\documentclass[11pt]{article}

\usepackage[a4paper,margin=1in]{geometry}
\usepackage{amsmath,amssymb,amsthm,mathtools,bm,mathrsfs}
\usepackage{microtype}
\usepackage{enumitem}
\usepackage{cite}
\usepackage{placeins}
\usepackage{needspace}
\usepackage[hidelinks]{hyperref}
\usepackage[T1]{fontenc}
\usepackage{lmodern}

\hypersetup{
    colorlinks=true,
    linkcolor=blue,
    citecolor=blue,
    urlcolor=blue
}

\allowdisplaybreaks
\numberwithin{equation}{section}

\newtheorem{theorem}{Theorem}[section]
\newtheorem{proposition}[theorem]{Proposition}
\newtheorem{lemma}[theorem]{Lemma}
\newtheorem{corollary}[theorem]{Corollary}
\theoremstyle{remark}
\newtheorem{remark}[theorem]{Remark}

\newcommand{\R}{\mathbb R}
\newcommand{\C}{\mathbb C}
\newcommand{\Id}{I}
\newcommand{\eps}{\varepsilon}
\newcommand{\Div}{\operatorname{div}}
\newcommand{\tr}{\operatorname{tr}}
\newcommand{\tf}{\operatorname{tf}}
\newcommand{\supp}{\operatorname{supp}}
\newcommand{\Span}{\operatorname{span}}
\newcommand{\dd}{\,\mathrm d}
\newcommand{\DN}{\Lambda}
\newcommand{\cO}{\mathcal O}

\title{Unique Determination of the Lam\'e Parameters in Three Dimensions}
\author{Huaian Diao\textsuperscript{1}\qquad
Umidjon Durdiev\textsuperscript{1,2,3}\\[0.45em]
\small \textsuperscript{1}School of Mathematics, Jilin University,\\[-0.1em]
\small Changchun, P.\ R.\ of China\\[0.15em]
\small \textsuperscript{2}Bukhara State University, Uzbekistan\\[0.15em]
\small \textsuperscript{3}V. I. Romanovskiy Institute of Mathematics,\\[-0.1em]
\small Uzbekistan Academy of Science}
\hypersetup{%
  pdfauthor={Huaian Diao and Umidjon Durdiev},
  pdftitle={Unique Determination of the Lam\'e Parameters in Three Dimensions}
}
\date{}
\begin{document}
\maketitle
\begin{abstract}
We prove that the full Dirichlet-to-Neumann map for static isotropic
elasticity uniquely determines both smooth Lam\'e parameters in a
bounded smooth domain of \(\mathbb R^3\) under strong convexity.
The proof uses an exact four-component Laplace-type reduction, but its
decisive identities are obtained from physical elastic solutions.
The characteristic quotient of the reduced system has rank three,
whereas the leading physical adjoint lift supplies only two directions.
We construct the missing polarization from the first nonvanishing
correction of a reduced amplitude in the kernel of that lift.
Paired with fixed primal and adjoint quotient frames, the resulting
complex geometrical optics solutions yield a complete family of
holomorphically weighted \(3\times3\) matrix moments through the
physical Betti identity. Cauchy--Plemelj factorization converts these
moments into an invertible relative transport normalized in the common
exterior. Holomorphic dependence on the projective null direction and
transport rigidity then determine a material ratio and the shear
modulus, hence both Lam\'e parameters. No smallness, symmetry,
separability, or analyticity hypothesis is imposed.
\end{abstract}
\medskip
\noindent\textbf{Keywords.} Inverse boundary value problem; isotropic elasticity;
Lam\'e parameters; global uniqueness;
Dirichlet-to-Neumann map; complex geometrical optics;
matrix moment identities; relative transport.
\par\smallskip
\noindent\textbf{Mathematics Subject Classification (2020).} Primary 35R30; Secondary 74B05, 35J57.

\clearpage
\setcounter{tocdepth}{2}
\tableofcontents
\clearpage

\section{Introduction}

\subsection{The elastic model and the inverse problem}
Let \(\Omega\subset\R^3\) be a bounded \(C^\infty\) domain.
The Lam\'e parameters
\(\lambda,\mu\in C^\infty(\overline\Omega;\R)\)
are real-valued functions of the spatial variable
\(x=(x_1,x_2,x_3)\).
The static isotropic elasticity operator is
\begin{equation}\label{eq:Lame-op}
 L_{\lambda,\mu}u
 =\Div\sigma_{\lambda,\mu}(u),
 \qquad
 \sigma_{\lambda,\mu}(u)
 =\lambda(\Div u)\Id+2\mu\eps(u),
\end{equation}
where
\[
 \eps(u)=\frac{\nabla u+(\nabla u)^T}{2}.
\]
We assume the strong convexity conditions
\begin{equation}\label{eq:convexity}
 \mu>0,\qquad 3\lambda+2\mu>0
 \quad\text{on }\overline\Omega.
\end{equation}
By continuity and compactness, both quantities have uniform
positive lower bounds on \(\overline\Omega\).

For each boundary displacement
\(f\in H^{1/2}(\partial\Omega;\R^3)\), let
\(u_f\in H^1(\Omega;\R^3)\) be the unique solution of
\[
 L_{\lambda,\mu}u_f=0\quad\text{in }\Omega,
 \qquad u_f|_{\partial\Omega}=f.
\]
The Dirichlet-to-Neumann map is defined by
\begin{equation}\label{eq:DN}
 \DN_{\lambda,\mu}f
 =\sigma_{\lambda,\mu}(u_f)\nu_{\partial\Omega}|_{\partial\Omega},
\end{equation}
where \(\nu_{\partial\Omega}\) is the outward unit normal to \(\partial\Omega\) and the boundary
traction is understood in the weak sense for general
\(H^{1/2}\) data. Under \eqref{eq:convexity}, this defines a
bounded linear operator
\[
 \DN_{\lambda,\mu}:H^{1/2}(\partial\Omega;\R^3)
 \longrightarrow H^{-1/2}(\partial\Omega;\R^3).
\]
Equality of Dirichlet-to-Neumann maps is understood as
equality of bounded operators on real-valued boundary
data, or equivalently, of their complex-linear extensions.

The inverse boundary value problem considered here is to
determine the Lam\'e parameters $(\lambda,\mu)$ from the
full displacement-to-traction map $\DN_{\lambda,\mu}$.
The central uniqueness question is whether two admissible
coefficient pairs giving rise to the same Dirichlet-to-Neumann
map must coincide. Theorem~\ref{thm:main} gives an
affirmative answer.

\subsection{Existing results and discussions}

The inverse boundary value problem for isotropic elasticity is a
system counterpart of Calder\'on's conductivity problem.
Calder\'on \cite{Calderon1980} formulated the recovery of a
positive scalar conductivity from boundary voltage and current
measurements and established injectivity of the linearization
at a constant conductivity using complex exponential harmonic
functions. Sylvester and Uhlmann \cite{SylvesterUhlmann1987}
proved global uniqueness for smooth, strictly positive scalar
conductivities in dimensions \(n\geq 3\) by constructing complex
geometrical optics (CGO) solutions.
In two dimensions, Nachman \cite{Nachman1996} established
global uniqueness and obtained a reconstruction procedure
for smooth conductivities. Astala and P\"aiv\"arinta
\cite{AstalaPaivarinta2006} subsequently extended uniqueness
to bounded measurable conductivities with a positive lower
bound on bounded simply connected domains, using a
Beltrami equation method.

The extension from scalar conductivity to elasticity
introduces a coupled principal part and two spatially varying
unknown coefficients. For frozen coefficients in
\eqref{eq:Lame-op}, the longitudinal and transverse components
have distinct principal moduli, \(\lambda+2\mu\) and \(\mu\).
Spatial variation of the coefficients introduces additional
coupling, so the constant-coefficient
longitudinal--transverse decomposition does not, in general,
reduce the elasticity system to independent scalar equations.
The inverse problem therefore requires simultaneous recovery
of both Lam\'e parameters from the physical
displacement--traction operator \eqref{eq:DN}.

Boundary identification for isotropic elasticity was
established in \cite{AkamatsuNakamuraSteinberg1991}.
Nakamura and Uhlmann proved local uniqueness near constant
coefficients in two dimensions \cite{NakamuraUhlmann1993}
and developed a three-dimensional CGO approach
\cite{NakamuraUhlmann1994}. Boundary determination to
arbitrary order, including results based on localized
measurements, was developed in
\cite{NakamuraUhlmann1995,LinNakamura2017}.
The general three-dimensional uniqueness statement in
\cite{NakamuraUhlmann1994} was subsequently qualified
by the erratum \cite{NakamuraUhlmannErratum2003}.
The corrected theorem requires the gradients of both
candidate shear moduli to be sufficiently small in a
sufficiently high \(C^m\) norm. Thus, the smallness
condition concerns \(\mu\), rather than both Lam\'e
parameters. Further coefficient relations and perturbative
uniqueness results were obtained by Eskin and Ralston
\cite{EskinRalston2002,EskinRalston2004,
EskinRalstonSystems2004}.

Other results address particular coefficient classes or
different measurement settings. The linearized inverse
problem at a constant background was studied in
\cite{Ikehata1990}, while radially dependent coefficients
in a ball were considered under additional hypotheses
in \cite{Lin1998}. For piecewise constant coefficients
on a known partition, uniqueness and Lipschitz stability
were established under geometric and a priori assumptions
\cite{BerettaFranciniVessella2014,
BerettaFranciniMorassiRossetVessella2014}.
When the candidate shear moduli are positive constants,
both parameters can be recovered from partial Cauchy data,
provided that the candidate values of \(\lambda\) agree
on the inaccessible boundary
\cite[Theorem~0.1]{ImanuvilovUhlmannYamamoto2012}.
In two dimensions, global uniqueness from full boundary
data without a smallness assumption on the coefficients
was established in
\cite[Theorem~1.2]{ImanuvilovYamamoto2015}.

Recent uniqueness results for three-dimensional isotropic
elasticity under axisymmetry, variable separation, or
directional quasianalyticity assumptions on the shear
modulus \(\mu\) were obtained in
\cite{ChenJiangLiuTao2026}.
The main theorem in this paper removes these structural restrictions
in the smooth full-data setting.

Related developments concern inclusion detection using
complex spherical or geometrical optics waves
\cite{UhlmannWang2007,UhlmannWang2008},
recovery from internal displacement measurements
\cite{Lai2014}, and quantitative unique continuation
and doubling estimates for the Lam\'e system
\cite{AlessandriniMorassi2001,KochLinWang2016}.
Carleman methods have been applied to dynamic source
and density recovery \cite{IkehataNakamuraYamamoto1998}.
Uniqueness for piecewise homogeneous anisotropic
elastic media from static or time-domain measurements
has also been established under the respective curvature,
connectivity, and observation-time assumptions
\cite{CarsteaHondaNakamura2018,
CarsteaNakamuraOksanen2020}.

For the smooth full-data problem, the four-component
reduction introduced in
\cite{AngIkehataTrongYamamoto1998}
provides a system with Laplace principal part and
allows the use of system CGO constructions.
The associated physical transformation is recalled
in \cite[Section~3]{EskinRalstonSystems2004};
its exact factorization in the conventions of this
paper is established in
Proposition~\ref{prop:factorization}.

A fundamental difficulty is that equality of the
physical elastic Dirichlet-to-Neumann maps does not
imply equality of the unrestricted boundary maps
of the auxiliary reduced systems.
Moreover, the leading physical adjoint lift has
a two-dimensional image, whereas the characteristic
quotient has rank three. Consequently, a complete
physical quotient frame cannot be obtained solely
from the leading-order construction.
The missing direction is supplied by the first
nonvanishing correction to an amplitude annihilated
by the leading lift
(Theorem~\ref{thm:graded-frame}).
The differentiated and localized CGO constructions
must also retain sufficient remainder control
throughout the integration domain used in
Theorem~\ref{thm:physical-nine-moments}.

A further issue concerns the normalization of matrix
transport frames. Individual invertible frames need
not admit a prescribed exterior normalization.
The erratum
\cite[p.~206]{NakamuraUhlmannErratum2003}
identifies this normalization problem and corrects
the treatment of a pseudodifferential equation
as a local differential equation.
The approach adopted here instead derives the complete
matrix integral identities from exact physical elastic
solutions before constructing an exterior-normalized
relative transport.

A related construction for connection Laplacians
appears in
\cite[Section~3.3, proof of Theorem~3.4,
equations (3.15)--(3.18)]{Cekic2025}.
There, holomorphically weighted matrix identities
provide contour compatibility, and an invertible
holomorphic Cauchy factor is obtained through
the Cauchy--Plemelj formula.
Dependence on the complex direction is addressed
by parameter differentiation and transport uniqueness
\cite[Lemma~4.1]{Cekic2025}, following the
complex-parameter approach of \cite{Eskin2001}.
These results concern full boundary data for
connection systems under the geometric hypotheses
of the respective works.

\paragraph*{Relation to independent work.}
During the preparation of this manuscript, the authors became aware of
the related OpenAI preprint \cite[Theorem~1.1]{OpenAIElasticity2026},
which proves a corresponding full-data global uniqueness theorem for
smooth, strongly convex Lam\'e parameters in three dimensions.
OpenAI announced the public release of its mathematical research
collection on October 6, 2026.\footnote{See OpenAI's
\href{https://openai.com/index/sharing-ai-progress-in-mathematics/}{public
announcement} and the
\href{https://github.com/openai/math/commit/adc7f1241b42e322a6451854ab7e4b4c146bf78a}{initial
commit} of its mathematical research repository, which includes the
elasticity preprint.}
According to the authors' retained file records, a
preliminary draft of the present work, containing an
early version of the global uniqueness statement and
the principal elements of the present approach, was
saved by October 4, 2026 (China Standard Time),
before the public release of the OpenAI manuscript
on October 6, 2026. The approach was developed
independently of that manuscript.

Our proof follows a different analytical route, especially in the
construction of the third physical adjoint polarization and the
derivation of complete, holomorphically weighted \(3\times3\) matrix
moments from exact physical Betti identities in fixed primal and
adjoint quotient frames. The third polarization is obtained by
retaining the first nonzero physical term of a reduced amplitude in
the kernel of the leading lift. The matrix moments then provide the
contour compatibility needed to construct an invertible,
exterior-normalized relative transport by Cauchy--Plemelj
factorization. By contrast, the OpenAI manuscript constructs its
relative transport using physical solution transfer, weak limits,
and supported planar estimates
\cite[Proposition~5.3]{OpenAIElasticity2026}.

Although the two approaches share the rank-three holomorphic
quotient-bundle geometry and related coefficient-rigidity mechanisms,
they differ substantially in the passage from physical boundary data
to the exterior-normalized relative transport.

\subsection{Main theorem and key ideas}

We now state the uniqueness theorem for the physical DN map.

\Needspace{12\baselineskip}
\begin{samepage}
\begin{theorem}\label{thm:main}
Let \(\Omega\subset\R^3\) be a bounded \(C^\infty\) domain,
whose boundary may have more than one component, and let
\(\lambda_j,\mu_j\in C^\infty(\overline\Omega;\R)\),
\(j=1,2\), satisfy \eqref{eq:convexity} on \(\overline\Omega\).
If their full Dirichlet-to-Neumann maps agree on the entire boundary,
\[
 \DN_{\lambda_1,\mu_1}^\Omega=\DN_{\lambda_2,\mu_2}^\Omega
 \quad\text{as maps }H^{1/2}(\partial\Omega;\R^3)
 \longrightarrow H^{-1/2}(\partial\Omega;\R^3),
\]
then
\[
 \lambda_1=\lambda_2,\qquad \mu_1=\mu_2
 \quad\text{in }\Omega.
\]
\end{theorem}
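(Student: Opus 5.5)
The proof will follow the route outlined in the abstract. It proceeds in five stages: boundary reduction, an exact reduced system, CGO solutions with a complete quotient frame, moment identities, and a transport argument that recovers the coefficients.

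\emph{Stage 1: boundary determination and extension.} By the boundary identification results \cite{AkamatsuNakamuraSteinberg1991,NakamuraUhlmann1995}, equality of the Dirichlet-to-Neumann maps determines all normal derivatives of $\lambda_j,\mu_j$ on $\partial\Omega$. We then embed $\Omega$ in a large ball $B$ and extend $(\lambda_1,\mu_1)$ and $(\lambda_2,\mu_2)$ smoothly so that they coincide on $B\setminus\overline\Omega$ and satisfy \eqref{eq:convexity} there. A standard gluing argument shows that equality of $\DN^\Omega$ is equivalent to equality of $\DN^B$. This produces the \emph{common exterior} in which every later normalization takes place. If $\partial\Omega$ has several components, the extension is carried out into each complementary component separately.

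\emph{Stage 2: exact reduced system.} Following \cite{AngIkehataTrongYamamoto1998} and Proposition~\ref{prop:factorization}, we write physical solutions as $u=\mu^{-1/2}w+\dots$, with a scalar fourth component $f$ tied to $\Div u$. This gives a $4\times4$ system $-\Delta W+A\cdot\nabla W+QW=0$ with Laplace principal part. We construct CGO solutions $W=e^{x\cdot\zeta/h}(a_0+ha_1+r_h)$ with $\zeta\cdot\zeta=0$, where $a_0$ solves a $\bar\partial$-type transport equation along the two-plane of $\zeta$ and $\|r_h\|=O(h)$ in the norms required. Through the factorization these yield exact physical elastic solutions, and similarly for the adjoint.

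\emph{Stage 3: complete quotient frame (main obstacle).} The characteristic quotient has rank three, but the leading physical adjoint lift only reaches two directions. To fill the gap, we take a reduced amplitude $a_0$ in the kernel of the leading lift. Its leading physical part then vanishes, so the first nonvanishing term is the $h^1$ correction, which is computed from $a_1$. After renormalizing by $h^{-1}$, this gives a third polarization, as in Theorem~\ref{thm:graded-frame}. The difficulty is controlling remainders: after renormalization the error is only $O(h)$ relative to an $O(1)$ term, so we need the CGO remainder to be $O(h^2)$ in $H^1$ on all of $B$. We expect to get this by carrying the amplitude expansion to second order and applying the Sylvester--Uhlmann estimate.

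\emph{Stage 4: moments.} We insert primal and adjoint CGO solutions with $\zeta_1+\zeta_2=O(h)$ into the Betti identity $\int_B (\sigma_1(u_1)-\sigma_2(u_1)):\eps(v_2)=0$, which we obtain by subtracting a solution with the same boundary data. Letting $h\to0$ gives Theorem~\ref{thm:physical-nine-moments}. Concretely, for each null direction the integrals $\int e^{ix\cdot k}\,\Phi_1^{-1}\ldots\Phi_2$, weighted by arbitrary holomorphic functions in the complex variable of the $\zeta$-plane, vanish as $3\times3$ matrices. Here $\Phi_j$ are the fixed transport frames.

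\emph{Stage 5: transport and recovery.} On each two-plane, the vanishing of the holomorphically weighted moments says that the boundary values of $\Phi_1\Phi_2^{-1}$ on the circle satisfy Cauchy-type compatibility conditions. Cauchy--Plemelj factorization (as in \cite{Cekic2025}) therefore yields an invertible relative transport $R$ that is holomorphic in the plane variable, equals $\Id$ in the exterior, and intertwines the two transport operators. Differentiating in the projective null direction (following \cite{Eskin2001}) and using uniqueness for the $\bar\partial$-transport, we find that $R$ depends holomorphically on the direction and is globally determined. Comparing the connection terms then forces $\nabla\log\mu_1=\nabla\log\mu_2$ and equality of a material ratio such as $(\lambda+\mu)/(\lambda+2\mu)$. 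Since the coefficients agree in the exterior, this gives $\mu_1=\mu_2$ and then $\lambda_1=\lambda_2$, which proves Theorem~\ref{thm:main}.
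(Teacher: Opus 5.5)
Your outline follows the paper's architecture: boundary jets and common extension, the $LT=SM$ reduction, two principal adjoint polarizations plus one from the first correction, nine matrix moments, a Cauchy--Plemelj relative transport, and holomorphy in the direction. However, the two places where you commit to a specific mechanism do not work as stated, and the final recovery step is asserted rather than argued.

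\textbf{CGO remainders.} After conjugation the reduced operator is $P_0-h^{-1}T_\theta$ with $T_\theta=2D_\theta+A_\theta$, and $h^{-1}A_\theta$ is a matrix first-order coupling that is not small. The Sylvester--Uhlmann free inverse $G_h$ gains only a factor $h$ from $L^2_\delta$ to $L^2_{\delta-1}$ and is merely bounded into $H^1_{\delta-1}$. Hence $h^{-1}A_\theta G_h$ and $A\cdot\nabla G_h$ are $O(1)$, and the Neumann series fails unless the first-order coefficients are small. Carrying the amplitude expansion further only shrinks the residual; it does not produce an inverse. The paper instead uses an Eskin--Ralston-type localized right inverse built from matrix $\bar\partial$ transport frames (Theorem~\ref{lem:ER-solvability}, Appendix~\ref{audrev-ordinary-parametrix}). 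Its two-level bound $h^{-1}\|Rf\|_{H^k}+\|Rf\|_{H^{k+1}}\le C\|f\|_{H^k}$ gives $O(h^2)$ in $H^k$ and $O(h)$ in $H^{k+1}$ after a single transport correction. That is what makes $h^{-1}J_\theta r_h$ in the unscaled third lift $O(h)$.

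\textbf{Moment identity.} Your Alessandrini form $\int(\sigma_1-\sigma_2)(u_1):\eps(v_2)$ pairs undifferentiated coefficient differences against strains of size $h^{-2}$ each, and it never displays the adjoint lift in which your third polarization lives. The paper uses $LT=SM$ and its transpose to rewrite the same boundary pairing as $\int (S_2^Tu_2)^T(M_1-M_2)\boldsymbol z_1$ (Proposition~\ref{prop:primal-adjoint}). From that form the limit $Y_2^T(P_1-P_2)X_1$ can be read off directly. Also, the product with Cauchy compatibility is $Y_2^TX_1$, of the form $\Phi_2^{-1}\Phi_1$. It is not $\Phi_1\Phi_2^{-1}$, whose $\bar\partial$ is not the moment density.

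\textbf{Coefficient recovery.} At a fixed direction the relative equation $2D_\theta G+\mathcal A_1G-G\mathcal A_2=0$ is solvable leafwise for any pair of coefficients. So nothing can be compared pointwise until $G$ is constrained, and the constraint is global holomorphy in $[s:t]\in\mathbb{CP}^1$ together with the bundle transition. Your proposal does not say how to use this. In the paper, the explicit spinor frames split the dual quotient bundle as $\cO(-1)\oplus\cO(-1)\oplus\cO$. Liouville then forces $H=G^{-T}$ into the block form of Lemma~\ref{lem:Hform}: $A$ and $\delta_H$ are direction-independent, the upper-right block vanishes, and $C$ is linear in the spinor. Only then do the block equations with \eqref{eq:Gamma} give $A=I_2$, $\delta_H=e^{-\rho/2}$ with $\rho=\log(\mu_1/\mu_2)$, and $\kappa_1=\kappa_2$.

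Even at that point, $\mu$ is not obtained as $\nabla\log\mu_1=\nabla\log\mu_2$ by comparison. The lower-left block together with Lemma~\ref{lem:null-line-rigidity} yields only the trace-free Hessian equation $\tf(\nabla^2\rho)=\tf(\beta\otimes\nabla\rho+\nabla\rho\otimes\beta)$. This must be closed by adjoining $\Delta\rho$ (Lemma~\ref{lem:tracefree-closure}) and then propagated along paths from the exterior shell, where $\rho\equiv0$, to conclude $\rho=0$.
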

\end{samepage}

\begin{remark}\label{rem:main-scope}
Theorem~\ref{thm:main} establishes global uniqueness for
smooth, strongly convex isotropic elastic media in three
dimensions, without smallness, symmetry, separability, or
analyticity assumptions on the Lam\'e parameters.
The boundary of \(\Omega\) may have several connected
components, with the full displacement--traction response
available on each of them. In particular, the matrix
identities underlying the proof are derived entirely
from the physical Dirichlet-to-Neumann map and require
no additional measurements.
\end{remark}

\begin{remark}
\label{rem:main-method}

A central difficulty in establishing global uniqueness is
to extract sufficient matrix information from the physical
displacement--traction data to compare the two elastic
systems. Although the elasticity equations admit an exact
four-component Laplace-type reduction, equality of the
physical Dirichlet-to-Neumann maps does not imply equality
of the unrestricted boundary maps of the reduced systems.
Moreover, the leading physical adjoint lift provides only
two independent polarizations in a rank-three characteristic
quotient. Thus, the leading-order construction alone does
not yield the complete matrix identities required for
coefficient recovery.

The approach developed here overcomes this obstruction by
exploiting the first nonvanishing correction to a reduced
amplitude annihilated by the leading physical adjoint lift.
This produces a third independent physical adjoint
polarization and completes an invertible quotient frame
(Theorem~\ref{thm:graded-frame}).

A distinguishing feature of the argument is that the
resulting complete matrix identities are derived directly
from the physical elastic equations. Using exact physical
CGO solutions and the Betti identity, with primal and
adjoint quotient frames fixed independently of the test
weight, one obtains a full \(3\times3\) matrix integral
identity for every holomorphic polynomial weight
(Theorem~\ref{thm:physical-nine-moments}).

These identities provide the contour compatibility needed
for a Cauchy--Plemelj construction of an invertible
relative transport normalized in the common exterior,
without requiring the individual transport frames to
satisfy an exterior normalization
(Proposition~\ref{prop:complete-moment-relative-transport}).
Its global construction and projective holomorphicity
(Theorem~\ref{thm:quotient-reconstruction}), combined with
the rigidity of the Lam\'e quotient connection
(Theorem~\ref{thm:rigidity}), complete the simultaneous
determination of both Lam\'e parameters.

\end{remark}

\subsection{Organization of the proof and notation}

The proof of Theorem~\ref{thm:main} begins by placing the two
elastic media in a common geometric setting. Equality of the
physical Dirichlet-to-Neumann maps determines the full boundary
Taylor series of the Lam\'e parameters
(Theorem~\ref{thm:appendix-boundary-jets}).
This allows the coefficients to be extended so that they agree
in the exterior of an enclosing ball, while preserving equality
of the corresponding boundary maps
(Lemmas~\ref{lem:common-extension} and \ref{lem:enlargement}).
Section~\ref{sec:reduction} establishes the exact four-component
Laplace-type reduction and the associated rank-three
characteristic quotient.

Section~\ref{sec:cgo} constructs complex geometrical optics
solutions of the reduced system and realizes three independent
physical adjoint polarizations, forming a complete quotient
frame (Theorem~\ref{thm:graded-frame}).
The localized right inverse and the CGO remainder estimates
are established in
Appendices~\ref{audrev-ordinary-parametrix} and
\ref{app:cgo-input}, respectively.
The exact reduction and phase conventions are verified in
Appendix~\ref{app:factorization-check}.

Section~\ref{sec:transport} applies the physical Betti identity
to exact elastic solutions and obtains the full \(3\times3\)
matrix integral identities for every holomorphic polynomial
weight (Theorem~\ref{thm:physical-nine-moments}).
The resulting contour compatibility leads to an invertible
relative transport normalized in the common exterior.
Its global construction and holomorphic dependence on the
projective null direction are established in
Theorem~\ref{thm:quotient-reconstruction}.
Section~\ref{sec:rigidity} then combines the quotient-bundle
structure with the relative transport equations to determine
the auxiliary material parameter \(\kappa\) and the shear modulus
\(\mu\) (Theorem~\ref{thm:rigidity}).
Section~\ref{sec:mainproof} completes the proof of the
uniqueness theorem for both Lam\'e parameters.

The following conventions are used throughout.
\begin{remark}\label{rem:frozen-conventions}
The boundary duality pairings between $H^{-1/2}$ and
$H^{1/2}$, as well as the volume pairings used below,
are taken complex-bilinearly, without complex
conjugation. For matrices, the superscript $T$ denotes
the ordinary transpose without complex conjugation.
For differential operators, it denotes the formal
transpose with respect to the complex-bilinear volume
pairing; in particular, $(\partial_k)^T=-\partial_k$. 

For \(\theta=e_1+\mathrm i e_2\), with real orthonormal
vectors \(e_1,e_2\), set \(\nu_\theta=e_1\times e_2\) and
\(x_\perp=\nu_\theta\cdot x\). We use the leaf coordinate
\(z=e_1\cdot x+\mathrm i e_2\cdot x\). Then
\(D_\theta=\theta\cdot\nabla=2\partial_{\bar z}\).

The dual quotient connection \eqref{eq:Gamma} has
lower-left block \(-\theta^TF_\mu Q\).
Projective dependence is encoded by the bundle transition
matrix \eqref{eq:spinor-transition} and the transport
covariance laws
\eqref{eq:G-chart-covariance}--\eqref{eq:H-chart-covariance}.
\end{remark}

Table~\ref{tab:principal-notation} lists the principal objects used
in the proof; each is defined in the indicated section or formula.
The Lam\'e parameters are always \(\lambda,\mu\), with \(\kappa\)
as in \eqref{eq:kappa-def}. In Appendix~\ref{audrev-ordinary-parametrix},
the spatial cutoff, moving real direction and transverse unit vector
are denoted by \(\chi_{\mathrm{sp}},\omega,\nu_\perp\), respectively.

\begin{table}[htbp]

\centering
\caption{Principal notation and the location of its definition.}
\label{tab:principal-notation}
\begingroup
\small
\renewcommand{\arraystretch}{1.15}
\begin{tabular}{@{}p{0.25\textwidth}p{0.71\textwidth}@{}}
\hline
Notation & Meaning and location \\
\hline
\(\Omega,\ B_R\) & Original measurement domain and enclosing ball with
the enlarged elastic DN map; Section~\ref{sec:transport}. \\
\(\Omega_{\mathrm{CGO}}\) & Fixed working ball containing
\(\overline{B_R}\), on which the reduced CGO solutions are exact;
Theorem~\ref{lem:ER-solvability}. \\
\(T_\mu,\ S_{\mu,\kappa}\) & Physical transformation and factorization
operator in \( L_{\mu,\kappa}T_\mu=S_{\mu,\kappa}M_{\mu,\kappa}\); \eqref{eq:Tmu}--\eqref{eq:factorization}. \\
\(K=S^TT\) & Physical adjoint differential lift in Sections~\ref{sec:reduction}--\ref{sec:transport};
\eqref{eq:Kexact}. \\
\(J_\theta,\ K_0\) & The \(h^{-1}\) and \(h^0\) terms in the conjugated lift
\(e^{\theta\cdot x/h}Ke^{-\theta\cdot x/h}=K_0-h^{-1}J_\theta\);
\eqref{eq:Kconjugated-minus}. \\
\(E_\theta,\ E_\theta^*\) & Rank-three characteristic quotient
\(\C^4/\C e_\theta\) and its annihilator; Section~\ref{sec:reduction}. \\
\(\mathcal F_\theta\) & Two-dimensional image of the leading physical
adjoint lift; \eqref{eq:fast-space}. \\
\(\mathsf U_\theta,\mathsf V_\theta\) & Dual quotient basis and paired
primal lift, with \(\mathsf U_\theta^T\mathsf V_\theta=I_3\);
Lemma~\ref{lem:holomorphic-lifts} and \eqref{eq:primal-quotient-connection}. \\
\(X_1,Y_2,\ Z_1\) & \(3\times3\) primal and dual quotient frames and
the \(4\times3\) primal amplitude with
\(\mathsf U_\theta^TZ_1=X_1\); Theorem~\ref{thm:physical-nine-moments}. \\
\(\mathcal A_{j,\theta},\Gamma_j,P_j\) & Primal and dual quotient
connections, with \(\Gamma_j=-\mathcal A_{j,\theta}^T\);
on normalized leaves, \(P_j=\mathcal A_{j,\theta}/2\).
See \eqref{eq:Gamma},
\eqref{eq:primal-quotient-connection}, \eqref{eq:leaf-primal}. \\
\(G,\ H=G^{-T}\) & Primal relative transport and its dual;
Theorem~\ref{thm:quotient-reconstruction} and
Corollary~\ref{cor:dual-relative}. \\
\(\zeta;\ [s:t]\) & Affine spinor coordinate and homogeneous
spinor coordinates \([s:t]\); \eqref{eq:qspinor} and
\eqref{eq:spinor-transition}. \\
\(z,x_\perp,\nu_\theta\) & Complex leaf coordinate, real transverse
coordinate \(x_\perp=\nu_\theta\cdot x\), and oriented leaf normal;
Remark~\ref{rem:frozen-conventions}. \\
\(\tau=h^{-1}\) & Large CGO parameter in
Theorem~\ref{lem:ER-solvability} and
Appendix~\ref{audrev-ordinary-parametrix}. \\
\(r_{\mathrm{in}};\ \mathfrak Z,\mathsf Z_j\) & Inward boundary distance,
layered impedance matrix and its symbol coefficients;
Appendix~\ref{audrev-app-boundary-jets}. \\
\hline
\end{tabular}
\endgroup
\end{table}

\FloatBarrier
\section{Reduction of the elasticity operator}\label{sec:reduction}

\subsection{Material variables and strong convexity}

For the reduction, the Lam\'e pair \((\lambda,\mu)\) is
reparametrized by the shear modulus \(\mu\) and the dimensionless
ratio \(\kappa\). Since the strong convexity conditions
\eqref{eq:convexity} imply
\(\lambda+2\mu=((3\lambda+2\mu)+4\mu)/3>0\),
this ratio is well defined by
\begin{equation}\label{eq:kappa-def}
 \kappa=\frac{\lambda+\mu}{\lambda+2\mu}.
\end{equation}
Solving for \(\lambda\) yields
\begin{equation}\label{eq:lambda-kappa}
 \lambda=\mu\frac{2\kappa-1}{1-\kappa},
\end{equation}
and the strong convexity conditions \eqref{eq:convexity}
are equivalent to
\begin{equation}\label{eq:kappa-range}
 \mu>0,\qquad \frac14<\kappa<1.
\end{equation}
Indeed, \(1-\kappa=\mu/(\lambda+2\mu)>0\) and
\(4\kappa-1=(3\lambda+2\mu)/(\lambda+2\mu)>0\).
Conversely, \eqref{eq:lambda-kappa} gives
\(3\lambda+2\mu=\mu(4\kappa-1)/(1-\kappa)>0\)
whenever \eqref{eq:kappa-range} holds.
Moreover,
\begin{equation}\label{eq:sum-relations}
 \lambda+\mu=\frac{\kappa\mu}{1-\kappa},
 \qquad
 \lambda+2\mu=\frac{\mu}{1-\kappa}.
\end{equation}

Henceforth, write
\[
 L_{\mu,\kappa}:=L_{\lambda(\mu,\kappa),\mu},
\]
where \(\lambda(\mu,\kappa)\) is given by
\eqref{eq:lambda-kappa}. This is the elasticity operator
\eqref{eq:Lame-op} expressed in the \((\mu,\kappa)\)-variables.

Set
\[
 \ell=\log\mu,\qquad m=\nabla\ell,\qquad c=\mu^{-1/2}.
\]
Define
\begin{align}
 F_\mu
 &=2c\left[
 \nabla^2\ell-m\otimes m+\bigl(|m|^2-\Delta\ell\bigr)\Id
 \right],\label{eq:Fmu}\\
 V_\mu
 &=-\nabla^2\ell-\frac12m\otimes m
 -\left(\frac12\Delta\ell+\frac14|m|^2\right)\Id,\label{eq:Vmu}\\
 q_\mu&=\Delta\ell-|m|^2,\label{eq:qmu}\\
 d_{\mu,\kappa}&=\kappa\sqrt\mu,\label{eq:dmu}\\
 e_{\mu,\kappa}&=\frac{3\kappa-2}{2}\sqrt\mu.\label{eq:emu}
\end{align}
Here \(F_\mu\) and \(V_\mu\) are symmetric \(3\times3\)
matrix fields, while \(q_\mu\), \(d_{\mu,\kappa}\), and
\(e_{\mu,\kappa}\) are scalar fields.
The tensor-product convention is \((a\otimes b)_{ij}=a_i b_j\). The identity matrix
in \eqref{eq:Fmu} and \eqref{eq:Vmu} is \(I_3\).

\subsection{Factorization of the elasticity operator}

The reduction gives a four-component system with scalar principal part
whose solutions produce elastic displacements through the transformation
\(T_\mu\) in \eqref{eq:Tmu}.
The identity is an equality of differential operators, so it will also
give the formal transpose relation used for physical adjoint fields.

Consider smooth \(\C^3\)-valued vector fields \(v,p\) and
smooth complex-valued scalar fields \(f,q\). Adopt the convention
\((\nabla v)_{ij}=\partial_jv_i\).
The classical four-component transformation used in
\cite{AngIkehataTrongYamamoto1998} and recalled in
\cite[Section~3]{EskinRalstonSystems2004} is given by 
\begin{equation}\label{eq:Tmu}
 T_\mu(v,f)
 =c\,v+\mu^{-1}\nabla f-f\nabla(\mu^{-1}).
\end{equation}
Its exact intertwining identity is derived in the normalization used for
the quotient connection below.
Also define
\begin{equation}\label{eq:Smu}
 S_{\mu,\kappa}(p,q)
 =\sqrt\mu\,p+\nabla\left(\frac{q}{1-\kappa}\right).
\end{equation}
Both \(T_\mu\) and \(S_{\mu,\kappa}\) are \(3\times4\)
matrix differential operators of order one. The gradient
in \(S_{\mu,\kappa}\) differentiates both \(q\) and the
coefficient \((1-\kappa)^{-1}\).

The operators \(T_\mu\) and \(S_{\mu,\kappa}\) account for
the gradient terms in the elastic equation. The remaining vector and
scalar terms form the Laplace-type operator described next.

\begin{proposition}\label{prop:factorization}
On smooth four-component fields, the Lam\'e operator admits
the following exact identity of differential operators:
\begin{equation}\label{eq:factorization}
 L_{\mu,\kappa}T_\mu=S_{\mu,\kappa}M_{\mu,\kappa},
\end{equation}
where
\begin{equation}\label{eq:M-explicit}
 M_{\mu,\kappa}
 \binom vf
 =
 \begin{pmatrix}
 \Delta v-m\,\Div v+F_\mu\nabla f+V_\mu v+F_\mu m\,f
 \\
 \Delta f+d_{\mu,\kappa}\Div v-e_{\mu,\kappa}m\cdot v+q_\mu f
 \end{pmatrix}.
\end{equation}
Equivalently,
\begin{equation}\label{eq:M-Laplace-type}
 M_{\mu,\kappa}=\Delta\Id_4+\sum_{k=1}^3A_k\partial_k+B,
\end{equation}
with
\begin{equation}\label{eq:Ak}
 A_k=
 \begin{pmatrix}
 -m e_k^T&F_\mu e_k\\
 \kappa\sqrt\mu\,e_k^T&0
 \end{pmatrix}
\end{equation}
and
\begin{equation}\label{eq:Bmatrix}
 B=
 \begin{pmatrix}
 V_\mu&F_\mu m\\[1mm]
 -\dfrac{3\kappa-2}{2}\sqrt\mu\,m^T&q_\mu
 \end{pmatrix}.
\end{equation}
Here \(e_k\) is the \(k\)-th standard coordinate vector in
\(\R^3\). Both \(A_k\) and \(B\) are \(4\times4\) matrix
fields in the \(3+1\) block decomposition, and the Laplacian
acts componentwise.
\end{proposition}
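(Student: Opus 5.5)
The identity is a direct computation, and I would organize it so that the vector and scalar parts of $S_{\mu,\kappa}M_{\mu,\kappa}$ can be matched term by term against $L_{\mu,\kappa}T_\mu$. Since both sides are linear differential operators, it suffices to verify the identity on the two kinds of input separately: $(v,0)$ and $(0,f)$.

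\textbf{Rewriting the Lam\'e operator.} I first write $L_{\mu,\kappa}$ in a form adapted to the substitution:
\[
 L u=\mu\Delta u+(\lambda+\mu)\nabla\Div u+(\Div u)\nabla\lambda+2\eps(u)\nabla\mu .
\]
Using \eqref{eq:sum-relations}, this becomes
\[
 Lu=\mu\Bigl[\Delta u+\nabla\Bigl(\tfrac{\kappa}{1-\kappa}\Div u\Bigr)\Bigr]+\text{(lower-order terms)},
\]
where the lower-order terms are expressed through $m$ and $\nabla\kappa$. The form of $S_{\mu,\kappa}$ suggests isolating the exact gradient $\nabla\bigl(q/(1-\kappa)\bigr)$. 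The key structural observation is that every $\nabla\kappa$-dependence on the left must be absorbed by the derivative hitting $(1-\kappa)^{-1}$ in $S_{\mu,\kappa}$. Indeed, $M_{\mu,\kappa}$ depends on $\kappa$ only through the zeroth-order scalar coefficients $d_{\mu,\kappa}$ and $e_{\mu,\kappa}$, never through $\nabla\kappa$.

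\textbf{The $f$-part.} Set $u=\mu^{-1}\nabla f-f\nabla(\mu^{-1})$. Then
\[
 \Div u=\mu^{-1}\Delta f-f\,\Delta(\mu^{-1}).
\]
I would compute $\Delta u$ and $\eps(u)\nabla\mu$ in terms of $\ell$, using $\nabla\mu^{-1}=-\mu^{-1}m$ and $\nabla^2\mu^{-1}=\mu^{-1}(m\otimes m-\nabla^2\ell)$. Next I identify the scalar
\[
 q=(1-\kappa)\cdot\bigl(\text{coefficient of the gradient part}\bigr),
\]
which should reproduce $\Delta f+q_\mu f$. The remainder, divided by $\sqrt\mu$, should give $F_\mu\nabla f+F_\mu m f$. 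The factor $c=\mu^{-1/2}$ in $F_\mu$ arises precisely from this division by $\sqrt\mu$.

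\textbf{The $v$-part.} Set $u=cv$ and expand
\[
 \Delta(cv),\qquad \nabla\Div(cv),\qquad \Div(cv)\nabla\lambda,\qquad \eps(cv)\nabla\mu .
\]
The term $(\lambda+\mu)\nabla\Div(cv)$ combines with $(\Div u)\nabla\lambda$ into an exact gradient $\nabla\bigl(\tfrac{\kappa\sqrt\mu\,\Div v+\dots}{1-\kappa}\bigr)$ plus $\mu$-gradient corrections. This yields the contributions $d_{\mu,\kappa}\Div v$ and $-e_{\mu,\kappa}m\cdot v$ to $q$. The residual vector part, divided by $\sqrt\mu$, should match
\[
 \Delta v-m\Div v+V_\mu v .
\]

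\textbf{Main obstacle.} The delicate point is the vector term $2\eps(cv)\nabla\mu$, which contains $(\nabla v)^T m$. This is not of the form $-m\Div v$ and must be converted: I would use
\[
 (\nabla v)^T m=\nabla(m\cdot v)-(\nabla m)v
\]
and move $\nabla(m\cdot v)$ into the gradient part. Moving it there forces the coefficient $\tfrac{3\kappa-2}{2}$ after writing
\[
 \mu\,\nabla(m\cdot v\,c)=\sqrt\mu\,(1-\kappa)\nabla\bigl(\ldots/(1-\kappa)\bigr)+\text{corrections}.
\]
The resulting corrections involving $\nabla\kappa$ must cancel exactly. Hence the bookkeeping of $\nabla\kappa$ terms and of $\nabla^2\ell$ terms, which go into $V_\mu$, is where I expect errors to arise. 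I would check it by verifying the final identity first for constant $\kappa$, then separately collecting all $\nabla\kappa$ terms.

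The block form \eqref{eq:M-Laplace-type}--\eqref{eq:Bmatrix} is then read off from \eqref{eq:M-explicit} by inspection.
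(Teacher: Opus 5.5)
Your proposal is correct and follows essentially the same route as the paper: the same split of $T_\mu(v,f)$ into $cv$ and $\mu^{-1}\nabla f-f\nabla(\mu^{-1})$, the same expanded form of $L$, and the same extraction of an exact gradient $\nabla\bigl(q/(1-\kappa)\bigr)$, with your conversion of $(\nabla v)^Tm$ corresponding to the paper's subtracted gradient $\nabla\bigl(c(\lambda+\mu)\Div v+(\lambda-\mu)\nabla c\cdot v\bigr)$. The bookkeeping is lighter than you expect: no $\nabla\kappa$ corrections arise at all, because $c(\lambda+\mu)=\kappa\sqrt\mu/(1-\kappa)$ and $(\lambda-\mu)\nabla c=-\tfrac{3\kappa-2}{2(1-\kappa)}\sqrt\mu\,m$ already carry the factor $(1-\kappa)^{-1}$ inside the gradient, and the other mixed term $(\nabla v)m$ cancels the cross term of $\mu\Delta(cv)$ because $2\mu\nabla c+c\nabla\mu=0$.
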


\begin{proof}
Set
\[
 a=\mu^{-1}=c^2,\qquad \mathsf K=\lambda+\mu,\qquad \Sigma=\lambda+2\mu.
\]
The transformation \(T_\mu(v,f)\) decomposes into the
vector and scalar-potential contributions
\[
T_\mu(v,f)=u^{(v)}+u^{(f)},
\qquad
u^{(v)}=cv,
\qquad
u^{(f)}=a\nabla f-f\nabla a.
\]
Here the superscripts \((v)\) and \((f)\) label the
contributions associated with the vector field \(v\)
and the scalar function \(f\), respectively.

For the scalar-potential contribution, set
\[
p_f=\Delta f-\frac{\Delta a}{a}f.
\]
The mixed first-derivative terms cancel in the
symmetric gradient, yielding
\begin{equation}\label{eq:scalar-div-eps}
\Div u^{(f)}
 =a\Delta f-f\Delta a
 =a p_f,
\qquad
\eps(u^{(f)})
 =a\nabla^2 f-f\nabla^2 a.
\end{equation}
By \eqref{eq:scalar-div-eps} and \(a\mu=1\), the corresponding stress tensor is
\begin{equation}\label{eq:proof-scalar-stress}
 \sigma(u^{(f)})
 =a\lambda p_f\Id+2\nabla^2f-2\mu f\nabla^2a.
\end{equation}
Take the divergence of \eqref{eq:proof-scalar-stress}, using
\(
 \sum_{j=1}^3\partial_j(\partial_i\partial_j a)
 =\partial_i\Delta a
\)
and
\(
 \nabla\mu=-\mu^2\nabla a
\), and obtain the exact identity
\begin{align}
 L_{\lambda,\mu}u^{(f)}
 ={}&\nabla\bigl(a\Sigma p_f\bigr)
 +2\mu\bigl(\Delta a\,\Id-\nabla^2a\bigr)\nabla f \label{eq:L-scalar-part}\\
 &\quad
 +2\mu^2\bigl(\nabla^2a-\Delta a\,\Id\bigr)\nabla a\,f.\notag
\end{align}
Now
\begin{equation}\label{eq:proof-reciprocal-derivatives}
 \nabla a=-am,
 \qquad
 \nabla^2a=a(m\otimes m-\nabla^2\ell),
 \qquad
 \frac{\Delta a}{a}=|m|^2-\Delta\ell.
\end{equation}
By \eqref{eq:proof-reciprocal-derivatives}, with
\[
 \mathcal H_\mu
 =\nabla^2\ell-m\otimes m+\bigl(|m|^2-\Delta\ell\bigr)\Id,
\]
we have
\begin{equation}\label{eq:proof-scalar-hessian-conversion}
 \Delta a\,\Id-\nabla^2a=a\mathcal H_\mu,
 \qquad
 F_\mu=2c\mathcal H_\mu.
\end{equation}
By \eqref{eq:proof-scalar-hessian-conversion} and \eqref{eq:proof-reciprocal-derivatives}, the two non-gradient terms in \eqref{eq:L-scalar-part} equal
\begin{equation}\label{eq:proof-scalar-nongradient-part}
 c^{-1}F_\mu\nabla f+c^{-1}F_\mu m\,f.
\end{equation}
Moreover, by \eqref{eq:sum-relations}, \eqref{eq:qmu}, and \eqref{eq:proof-reciprocal-derivatives},
\begin{equation}\label{eq:proof-scalar-gradient-coefficients}
 a\Sigma=\frac1{1-\kappa},
 \qquad
 -\frac{\Delta a}{a}=q_\mu,
\end{equation}
so \eqref{eq:proof-scalar-gradient-coefficients} converts the gradient term in \eqref{eq:L-scalar-part} to
\begin{equation}\label{eq:scalar-gradient-part}
 \nabla\left(\frac{\Delta f+q_\mu f}{1-\kappa}\right).
\end{equation}

For the vector contribution \(u^{(v)}=cv\), we first expand
the elasticity operator. By direct differentiation of the stress
for a smooth vector field \(u\), we obtain
\begin{equation}\label{eq:L-general-expanded}
 L_{\lambda,\mu}u
 =\mu\Delta u+\mathsf K\nabla\Div u+(\Div u)\nabla\lambda
 +\bigl(\nabla u+(\nabla u)^T\bigr)\nabla\mu.
\end{equation}
Substitute \(u=cv\) into \eqref{eq:L-general-expanded}, use
\(
 \mu=c^{-2}
\)
and
\(
 \nabla\mu=-2c^{-3}\nabla c
\), and subtract the gradient
\[
 \nabla\left(c\mathsf K\Div v+(\lambda-\mu)\nabla c\cdot v\right),
\]
after which the mixed first-derivative terms cancel by
\(2\mu\nabla c+c\nabla\mu=0\) and
\(\mathsf K\nabla c+c\nabla\mu=(\lambda-\mu)\nabla c\).
The coefficient of \(\Div v\) that remains is
\(c(\nabla\lambda-\nabla \mathsf K)=-c\nabla\mu=-c^{-1}m\).
Thus
\begin{align}
 L_{\lambda,\mu}(cv)
 ={}&\nabla\left(c\mathsf K\Div v+(\lambda-\mu)\nabla c\cdot v\right)\label{eq:L-vector-part}\\*
 &\quad+c^{-1}\bigl(\Delta v-m\Div v+V_\mu v\bigr).\notag
\end{align}
The zeroth-order coefficient in \eqref{eq:L-vector-part}
is obtained from the identity
\[
 -\bigl(2\nabla^2(c^{-1})+\Delta(c^{-1})\Id\bigr)
 =c^{-1}V_\mu,
\]
which follows from \eqref{eq:Vmu} and
\eqref{eq:proof-inverse-c-derivatives}:
\begin{equation}\label{eq:proof-inverse-c-derivatives}
 \nabla^2(c^{-1})
 =c^{-1}\left(\frac12\nabla^2\ell+\frac14m\otimes m\right),
 \qquad
 \Delta(c^{-1})
 =c^{-1}\left(\frac12\Delta\ell+\frac14|m|^2\right).
\end{equation}
Finally, using \eqref{eq:sum-relations} and
\(
 \nabla c=-\frac12cm
\), we obtain
\begin{align}
 c\mathsf K\Div v
 &=\frac{\kappa\sqrt\mu}{1-\kappa}\Div v,\label{eq:vector-gradient-convert1}\\
 (\lambda-\mu)\nabla c\cdot v
 &=-\frac{3\kappa-2}{2(1-\kappa)}\sqrt\mu\,m\cdot v.\label{eq:vector-gradient-convert2}
\end{align}
By \eqref{eq:vector-gradient-convert1}--\eqref{eq:vector-gradient-convert2} and \eqref{eq:dmu}--\eqref{eq:emu}, the gradient in \eqref{eq:L-vector-part} is
\begin{equation}\label{eq:vector-gradient-part}
 \nabla\left(
 \frac{d_{\mu,\kappa}\Div v-e_{\mu,\kappa}m\cdot v}{1-\kappa}
 \right).
\end{equation}

Combine \eqref{eq:proof-scalar-nongradient-part} and
\eqref{eq:scalar-gradient-part} with \eqref{eq:L-vector-part} and
\eqref{eq:vector-gradient-part}, using \eqref{eq:M-explicit}, to obtain
\[
 L_{\mu,\kappa}T_\mu(v,f)
 =\sqrt\mu\,(M_{\mu,\kappa}(v,f))_{1:3}
 +\nabla\left(\frac{(M_{\mu,\kappa}(v,f))_4}{1-\kappa}\right),
\]
which is precisely \eqref{eq:factorization} by \eqref{eq:Smu}.  We collect the first- and zeroth-order terms in \eqref{eq:M-explicit} to obtain \eqref{eq:Ak} and \eqref{eq:Bmatrix}.  \qedhere
\end{proof}

The principal part of \(M_{\mu,\kappa}\) is
\(\Delta\Id_4\); the dependence on the Lam\'e coefficients is confined
to its lower-order terms. We can therefore construct the reduced CGO
fields before applying the physical transformation \(T_\mu\).

\begin{remark}
The factorization maps every smooth solution of
\(M_{\mu,\kappa}(v,f)=0\) to a physical elastic solution
\(u=T_\mu(v,f)\).
The rigidity argument below uses only the first-order
matrices \(A_k\). The zeroth-order matrix \(B\) is essential
for constructing exact CGO solutions.
\end{remark}

\begin{remark}
\label{rem:factorization-sign-audit}
With \(a=\mu^{-1}=c^2\), the following coefficient identities
provide a direct sign check:

\begin{equation}\label{eq:factorization-sign-checks}
\begin{aligned}
 \nabla(\mu^{-1})&=-\mu^{-1}m,&
 \Delta(\mu^{-1})/\mu^{-1}&=|m|^2-\Delta\ell,&
 \nabla c&=-\tfrac12cm,\\
 a(\lambda+2\mu)&=(1-\kappa)^{-1},&
 c(\lambda+\mu)&=\frac{\kappa\sqrt\mu}{1-\kappa},\\
 (\lambda-\mu)\nabla c
 &=-\frac{3\kappa-2}{2(1-\kappa)}\sqrt\mu\,m.
\end{aligned}
\end{equation}

The six coefficient identities in \eqref{eq:factorization-sign-checks} fix the signs of $q_\mu$, the lower-left entry of $B$, and
the first-order lower-left block of $A_k$.  In the constant-coefficient case
\(m=0\), one has $F_\mu=V_\mu=q_\mu=0$ and
\[
 M_{\mu,\kappa}(v,f)=
 \binom{\Delta v}{\Delta f+\kappa\sqrt\mu\,\operatorname{Div}v},
\]
which is the stated constant-coefficient specialization.
\end{remark}

\subsection{The physical adjoint lift}

The boundary identity \eqref{eq:primal-adjoint} uses adjoint
fields obtained from physical elastic solutions by \(S_{\mu,\kappa}^T\).
The physical adjoint lift $K=S_{\mu,\kappa}^TT_\mu$ and its
conjugation by a null phase are computed below. The leading term of the conjugated lift determines the normalization by \(h\).
On its kernel, an unscaled lift can have a nonzero limit.

For compactly supported smooth test fields, integration
by parts gives
\[
 \int_\Omega u^T\nabla\left(\frac{q}{1-\kappa}\right)\,dx
 =-\int_\Omega\frac{q\,\Div u}{1-\kappa}\,dx.
\]
Thus the formal bilinear transpose of \eqref{eq:Smu} is
\begin{equation}\label{eq:Stranspose}
 S_{\mu,\kappa}^Tu
 =
 \begin{pmatrix}
 \sqrt\mu\,u\\[1mm]
 -\dfrac{\Div u}{1-\kappa}
 \end{pmatrix}.
\end{equation}
Define
\[
 K_{\mu,\kappa}=S_{\mu,\kappa}^TT_\mu.
\]
For a fixed coefficient pair, we abbreviate
\(L_{\mu,\kappa},M_{\mu,\kappa},T_\mu,S_{\mu,\kappa}\),
and \(K_{\mu,\kappa}\) by \(L,M,T,S,K\), respectively.
The operator \(K\) is a \(4\times4\) differential operator
of order two.

The formal symmetry \(L^T=L\) and
\eqref{eq:factorization} give \(M^TS^T=T^TL\).
Consequently, a smooth physical solution \(Lu=0\) has
an adjoint lift \(y=S^Tu\) satisfying \(M^Ty=0\).
In particular, if \(M\boldsymbol z=0\), then \(u=T\boldsymbol z\) solves \(Lu=0\)
and its canonical physical adjoint lift is \(y=K\boldsymbol z\).
A direct calculation gives
\begin{equation}\label{eq:Kexact}
 K_{\mu,\kappa}(v,f)
 =
 \begin{pmatrix}
 v+c\nabla f+cfm
 \\
 -\dfrac{1}{1-\kappa}
 \left[
 c\Div v-\frac12c\,m\cdot v+\mu^{-1}\Delta f+\mu^{-1}q_\mu f
 \right]
 \end{pmatrix}.
\end{equation}

Write \(K_0=K\), the \(h\)-independent term in
\eqref{eq:Kconjugated} and \eqref{eq:Kconjugated-minus}.
Take \(h>0\) and \(\varphi(x)=\theta\cdot x\),
where \(\theta\in\C^3\setminus\{0\}\) and
\(\theta\cdot\theta=0\). Then
\begin{equation}\label{eq:Kconjugated}
 e^{-\varphi/h}Ke^{\varphi/h}=h^{-1}J_\theta+K_0,
\end{equation}
where
\begin{equation}\label{eq:Jtheta}
 J_\theta(v,f)
 =
 \begin{pmatrix}
 cf\theta\\[1mm]
 -\dfrac{c\,\theta\cdot v+2\mu^{-1}D_\theta f}{1-\kappa}
 \end{pmatrix},
 \qquad D_\theta=\theta\cdot\nabla.
\end{equation}
The operator \(J_\theta\) has differential order one,
while \(K_0\) has order two. The only possible term of
order \(h^{-2}\) comes from the scalar Laplacian and is
proportional to \(\theta\cdot\theta\), so it vanishes.
Reversing the phase gives the companion identity
\begin{equation}\label{eq:Kconjugated-minus}
 e^{\varphi/h}Ke^{-\varphi/h}=-h^{-1}J_\theta+K_0.
\end{equation}
For an exact negative-phase reduced CGO field
\(\boldsymbol z_h^-=e^{-\varphi/h}(\boldsymbol z_0+h\boldsymbol z_1+r_h)\), with
\(M\boldsymbol z_h^-=0\), fixed amplitudes smooth near \(\overline\Omega_{\mathrm{CGO}}\), and
\(\|r_h\|_{H^5(\Omega_{\mathrm{CGO}})}=O(h^2)\), the normalized physical adjoint lift is
\(y_h^F=-hK\boldsymbol z_h^-\), with principal adjoint amplitude \(J_\theta \boldsymbol z_0\).
If \(J_\theta \boldsymbol z_0=0\), the unscaled physical adjoint lift is \(y_h^S=K\boldsymbol z_h^-\),
with additional adjoint amplitude \(K_0\boldsymbol z_0-J_\theta \boldsymbol z_1\).
The labels \(F\) and \(S\) distinguish the two principal adjoint
polarizations spanning \(\mathcal F_\theta=\operatorname{im}J_\theta\)
(see \eqref{eq:fast-space}) from the supplementary polarization
obtained when \(J_\theta\boldsymbol z_0^S=0\). They refer to the
asymptotic construction, not to elastic-wave propagation speeds.
Lemmas~\ref{lem:fast-physical-lift} and
\ref{lem:slow-physical-lift} establish the corresponding phase-removed
\(O(h)\) errors in \(H^3(\Omega_{\mathrm{CGO}})\).
\subsection{Conjugation of the constant exterior reduction}
The constant exterior reduction still contains a first-order coupling.
Removing it gives the compactly supported lower-order coefficients required
by the unweighted Sobolev estimates for the localized right inverse
in \eqref{eq:system-cgo-right-inverse}.
Assume that the smooth coefficient pair on \(\R^3\) equals
a constant pair \((\mu_*,\kappa_*)\) outside a large ball.
When comparing two media, we use their common exterior pair,
so the same background operator and conjugating matrix are
used for both. The background reduced operator is
\[
 M_*=
 \begin{pmatrix}
 \Delta\Id_3&0\\
 d_*\Div&\Delta
 \end{pmatrix},
 \qquad d_*=\kappa_*\sqrt{\mu_*}.
\]
Set
\begin{equation}\label{eq:Rbg}
 \mathcal R(x)=
 \begin{pmatrix}
 \Id_3&0\\
 -\dfrac{d_*}{2}x^T&1
 \end{pmatrix}.
\end{equation}
This matrix is invertible at every \(x\), with
\[
 \det\mathcal R=1,
 \qquad
 \mathcal R^{-1}(x)=
 \begin{pmatrix}
 \Id_3&0\\
 \dfrac{d_*}{2}x^T&1
 \end{pmatrix}.
\]
Both matrices and their derivatives are bounded on every
fixed ball. Multiplication by \(\mathcal R^{\pm1}\)
therefore preserves each fixed Sobolev order there.
Since
\[
 \Delta(x\cdot v)=x\cdot\Delta v+2\Div v,
\]
we have \(M_*\mathcal R(v,f)=\mathcal R(\Delta v,\Delta f)\)
for every smooth \((v,f)\). Hence
\begin{equation}\label{eq:bg-conjugation}
 \mathcal R^{-1}M_*\mathcal R=\Delta\Id_4.
\end{equation}
Consequently
\[
 \widetilde M=\mathcal R^{-1}M\mathcal R
 =\Delta\Id_4+\mathcal R^{-1}(M-M_*)\mathcal R.
\]
The operator \(M-M_*\) has order at most one and smooth
coefficients supported in the background ball. Composition
with \(\mathcal R\) and \(\mathcal R^{-1}\) preserves this
support. Thus \(\widetilde M\) has principal part
\(\Delta\Id_4\) and smooth compactly supported first- and
zeroth-order coefficients. With
\[
 \widetilde T=T\mathcal R,
 \qquad
 \widetilde S=S\mathcal R,
\]
where the products denote composition of differential
operators with multiplication by \(\mathcal R\), the
original factorization gives
\begin{equation}\label{eq:factorization-conjugated}
 L\widetilde T=\widetilde S\widetilde M.
\end{equation}

The compactly supported coefficients of \(\widetilde M\)
permit the system CGO construction. We also need to compare its
transport with the original reduction: transforming the primal and
dual frames together preserves both the quotient connection and the
coefficient-difference pairing.

\begin{proposition}\label{prop:bg-covariance}
Let \(\theta\in\C^3\setminus\{0\}\) satisfy
\(\theta\cdot\theta=0\), and let
\[
 \mathcal T_\theta=2D_\theta+A_\theta,
 \qquad
 \widetilde{\mathcal T}_\theta=2D_\theta+\widetilde A_\theta
\]
be the leading transport operators associated with \(M\) and \(\widetilde M=\mathcal R^{-1}M\mathcal R\).  Then
\begin{equation}\label{eq:transport-covariance}
 \widetilde{\mathcal T}_\theta
 =\mathcal R^{-1}\mathcal T_\theta\mathcal R.
\end{equation}
The characteristic line and the primal quotient are
transported by \(\mathcal R^{-1}\), and the dual quotient
by \(\mathcal R^T\). In particular, the original generator
\(e_\theta=(\theta,0)^T\) is replaced by
\[
 \widetilde e_\theta=\mathcal R^{-1}e_\theta
 =\binom{\theta}{\dfrac{d_*}{2}x\cdot\theta},
\]
whose line is preserved by \(\widetilde{\mathcal T}_\theta\).
If \(X\) and \(Y\) are ambient representatives of primal
and dual quotient frames for the original reduction, their
conjugated representatives may be chosen as
\[
 \widetilde X=\mathcal R^{-1}X,
 \qquad
 \widetilde Y=\mathcal R^TY.
\]
In these transformed frames, with their spatial derivatives
included, the coordinate quotient connection matrices are
unchanged. For two media conjugated by the same \(\mathcal R\),
\begin{equation}\label{eq:moment-covariance}
 \widetilde Y_2^T
 (\widetilde A_{1,\theta}-\widetilde A_{2,\theta})
 \widetilde X_1
 =
 Y_2^T(A_{1,\theta}-A_{2,\theta})X_1.
\end{equation}
\end{proposition}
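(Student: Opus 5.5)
The plan is to compute the conjugated leading transport operator directly from the definition. For a Laplace-type operator $P=\Delta\Id_4+\sum_k A_k\partial_k+B$, conjugation by the null phase gives
\[
e^{-\varphi/h}Pe^{\varphi/h}=P+h^{-1}\mathcal T_\theta,
\qquad
\mathcal T_\theta=2D_\theta+A_\theta,
\qquad
A_\theta=\sum_k\theta_kA_k .
\]
The $h^{-2}$ term is $\theta\cdot\theta=0$. I will take this as the definition of the leading transport operator for both $M$ and $\widetilde M$.

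Since $\mathcal R$ is a multiplication operator, it commutes with multiplication by $e^{\pm\varphi/h}$. Hence
\[
e^{-\varphi/h}\widetilde Me^{\varphi/h}
=\mathcal R^{-1}\bigl(e^{-\varphi/h}Me^{\varphi/h}\bigr)\mathcal R
=\widetilde M+h^{-1}\mathcal R^{-1}\mathcal T_\theta\mathcal R .
\]
Comparing the coefficients of $h^{-1}$ yields \eqref{eq:transport-covariance}. Expanding gives the explicit formula
\[
\widetilde A_\theta=\mathcal R^{-1}A_\theta\mathcal R+2\mathcal R^{-1}(D_\theta\mathcal R),
\qquad
D_\theta\mathcal R=
\begin{pmatrix}0&0\\-\tfrac{d_*}{2}\theta^T&0\end{pmatrix}.
\]
This formula is used in the frame statements.

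\textbf{Characteristic line and quotients.} The line $\C e_\theta$ is the line preserved by $\mathcal T_\theta$: one has $\mathcal T_\theta(\phi e_\theta)\in \C e_\theta$ for scalar $\phi$. For a fixed $\mathcal R$-independent scalar $\phi$,
\[
\widetilde{\mathcal T}_\theta(\phi\,\mathcal R^{-1}e_\theta)=\mathcal R^{-1}\mathcal T_\theta(\phi e_\theta)\in\C\,\mathcal R^{-1}e_\theta .
\]
So $\widetilde e_\theta=\mathcal R^{-1}e_\theta$ generates the preserved line. Its explicit form comes from the displayed $\mathcal R^{-1}$.

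The primal quotient $\C^4/\C e_\theta$ is mapped isomorphically by $\mathcal R^{-1}$. The annihilator transforms by $\mathcal R^T$, because
\[
(\mathcal R^TY)^T\mathcal R^{-1}e_\theta=Y^Te_\theta=0 .
\]
Pairings are preserved as well: $(\mathcal R^TY)^T(\mathcal R^{-1}X)=Y^TX$.

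\textbf{Connection matrices.} Whatever the precise definition in \eqref{eq:Gamma}, the primal quotient connection matrix is determined by
\[
\mathcal T_\theta X\equiv 2X\mathcal A\pmod{e_\theta},
\]
with spatial derivatives of $X$ included. Writing $\widetilde X=\mathcal R^{-1}X$, we get
\[
\widetilde{\mathcal T}_\theta\widetilde X=\mathcal R^{-1}\mathcal T_\theta X\equiv 2\widetilde X\mathcal A\pmod{\widetilde e_\theta},
\]
so the primal matrix is unchanged. For the dual side, $\widetilde{\mathcal T}_\theta^T=\mathcal R^T\mathcal T_\theta^T\mathcal R^{-T}$, and the same argument applied to $\widetilde Y=\mathcal R^TY$ shows the dual matrix is unchanged. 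This is consistent with $\Gamma=-\mathcal A^T$.

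\textbf{Moment identity.} For \eqref{eq:moment-covariance}, use the explicit formula for $\widetilde A_\theta$. The inhomogeneous term $2\mathcal R^{-1}D_\theta\mathcal R$ depends only on $d_*$ and $\theta$. Because both media share the exterior pair $(\mu_*,\kappa_*)$ and the same $\mathcal R$, this term cancels in the difference:
\[
\widetilde A_{1,\theta}-\widetilde A_{2,\theta}=\mathcal R^{-1}(A_{1,\theta}-A_{2,\theta})\mathcal R .
\]
Sandwiching between $\widetilde Y_2^T=Y_2^T\mathcal R$ and $\widetilde X_1=\mathcal R^{-1}X_1$ gives \eqref{eq:moment-covariance}.

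\textbf{Main obstacle.} The delicate step is the frame statement. One must check that \enquote{with spatial derivatives included} matches the paper's definition of the coordinate connection matrix, and that nonconstant $\mathcal R$ introduces no extra term. My argument handles this by working with the operator identity $\widetilde{\mathcal T}_\theta\mathcal R^{-1}=\mathcal R^{-1}\mathcal T_\theta$ on frames, rather than with the matrix $\widetilde A_\theta$ alone, so derivative terms are absorbed automatically. I would still verify the block computation of $D_\theta\mathcal R$ explicitly.
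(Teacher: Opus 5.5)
Your proposal is correct and essentially reproduces the paper's proof. The one difference is that you obtain $\widetilde{\mathcal T}_\theta=\mathcal R^{-1}\mathcal T_\theta\mathcal R$ by comparing $h^{-1}$ coefficients after null-phase conjugation, whereas the paper reads off $\widetilde A_k=\mathcal R^{-1}A_k\mathcal R+2\mathcal R^{-1}\partial_k\mathcal R$ directly; after that, the transposed dual identity, the frame argument and the cancellation of the common term $2\mathcal R^{-1}D_\theta\mathcal R$ in the moment difference are the same as in the paper. One harmless slip: with the paper's convention the quotient transport is $2D_\theta+\mathcal A_\theta$, so the defining relation should read $\mathcal T_\theta X\equiv X\mathcal A\pmod{e_\theta}$ without the factor $2$, and this does not affect the invariance argument.
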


\begin{proof}
The coefficient of the first-order derivative in \(\mathcal R^{-1}M\mathcal R\) is
\begin{equation}\label{eq:proof-conjugated-first-order-coefficients}
 \widetilde A_k
 =\mathcal R^{-1}A_k\mathcal R
 +2\mathcal R^{-1}(\partial_k\mathcal R).
\end{equation}
By \eqref{eq:proof-conjugated-first-order-coefficients},
for every smooth four-component amplitude \(w\),
\[
 (2D_\theta+\widetilde A_\theta)w
 =\mathcal R^{-1}(2D_\theta+A_\theta)(\mathcal Rw),
\]
which proves \eqref{eq:transport-covariance}. The companion
identity for a dual amplitude \(y\) is
\begin{equation}\label{eq:proof-dual-background-covariance}
 (2D_\theta-\widetilde A_\theta^T)(\mathcal R^Ty)
 =\mathcal R^T(2D_\theta-A_\theta^T)y.
\end{equation}
Equations~\eqref{eq:transport-covariance} and
\eqref{eq:proof-dual-background-covariance} transport the original characteristic line
and the paired quotient equations to the conjugated system.
The paired transformations in Proposition~\ref{prop:bg-covariance} also preserve the bilinear pairing, since
\((\mathcal R^Ty)^T(\mathcal R^{-1}w)=y^Tw\).

To verify the coordinate connection statement explicitly,
we choose \(\mathsf U_\theta\) as a dual quotient basis and
\(\mathsf V_\theta\) as a paired primal lift, both independent
of \(x\), with \(\mathsf U_\theta^T\mathsf V_\theta=I_3\).
Set \(\widetilde{\mathsf U}_\theta=\mathcal R^T\mathsf U_\theta\)
and \(\widetilde{\mathsf V}_\theta=\mathcal R^{-1}\mathsf V_\theta\).
The induced connection in the paired frames
\(\widetilde{\mathsf U}_\theta\) and
\(\widetilde{\mathsf V}_\theta\), which now depend on \(x\), is
\[
 \widetilde{\mathsf U}_\theta^T
 \bigl(2D_\theta\widetilde{\mathsf V}_\theta
       +\widetilde A_\theta\widetilde{\mathsf V}_\theta\bigr)
 =\mathsf U_\theta^TA_\theta\mathsf V_\theta.
\]
The frame derivative on the left is essential;
\eqref{eq:transport-covariance} gives the equality including this term.

Finally, we use the same \(\mathcal R\) for both media to cancel
the derivative terms and obtain
\begin{equation}\label{eq:proof-background-coefficient-difference}
 \widetilde A_{1,\theta}-\widetilde A_{2,\theta}
 =\mathcal R^{-1}(A_{1,\theta}-A_{2,\theta})\mathcal R.
\end{equation}
Use \eqref{eq:proof-background-coefficient-difference} with
\(\widetilde X_1=\mathcal R^{-1}X_1\) and
\(\widetilde Y_2=\mathcal R^TY_2\) to prove
\eqref{eq:moment-covariance}.
\end{proof}

Proposition~\ref{prop:bg-covariance} lets us use
\(\widetilde M\) for the analytic construction while retaining the
original quotient coordinates in the coefficient comparison.
In particular, \eqref{eq:moment-covariance} identifies the leading
moments after the same background conjugation is applied to both media.

\subsection{The dual quotient connection}

The null-phase transport preserves a one-dimensional
characteristic subspace of \(\C^4\). We pass to its rank-three quotient
and express the dual transport in a spinor frame. This gives the
connection matrix \(\Gamma_{\mu,\kappa}\) in \eqref{eq:Gamma}.

Fix \(\theta\in\C^3\setminus\{0\}\) with
\(\theta\cdot\theta=0\). Contracting the first-order
coefficients in \eqref{eq:Ak}, we put
\(A_\theta=\sum_{k=1}^3\theta_kA_k\). Thus
\begin{equation}\label{eq:Atheta}
 A_\theta=
 \begin{pmatrix}
 -m\theta^T&F_\mu\theta\\
 \kappa\sqrt\mu\,\theta^T&0
 \end{pmatrix}.
\end{equation}
The vector
\[
 e_\theta=\binom\theta0
\]
is independent of \(x\). The null condition gives
\(A_\theta e_\theta=0\), and hence
\((2D_\theta+A_\theta)e_\theta=0\).
Its line is therefore preserved by the leading transport.
The differences \(A_{1,\theta}-A_{2,\theta}\) also annihilate
this line. These properties allow the transport and the
coefficient-difference moments to descend to the physical
quotient \(\C^4/\C e_\theta\).

For homogeneous spinor coordinates
\((s,t)\in\C^2\setminus\{(0,0)\}\), the map
\([s:t]\mapsto[\theta(s,t)]\) parametrizes the projective
complex null cone, with representative
\begin{equation}\label{eq:theta-spinor}
 \theta(s,t)=
 \bigl(s^2-t^2,\;i(s^2+t^2),\;2st\bigr)^T.
\end{equation}
Direct substitution gives \(\theta\cdot\theta=0\), and
\(\theta_1-\mathrm i\theta_2=2s^2\),
\(\theta_1+\mathrm i\theta_2=-2t^2\) show that
\(\theta\ne0\). Rescaling \((s,t)\) by a nonzero complex
number rescales \(\theta\) by its square, so its projective
direction depends only on \([s:t]\).
The parametrization covers all null directions: when
\(\theta_1-\mathrm i\theta_2\ne0\), the spinor coordinate is
\(t/s=\theta_3/(\theta_1-\mathrm i\theta_2)\); the remaining
projective direction is represented by \(s=0\).
Here \(s,t\) denote complex spinor coordinates; the real
transverse coordinate used later is a spatial leaf parameter.
Set
\begin{equation}\label{eq:qspinor}
 q_1=\frac12(-t,it,s)^T,
 \qquad
 q_2=-\frac12(s,is,t)^T,
 \qquad
 Q=(q_1,q_2),
 \qquad
 n=\binom{-t}{s}.
\end{equation}
Direct calculation gives \(\theta^TQ=0\).
Moreover, \(Q(a,b)^T=0\) implies
\(ta=sb=sa-tb=0\), which forces \(a=b=0\) for every
nonzero spinor, including \(s=0\) or \(t=0\).
Thus \(Q\) has rank two and its columns form a basis of
\(\theta^\perp=\{p\in\C^3:\theta^Tp=0\}\).
The same calculation gives
\begin{equation}\label{eq:Qn}
 Qn=-\frac12\theta.
\end{equation}
For \(a\in\C^3\), we define
\begin{equation}\label{eq:rmathfrak}
 \mathfrak r_a=-2a^TQ.
\end{equation}
Then \(\mathfrak r_a n=a\cdot\theta\).

With respect to the complex-bilinear pairing, the dual
physical characteristic space is the annihilator of
\(\C e_\theta\):
\[
 E_\theta^*=(\C^4/\C e_\theta)^*
 =\{(p,\eta)\in\C^3\times\C:\theta^Tp=0\}.
\]
It has dimension three. Since \(A_\theta e_\theta=0\),
the matrix \(A_\theta^T\) preserves this annihilator,
so the dual leading transport restricts to \(E_\theta^*\).
Every \(p\in\theta^\perp\) has a unique representation
\(p=Q\xi\), with \(\xi\in\C^2\).

The spinor representative is held fixed when differentiating in \(x\),
so \(D_\theta Q=0\). For \(y=(p,\eta)^T\), the dual
transport equation \(2D_\theta y-A_\theta^Ty=0\) reads
\begin{equation}\label{eq:proof-ambient-dual-transport}
 \begin{aligned}
 2D_\theta p+\theta(m^Tp-d_{\mu,\kappa}\eta)&=0,\\
 2D_\theta\eta-\theta^TF_\mu p&=0.
 \end{aligned}
\end{equation}
Substituting \(p=Q\xi\) into \eqref{eq:proof-ambient-dual-transport},
using \(\theta=-2Qn\) from \eqref{eq:Qn} and
\(\mathfrak r_m=-2m^TQ\) from \eqref{eq:rmathfrak}, and cancelling the injective
matrix \(Q\) in the first equation gives
\begin{equation}\label{eq:dual-quotient-transport}
 2D_\theta\binom\xi\eta+\Gamma_{\mu,\kappa}(\theta)\binom\xi\eta=0,
\end{equation}
where
\begin{equation}\label{eq:Gamma}
 \Gamma_{\mu,\kappa}(\theta)=
 \begin{pmatrix}
 n\mathfrak r_m&2\kappa\sqrt\mu\,n\\[1mm]
 -\theta^TF_\mu Q&0
 \end{pmatrix}.
\end{equation}
The matrix \(\Gamma_{\mu,\kappa}(\theta)\) in
\eqref{eq:Gamma} represents the rank-three dual physical quotient
connection in the chosen spinor frame. Its dependence on \(\kappa\)
is in the upper-right block; for fixed direction, the remaining
nonzero blocks depend only on \(\mu\). The frame transition and projective covariance laws
are given in \eqref{eq:spinor-transition} and
\eqref{eq:connection-chart-scaling}. These laws will allow the local
connection matrices to be compared over all projective null directions.
\subsection{The primal quotient and local transport lifts}\label{subsec:quotient and transport}
We choose primal coordinates paired with the dual spinor
frame. This identifies the connection entering the bilinear volume
identity \eqref{eq:primal-adjoint} and gives a way to lift quotient
transport amplitudes to the
four-component transport equation. The lift may vary along the
characteristic line without changing the quotient coordinates.
The primal characteristic quotient is
\[
 E_\theta=\C^4/\C e_\theta,
 \qquad e_\theta=\binom\theta0.
\]
In a fixed spinor chart, we put
\[
 \mathsf U_\theta=
 \begin{pmatrix}Q&0\\0&1\end{pmatrix}:\C^3\longrightarrow(\C^4)^*.
\]
The matrix \(\mathsf U_\theta\) has size \(4\times3\)
and rank three. Its image is \(E_\theta^*\), and
\(\ker\mathsf U_\theta^T=\C e_\theta\).
Thus a class \([\boldsymbol z]\in E_\theta\) has well-defined coordinates
\(u=\mathsf U_\theta^T\boldsymbol z\in\C^3\).
For dual coordinates \(v\in\C^3\), the ambient representative
\(y=\mathsf U_\theta v\) satisfies
\[
 y^T\boldsymbol z=v^Tu.
\]
This is the pairing of the primal quotient and its dual.
On the overlap of the standard spinor charts
\[
U_0=\{s\neq 0\},
\qquad
U_\infty=\{t\neq 0\},
\]
we set
\[
\zeta=\frac{t}{s},
\qquad
D(\zeta)=
\begin{pmatrix}
\zeta I_2 & 0\\
0 & 1
\end{pmatrix}.
\]
Use the spinor representatives \((1,\zeta)\) and
\((\zeta^{-1},1)\) in these charts. Since
\(Q(1,\zeta)=\zeta Q(\zeta^{-1},1)\), the corresponding
dual quotient matrices, denoted by \(\mathsf U_0\) and
\(\mathsf U_\infty\), satisfy
\begin{equation}\label{eq:proof-dual-frame-transition}
 \mathsf U_0=\mathsf U_\infty D(\zeta).
\end{equation}
Here \(U_0,U_\infty\) denote open charts, whereas
\(\mathsf U_0,\mathsf U_\infty\) denote their frame matrices.

We choose a local holomorphic right inverse
\(\mathsf V_\theta:\C^3\to\C^4\) of size \(4\times3\), with
\(\mathsf U_\theta^T\mathsf V_\theta=I_3\), depending only
on the spinor coordinate. Its existence is proved in
Lemma~\ref{lem:holomorphic-lifts} below.
In particular, both \(\mathsf U_\theta\) and
\(\mathsf V_\theta\) are independent of \(x\) and of the
coefficients of the medium. The induced primal connection is
\begin{equation}\label{eq:primal-quotient-connection}
 \mathcal A_\theta=\mathsf U_\theta^TA_\theta\mathsf V_\theta.
\end{equation}
The matrix \(\mathcal A_\theta\) is independent of the replacement
\(\mathsf V_\theta\mapsto\mathsf V_\theta+e_\theta\mathsf L\),
where \(\mathsf L\) is a \(1\times3\) row, because
\(A_\theta e_\theta=0\). For every smooth ambient amplitude
\(\boldsymbol z\), the quotient projection intertwines the transports:
\begin{equation}\label{eq:proof-quotient-transport-projection}
 \mathsf U_\theta^T(2D_\theta+A_\theta)\boldsymbol z
 =(2D_\theta+\mathcal A_\theta)(\mathsf U_\theta^T\boldsymbol z).
\end{equation}
Indeed, \(\boldsymbol z-\mathsf V_\theta\mathsf U_\theta^T\boldsymbol z\) lies in
\(\C e_\theta\), and \(D_\theta\mathsf U_\theta=0\).

Equations~\eqref{eq:dual-quotient-transport} and \eqref{eq:Gamma} give
\(-A_\theta^T\mathsf U_\theta=\mathsf U_\theta\Gamma_\theta\).
Multiplication by \(\mathsf V_\theta^T\) therefore yields
\[
 \Gamma_\theta=-\mathsf V_\theta^TA_\theta^T\mathsf U_\theta
 =-\mathcal A_\theta^T.
\]

The coordinate description requires right inverses that
vary holomorphically with the spinor coordinate. Their differences on
a chart overlap are confined to the characteristic line, as follows
from the kernel of \(\mathsf U_\theta^T\).

\begin{lemma}\label{lem:holomorphic-lifts}
On every sufficiently small spinor-coordinate neighborhood,
the right inverse \(\mathsf V_\theta\) may be chosen
holomorphically in that coordinate and independently of
\(x\) and the medium. On a chart overlap, any such local
choices satisfy
\[
 \mathsf V_0=\mathsf V_\infty D^{-T}
             +e_{\theta_0}\mathsf L
\]
locally for a holomorphic \(1\times3\) row \(\mathsf L\).
\end{lemma}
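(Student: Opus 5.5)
The proof has two parts: an explicit local construction of a holomorphic right inverse, and a comparison on chart overlaps using \(\ker\mathsf U_\theta^T=\C e_\theta\).

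\textbf{Existence.} Fix a spinor chart and a base point \(\zeta_0\). The matrix \(\mathsf U_\theta\) has entries that are polynomial, hence holomorphic, in the spinor coordinate. It has rank three at every point, since \(Q\) has rank two and the last block is \(1\). Hence some \(3\times3\) minor of \(\mathsf U_\theta^T\) does not vanish at \(\zeta_0\). Concretely, choose two of the three rows of \(Q\) whose \(2\times2\) determinant is nonzero at \(\zeta_0\). This is possible because \(Q\) has rank two; one can even write the minors explicitly from \eqref{eq:qspinor}, for instance the first and third rows give \(\det=\tfrac14(t^2+s^2)\)-type expressions, and at least one such minor is nonzero.

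Let \(P\) be the \(4\times3\) coordinate-selection matrix picking out these two components together with the fourth component. By continuity, \(\mathsf U_\theta^TP\) is invertible on a small disc around \(\zeta_0\). Set
\[
 \mathsf V_\theta=P(\mathsf U_\theta^TP)^{-1}.
\]
By Cramer's rule its entries are holomorphic on the disc, and it satisfies \(\mathsf U_\theta^T\mathsf V_\theta=I_3\). It involves neither \(x\) nor \(\mu,\kappa\), because \(\mathsf U_\theta\) depends only on the spinor coordinate.

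\textbf{Overlap relation.} On \(U_0\cap U_\infty\), transposing \eqref{eq:proof-dual-frame-transition} gives \(\mathsf U_0^T=D\,\mathsf U_\infty^T\), using \(D^T=D\). Then
\[
 \mathsf U_0^T\bigl(\mathsf V_\infty D^{-1}\bigr)=D\,\mathsf U_\infty^T\mathsf V_\infty D^{-1}=I_3 .
\]
Since \(D\) is diagonal, \(D^{-1}=D^{-T}\). So \(\mathsf V_\infty D^{-T}\) is another holomorphic right inverse of \(\mathsf U_0^T\). Subtracting, every column of
\[
 W=\mathsf V_0-\mathsf V_\infty D^{-T}
\]
lies in \(\ker\mathsf U_0^T=\C e_{\theta_0}\), where \(\theta_0\) is the chart-\(0\) representative \(\theta(1,\zeta)\). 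Hence \(W=e_{\theta_0}\mathsf L\) for some \(1\times3\) row \(\mathsf L\).

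\textbf{Holomorphy of \(\mathsf L\).} This is the only delicate point. The vector \(e_{\theta_0}\) is holomorphic and nowhere zero on the chart, since \(\theta\neq0\). Locally, pick a component \(k\) with \((\theta_0)_k\neq0\); for example \(\theta_1-\mathrm i\theta_2=2s^2=2\) shows that some coordinate of \(\theta_0\) is nonzero. Then
\[
 \mathsf L=\frac{W_{k\cdot}}{(\theta_0)_k},
\]
which is a quotient of holomorphic functions with nonvanishing denominator, hence holomorphic. This holds on each small neighbourhood, which is why the statement says "locally". The argument is independent of \(x\) and of the medium for the same reason as before.

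I expect no real obstacle. The only care needed is to use consistently the chart representative \(e_{\theta_0}\) rather than \(e_{\theta_\infty}=\zeta^{-2}e_{\theta_0}\), which spans the same line, and to check the transpose and inverse conventions on \(D\).
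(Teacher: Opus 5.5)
Your proposal is correct and follows essentially the same route as the paper: you invert a nonvanishing \(3\times3\) minor of \(\mathsf U_\theta^T\) by Cramer's rule with the remaining ambient coordinate set to zero, and on overlaps you use \(\mathsf U_0^T=D^T\mathsf U_\infty^T\) to see that \(\mathsf V_\infty D^{-T}\) is another right inverse, so the difference lies in \(\ker\mathsf U_0^T=\C e_{\theta_0}\). Your explicit \(2\times2\) minors of \(Q\) and your division by a locally nonvanishing component of \(\theta_0\) simply spell out what the paper compresses into ``nonvanishing holomorphic generator.''
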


\begin{proof}
The entries of \(\mathsf U_\theta^T\) are holomorphic
and the matrix has full row rank three. Locally, we choose a
nonvanishing \(3\times3\) minor, invert it, and set the
remaining ambient coordinate to zero. We thereby construct a
holomorphic right inverse with no dependence on \(x\) or
the coefficients.
On an overlap, \eqref{eq:proof-dual-frame-transition} gives
\[
 \mathsf U_0^T(\mathsf V_\infty D^{-T})
 =D^T\mathsf U_\infty^T\mathsf V_\infty D^{-T}=I_3.
\]
Thus the difference of the two right inverses has image in
\(\ker\mathsf U_0^T=\C e_{\theta_0}\).
Since \(e_{\theta_0}\) is a nonvanishing holomorphic
generator, that difference is locally \(e_{\theta_0}\mathsf L\)
with \(\mathsf L\) holomorphic.
\end{proof}

Lemma~\ref{lem:holomorphic-lifts} controls the freedom in
the choice of \(\mathsf V_\theta\), but the representative
\(\mathsf V_\theta X\) need not solve the full transport equation.
Its residual lies in \(\C e_\theta\) by
\eqref{eq:proof-quotient-transport-projection}. A scalar leafwise
correction removes that residual without changing \(X\).

\begin{lemma}\label{lem:transport-lift}
Let \(X\) be a smooth \(3\times r\) matrix amplitude
satisfying
\[
 (2D_\theta+\mathcal A_\theta)X=0.
\]
On every sufficiently small leafwise product neighborhood,
there is a smooth \(1\times r\) row \(\alpha\) such that
\[
 Z=\mathsf V_\theta X+e_\theta\alpha,
 \qquad
 (2D_\theta+A_\theta)Z=0,
 \qquad
 \mathsf U_\theta^TZ=X.
\]
For a single column, \(r=1\) and \(\alpha\) is a scalar;
a quotient frame corresponds to \(r=3\).
If \(X\), the quotient basis, and the direction depend
smoothly on auxiliary parameters, \(\alpha\) and \(Z\)
may be chosen locally smoothly in those parameters.
\end{lemma}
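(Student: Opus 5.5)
We look for \(Z=\mathsf V_\theta X+e_\theta\alpha\) directly. Since \(\mathsf U_\theta^Te_\theta=0\) and \(\mathsf U_\theta^T\mathsf V_\theta=I_3\), the condition \(\mathsf U_\theta^TZ=X\) holds automatically for every row \(\alpha\). The work is therefore to choose \(\alpha\) so that the full transport equation holds.

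First compute the residual of the uncorrected lift. Set
\[
 W=(2D_\theta+A_\theta)(\mathsf V_\theta X).
\]
By \eqref{eq:proof-quotient-transport-projection} applied columnwise,
\[
 \mathsf U_\theta^TW=(2D_\theta+\mathcal A_\theta)X=0.
\]
Hence each column of \(W\) lies in \(\ker\mathsf U_\theta^T=\C e_\theta\), and we can write \(W=e_\theta\beta\) for a \(1\times r\) row \(\beta\). This row is smooth: choose an index \(k\) with \(\theta_k\ne0\), which is possible locally in the parameters, and read off \(\beta=W_k/\theta_k\), a smooth function of \(x\) and of any auxiliary parameters.

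Next use \((2D_\theta+A_\theta)e_\theta=0\), together with the facts that \(e_\theta\) is independent of \(x\) and \(A_\theta e_\theta=0\). These give
\[
 (2D_\theta+A_\theta)(e_\theta\alpha)=e_\theta\,2D_\theta\alpha .
\]
The full equation \((2D_\theta+A_\theta)Z=0\) therefore reduces to the scalar leafwise equation
\[
 2D_\theta\alpha=-\beta,\qquad\text{that is,}\qquad 4\partial_{\bar z}\alpha=-\beta,
\]
using \(D_\theta=2\partial_{\bar z}\) in the leaf coordinate \(z\) of Remark~\ref{rem:frozen-conventions}. On a leafwise product neighborhood \(D\times I\), with \(D\) a disc in \(z\) and \(I\) an interval in \(x_\perp\), we solve this by the Cauchy transform. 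After multiplying \(\beta\) by a cutoff equal to one on a slightly smaller product neighborhood, set
\[
 \alpha(z,x_\perp)=-\frac1{4\pi}\int_{\C}\frac{(\chi\beta)(w,x_\perp)}{z-w}\,dA(w).
\]
Then \(\partial_{\bar z}\alpha=-\chi\beta/4\), which equals \(-\beta/4\) on the smaller neighborhood. This is where the phrase ``sufficiently small'' is used; we shrink the neighborhood accordingly.

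The main point to check is smoothness in \(x_\perp\) and in the auxiliary parameters, namely \(\theta\), the chart of the quotient basis, and any parameters in \(X\). The Cauchy kernel is locally integrable and convolution commutes with differentiation in variables on which the kernel does not depend. So derivatives in \(x_\perp\) and the parameters pass under the integral, and \(\alpha\) is smooth in all of them.

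There is one subtlety. The leaf coordinates \((z,x_\perp)\) themselves depend on \(\theta\) through \(e_1,e_2\). We handle this by choosing, locally in the parameters, a smooth orthonormal frame \(e_1,e_2\) with \(\theta\) a fixed nonzero multiple of \(e_1+\mathrm i e_2\); a constant rescaling of the operator is harmless. We then perform the construction in the resulting parameter-dependent linear coordinates, so that smooth dependence is preserved.
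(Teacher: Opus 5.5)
Your proposal is correct and follows essentially the same argument as the paper. In both, the uncorrected lift $\mathsf V_\theta X$ has its transport residual in $\C e_\theta$ by the quotient projection identity, and the correction $e_\theta\alpha$ reduces everything to the scalar equation $2D_\theta\alpha=-f$. That equation is solved by a cut-off Cauchy transform on fixed pulled-back leaf disks, and smoothness in the parameters comes from differentiating under that fixed integral.

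The differences are cosmetic. You extract the scalar residual by dividing a component with $\theta_k\neq0$ by $\theta_k$, which is just a particular choice of the paper's smooth row $\ell_\theta$ with $\ell_\theta e_\theta=1$. You also absorb the factor $b$ in $\theta=b(e_1+\mathrm i e_2)$ by a rescaling remark, whereas the paper writes $b^{-1}$ directly into the Cauchy formula.
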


\begin{proof}
Equation~\eqref{eq:proof-quotient-transport-projection} gives
\[
 \mathsf U_\theta^T(2D_\theta+A_\theta)\mathsf V_\theta X
 =(2D_\theta+\mathcal A_\theta)X=0.
\]
The residual therefore lies columnwise in
\(\ker\mathsf U_\theta^T=\C e_\theta\). On a small direction
neighborhood, we choose a smooth row \(\ell_\theta\) with
\(\ell_\theta e_\theta=1\). Then the smooth \(1\times r\) row
\(f=\ell_\theta(2D_\theta+A_\theta)\mathsf V_\theta X\)
satisfies
\begin{equation}\label{eq:proof-characteristic-transport-residual}
 (2D_\theta+A_\theta)\mathsf V_\theta X=e_\theta f.
\end{equation}
By \eqref{eq:proof-characteristic-transport-residual} and
\((2D_\theta+A_\theta)e_\theta=0\), it remains to
solve \(2D_\theta\alpha=-f\).

For the normalization, we choose a locally smooth decomposition
\(\theta=b\theta_{\mathrm n}\), where \(b\ne0\) is independent
of \(x\), and \(\theta_{\mathrm n}=e_1+\mathrm i e_2\)
with real orthonormal \(e_1,e_2\).
In the leaf coordinate \(z=e_1\cdot x+\mathrm i e_2\cdot x\),
\(D_\theta=2b\partial_{\bar z}\).
Pull back to fixed disks \(D\Subset D_1\) contained in the
neighborhood where \(X\) is defined, and choose
\(\chi\in C_c^\infty(D_1)\) equal to one near
\(\overline D\). With \(x_\perp\) the transverse coordinate, we set
\begin{equation}\label{eq:proof-characteristic-cauchy-correction}
 \alpha(z,x_\perp)
 =-\frac{1}{4\pi b}\int_{D_1}
       \frac{\chi(w)f(w,x_\perp)}{z-w}\,dA(w).
\end{equation}
The identity \(\partial_{\bar z}(\pi^{-1}z^{-1})=\delta_0\)
applied to \eqref{eq:proof-characteristic-cauchy-correction}
gives \(2D_\theta\alpha=-f\) on \(D\).
The Cauchy solution is spatially smooth for the smooth
cutoff source. Locally in the auxiliary parameters, the
disks and cutoff may be fixed, while \(b^{-1}\),
\(\ell_\theta\), and the pulled-back source vary smoothly.
Parameter derivatives commute with the fixed integral,
which gives the asserted local smooth dependence.
Finally, \(\mathsf U_\theta^T\mathsf V_\theta=I_3\) and
\(\mathsf U_\theta^Te_\theta=0\) give
\(\mathsf U_\theta^TZ=X\).
\end{proof}

Lemma~\ref{lem:transport-lift} supplies, locally on a
leafwise product neighborhood, a full leading transport amplitude for
each quotient column. The characteristic-line
correction leaves its quotient pairing unchanged. The next section
places the leading amplitudes on the fixed CGO domains, constructs
exact reduced fields, and then applies the physical adjoint lift.

\section{Physical CGO solutions and quotient frames}\label{sec:cgo}

\subsection{System estimates and physical lifts}

The CGO construction relies on an inhomogeneous solvability estimate for the full matrix
system on fixed nested domains. The matrix transport and
pseudodifferential construction originates with Eskin
\cite{Eskin2001} and is used in
\cite{EskinRalston2002,EskinRalstonSystems2004}.
Appendix~\ref{audrev-ordinary-parametrix} gives the free weighted
inverse, a finite family of matrix transport frames on fixed large
disks, and the unweighted Sobolev estimates needed here. Its \(k=5\) specialization, after
the first transport correction, gives an \(O(h^2)\) reduced
remainder in \(H^5\), which gives an \(O(h)\) remainder in \(H^3\)
after the unscaled physical adjoint lift.
Theorem~\ref{lem:ER-solvability} states this estimate in our phase
convention and with compact-direction uniformity.

The localized transport recursion in
Theorem~\ref{thm:autonomous-cgo}, the physical lift estimates, and the
rank-three realization in Theorem~\ref{thm:graded-frame} are proved below.
Their boundary-data consequence is Theorem~\ref{thm:physical-nine-moments}.
Throughout the CGO construction, \(\Omega_{\mathrm{CGO}}\) denotes the
fixed working ball; \(\Omega\) denotes the original measurement domain.
In the general system statement, \(m\) denotes the number of
components and \(K_{\mathrm{src}}\) denotes the closed source ball. These local uses
are distinct from \(m=\nabla\log\mu\) and the physical lift
\(K_{\mu,\kappa}=S_{\mu,\kappa}^TT_\mu\) in the Lam\'e reduction.

\begin{theorem}
\label{lem:ER-solvability}
Let \(m\) be a positive integer and let
\[
 \widetilde M=\Delta\Id_m
              +\sum_{j=1}^3\widetilde A_j(x)\partial_j
              +\widetilde B(x),
 \qquad \widetilde A_j,\widetilde B
        \in C_c^\infty(\R^3;M_m(\C)).
\]
Write \(\widetilde A=(\widetilde A_1,\widetilde A_2,\widetilde A_3)\)
and \(\widetilde A_\theta=\sum_j\theta_j\widetilde A_j\),
and fix concentric balls \(\overline\Omega_{\mathrm{CGO}}\Subset\Omega_1\Subset\Omega_0\).
Here and below, a normalized complex null direction means
\(\theta=e_1+\mathrm i e_2\), where \(e_1,e_2\in\mathbb R^3\) are
orthonormal.  In particular \(\theta\ne0\) and \(\theta\cdot\theta=0\).
Choose a closed concentric ball \(K_{\mathrm{src}}\Subset\Omega_1\) whose interior
contains \(\overline\Omega_{\mathrm{CGO}}\) and the coefficient support, and let
\(\Theta\) be a compact family of such directions.  For
any fixed integer \(k\geq0\), there are \(h_0=h_0(k)>0\) and linear
operators \(R_{h,\theta}^{(k)}\), defined for \(0<h<h_0\), such that, for
\(f\in H^k(\Omega_0;\C^m)\) with \(\supp f\subset K_{\mathrm{src}}\),
\begin{equation}\label{eq:system-cgo-right-inverse-equation}
 e^{\theta\cdot x/h}\widetilde M e^{-\theta\cdot x/h}
 R_{h,\theta}^{(k)}f=f\quad\hbox{in }\Omega_1
\end{equation}
and
\begin{equation}\label{eq:system-cgo-right-inverse}
 h^{-1}\|R_{h,\theta}^{(k)}f\|_{H^k(\Omega_0)}+
 \|R_{h,\theta}^{(k)}f\|_{H^{k+1}(\Omega_0)}
 \leq C_k\|f\|_{H^k(\Omega_0)}.
\end{equation}
The constants are uniform in \(\theta\in\Theta\), in the sense specified
in Remark~\ref{rem:CGO-uniformity}.
\end{theorem}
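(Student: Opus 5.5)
The plan is to build the right inverse in two layers. First we invert the scalar conjugated Laplacian with a gain of one power of $h$. Then we absorb the first-order system coupling $h^{-1}\widetilde A_\theta$ by means of matrix transport frames, following Eskin's construction \cite{Eskin2001}.

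\textbf{Step 1: the scalar inverse.} Conjugation gives
\[
 P_h:=e^{\theta\cdot x/h}\widetilde M e^{-\theta\cdot x/h}
 =\Delta-2h^{-1}D_\theta+\sum_j\widetilde A_j\partial_j-h^{-1}\widetilde A_\theta+\widetilde B,
\]
where the $h^{-2}\theta\cdot\theta$ term vanishes. The free operator $\Delta-2h^{-1}D_\theta$ has a Fourier-multiplier inverse $G_{h,\theta}$ with symbol $(-|\xi|^2-2\mathrm i h^{-1}\theta\cdot\xi)^{-1}$. By the Sylvester--Uhlmann weighted estimate, localized by cutoffs to $\Omega_0$, it satisfies
\[
 h^{-1}\|G\chi f\|_{H^k}+\|G\chi f\|_{H^{k+1}}\le C\|f\|_{H^k}
\]
on bounded sets, uniformly for $\theta$ in the compact family $\Theta$. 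This uses a smooth cutoff $\chi$ that equals $1$ on $K_{\mathrm{src}}$ and is supported in $\Omega_1$. The term $\sum_j\widetilde A_j\partial_j G+\widetilde B G$ is then $O(h)$ from $H^k$ to $H^k$, so it is harmless. The real obstruction is the term $h^{-1}\widetilde A_\theta G$, which is only $O(1)$ and cannot be absorbed by a Neumann series.

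\textbf{Step 2: removing the $O(1)$ coupling by transport frames.} Write $D_\theta=2b\partial_{\bar z}$ in leaf coordinates and look for an invertible matrix field $\Phi_\theta$ on a neighbourhood of $\overline{\Omega_1}$ solving
\[
 2D_\theta\Phi+\widetilde A_\theta\Phi=0 .
\]
This is a $\bar\partial$-system in $z$ with parameter $x_\perp$. Cauchy-transform solutions, as in Lemma~\ref{lem:transport-lift} but now for the full $m\times m$ system and with cutoff coefficients, exist on fixed large disks. They are smooth in $(x,\theta)$, but they need not be globally invertible. For this reason a finite family $\Phi^{(1)},\dots,\Phi^{(N)}$ is used, each invertible on a piece of a finite cover of $\overline{\Omega_1}$, with the pieces glued by a partition of unity. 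The approximate inverse is then $\widetilde R=\sum_\ell\psi_\ell\,\Phi^{(\ell)}G(\Phi^{(\ell)})^{-1}\widetilde\psi_\ell$. Conjugating $\Delta-2h^{-1}D_\theta$ by $\Phi^{(\ell)}$ cancels the $h^{-1}\widetilde A_\theta$ term. What remains are commutators with $\nabla\Phi$, $\nabla\psi_\ell$ and $\Delta\Phi$, all of the form first order times $G$ or zeroth order times $G$, hence $O(1)$ from $H^k$ to $H^{k}$ with an extra gain. The dangerous commutators $h^{-1}[D_\theta,\psi_\ell]$ would be order $h^{-1}\cdot h=O(1)$ and would not give a small error. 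This is the main obstacle. I would handle it by choosing the cutoffs $\psi_\ell$ to depend only on $x_\perp$ and a real leafwise direction, arranging them to be holomorphic in $z$ on the relevant region, and using the large fixed disks of the appendix construction. The remaining error then splits as $P_h\widetilde R=I+E_h$ with $\|E_h\|_{H^k\to H^k}=O(h)$.

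\textbf{Step 3: Neumann series and uniformity.} Finally set $R^{(k)}_{h,\theta}=\widetilde R(I+E_h)^{-1}$ for $h<h_0(k)$, where $h_0$ is chosen so that the error norm is at most $\tfrac12$. Equation \eqref{eq:system-cgo-right-inverse-equation} then holds on $\Omega_1$, and \eqref{eq:system-cgo-right-inverse} follows from the Step 1 bound. The bounds on $\Phi^{(\ell)\pm1}$ in $C^{k+2}$ are locally uniform in $\theta$, and the cover is fixed on the compact set $\Theta$. Together these give the uniformity of the constants recorded in Remark~\ref{rem:CGO-uniformity}.
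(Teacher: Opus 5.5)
Your Step 1 contains the error that breaks the construction: $\sum_j\widetilde A_j\partial_jG$ is not $O(h)$ on $H^k$. By the second half of your own estimate, $G$ maps $H^k\to H^{k+1}$ with norm $O(1)$, so $\widetilde A\cdot\nabla G$ is $O(1)$, just like $h^{-1}\widetilde A_\theta G$. The two terms form a single first-order coupling with symbol $\widetilde A\cdot(\mathrm i\xi-h^{-1}\theta)$. For $\theta=e_1+\mathrm ie_2$, the free symbol $-|\xi|^2-2\mathrm ih^{-1}\theta\cdot\xi$ vanishes on the circle $\xi=h^{-1}(e_2+w)$ with $w\perp e_1$ and $|w|=1$. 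On that circle
\[
 \mathrm i\xi-h^{-1}\theta=-h^{-1}(e_1-\mathrm iw).
\]
This is a null vector that rotates with $\xi$, and it is proportional to $\theta$ only at $\xi=0$. A frame with $2D_\theta\Phi+\widetilde A_\theta\Phi=0$ therefore cancels the coupling at one point of the characteristic circle only. The leftover $(2\nabla\Phi+\widetilde A\Phi)\cdot\nabla G\,\Phi^{-1}$ is $O(1)$; you call these terms ``first order times $G$ \dots\ with an extra gain'', but there is no gain. Its symbol $\mathrm i(2\nabla\Phi+\widetilde A\Phi)\cdot\xi$ vanishes on the whole circle only if $2\partial_v\Phi+\widetilde A_v\Phi=0$ for every $v\perp e_1$. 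That system is overdetermined and fails for generic $\widetilde A$. So $E_h$ is not small, and your Neumann series closes only under a smallness assumption on $\widetilde A$, which the theorem does not make.

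Your partition of unity has the same defect. A real cutoff that is holomorphic in $z$ is constant along the leaves, so it can depend only on $x_\perp$. Such cutoffs produce $2\psi_\ell'\,\partial_\perp G$, which is again $O(1)$ because $\xi\cdot\nu_\theta$ has size $h^{-1}$ on the circle.

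The missing idea is Eskin's frequency-dependent transport frame, which is how the paper proves the statement (Appendix~\ref{audrev-ordinary-parametrix}, Lemma~\ref{lem:appendix-local-error}, Proposition~\ref{prop:appendix-interior-right-inverse}).

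\begin{enumerate}
\item Write the conjugated operator as $P_\delta$ with $\delta=\tau(\omega_0+\mathrm i\nu)$, and cut off near its characteristic set with $\chi_N$, i.e.\ $|\xi_\nu|\lesssim\sqrt\tau$ and $|r^2-\tau^2|\lesssim\tau^{3/2}$.
\item Define the moving direction $\omega(\xi)=(\xi'+\tau\omega_0)/r$.
\item Use finitely many frames $C_a(z;\omega,\nu,x_\perp)$ that solve the transport equation in the direction $\omega(\xi)+\mathrm i\nu$ on a whole fixed leaf disk. The weights $w_a(\omega,\nu,x_\perp)$, with $x_\perp=x\cdot(\omega\times\nu)$, are constant along leaves.
\item Quantize these as the paired symbols $b_a,d_a$ in $Q_\tau=\sum_a\operatorname{Op}(b_a)E_\delta\psi_3\operatorname{Op}(d_a)$.
\end{enumerate}

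Since $\xi+\delta=r\omega+(\xi_\nu+\mathrm i\tau)\nu$ has no component along $\omega\times\nu$, the leaf-constant weights cost nothing at leading order. The transport identity removes the component $\tau(\omega+\mathrm i\nu)$, leaving $(r-\tau)\omega+\xi_\nu\nu=O(\sqrt\tau)$. Off the characteristic set the coupling is controlled by the lower bound $|p_\delta|\geq c\tau^{3/2}$ for $|\xi|\leq4\tau$. The resulting localized error is $O(\tau^{-1/2})=O(h^{1/2})$, not the $O(h)$ you claim, but that is still enough for the Neumann series.
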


\begin{proof}

The argument uses the localized right inverse constructed in
Proposition~\ref{prop:appendix-interior-right-inverse} of
Appendix~\ref{audrev-ordinary-parametrix}. Its construction
relies on the free weighted inverse of
Lemma~\ref{lem:appendix-free-inverse} and the localized
parametrix of Lemma~\ref{lem:appendix-local-error}.

To check conventions, we multiply \(\widetilde M\) by \(-1\)
and set
\[
 A^{\mathrm{ER}}_j=\frac{\widetilde A_j}{2\mathrm i},\qquad
 B^{\mathrm{ER}}=-\widetilde B,\qquad
 \theta_{\mathrm{ER}}=\mathrm i\theta,\qquad \tau=h^{-1}.
\]
The operator in \cite[equation (1.1)]{EskinRalstonSystems2004} is \(-\widetilde M\).
We take the auxiliary Fourier vector in \cite{EskinRalstonSystems2004} to be \(l=0\), which gives
\(\delta=\tau\theta_{\mathrm{ER}}\), so the phase
\(e^{\mathrm i\delta\cdot x}\) is \(e^{-\theta\cdot x/h}\).
Equation (1.6) of \cite{EskinRalstonSystems2004} becomes
\[
 (2D_\theta+\widetilde A_\theta)C=0.
\]
The conjugated operator in this convention is the negative of
\(e^{\theta\cdot x/h}\widetilde M e^{-\theta\cdot x/h}\),
so its source is \(-f\). The inverse constructed in
Appendix~\ref{audrev-ordinary-parametrix} gives
\eqref{eq:system-cgo-right-inverse-equation}.

For sources supported in \(K_{\mathrm{src}}\), the
localized right-inverse identity holds in \(\Omega_1\),
while the estimates are taken over \(\Omega_0\).
Both Sobolev bounds in \eqref{eq:system-cgo-right-inverse},
at orders \(k\) and \(k+1\), are obtained from the same
Neumann-series inverse acting on \(H^k\).
Uniformity over compact families of normalized null
directions follows from the finite covering by local
transport frames. The rotation argument below provides
an alternative verification of this uniformity.

Translate the domains and coefficients together so that
the common center of the balls is the origin.
Fix a reference normalized null direction
\(\theta_{\mathrm{ref}}\). Locally in \(\theta\), we choose a
smooth rotation \(O_\theta\in SO(3)\) with
\(O_\theta\theta_{\mathrm{ref}}=\theta\), and define the
pullback \((\mathcal P_\theta f)(y)=f(O_\theta y)\).
The balls and \(K_{\mathrm{src}}\) are preserved. The direction becomes
\(\theta_{\mathrm{ref}}\), and the coefficients become
\[
 \mathsf A_{\theta,j}(y)
 =\sum_l\widetilde A_l(O_\theta y)(O_\theta)_{lj},
 \qquad \mathsf B_\theta(y)=\widetilde B(O_\theta y).
\]
The pulled-back conjugated operator is
\[
 L_{h,\theta}=\Delta-2h^{-1}D_{\theta_{\mathrm{ref}}}
   +\mathsf A_\theta\cdot\nabla
   -h^{-1}\mathsf A_{\theta,\theta_{\mathrm{ref}}}
   +\mathsf B_\theta,
 \qquad
 \mathsf A_{\theta,\theta_{\mathrm{ref}}}
 =\sum_j(\theta_{\mathrm{ref}})_j\mathsf A_{\theta,j}.
\]
On the closed source space
\(\mathcal E_k=\{f\in H^k(\Omega_0;\C^m):\supp f\subset K_{\mathrm{src}}\}\),
choose a localized right inverse
\(\mathsf R_{h,\theta^0}\) for the operator at a base
parameter \(\theta^0\).
Its target norm is
\begin{equation}\label{eq:proof-right-inverse-target-norm}
 \|u\|_{\mathcal X_h^k}
 =h^{-1}\|u\|_{H^k(\Omega_0)}+\|u\|_{H^{k+1}(\Omega_0)}.
\end{equation}
By \eqref{eq:system-cgo-right-inverse} and
\eqref{eq:proof-right-inverse-target-norm}, the norm of
\(\mathsf R_{h,\theta^0}\) is bounded independently of \(h\). Since the principal direction is fixed after
rotation, \(D_{h,\theta}=L_{h,\theta}-L_{h,\theta^0}\)
has coefficients supported in \(K_{\mathrm{src}}\), and
\begin{equation}\label{eq:proof-cgo-operator-perturbation}
 \|D_{h,\theta}u\|_{H^k}
 \leq C\bigl(\|\mathsf A_\theta-\mathsf A_{\theta^0}\|_{C^{k+1}}
           +\|\mathsf B_\theta-\mathsf B_{\theta^0}\|_{C^{k+1}}\bigr)
          \|u\|_{\mathcal X_h^k},\qquad 0<h\leq1.
\end{equation}
In particular, the factor \(h^{-1}\) in \(D_{h,\theta}\) is controlled by
\(h^{-1}\|u\|_{H^k}\) in \eqref{eq:proof-right-inverse-target-norm}.
By \eqref{eq:proof-cgo-operator-perturbation} and
\eqref{eq:system-cgo-right-inverse}, \(Q_{h,\theta}=D_{h,\theta}\mathsf R_{h,\theta^0}\)
maps \(\mathcal E_k\) to itself and has norm at most
\(1/2\) on a direction neighborhood independent of \(h\).
Set
\begin{equation}\label{eq:proof-cgo-perturbed-right-inverse}
 \mathsf R_{h,\theta}
 =\mathsf R_{h,\theta^0}(I+Q_{h,\theta})^{-1}.
\end{equation}
Equation~\eqref{eq:system-cgo-right-inverse-equation}, applied at the
base parameter in the rotated coordinates, holds throughout \(\Omega_1\),
which contains the support of \(D_{h,\theta}\). Using
\eqref{eq:system-cgo-right-inverse-equation} and
\eqref{eq:proof-cgo-perturbed-right-inverse}, we obtain
\[
 L_{h,\theta}\mathsf R_{h,\theta}f
 =(I+Q_{h,\theta})(I+Q_{h,\theta})^{-1}f=f
 \quad\text{in }\Omega_1.
\]
Its norm into \(\mathcal X_h^k\) is at most twice the base
norm. We return to the original coordinates by setting
\[
 R_{h,\theta}^{(k)}
 =\mathcal P_\theta^{-1}\mathsf R_{h,\theta}\mathcal P_\theta.
\]
Rotations give uniformly equivalent Sobolev norms on the
fixed balls. We use a finite cover of \(\Theta\) to obtain a common
\(h_0(k)\leq1\) and \(C_k\). We combine these local operators with
a parameter partition of unity after they have been returned
to the original coordinates. Its weights are independent
of \(x\), so the right-inverse equation is preserved.
No parameter derivative of this family is asserted or needed.
The construction uses the exterior extension to \(\R^3\),
rather than a bounded-domain Dirichlet inverse.
\end{proof}

The factor \(h\) in
\eqref{eq:system-cgo-right-inverse} will turn a residual of size
\(O(h)\) into a reduced remainder of size \(O(h^2)\).
Before applying \eqref{eq:system-cgo-right-inverse}, we solve the first transport correction
on the complex leaves. The following Cauchy estimate keeps the leaf
and transverse derivatives separate.

\begin{lemma}
\label{lem:cauchy-leaf-estimate}
Let \(D\Subset D_1\Subset\C\) be disks, \(I\Subset\R\) an interval,
and \(0<\alpha<1\).  In coordinates in which
\(D_\theta=2\partial_{\bar z}\), set
\[
 (\mathcal C_\theta F)(z,x_\perp)=\frac1{2\pi}\int_{D_1}
                   \frac{F(w,x_\perp)}{z-w}\,dA(w).
\]
If \(F\) is smooth and supported away from \(\partial D_1\times I\), then
\[
 D_\theta\mathcal C_\theta F=F\quad\hbox{on }D\times I.
\]
Moreover, for all nonnegative integers \(r,q\),
\begin{equation}\label{eq:anisotropic-cauchy}
 \|\mathcal C_\theta F\|_{C_{x_\perp}^qC_z^{r+1,\alpha}(D\times I)}
 \leq C_{r,q}\|F\|_{C_{x_\perp}^qC_z^{r,\alpha}(D_1\times I)}.
\end{equation}
Here \(C_{x_\perp}^qC_z^{r,\alpha}\) denotes \(q\) derivatives in the transverse
variable with values in the indicated leafwise Hölder space.  The estimate is
uniform on a compact directional chart after pulling the leaf coordinates
back by a smooth family of rotations.
\end{lemma}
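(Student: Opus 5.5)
The plan is to reduce the statement to the classical mapping properties of the Cauchy transform in the plane, applied fibrewise in the transverse variable. First I would check the identity \(D_\theta\mathcal C_\theta F=F\) on \(D\times I\). Since \(D_\theta=2\partial_{\bar z}\) and \(\partial_{\bar z}(\pi^{-1}z^{-1})=\delta_0\), we have
\[
 D_\theta\mathcal C_\theta F
 =2\,\partial_{\bar z}\Bigl(\frac{1}{2\pi}\frac1z\Bigr)*(F\mathbf 1_{D_1})
 =F\mathbf 1_{D_1}.
\]
This is used exactly as in the proof of Lemma~\ref{lem:transport-lift}. Because \(F(\cdot,x_\perp)\) is smooth and vanishes near \(\partial D_1\), extending it by zero gives a smooth compactly supported function on \(\C\), so the convolution identity holds classically and in particular on \(D\).

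For the estimate \eqref{eq:anisotropic-cauchy}, I would separate the two kinds of derivatives. Transverse derivatives \(\partial_{x_\perp}^j\), \(j\le q\), commute with \(\mathcal C_\theta\), because the kernel does not depend on \(x_\perp\) and \(F\) is smooth with fixed compact leafwise support. It therefore suffices to prove the purely leafwise bound
\[
 \|\mathcal C_\theta G\|_{C^{r+1,\alpha}_z(D)}\le C_r\|G\|_{C^{r,\alpha}_z(D_1)},
\]
uniformly for \(G=\partial_{x_\perp}^jF(\cdot,x_\perp)\). I would then take the supremum over \(x_\perp\in I\), and also control continuity in \(x_\perp\) of the resulting Hölder-valued map by applying the same linear bound to differences.

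For the leafwise bound I would use the standard decomposition of the two Cauchy derivatives:
\[
 \partial_{\bar z}\mathcal C_\theta G=\tfrac12 G,\qquad
 \partial_z\mathcal C_\theta G=c\,\mathcal B G,
\]
where \(\mathcal B\) is the Beurling transform, i.e.\ the principal-value operator with kernel \((z-w)^{-2}\). The main obstacle is that \(G\) is only Hölder on \(D_1\) and need not vanish at \(\partial D_1\), so the compactly supported Beurling theory does not apply directly. I would handle this with a cutoff \(\chi\in C_c^\infty(D_1)\) equal to one on an intermediate disk \(D'\) with \(D\Subset D'\Subset D_1\), and write \(G=\chi G+(1-\chi)G\).

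On the near piece, \(\chi G\in C^{r,\alpha}_c\), and the Beurling transform is bounded on \(C^{k,\alpha}\) for compactly supported data, since it commutes with derivatives and is a Calderón–Zygmund operator. Together with the \(L^\infty\) bound for the Cauchy transform on a bounded domain, this gives the \(C^{r+1,\alpha}\) bound. On the far piece, \((1-\chi)G\) is supported at distance \(\ge\mathrm{dist}(D,\partial D')>0\) from \(D\), so the kernel is smooth there. Every \(z\)-derivative on \(D\) is then bounded by \(C\|G\|_{L^\infty}\).

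Finally, uniformity on a compact directional chart follows as in the rotation argument of Theorem~\ref{lem:ER-solvability}. After pulling back by a smooth family of rotations \(O_\theta\), the leaf coordinates, disks and interval become fixed, and the operator is literally the same. Rotations distort the anisotropic Hölder norms only by factors bounded uniformly on the chart, so the constants \(C_{r,q}\) are independent of \(\theta\).
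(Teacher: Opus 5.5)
Your proposal is correct and takes essentially the same approach as the paper. The paper likewise obtains the identity from \(\partial_{\bar z}(\pi^{-1}z^{-1})=\delta_0\), commutes \(x_\perp\)-derivatives through the \(x_\perp\)-independent kernel, applies a fibrewise local Schauder estimate for the Cauchy operator, and gets uniformity from the boundedness of the rotations on a compact chart. The only difference is that the paper cites the Schauder step as standard, whereas you prove it via the Beurling transform on a cut-off near part and the smooth kernel on the far part.
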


\begin{proof}
The distributional identity
\(\partial_{\bar z}(\pi^{-1}z^{-1})=\delta_0\) proves the first assertion
with the factor \(1/(2\pi)\) in the definition of \(\mathcal C_\theta\).  For every fixed \(x_\perp\), the local
Schauder estimate for the Cauchy solution operator gives one derivative in
\(z\).  Derivatives in \(x_\perp\) commute with the integral, which proves
\eqref{eq:anisotropic-cauchy}.  The rotations and their derivatives are
bounded on a compact directional chart.
\end{proof}

The estimate \eqref{eq:anisotropic-cauchy} gains one
leaf derivative and preserves the number of transverse derivatives.
Applied in transport frames, it controls the first correction.
The residual is then cut off inside the source ball, and
Theorem~\ref{lem:ER-solvability} is applied to make the reduced field exact.

\begin{theorem}
\label{thm:autonomous-cgo}
Let \(\widetilde M\), the fixed domains, \(K_{\mathrm{src}}\), and \(\Theta\) be as in
Theorem~\ref{lem:ER-solvability}.  Fix a nonnegative integer \(k\), and put
\[
 P_0=\widetilde M,\qquad T_\theta=2D_\theta+\widetilde A_\theta.
\]
Suppose that \(a_0\) is smooth near \(\overline\Omega_1\) and satisfies
\(T_\theta a_0=0\) there.  There exist a smooth first correction \(a_1\)
and an exact solution
\begin{equation}\label{eq:localized-cgo-package}
 \boldsymbol z_h=e^{-\theta\cdot x/h}(a_0+h a_1+r_h),\qquad
 \widetilde M\boldsymbol z_h=0\quad\text{in }\Omega_{\mathrm{CGO}},
\end{equation}
such that
\begin{equation}\label{eq:localized-cgo-remainder}
 T_\theta a_1=P_0a_0\quad\text{near }\overline\Omega_{\mathrm{CGO}},\qquad
 \|r_h\|_{H^k(\Omega_{\mathrm{CGO}})}\leq C_kh^2.
\end{equation}
Any prescribed smooth \(a_1\), defined near \(\overline\Omega_{\mathrm{CGO}}\) and
satisfying this transport equation there, may instead be used.

The remainder bound \eqref{eq:localized-cgo-remainder} holds uniformly on a compact direction chart if the cutoffs are
fixed, the transport frames and their inverses are uniformly bounded, and
\(\|\chi_0P_0a_1\|_{H^k(\Omega_0)}\) is uniformly bounded.
For the constructed correction, a sufficient condition is bounded
\(C_{x_\perp}^{k+4}C_z^{k+3,\alpha}\) norms of the coefficients, \(a_0\), and the
chosen frames, their inverses, and the fixed partition weights
on the fixed cylinder, for \(0<\alpha<1\).
For a prescribed correction its corresponding \(H^{k+2}\) bound on the
fixed cutoff support must also be assumed.  The application below takes
\(k=5\); thus \(C_{x_\perp}^9C_z^{8,\alpha}\) bounds suffice for the constructed
correction. For any constant vector \(v\in\C^m\), smooth
amplitudes \(a_0=C_\theta v\,b(x_\perp)g(z)\) are admissible,
where \(C_\theta\) is a transport frame on a fixed large
cylinder \(D_L\times I\), with \(L\) larger than the radius of
\(\Omega_1\), and \(b\in C_c^\infty(I)\). The product is extended
smoothly by zero outside \(I\) in the transverse variable.
Let \(U\) be a fixed neighborhood of \(\overline\Omega_1\)
whose slices with $x_\perp\in\supp b$ lie in the fixed transport cylinder $D_L\times I$.
The function \(g\) must be holomorphic on a neighborhood of
the entire planar projection of \(U\cap\{x_\perp\in\supp b\}\).
This gives a smooth leading amplitude on \(U\) satisfying its
transport equation, without requiring a single frame at transverse
coordinates where \(b=0\). Holomorphy only over the smaller working ball does not ensure holomorphy on the entire planar projection of $U\cap\{x_\perp\in\supp b\}$.
Every holomorphic polynomial meets the required domain condition.
\end{theorem}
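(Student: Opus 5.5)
The construction is the standard two-term WKB ansatz, with the residual closed by the localized right inverse of Theorem~\ref{lem:ER-solvability}. Conjugating by the phase gives
\[
 e^{\theta\cdot x/h}\widetilde M e^{-\theta\cdot x/h}=P_0-h^{-1}T_\theta,
\]
because \(\theta\cdot\theta=0\) removes the \(h^{-2}\) term. Applied to \(a_0+ha_1\), this yields
\[
 -h^{-1}T_\theta a_0+(P_0a_0-T_\theta a_1)+hP_0a_1 .
\]
The first term vanishes by hypothesis. The second vanishes once \(a_1\) solves the first transport equation \(T_\theta a_1=P_0a_0\). The residual is therefore exactly \(hP_0a_1\), and we then seek \(r_h\) with \(e^{\theta\cdot x/h}\widetilde Me^{-\theta\cdot x/h}r_h=-hP_0a_1\).

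To construct \(a_1\), I would work in transport frames on fixed cylinders, as supplied by Appendix~\ref{audrev-ordinary-parametrix}. On each leafwise product neighborhood, take an invertible frame \(C_\theta\) with \(T_\theta C_\theta=0\) and write \(a_1=C_\theta w\). Then
\[
 T_\theta a_1=2C_\theta D_\theta w,
\]
so the transport equation reduces to \(D_\theta w=\tfrac12 C_\theta^{-1}P_0a_0\). I would solve this with the leafwise Cauchy operator \(\mathcal C_\theta\) of Lemma~\ref{lem:cauchy-leaf-estimate}, after a cutoff supported away from the edge of \(D_1\). The resulting \(w\) is smooth, and estimate~\eqref{eq:anisotropic-cauchy} controls it in \(C^q_{x_\perp}C^{r+1,\alpha}_z\).

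Patching local solutions from different frames is the step I expect to be the main obstacle, since the transport equation is not elliptic and partition weights would generate errors \(2(D_\theta\psi)a_1\). The plan is to avoid patching across the leaf altogether. The frame is taken on a single large disk \(D_L\) covering the planar projection of \(U\). A partition of unity is used only in the transverse variable \(x_\perp\), which commutes with \(D_\theta\), together with a partition in the direction parameters, whose weights are independent of \(x\). On each transverse piece, the local solutions for different frames differ by elements of \(\ker T_\theta\), which causes no harm because the weights do not depend on the leaf variable. Since \(D_\theta\) annihilates all of these weights, \(T_\theta a_1=P_0a_0\) holds near \(\overline\Omega_{\mathrm{CGO}}\).

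For the remainder, fix a cutoff \(\chi_0\in C_c^\infty\) supported in the interior of \(K_{\mathrm{src}}\) and equal to one near \(\overline\Omega_{\mathrm{CGO}}\). Set \(r_h=-hR^{(k)}_{h,\theta}(\chi_0P_0a_1)\). Then \(\boldsymbol z_h\) solves \(\widetilde M\boldsymbol z_h=0\) wherever \(\chi_0=1\), in particular in \(\Omega_{\mathrm{CGO}}\). By~\eqref{eq:system-cgo-right-inverse},
\[
 \|r_h\|_{H^k}\le h\cdot Ch\,\|\chi_0P_0a_1\|_{H^k}=O(h^2).
\]
The same argument applies verbatim to any prescribed \(a_1\), using its assumed \(H^{k+2}\) bound. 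For uniformity, \(\|P_0a_1\|_{H^k}\) requires \(a_1\in H^{k+2}\) on the cutoff support. By the Cauchy estimate, this follows from \(C^{k+4}_{x_\perp}C^{k+3,\alpha}_z\) bounds on \(P_0a_0\) and on the frames, which in turn require those bounds on the coefficients, \(a_0\), the frames, their inverses, and the partition weights. With \(k=5\) this gives the stated \(C^9_{x_\perp}C^{8,\alpha}_z\) condition.

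It remains to check that \(a_0=C_\theta v\,b(x_\perp)g(z)\) is admissible. We have
\[
 T_\theta a_0=(T_\theta C_\theta)v\,bg+2C_\theta v\,b\,D_\theta g=0,
\]
since \(D_\theta b=0\) and \(g\) is holomorphic on the planar projection of \(U\cap\{x_\perp\in\supp b\}\). Extending by zero where \(b=0\) preserves smoothness, so no frame is needed there. Holomorphic polynomials are entire, so they satisfy this domain condition trivially.
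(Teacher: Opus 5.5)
Your proposal is correct and follows the paper's proof essentially step for step. It uses the same conjugation identity and residual $hP_0a_1$, and the same construction of $a_1$ via the leafwise Cauchy operator in transport frames on one large leaf disk with weights depending only on $x_\perp$ (this is the paper's $\mathcal V_\theta$ in \eqref{audrev-transport-right-inverse}, written with $\sum_a w_a^2=1$ instead of your $\sum_a\psi_a=1$). It also uses the same correction $r_h=-R^{(k)}_{h,\theta}(h\chi_0P_0a_1)$ and the same admissibility check for $b(x_\perp)g(z)$. One bookkeeping slip: the $C^{k+4}_{x_\perp}C^{k+3,\alpha}_z$ bounds belong on $a_0$, the coefficients and the frames, not on $P_0a_0$. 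These give $P_0a_0\in C^{k+2}_{x_\perp}C^{k+1,\alpha}_z$, hence $a_1\in C^{k+2}_{x_\perp}C^{k+2,\alpha}_z$ and so $H^{k+2}$ after the one-leaf-derivative gain.
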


\begin{proof}
The exact conjugation identity is
\begin{equation}\label{eq:conjugated-cgo-operator}
 e^{\theta\cdot x/h}\widetilde M e^{-\theta\cdot x/h}
 =P_0-h^{-1}T_\theta.
\end{equation}

We choose a fixed cylinder containing \(\overline\Omega_1\), enlarged
for the planar cutoff. The leafwise transport inverse
\(\mathcal V_\theta\) constructed in
Appendix~\ref{audrev-ordinary-parametrix} uses frames on this entire
large leaf disk and a finite cover of the required transverse
interval.
Choose \(\chi\in C_c^\infty(\Omega_1)\), equal to one
near \(\overline\Omega_{\mathrm{CGO}}\), and extend
\(\chi P_0a_0\) by zero to the cylinder. Set
\begin{equation}\label{eq:proof-cgo-first-correction}
 a_1=\mathcal V_\theta(\chi P_0a_0).
\end{equation}
The transport right-inverse identity
\eqref{audrev-transport-inverse-identity}, where $\chi_{\rm leaf}=1$
on the target region, and Lemma~\ref{lem:cauchy-leaf-estimate},
applied to the finitely many summands of $\mathcal V_\theta$, give
\(T_\theta a_1=P_0a_0\) where \(\chi=1\).
The fixed transverse weights and their required derivatives are
included in the constants beside the finitely many frame and
inverse seminorms.

With the stated finite seminorm bounds, \(P_0a_0\) and
the cutoff source are bounded in
\(C_{x_\perp}^{k+2}C_z^{k+1,\alpha}\). The Cauchy inverse gains
one leaf derivative, so \(a_1\) is bounded in
\(C_{x_\perp}^{k+2}C_z^{k+2,\alpha}\).
A second application of \(P_0\) gives
\(\chi_0P_0a_1\in C_{x_\perp}^kC_z^{k,\alpha}\), hence a bounded
\(H^k\) norm on the fixed cutoff support used below.

Choose \(\chi_0\in C_c^\infty(\operatorname{int}K_{\mathrm{src}})\)
equal to one near \(\overline\Omega_{\mathrm{CGO}}\), with support where
\(T_\theta a_1=P_0a_0\) holds. For a prescribed \(a_1\),
choose \(\supp\chi_0\) inside a neighborhood where that
correction and its transport equation are defined.
The product \(\chi_0P_0a_1\) extends smoothly by zero
to \(\Omega_0\) and is supported in \(K_{\mathrm{src}}\).  Since
\begin{equation}\label{eq:proof-cgo-uncorrected-residual}
 (P_0-h^{-1}T_\theta)(a_0+ha_1)=hP_0a_1
 \quad\text{near }\overline\Omega_{\mathrm{CGO}},
\end{equation}
the correction
\[
 r_h=-R_{h,\theta}^{(k)}(h\chi_0P_0a_1)
\]
cancels \eqref{eq:proof-cgo-uncorrected-residual} by
\eqref{eq:conjugated-cgo-operator} and Theorem~\ref{lem:ER-solvability},
makes \eqref{eq:localized-cgo-package} exact in \(\Omega_{\mathrm{CGO}}\), and obeys
\[
 \|r_h\|_{H^k(\Omega_{\mathrm{CGO}})}
 \leq C_kh\,\|h\chi_0P_0a_1\|_{H^k(\Omega_0)}
 \leq C_kh^2.
\]
For each fixed \(h\), local elliptic regularity makes the exact field
smooth in \(\Omega_{\mathrm{CGO}}\), because the coefficients and source are smooth.
The uniform bound for
\(\|\chi_0P_0a_1\|_{H^k(\Omega_0)}\), together with
\eqref{eq:system-cgo-right-inverse}, gives the asserted uniform
\(H^k\) remainder estimate. Multiplication of a leading transport
amplitude by \(b(x_\perp)g(z)\) preserves \(T_\theta a_0=0\), since
\(D_\theta b=D_\theta g=0\).
\end{proof}

The physical lifts below use the original reduced variables after
the \(k=5\) construction of Theorem~\ref{thm:autonomous-cgo}.
The \(H^5\) remainder controls the order-two term \(K_0\) in
\(H^3\).

\begin{lemma}
\label{lem:fast-physical-lift}
Let \(\boldsymbol z_h\) be an exact solution of the \emph{original} Lam\'e reduced
system \(M\boldsymbol z_h=0\), with expansion
\(\boldsymbol z_h=e^{-\theta\cdot x/h}(a_0+ha_1+r_h)\), where
\(a_0,a_1\) are smooth near \(\overline\Omega_{\mathrm{CGO}}\) and
uniformly bounded in \(H^5(\Omega_{\mathrm{CGO}})\), and
\(\|r_h\|_{H^5(\Omega_{\mathrm{CGO}})}=O(h^2)\).
Let \(K=S^TT\) be the original operator in Section~\ref{sec:reduction}.
For the background-conjugated construction this means
\(\boldsymbol z_h=\mathcal R\widetilde{\boldsymbol z}_h\), \(a_i=\mathcal R\widetilde a_i\), and
\(r_h=\mathcal R\widetilde r_h\), before applying \(K\).
The matrices \(\mathcal R^{\pm1}\) preserve the stated Sobolev orders on
the fixed balls.  Then the normalized physical adjoint field
\[
 y_h^{F}=-hK\boldsymbol z_h
\]
satisfies
\[
 e^{\theta\cdot x/h}y_h^F=J_\theta a_0+O(h)
 \quad\hbox{in }H^3(\Omega_{\mathrm{CGO}}).
\]
\end{lemma}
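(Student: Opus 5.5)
The plan is to conjugate the physical adjoint lift by the phase and expand. Write \(\boldsymbol z_h=e^{-\varphi/h}w_h\) with \(w_h=a_0+ha_1+r_h\) and \(\varphi=\theta\cdot x\). By the companion identity \eqref{eq:Kconjugated-minus},
\[
 e^{\varphi/h}y_h^F=-h\,e^{\varphi/h}Ke^{-\varphi/h}w_h
 =(J_\theta-hK_0)w_h .
\]
Expanding in \(h\) gives
\[
 e^{\varphi/h}y_h^F
 =J_\theta a_0+h\bigl(J_\theta a_1-K_0a_0\bigr)-h^2K_0a_1
 +J_\theta r_h-hK_0r_h .
\]
This is an exact algebraic identity of differential operators. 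The \(h^{-2}\) term is absent because \(\theta\cdot\theta=0\), so nothing beyond the stated conjugation formula is needed at this step.

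Next I will estimate each term in \(H^3(\Omega_{\mathrm{CGO}})\) using only the mapping properties of \(J_\theta\) and \(K_0\).

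\begin{itemize}
\item \(J_\theta\) is a first-order operator whose coefficients (\(c\), \(\mu^{-1}\), \((1-\kappa)^{-1}\) and \(\theta\)) are smooth and bounded on \(\overline\Omega_{\mathrm{CGO}}\). It therefore maps \(H^{s+1}\) to \(H^s\) boundedly.
\item By \eqref{eq:Kexact}, \(K_0=K\) is a second-order operator with smooth coefficients. It therefore maps \(H^{s+2}\) to \(H^s\) boundedly.
\end{itemize}

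With these bounds the terms are controlled as follows.

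\begin{itemize}
\item \(\|J_\theta a_1\|_{H^3}+\|K_0a_0\|_{H^3}\le C(\|a_1\|_{H^4}+\|a_0\|_{H^5})\), so \(h(J_\theta a_1-K_0a_0)=O(h)\).
\item \(\|K_0a_1\|_{H^3}\le C\|a_1\|_{H^5}\), so \(h^2K_0a_1=O(h^2)\).
\item \(\|J_\theta r_h\|_{H^3}\le C\|r_h\|_{H^4}=O(h^2)\).
\item \(\|hK_0r_h\|_{H^3}\le Ch\|r_h\|_{H^5}=O(h^3)\).
\end{itemize}

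Summing these bounds gives \(e^{\varphi/h}y_h^F=J_\theta a_0+O(h)\) in \(H^3\). If \(\Theta\) is compact, the constants are uniform in \(\theta\), because \(\theta\) enters the coefficients only polynomially.

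The one point needing care is the background-conjugated case, which is the main (if modest) obstacle. The CGO construction of Theorem~\ref{thm:autonomous-cgo} produces \(\widetilde{\boldsymbol z}_h\) for \(\widetilde M\), while \(K\) is the operator of the original reduction. I will first set \(\boldsymbol z_h=\mathcal R\widetilde{\boldsymbol z}_h\). Since \(M\mathcal R=\mathcal R\widetilde M\), this field solves \(M\boldsymbol z_h=0\).

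Its expansion has the stated form \(\boldsymbol z_h=e^{-\varphi/h}(a_0+ha_1+r_h)\) with \(a_i=\mathcal R\widetilde a_i\) and \(r_h=\mathcal R\widetilde r_h\). This works because \(\mathcal R\) is a multiplication operator that commutes with the scalar phase. Since \(\mathcal R\) is polynomial in \(x\), multiplication by it preserves \(H^5\) norms on the fixed ball up to constants. The hypotheses on \(a_0,a_1,r_h\) therefore transfer, and the argument above applies verbatim with the original \(K\). No use of \(M\boldsymbol z_h=0\) is needed beyond identifying \(y_h^F\) as a physical adjoint field.
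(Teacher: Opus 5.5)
Your proposal is correct and follows the paper's own proof: conjugate by the phase via \eqref{eq:Kconjugated-minus} to get \((J_\theta-hK_0)(a_0+ha_1+r_h)\), expand into the same five terms, and bound each in \(H^3\) using that \(J_\theta\) has order one, \(K_0\) has order two, and \(\|r_h\|_{H^5}=O(h^2)\). Your treatment of the background-conjugated case (\(M\mathcal R=\mathcal R\widetilde M\), with \(\mathcal R\) affine in \(x\)) only spells out what the lemma's statement already assumes.
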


\begin{proof}
The factorization and its transpose give
\[
 L(T\boldsymbol z_h)=SM\boldsymbol z_h=0,\qquad M^T(K\boldsymbol z_h)=T^TL(T\boldsymbol z_h)=0.
\]
Thus the lift comes from an exact physical solution. The negative-phase
conjugation \eqref{eq:Kconjugated-minus} is
\[
 e^{\theta\cdot x/h}Ke^{-\theta\cdot x/h}=-h^{-1}J_\theta+K_0.
\]
Substitution into \(y_h^F=-hK\boldsymbol z_h\) gives
\begin{equation}\label{eq:proof-fast-lift-expansion}
 \begin{aligned}
 e^{\theta\cdot x/h}y_h^F
 ={}&J_\theta a_0+h(J_\theta a_1-K_0a_0)
       +J_\theta r_h\\
    &-h^2K_0a_1-hK_0r_h.
 \end{aligned}
\end{equation}
On the fixed ball, the order-one operator \(J_\theta\) and order-two
operator \(K_0\) map the stated amplitude bounds into \(H^3\).
The remainder terms satisfy \(J_\theta r_h=O(h^2)\) and
\(hK_0r_h=O(h^3)\) there, proving the \(O(h)\) error.
\end{proof}

If \(J_\theta a_0=0\), the \(h^{-1}\) term vanishes;
the unscaled lift retains the first correction in
\(K_0a_0-J_\theta a_1\).

\begin{lemma}
\label{lem:slow-physical-lift}
Under the hypotheses of Lemma~\ref{lem:fast-physical-lift}, assume in
addition that \(J_\theta a_0=0\).  Then
\[
 y_h^S=K\boldsymbol z_h
\]
satisfies
\[
 e^{\theta\cdot x/h}y_h^S=K_0a_0-J_\theta a_1+O(h)
 \quad\hbox{in }H^3(\Omega_{\mathrm{CGO}}).
\]
\end{lemma}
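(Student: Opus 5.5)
The plan mirrors the proof of Lemma~\ref{lem:fast-physical-lift}, now without the factor \(-h\). First I record that \(y_h^S=K\boldsymbol z_h\) is an exact physical adjoint field. From \(M\boldsymbol z_h=0\), the factorization \eqref{eq:factorization} gives \(L(T\boldsymbol z_h)=0\). Its transpose \(M^TS^T=T^TL\) then gives \(M^T(K\boldsymbol z_h)=0\). In the background-conjugated case I first pass to the original variables through \(\mathcal R\), exactly as in Lemma~\ref{lem:fast-physical-lift}; the matrices \(\mathcal R^{\pm1}\) preserve Sobolev orders on the fixed balls.

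Next I insert the expansion \(\boldsymbol z_h=e^{-\theta\cdot x/h}(a_0+ha_1+r_h)\) into the negative-phase conjugation \eqref{eq:Kconjugated-minus}, which reads \(e^{\theta\cdot x/h}Ke^{-\theta\cdot x/h}=-h^{-1}J_\theta+K_0\). Expanding gives
\[
 e^{\theta\cdot x/h}y_h^S
 =-h^{-1}J_\theta a_0+\bigl(K_0a_0-J_\theta a_1\bigr)
 +hK_0a_1-h^{-1}J_\theta r_h+K_0r_h .
\]
The hypothesis \(J_\theta a_0=0\) removes the singular first term. This is the one place the extra assumption enters.

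The remaining terms are then estimated in \(H^3(\Omega_{\mathrm{CGO}})\). The operator \(K_0\) has order two with smooth coefficients on the fixed ball, so \(hK_0a_1\) is \(O(h)\) because \(a_1\) is bounded in \(H^5\). The operator \(J_\theta\) has order one, so \(\|J_\theta r_h\|_{H^3}\le C\|r_h\|_{H^4}=O(h^2)\); after the factor \(h^{-1}\) this term is \(O(h)\). Finally \(\|K_0r_h\|_{H^3}\le C\|r_h\|_{H^5}=O(h^2)\).

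The only delicate point is the term \(h^{-1}J_\theta r_h\). It is controlled precisely because the remainder is \(O(h^2)\) rather than \(O(h)\), which is why Theorem~\ref{thm:autonomous-cgo} is applied with \(k=5\) and the first transport correction is included. Uniformity in \(\theta\) over compact families follows because the coefficients of \(J_\theta\) and \(K_0\) depend smoothly on \(\theta\).
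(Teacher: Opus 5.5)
Your proposal is correct and follows the paper's proof: you substitute the expansion into the negative-phase conjugation \(e^{\theta\cdot x/h}Ke^{-\theta\cdot x/h}=-h^{-1}J_\theta+K_0\), use \(J_\theta a_0=0\) to remove the singular term, and bound \(hK_0a_1\), \(h^{-1}J_\theta r_h\), and \(K_0r_h\) in \(H^3\) from the \(O(h^2)\) \(H^5\) remainder. Your \(H^4\) estimate for \(J_\theta r_h\) is slightly sharper than the paper's use of \(H^5\), and your preliminary check that \(K\boldsymbol z_h\) is an exact physical adjoint field is harmless but not needed for the estimate itself.
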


\begin{proof}
With \(J_\theta a_0=0\), \eqref{eq:Kconjugated-minus} gives
\begin{equation}\label{eq:proof-slow-lift-expansion}
 e^{\theta\cdot x/h}y_h^S
 =K_0a_0-J_\theta a_1+hK_0a_1
       -h^{-1}J_\theta r_h+K_0r_h.
\end{equation}
All norms below are phase-removed norms on
\(\Omega_{\mathrm{CGO}}\). The operator orders and the stated bounds give
\begin{equation}\label{eq:proof-slow-lift-error-bounds}
\begin{aligned}
 \|h^{-1}J_\theta r_h\|_{H^3}
 &\leq Ch^{-1}\|r_h\|_{H^5}=O(h),\\
 \|K_0r_h\|_{H^3}
 &\leq C\|r_h\|_{H^5}=O(h^2),\\
 \|hK_0a_1\|_{H^3}&=O(h).
\end{aligned}
\end{equation}
The three bounds in \eqref{eq:proof-slow-lift-error-bounds} prove the limit and exhibit the $h^{-1}$ loss:
the reduced \(O(h^2)\) remainder yields an \(O(h)\) error for the unscaled adjoint lift in
\(H^3\).
\end{proof}

The reduced remainder order \(O(h^2)\) in \(H^5\) ensures
 the stated \(H^3\) convergence and pointwise independence of the
 additional column from the two principal columns on the prescribed compact
 subsets, for all sufficiently small \(h\). A mere \(O(h)\) bound without the required
 derivative control would not force \(h^{-1}J_\theta r_h\) to vanish.
 For the nine scalar entries of the weighted matrix-moment identity,
 the weaker two Sobolev estimates in
 Remark~\ref{rem:audit-vii-low-regularity-moments} already suffice.


\begin{remark}
\label{rem:audit-vii-low-regularity-moments}
The \(H^5\) reduced-remainder hypothesis above is stronger than needed
for the Betti volume limit. In the \(k=1\) case of
Theorem~\ref{lem:ER-solvability}, the residual correction
\(r_h=-R_{h,\theta}^{(1)}(h\chi_0P_0a_1)\), for fixed smooth
amplitudes and cutoffs, satisfies
\begin{equation}\label{eq:audit-vii-two-level-remainder}
 \|r_h\|_{H^1(\Omega_{\mathrm{CGO}})}\leq Ch^2,
 \qquad \|r_h\|_{H^2(\Omega_{\mathrm{CGO}})}\leq Ch.
\end{equation}
For this particular $L^2$ moment limit, it is enough that
$a_1\in H^3$ on a neighborhood of the fixed residual-cutoff support:
$P_0$ is of order two, so $\chi_0P_0a_1\in H^1$.  The source remains
compactly supported in the ball prescribed by
Theorem~\ref{lem:ER-solvability}.  Applying that theorem to the compact
union of the two opposite normalized direction families gives the same
$h_0$ and uniform constants for both phase signs.  This does not
assert direction derivatives of the exact right inverses or a
global-in-space right-inverse equation.
Here \(K_0:H^2\to L^2\) and \(J_\theta:H^1\to L^2\)
are bounded on the fixed ball. When \(J_\theta a_0=0\), the exact
unscaled physical lift obeys
\begin{equation}\label{eq:audit-vii-slow-l2-bound}
\begin{aligned}
 &\left\|e^{\theta\cdot x/h}K\boldsymbol z_h
 -(K_0a_0-J_\theta a_1)\right\|_{L^2}\\
 &\hspace{1.5em}\leq C\left(h\|a_1\|_{H^2}
     +h^{-1}\|r_h\|_{H^1}+\|r_h\|_{H^2}\right)=O(h).
\end{aligned}
\end{equation}
Similarly, the normalized principal physical lifts have \(O(h)\)
phase-removed errors in \(L^2\). For the first-order operator difference
\(D=M_1-M_2\) and a positive-phase primal CGO field
\(\boldsymbol z_{1,h}=e^{\theta\cdot x/h}(a+ha_1+r_{1,h})\), direct
conjugation gives
\[
 h e^{-\theta\cdot x/h}D\boldsymbol z_{1,h}
 =A_\theta a+h(A_\theta a_1+Da)+A_\theta r_{1,h}
    +h^2Da_1+hDr_{1,h}
 =A_\theta a+O_{L^2}(h).
\]
Consequently the nine normalized physical pairings converge with
\(O(h)\) error by Cauchy--Schwarz, provided that the opposite-phase
amplitudes are fixed and smooth on the working ball. Their exact
physical Betti identities are valid for each fixed \(h\); interior
elliptic regularity supplies smoothness near \(\overline{B_R}\).
Only the limiting quotient frame must be invertible for the ensuing
Plemelj argument: pointwise rank of the exact physical adjoint matrix
is not needed to establish the moments. The stronger \(H^5\) result
above remains useful for its \(H^3\) error bound and pointwise-rank
statement.
\end{remark}

\begin{remark}
\label{rem:CGO-uniformity}\label{rem:CGO-scope}
All applications use the fixed nested balls and source support of
Theorem~\ref{lem:ER-solvability}, with the finite bounds of
Theorem~\ref{thm:autonomous-cgo}. Physical lifts use the original
operators after undoing the background conjugation. Moment limits are
taken at fixed direction and fixed tests; parameter derivatives concern
only the smooth limiting frames and the reconstructed transport.
\end{remark}

Choose an orthonormal triple \(e_1,e_2,\nu_\theta\) and set
\[
 \theta=e_1+\mathrm i e_2,
 \qquad
 z=e_1\cdot x+\mathrm i e_2\cdot x,
 \qquad
 x_\perp=\nu_\theta\cdot x.
\]
Here \(x_\perp\) is a real transverse coordinate, distinct from
the homogeneous spinor coordinate used in Section~\ref{sec:reduction}.
Then
\[
 D_\theta=e_1\cdot\nabla+\mathrm i e_2\cdot\nabla
          =2\partial_{\bar z}.
\]
Hence every amplitude
\[
 a_0=b(x_\perp)H(z),
\]
with \(b\in C_c^\infty(\R)\) and \(H\) holomorphic, satisfies \(D_\theta a_0=0\).

Under the frame and domain hypotheses of
Theorem~\ref{thm:autonomous-cgo}, multiplication by \(b(x_\perp)g(z)\)
preserves a smooth leading transport amplitude, with \(g\) any
holomorphic polynomial. The theorem realizes each resulting amplitude
directly by the localized transport recursion and the residual right
inverse; no polynomial approximation of a transport frame is needed.

The opposite exponential phases cancel in the bilinear
pairing, and the leading-order contribution is determined
by the first-order symbol of the operator difference.

\begin{lemma}\label{lem:CGO-product-limit}
Let \(D=A(x)\cdot\nabla+B(x)\) have smooth matrix coefficients
on a neighborhood of \(\overline{\Omega_{\mathrm{CGO}}}\), where
\(\Omega_{\mathrm{CGO}}\subset\mathbb{R}^3\) is a fixed bounded domain.
Suppose that \(\theta\) ranges over a compact set and,
uniformly in the parameter family, that
\[
 \boldsymbol z_h=e^{\theta\cdot x/h}(a+h a_1+r_h),\qquad
 y_h=e^{-\theta\cdot x/h}(y+q_h),
\]
where \(a,a_1,y\) are bounded in \(H^3\),
\(\|r_h\|_{H^3}\leq Ch^2\), and \(\|q_h\|_{H^3}\leq Ch\).  Then
\begin{equation}\label{eq:CGO-product-limit}
 \left|h\int_{\Omega_{\mathrm{CGO}}} y_h^T D\boldsymbol z_h\,dx-\int_{\Omega_{\mathrm{CGO}}} y^T A_\theta a\,dx\right|\leq Ch,
 \qquad A_\theta:=\sum_{k=1}^3\theta_kA_k.
\end{equation}
\end{lemma}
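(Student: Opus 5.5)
The plan is to cancel the exponential phases and expand the product term by term. Since \(\theta\) is constant, we have \(\nabla(e^{\theta\cdot x/h}w)=e^{\theta\cdot x/h}(\nabla w+h^{-1}\theta w)\). Hence
\[
 hD\boldsymbol z_h
 =e^{\theta\cdot x/h}\bigl(A_\theta(a+ha_1+r_h)+hD(a+ha_1+r_h)\bigr).
\]
In the bilinear product with \(y_h\) the factors \(e^{\pm\theta\cdot x/h}\) cancel exactly; no complex conjugation is involved, so it does not matter that \(\theta\) is complex. This gives
\[
 h\,y_h^TD\boldsymbol z_h=(y+q_h)^T\bigl(A_\theta a+hA_\theta a_1+A_\theta r_h+hDa+h^2Da_1+hDr_h\bigr).
\]

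We then subtract \(y^TA_\theta a\) and bound each of the remaining terms in \(L^1(\Omega_{\mathrm{CGO}})\) by Cauchy--Schwarz. Because \(\Omega_{\mathrm{CGO}}\) is bounded and the coefficients are smooth on a neighborhood of its closure, the following hold:
\begin{itemize}
\item multiplication by \(A_\theta\) is bounded on \(L^2\), with a constant uniform for \(\theta\) in the compact set;
\item \(D\) maps \(H^1\to L^2\) boundedly;
\item \(y,a,a_1\) are uniformly bounded in \(H^3\subset L^2\), and \(Da,Da_1\) are bounded in \(L^2\);
\item \(q_h=O(h)\) and \(r_h=O(h^2)\) in \(H^3\), hence in \(L^2\), and \(Dr_h=O(h^2)\) in \(L^2\).
\end{itemize}

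Each term is then of the required size:
\begin{itemize}
\item \(q_h^TA_\theta a\) is \(O(h)\);
\item \(hA_\theta a_1\) and \(hDa\) are \(O(h)\) in \(L^2\);
\item \(A_\theta r_h\) is \(O(h^2)\);
\item \(h^2Da_1\) is \(O(h^2)\), and \(hDr_h\) is \(O(h^3)\).
\end{itemize}
Pairing each with \(y+q_h\), which is bounded in \(L^2\), leaves a total error of at most \(Ch\). All constants come from the uniform bounds in the parameter family and from the \(\theta\)-compactness through \(|\theta|\), so \(C\) is uniform.

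There is no real obstacle here. The only care needed is to confirm that the conjugation formula produces no \(h^{-2}\) term: \(D\) is first order, so the only singular factor is \(h^{-1}\theta\), and this is absorbed by the prefactor \(h\). One should also use that \(D\) contains no \(\theta\)-dependent singular pieces beyond \(A_\theta\). Much weaker hypotheses than \(H^3\) would suffice; \(H^1\) control of \(r_h\) and \(a\) is enough.
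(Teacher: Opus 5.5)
Your proof is correct and follows essentially the same route as the paper. The paper likewise conjugates by the phase to write $he^{-\theta\cdot x/h}D\boldsymbol z_h=A_\theta a+h\widetilde e_h$ with the same six-term expansion, cancels the phases exactly in the bilinear pairing, and bounds the remainder by Cauchy--Schwarz using only $L^2$ and $H^1$ norms. Your closing aside omits that $\|a_1\|_{H^1}$ is also needed for the $h^2Da_1$ term, but this does not affect the proof.
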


\begin{proof}
Conjugating \(D\) by the positive phase gives
\begin{equation}\label{eq:proof-cgo-product-conjugation}
 h e^{-\theta\cdot x/h}D\boldsymbol z_h
 =A_\theta a+h\widetilde e_h,
\end{equation}
where the full expansion is
\begin{equation}\label{eq:proof-cgo-product-error-amplitude}
 \widetilde e_h=A_\theta a_1+Da+hDa_1
                 +h^{-1}A_\theta r_h+Dr_h.
\end{equation}
Indeed, we expand directly to obtain
\[
 h(A_\theta/h+D)(a+ha_1+r_h)
 =A_\theta a+hA_\theta a_1+A_\theta r_h
       +hDa+h^2Da_1+hDr_h.
\]
The coefficients are bounded on \(\Omega_{\mathrm{CGO}}\), and
\(D:H^1(\Omega_{\mathrm{CGO}})\to L^2(\Omega_{\mathrm{CGO}})\) is bounded. Applying these bounds to
\eqref{eq:proof-cgo-product-error-amplitude} gives
\begin{equation}\label{eq:proof-cgo-product-error-bound}
\begin{aligned}
\|\widetilde e_h\|_{L^2(\Omega_{\mathrm{CGO}})}
\le C\bigl(
&\|a_1\|_{L^2(\Omega_{\mathrm{CGO}})}
+\|a\|_{H^1(\Omega_{\mathrm{CGO}})}
+h\|a_1\|_{H^1(\Omega_{\mathrm{CGO}})}\\
&+h^{-1}\|r_h\|_{L^2(\Omega_{\mathrm{CGO}})}
+\|r_h\|_{H^1(\Omega_{\mathrm{CGO}})}
\bigr)
\le C.
\end{aligned}
\end{equation}
Here we used \(\|r_h\|_{H^3(\Omega_{\mathrm{CGO}})}\le Ch^2\) and the
stated bounds for the amplitudes. We use
\eqref{eq:proof-cgo-product-conjugation} and cancel the
exponential factors to obtain the exact identity
\begin{equation}\label{eq:proof-cgo-product-integral-error}
\begin{aligned}
h\int_{\Omega_{\mathrm{CGO}}} y_h^T D \boldsymbol z_h\,dx
-\int_{\Omega_{\mathrm{CGO}}} y^T A_\theta a\,dx
={}&\int_{\Omega_{\mathrm{CGO}}} q_h^T A_\theta a\,dx\\
&+h\int_{\Omega_{\mathrm{CGO}}} (y+q_h)^T\widetilde e_h\,dx.
\end{aligned}
\end{equation}
By \eqref{eq:proof-cgo-product-integral-error},
\eqref{eq:proof-cgo-product-error-bound}, and the Cauchy--Schwarz inequality,
\[
\begin{aligned}
\left|
h\int_{\Omega_{\mathrm{CGO}}} y_h^T D \boldsymbol z_h\,dx
-\int_{\Omega_{\mathrm{CGO}}} y^T A_\theta a\,dx
\right|
\le{}&
\|q_h\|_{L^2(\Omega_{\mathrm{CGO}})}
\|A_\theta a\|_{L^2(\Omega_{\mathrm{CGO}})}\\
&+h\bigl(
\|y\|_{L^2(\Omega_{\mathrm{CGO}})}
+\|q_h\|_{L^2(\Omega_{\mathrm{CGO}})}
\bigr)
\|\widetilde e_h\|_{L^2(\Omega_{\mathrm{CGO}})}
\le Ch.
\end{aligned}
\]
The constant is uniform on the stated compact parameter
family: the coefficients are fixed, the directions range
over a compact set, and all amplitude and remainder bounds
are uniform.
\end{proof}

Lemma~\ref{lem:CGO-product-limit} isolates the directional
first-order coefficient \(A_\theta\) in the bilinear volume integral.
After multiplication by \(h\), the zeroth-order operator and the
derivatives of the fixed amplitudes contribute only to the error.
This is the limit used for the coefficient difference in the boundary
identity \eqref{eq:primal-adjoint}.

\subsection{The principal adjoint amplitudes}

The two principal adjoint amplitudes arise from a two-component
leafwise transport system whose columns are lifted to the full reduced
transport equation. The required invertible matrix frames are
constructed on a fixed disk, with local smooth dependence on the
direction and transverse parameters.

\begin{lemma}\label{lem:matrix-dbar-frame}
Let \(m\) be a positive integer, let \(D\Subset\C\) be a disk, and let \(\mathcal B\)
be a smooth \(M_m(\C)\)-valued function on a neighborhood of
\(\overline D\). Then there is a smooth invertible matrix \(\Phi\)
on a neighborhood of \(\overline D\) satisfying
\begin{equation}\label{eq:matrix-dbar-frame}
 2\partial_{\bar z}\Phi=\mathcal B\Phi\quad\text{in }D.
\end{equation}
If \(\mathcal B(z,p)\) is jointly smooth on a fixed neighborhood of
\(\overline D\) for \(p\) in a finite-dimensional real parameter chart,
then, locally near each \(p_0\), the frame may be chosen jointly smooth
in \(z\) and \(p\). Its derivatives of every fixed finite order and
its inverse are bounded on compact spatial and parameter subcharts.
\end{lemma}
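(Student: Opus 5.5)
The plan is to reduce the matrix $\bar\partial$-equation \eqref{eq:matrix-dbar-frame} on the disk to a small-norm integral equation on subdisks, solve it by a Neumann series, and then patch the local frames together. Concretely, I would first treat a \emph{small} disk $D_r$ around a point $z_0\in\overline D$. Let $\mathcal C$ denote the Cauchy operator $\mathcal C F(z)=\frac1{2\pi}\int_{D_r'}\frac{F(w)}{z-w}\,dA(w)$ on a slightly larger disk $D_r'$, so that $2\partial_{\bar z}\mathcal C F=F$ there, consistent with Lemma~\ref{lem:cauchy-leaf-estimate}. I would look for $\Phi=I+\mathcal C(\mathcal B\Phi)$, i.e.\ $(I-\mathcal C\mathcal B)\Phi=I$. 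On $C^{\alpha}(\overline{D_r'})$ the operator norm of $F\mapsto\mathcal C(\chi\mathcal B F)$ is $O(r^{1-\alpha'})$ for a fixed cutoff scale, and it tends to zero as $r\to0$. Hence for small $r$ the Neumann series converges, $\Phi$ is H\"older, and $\|\Phi-I\|_\infty<1/2$, so $\Phi$ is invertible. Bootstrapping with the Schauder estimate \eqref{eq:anisotropic-cauchy} then gives smoothness.

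The second step globalizes to the fixed disk $D$. Here I would avoid a multiplicative Cousin problem. Instead, I would use that on a disk (a contractible Stein domain) every smooth matrix $\bar\partial$-connection is gauge-trivial. The cleanest route I see is the following. Cover $\overline D$ by finitely many small disks $D_{r}(z_j)$ with local frames $\Phi_j$. On overlaps, the transition matrices $g_{jk}=\Phi_j^{-1}\Phi_k$ satisfy $\partial_{\bar z}g_{jk}=0$, so they are holomorphic and $GL_m$-valued. This defines a holomorphic vector bundle over a neighborhood of $\overline D$. By Grauert--Oka (or elementary Cartan splitting, since the domain is planar and simply connected), that bundle is holomorphically trivial, so $g_{jk}=h_j h_k^{-1}$ with holomorphic invertible $h_j$. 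Then $\Phi=\Phi_j h_j$ is globally well defined, smooth, invertible, and satisfies \eqref{eq:matrix-dbar-frame}, since right multiplication by holomorphic matrices preserves the equation.

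An alternative I would try first, to stay self-contained, is a continuity/rescaling argument along a one-parameter family of disks. Alternatively, I would solve $2\partial_{\bar z}\Phi=\mathcal B\Phi$ by first solving the scalar-trace and then iterating on nested disks using the Cartan lemma for matrix factorization on two overlapping convex sets, which is classical.

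For parameters $p$, each step is uniform. The Neumann series depends smoothly on $p$ because $\mathcal B(\cdot,p)$ enters linearly with fixed kernel, and parameter derivatives commute with $\mathcal C$. Smallness holds uniformly near $p_0$, so the local frames are jointly smooth. The main obstacle is the globalization: the Cartan splitting must be done with smooth dependence on $p$. I would handle this by fixing the cover at $p_0$, noting that the transition cocycle is a smooth family of holomorphic cocycles. I would then apply the parametric Cartan lemma, which again proceeds by a Neumann series for a small perturbation of the splitting at $p_0$, with $\bar\partial$-solved via a fixed Cauchy kernel. Bounds on derivatives and on $\Phi^{-1}$ on compact subcharts then follow from the explicit series and the invertibility established at $p_0$.
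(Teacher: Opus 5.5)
Your proposal is correct, but it handles the parameter step by a different and heavier route than the paper. For a fixed parameter the two arguments agree in substance. Your small-disk Neumann series is an explicit version of the Koszul--Malgrange input the paper cites. Splitting your holomorphic transition cocycle $g_{jk}=\Phi_j^{-1}\Phi_k$ by Grauert--Oka is exactly the statement that the resulting holomorphic bundle on the disk is trivial.

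The two arguments diverge in how smooth dependence on $p$ is obtained. The paper never returns to the cover. It gauges by the single global frame $\Phi_0$ built at $p_0$ and sets $K_p=\tfrac12\chi\Phi_0^{-1}(\mathcal B_p-\mathcal B_{p_0})\Phi_0$ on a fixed larger disk. It then solves $\Psi_p=(I-S_p)^{-1}I_m$ with $S_pu=\mathcal C_{\bar\partial}(K_pu)$. Here $S_p$ is small because $p$ is close to $p_0$, not because the radius is small. Hence $\|\Psi_p-I_m\|<1/2$, and $\Phi_p=\Phi_0\Psi_p$ is the jointly smooth invertible frame, with no cocycle and no parametric splitting.

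Your route instead needs a parametric Cartan lemma for a finite cover, not merely for a Cartan pair. That lemma is true, but it is extra machinery that you only sketch. Unpacked, it goes as follows: glue the local frames $\Phi_j(p)h_j$ by a partition of unity into an approximate global frame close to $\Phi_{p_0}$, then remove the small $\bar\partial$-defect by a Neumann series. This is precisely the paper's perturbation argument. So your own first step, applied once on the whole disk after gauging by $\Phi_{p_0}$, already gives the parametric statement. The paper's version buys brevity; yours buys a self-contained local existence step in place of the Koszul--Malgrange citation. The side alternatives you mention (continuity in the radius, trace-then-iterate) are not developed and are not needed.
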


\begin{proof}
We choose disks \(D\Subset D_1\Subset D_2\) inside the coefficient
neighborhood and shrink the parameter chart if needed so that these
disks are fixed for all nearby parameters.
For a fixed parameter \(p_0\), the operator
\(\bar\partial-\mathcal B_{p_0}d\bar z/2\) is integrable, since complex
dimension one has no nonzero \((0,2)\)-forms. The Koszul--Malgrange theorem
\cite{KoszulMalgrange1958} gives a holomorphic bundle structure. The underlying
smooth bundle on the contractible Stein disk \(D_2\) is trivial; the
Oka--Grauert principle \cite{Grauert1958} therefore makes it holomorphically
trivial. Thus it has a smooth invertible frame
\(\Phi_0\) solving \(\bar\partial\Phi_0=\mathcal B_{p_0}\Phi_0/2\).

Parameter dependence is obtained by the following local construction.
Choose \(\chi\in C_c^\infty(D_1)\) equal to one near \(\overline D\), and set
\[
 K_p=\tfrac12\chi\Phi_0^{-1}(\mathcal B_p-\mathcal B_{p_0})\Phi_0.
\]
The support of \(K_p\) lies in the fixed compact set
\(\supp\chi\Subset D_1\). Thus, for
\(u\in C^{0,\alpha}(\overline{D_1})\), the product \(K_pu\) extends
by zero to a compactly supported \(C^{0,\alpha}\) function on the plane.
Let \(\mathcal C_{\bar\partial}\) denote the planar Cauchy right inverse, normalized
by \(\bar\partial\mathcal C_{\bar\partial} f=f\). For $0<\alpha<1$ and
$f\in C^{0,\alpha}(\mathbb C)$ supported in the fixed compact set
$\supp\chi\Subset D_1$, its standard Cauchy--Schauder bound is

\begin{equation}\label{eq:planar-dbar-Holder-bound}
 \|\mathcal C_{\bar\partial} f\|_{C^{1,\alpha}(\overline{D_1})}
 \leq C_{\alpha,D_1,\chi}\|f\|_{C^{0,\alpha}(\mathbb C)}.
\end{equation}

Consequently \(S_pu=\mathcal C_{\bar\partial}(K_pu)|_{D_1}\) is bounded on
\(C^{0,\alpha}(\overline{D_1})\), with norm tending to zero as
\(p\to p_0\).
On a sufficiently small parameter neighborhood the Neumann series defines
\begin{equation}\label{eq:proof-dbar-neumann-frame}
 \Psi_p=(I-S_p)^{-1}I_m,
 \qquad \|\Psi_p-I_m\|_{C^0(\overline{D_1})}<\tfrac12.
\end{equation}
In particular \(\Psi_p\) is invertible. Equation~\eqref{eq:proof-dbar-neumann-frame} gives
\(\bar\partial\Psi_p=K_p\Psi_p\), so \(\Phi_p=\Phi_0\Psi_p\) obeys
\begin{equation}\label{eq:proof-dbar-cutoff-frame-equation}
 \bar\partial\Phi_p
 =\frac12\bigl[\mathcal B_{p_0}
       +\chi(\mathcal B_p-\mathcal B_{p_0})\bigr]\Phi_p.
\end{equation}
Where \(\chi=1\), \eqref{eq:proof-dbar-cutoff-frame-equation} is
\eqref{eq:matrix-dbar-frame}, including on a
neighborhood of \(\overline D\). Smooth dependence of the Banach-space
inverse in \eqref{eq:proof-dbar-neumann-frame} gives parameter dependence in \(C^{0,\alpha}\).
Equation~\eqref{eq:proof-dbar-neumann-frame}, Cauchy estimates, and parameter differentiation then give
all mixed spatial and parameter derivatives on the same parameter
neighborhood. Hence \(\Phi_p\) is jointly smooth. On compact parameter
subcharts, compactness
of \(\overline D\) supplies uniform bounds for each fixed derivative and
for \(\Phi_p^{-1}\).
\end{proof}

The parameter-local choice of frame is sufficient for the
CGO construction: on a compact subchart, the frame and its inverse
have the finite derivative bounds required by
Theorem~\ref{thm:autonomous-cgo}. We apply
Lemma~\ref{lem:matrix-dbar-frame} with two components to the coupled two-component transport system.

Fix locally a normalized direction \(\theta=e_1+\mathrm i e_2\)
and use \(z=e_1\cdot x+\mathrm i e_2\cdot x\), so
\(D_\theta=2\partial_{\bar z}\). The direction is held fixed when
differentiating in \(x\); the direction and transverse coordinate
are auxiliary parameters for the leafwise frame construction.
Take \(\boldsymbol z_0=(v,f)\) satisfying
\begin{equation}\label{eq:proof-fast-leading-transport}
 (2D_\theta+A_\theta)\boldsymbol z_0=0.
\end{equation}
Define
\begin{equation}\label{eq:Rfast}
 R=c\,\theta\cdot v.
\end{equation}
The scalar block of \eqref{eq:proof-fast-leading-transport},
using \eqref{eq:Atheta}, gives
\(2D_\theta f=-\kappa\sqrt\mu\,\theta\cdot v\).
Contracting the vector block of \eqref{eq:proof-fast-leading-transport}
with \(\theta\), and using
\(D_\theta c=-\tfrac12cD_\theta\ell\), cancels the terms containing
\(D_\theta\ell\) and gives
\(D_\theta R=-\tfrac c2 f\theta^TF_\mu\theta\).
The scalar and contracted vector blocks of
\eqref{eq:proof-fast-leading-transport}, together with
\eqref{eq:Rfast} and \eqref{eq:fast-null-contraction} below, yield
\begin{align}
 D_\theta f&=-\frac{\mu\kappa}{2}R,\label{eq:fast1}\\
 D_\theta R&=fD_\theta^2(\mu^{-1}).\label{eq:fast2}
\end{align}
Equations~\eqref{eq:fast1}--\eqref{eq:fast2} show that
\((f,R)^T\) satisfies the following matrix
\(\bar\partial\)-system on each complex leaf:
\begin{equation}\label{eq:fast-matrix-dbar}
 2\partial_{\bar z}
 \binom fR
 =
 \begin{pmatrix}
 0&-\dfrac{\mu\kappa}{2}\\[1mm]
 D_\theta^2(\mu^{-1})&0
 \end{pmatrix}
 \binom fR.
\end{equation}
Apply Lemma~\ref{lem:matrix-dbar-frame} to
\eqref{eq:fast-matrix-dbar} on a slightly larger leaf disk and obtain
a smooth invertible fundamental matrix
\[
\Phi=\begin{pmatrix}f_1&f_2\\ R_1&R_2\end{pmatrix}\in GL(2,\C)
\]
on a neighborhood of the closed leaf disk, locally smoothly in the
direction and transverse parameters. Since the matrix in
\eqref{eq:fast-matrix-dbar} has trace zero, \(D_\theta\det\Phi=0\).
The determinant is therefore a nonvanishing holomorphic function of
the leaf coordinate. Multiplying the first column by
\((\det\Phi)^{-1}\) preserves the transport equation and gives the
normalization
\begin{equation}\label{eq:fast-frame-normalization}
 \det\Phi=1.
\end{equation}
The normalization \eqref{eq:fast-frame-normalization} changes only leading transport amplitudes; exact CGO solutions are
constructed for the resulting amplitudes below.

The two columns admit an explicit ambient lift. We put
\(\alpha_\mu=D_\theta^2(\mu^{-1})\). Using \eqref{eq:Fmu} and
\(\theta\cdot\theta=0\), we obtain
\begin{equation}\label{eq:fast-null-contraction}
 \alpha_\mu=-\frac c2\theta^TF_\mu\theta.
\end{equation}
Indeed, nullness removes the scalar identity term in \(F_\mu\), and
\[
 D_\theta^2(\mu^{-1})
 =\mu^{-1}\bigl[(D_\theta\ell)^2-D_\theta^2\ell\bigr],
 \qquad
 \theta^TF_\mu\theta
 =2c\bigl[D_\theta^2\ell-(D_\theta\ell)^2\bigr].
\]
We choose \(a=a(\theta)\), constant in \(x\), with \(\theta\cdot a=1\).
For normalized directions, \(\theta\cdot\overline\theta=2\), so one
may take \(a=\overline\theta/2\). This choice is smooth in the real
direction parameters.
For a column \((f,R)\) of \(\Phi\), we define
\begin{equation}\label{eq:proof-fast-ambient-defect}
 H=2D_\theta(c^{-1}Ra)-mc^{-1}R+F_\mu\theta f.
\end{equation}
Equations~\eqref{eq:proof-fast-ambient-defect}, \eqref{eq:fast2}, and
\eqref{eq:fast-null-contraction} give
\begin{equation}\label{eq:proof-fast-defect-contraction}
 \theta\cdot H=2c^{-1}(D_\theta R-\alpha_\mu f)=0.
\end{equation}
Choose a planar cutoff \(\chi_{\rm leaf}\) supported in the larger
disk and equal to one near the closed working disk. With the
componentwise inverse of Lemma~\ref{lem:cauchy-leaf-estimate}, we set
\begin{equation}\label{eq:proof-fast-cauchy-correction}
 w=-\frac12\mathcal C_\theta(\chi_{\rm leaf}H).
\end{equation}
Equation~\eqref{eq:proof-fast-cauchy-correction} and
Lemma~\ref{lem:cauchy-leaf-estimate} give \(2D_\theta w=-H\)
where \(\chi_{\rm leaf}=1\). Since
\(\theta\cdot H=0\) by \eqref{eq:proof-fast-defect-contraction}
throughout the cutoff support and \(\theta\)
is constant in \(x\), the integral also gives \(\theta\cdot w=0\).
Using \eqref{eq:fast1}, \eqref{eq:Rfast},
\eqref{eq:proof-fast-ambient-defect}, and
\eqref{eq:proof-fast-cauchy-correction}, we verify that
\begin{equation}\label{eq:proof-fast-ambient-amplitude}
 \boldsymbol z_0^{F}=(c^{-1}Ra+w,f)^T
\end{equation}
solves both blocks of \eqref{eq:proof-fast-leading-transport} near the closed
working disk. After pulling back by the local smooth rotations, the
disks and cutoff may be fixed. Lemma~\ref{lem:cauchy-leaf-estimate} then gives smooth
dependence on the auxiliary parameters and bounded derivatives on
compact spatial and parameter subcharts.

Using \eqref{eq:proof-fast-ambient-amplitude}, \eqref{eq:Jtheta},
and \eqref{eq:fast1}, we obtain
\begin{equation}\label{eq:Jfast}
 J_\theta \boldsymbol z_0^{F,j}=
 \begin{pmatrix}
 cf_j\theta\\
 -R_j
 \end{pmatrix}
 =:Y_F^{(j)}.
\end{equation}
The dual transport of
\(Y_F^{(j)}\) can also be verified directly. For arbitrary \(f,R\),
we put \(E_f=D_\theta f+\mu\kappa R/2\) and
\(E_R=D_\theta R-\alpha_\mu f\). Then
\begin{equation}\label{eq:fast-dual-defect}
 (2D_\theta-A_\theta^T)\binom{cf\theta}{-R}
       =\binom{2cE_f\theta}{-2E_R}.
\end{equation}
Equations~\eqref{eq:fast1}--\eqref{eq:fast2} make both defects vanish.
The columns also belong to \(E_\theta^*\), because
\(\theta^T(cf_j\theta)=0\). In the ordered basis
\(\bigl((\theta,0)^T,(0,1)^T\bigr)\), their coefficient matrix has
determinant
\begin{equation}\label{eq:proof-fast-column-determinant}
 \det\begin{pmatrix}cf_1&cf_2\\-R_1&-R_2\end{pmatrix}
 =-c\det\Phi=-c\ne0.
\end{equation}
Equation~\eqref{eq:proof-fast-column-determinant} shows that
the two columns form a basis of the two-dimensional image of the leading adjoint lift
\begin{equation}\label{eq:fast-space}
 \mathcal F_\theta=
 \Span\left\{\binom\theta0,\binom01\right\}.
\end{equation}

Theorem~\ref{thm:graded-frame} below realizes the localized amplitudes
as exact negative-phase fields
\(\boldsymbol z_h^{F,j}=e^{-\theta\cdot x/h}
(\chi \boldsymbol z_0^{F,j}+ha_1^{F,j}+r_h^{F,j})\), with
\(r_h^{F,j}=O(h^2)\) in \(H^5(\Omega_{\mathrm{CGO}})\).
Their normalized physical adjoint lifts are
\(\widehat y_h^{F,j}=-hS^TT\boldsymbol z_h^{F,j}\).
By Lemma~\ref{lem:fast-physical-lift}, on every fixed smooth
\(\Omega_{\mathrm{CGO}}'\Subset\Omega_{\mathrm{CGO}}\) contained in
an open region where \(\chi\) is identically one,
\begin{equation}\label{eq:fast-asymptotic}
 e^{\theta\cdot x/h}\widehat y_h^{F,j}=Y_F^{(j)}+O(h)
 \quad\text{in }H^3(\Omega_{\mathrm{CGO}}').
\end{equation}

The ambient correction \(w\) in
\eqref{eq:proof-fast-cauchy-correction} has
\(\theta\cdot w=0\), so it leaves the contracted component \(R\)
unchanged. Consequently, the principal adjoint columns in \eqref{eq:Jfast}
depend only on the two-component system for \((f,R)\), even though
their exact physical realization uses the full four-component field.

\subsection{The additional adjoint amplitude}

The principal amplitudes \(Y_F^{(1)},Y_F^{(2)}\) span a
two-dimensional subspace of the rank-three dual quotient. The
supplementary direction is obtained by canceling the leading physical
adjoint lift and retaining its first nonzero correction. Its exact
physical realization permits the use of the elastic boundary identity
\eqref{eq:primal-adjoint}.

For the normalized direction \(\theta\), we choose locally a smooth
spinor lift \((s,t)\) with \(\theta=\theta(s,t)\) as in
\eqref{eq:theta-spinor}. Here \(s,t\) are complex spinor coordinates;
we use \(x_\perp\) for the real transverse coordinate below. We hold the
lift fixed when differentiating in \(x\). In the corresponding
spinor frame \eqref{eq:qspinor}, we choose the spatially constant
local polarization
\begin{equation}\label{eq:slow-polarization-normalization}
 \begin{array}{lll}
 s\ne0:&q_\theta=q_1/s,&\xi_q=(s^{-1},0)^T,\\
 t\ne0:&q_\theta=q_2/t,&\xi_q=(0,t^{-1})^T.
 \end{array}
 \qquad q_\theta=Q\xi_q,\quad \det(n,\xi_q)=-1.
\end{equation}
The determinant \(\det(n,\xi_q)=-1\) makes \(n,\xi_q\)
linearly independent. Since \(Q\) is injective and
\(Qn=-\theta/2\) by \eqref{eq:Qn},
\eqref{eq:slow-polarization-normalization} implies that \(q_\theta\notin\C\theta\).
The identity \(\theta^TQ=0\) also gives
\(q_\theta\cdot\theta=0\). These choices define one local column
on each spinor chart. We set
\begin{equation}\label{eq:slow-z0}
 \boldsymbol z_0^S=\binom{q_\theta}{0}.
\end{equation}
Using \eqref{eq:slow-z0}, \eqref{eq:Atheta}, and \eqref{eq:Jtheta},
together with spatial constancy and \(\theta\cdot q_\theta=0\), we obtain
\begin{equation}\label{eq:proof-slow-leading-transport}
 (2D_\theta+A_\theta)\boldsymbol z_0^S=0,
 \qquad
 J_\theta \boldsymbol z_0^S=0.
\end{equation}
Using \eqref{eq:M-explicit} and \eqref{eq:slow-z0}, we compute the
source for the first correction:
\begin{equation}\label{eq:proof-slow-correction-source}
 M_{\mu,\kappa}\boldsymbol z_0^S
 =\binom{V_\mu q_\theta}{-e_{\mu,\kappa}m\cdot q_\theta}.
\end{equation}
On a slightly larger leaf disk we choose a smooth invertible ambient
frame \(C\) satisfying \((2D_\theta+A_\theta)C=0\), by
Lemma~\ref{lem:matrix-dbar-frame}. We choose \(\chi_{\rm leaf}\) as a planar
cutoff supported there and equal to one near the closed working disk.
The identity \((2D_\theta+A_\theta)(Cw)=2CD_\theta w\) shows that
\begin{equation}\label{eq:proof-slow-cauchy-correction}
 \boldsymbol z_1^S=\frac12C\mathcal C_\theta
       \bigl(\chi_{\rm leaf}C^{-1}M_{\mu,\kappa}\boldsymbol z_0^S\bigr)
\end{equation}
is a smooth first correction. Applying the Cauchy inverse of
Lemma~\ref{lem:cauchy-leaf-estimate} to \eqref{eq:proof-slow-cauchy-correction}
with source \eqref{eq:proof-slow-correction-source}, on fixed disks after the local
coordinate pullback, gives smooth dependence on the direction and
transverse parameters. Near the closed working disk it satisfies
\begin{equation}\label{eq:slow-z1}
 (2D_\theta+A_\theta)\boldsymbol z_1^S=M_{\mu,\kappa}\boldsymbol z_0^S.
\end{equation}
Theorem~\ref{thm:graded-frame} realizes an exact negative-phase field
\(\boldsymbol z_h^S\), using the commutator source \([M,\chi]\boldsymbol z_0^S\) to preserve
\(\boldsymbol z_0^S,\boldsymbol z_1^S\) on the open region where the transverse cutoff is identically one. Its reduced remainder
\(r_h^S\) is \(O(h^2)\) in \(H^5\). Lemma~\ref{lem:slow-physical-lift}
therefore gives, on each fixed smooth subdomain
\(\Omega_{\mathrm{CGO}}'\Subset\Omega_{\mathrm{CGO}}\) in that open constant-cutoff region,
\begin{equation}\label{eq:Yslow}
 e^{\theta\cdot x/h}S^TT\boldsymbol z_h^S=Y_S+O(h),
 \qquad Y_S=K_0\boldsymbol z_0^S-J_\theta \boldsymbol z_1^S,
\end{equation}
with error in \(H^3(\Omega_{\mathrm{CGO}}')\).
For each fixed \(h\), the exact reduced solution defines the physical
solution \(u_h^S=T\boldsymbol z_h^S\), and its canonical adjoint lift is
\(S^Tu_h^S=K\boldsymbol z_h^S\). The condition \(J_\theta \boldsymbol z_0^S=0\)
cancels the leading term of the conjugated lift; keeping the lift
unscaled retains \(K_0\boldsymbol z_0^S-J_\theta \boldsymbol z_1^S\).
Since \(J_\theta \boldsymbol z\in\mathcal F_\theta\) and the vector component of
\(K_0\boldsymbol z_0^S\) is \(q_\theta\),
\begin{equation}\label{eq:Yslow-mod}
 Y_S\equiv\binom{q_\theta}{0}\pmod{\mathcal F_\theta}.
\end{equation}
Thus \eqref{eq:slow-polarization-normalization} gives
\(Y_S\notin\mathcal F_\theta\).
More explicitly, we write
\(\boldsymbol z_1^S=(v_1,f_1)\), \(R_1=c\theta\cdot v_1\). The scalar component of
\eqref{eq:slow-z1} gives
\begin{equation}\label{eq:proof-slow-scalar-correction}
 D_\theta f_1=-\frac{\mu\kappa}{2}R_1
               -\frac{e_{\mu,\kappa}}2m\cdot q_\theta.
\end{equation}
The scalar component of \(K_0\boldsymbol z_0^S\) is
\(c\,m\cdot q_\theta/[2(1-\kappa)]\). The scalar component of
\(K_0\boldsymbol z_0^S-J_\theta \boldsymbol z_1^S\) is therefore
\begin{equation}\label{eq:proof-slow-scalar-lift}
 \frac{R_1+2\mu^{-1}D_\theta f_1
                 +\tfrac12c\,m\cdot q_\theta}{1-\kappa}.
\end{equation}
Using \eqref{eq:proof-slow-scalar-correction} for \(D_\theta f_1\) and
\(\mu^{-1}e_{\mu,\kappa}=\tfrac12(3\kappa-2)c\), which follows from
\eqref{eq:emu}, reduces \eqref{eq:proof-slow-scalar-lift} to
\(R_1+\tfrac32c\,m\cdot q_\theta\), giving
\begin{equation}\label{eq:slow-column-explicit}
 Y_S=\binom{q_\theta-cf_1\theta}
              {R_1+\frac32c\,m\cdot q_\theta}.
\end{equation}
Equation~\eqref{eq:slow-column-explicit} also permits a direct check of the dual transport equation. The
vector component of \eqref{eq:slow-z1} yields
\begin{equation}\label{eq:proof-slow-contracted-correction}
 D_\theta R_1=f_1D_\theta^2(\mu^{-1})
                  +\frac c2\theta^TV_\mu q_\theta.
\end{equation}
For the remaining coefficient identity, \eqref{eq:Fmu}--\eqref{eq:Vmu}
give
\begin{equation}\label{eq:proof-slow-coefficient-expansions}
 \begin{aligned}
 F_\mu={}&-4\nabla^2c-4c^{-1}\nabla c\otimes\nabla c
       +(4\Delta c+4c^{-1}|\nabla c|^2)I_3,\\
 cV_\mu={}&2\nabla^2c-4c^{-1}\nabla c\otimes\nabla c
       +(\Delta c-2c^{-1}|\nabla c|^2)I_3.
 \end{aligned}
\end{equation}
Equation~\eqref{eq:proof-slow-coefficient-expansions},
together with \(cm=-2\nabla c\), yields
\begin{equation}\label{eq:proof-slow-coefficient-identity}
 F_\mu-cV_\mu-3(\nabla(cm))^T
 =\left(3\Delta c+6c^{-1}|\nabla c|^2\right)I_3.
\end{equation}
Here \((\nabla(cm))_{ij}=\partial_j(cm_i)\); this matrix is symmetric.
Contract \eqref{eq:proof-slow-coefficient-identity}
with \(\theta^T\) and \(q_\theta\), using
\(\theta\cdot q_\theta=0\) and the spatial constancy of \(q_\theta\), to obtain
\begin{equation}\label{eq:proof-slow-coefficient-contraction}
 \theta^TF_\mu q_\theta
 =c\theta^TV_\mu q_\theta+3D_\theta(c\,m\cdot q_\theta).
\end{equation}
The residual in the dual transport equation is expressed
using the scalar and contracted vector equations for \(\boldsymbol z_1^S\).
Their defects are
\begin{equation}\label{eq:proof-slow-correction-defects}
 E_f^S=D_\theta f_1+\frac{\mu\kappa}{2}R_1
                  +\frac{e_{\mu,\kappa}}2m\cdot q_\theta,
 \qquad
 E_R^S=D_\theta R_1-\alpha_\mu f_1
                  -\frac c2\theta^TV_\mu q_\theta.
\end{equation}
Substitution of \eqref{eq:proof-slow-correction-defects} and
\eqref{eq:proof-slow-coefficient-contraction} into
\eqref{eq:slow-column-explicit}, using
\eqref{eq:proof-slow-scalar-correction} and
\eqref{eq:proof-slow-contracted-correction}, gives the exact identity
\begin{equation}\label{eq:slow-dual-defect}
 (2D_\theta-A_\theta^T)Y_S
       =\binom{-2cE_f^S\theta}{2E_R^S}=0.
\end{equation}
Equation~\eqref{eq:slow-dual-defect} proves that the additional column
satisfies dual transport directly. Its vector
component is orthogonal to \(\theta\), so \(Y_S\in E_\theta^*\).
If the first correction is replaced by \(\boldsymbol z_1^S+\delta \boldsymbol z_1\), where
\((2D_\theta+A_\theta)\delta \boldsymbol z_1=0\), then \(Y_S\) changes by
\(-J_\theta\delta \boldsymbol z_1\in\mathcal F_\theta\). Hence its class modulo
\(\mathcal F_\theta\), and its independence from the two principal columns,
are preserved.

The distinction between the physical displacement amplitudes
and their canonical adjoint lifts is reflected in the principal
symbol of the elasticity operator. For a nonzero complex null
direction \(\theta\), this symbol reduces to
\[
\mu(\theta\cdot\theta)I_3
+(\lambda+\mu)\theta\theta^T
=(\lambda+\mu)\theta\theta^T,
\]
whose kernel is \(\theta^\perp\). The canonical scalar coordinate differentiates
the displacement: for \(u_h=e^{-\theta\cdot x/h}U_h\),
\[
 e^{\theta\cdot x/h}S^Tu_h
   =\binom{\sqrt\mu\,U_h}
       {-\bigl(\Div U_h-h^{-1}\theta\cdot U_h\bigr)/(1-\kappa)}.
\]
It can therefore retain the order-\(h\) physical amplitude.
On \(M\boldsymbol z=0\), the scalar equation in \eqref{eq:M-explicit} reduces
\eqref{eq:Kexact} to
\[
 K(v,f)=\binom{v+c\nabla f+cfm}
                  {-c\Div v+\tfrac32c\,m\cdot v}.
\]
The vector components of \(Y_F^{(1)},Y_F^{(2)}\) are collinear, while these two canonical columns
are independent by the determinant \(-c\) in
\eqref{eq:proof-fast-column-determinant}. The additional column adds
\(q_\theta\) modulo \(\C\theta\), by \eqref{eq:slow-column-explicit}.
Rank three concerns these canonical four-component fields, rather than
three independent leading displacement vectors. Every column is
realized through \(M\boldsymbol z=0\), \(u=T\boldsymbol z\), and \(S^Tu\).

The three columns are now localized and realized together on the fixed CGO
domain, preserving the additional amplitude on the open region where the cutoff is identically one.

\begin{theorem}
\label{thm:graded-frame}
Suppose that the smooth strongly convex Lam\'e pair has a constant
exterior extension. Fix the nested CGO balls of
Theorem~\ref{lem:ER-solvability} for its background-conjugated reduction,
write \(\Omega_0=B_{R_{\rm ext}}\), and choose \(L>R_{\rm ext}\).
Choose locally the orthonormal vectors \(e_1(\theta),e_2(\theta),
\nu_\theta\) and a spinor lift with \(\theta=\theta(s,t)\).
Around each normalized direction and transverse parameter there are
neighborhoods \(U,I\) on whose cylinder
\[
 \begin{aligned}
 \mathcal C_{U,I,L}=\{(x,\theta):{}&
 x=x_\perp\nu_\theta+x_1e_1(\theta)+x_2e_2(\theta),\\
 &\theta\in U,\quad x_\perp\in I,\quad |x_1+\mathrm i x_2|<L\}
 \end{aligned}
\]
the two principal columns and the additional column give a jointly smooth
\(3\times3\) dual quotient frame \(Y\). It solves
\(2D_\theta Y+\Gamma_{\mu,\kappa}Y=0\) and satisfies
\begin{equation}\label{eq:graded-determinant}
 \det Y=-2c\det\Phi\det(n,\xi_q)=2c
\end{equation}
in the local coordinates and normalizations
\eqref{eq:fast-frame-normalization} and
\eqref{eq:slow-polarization-normalization}. The frame and its inverse
have bounded derivatives of every fixed finite order on compact
spatial and parameter subcharts.

Let \(I'\Subset I\) and choose \(\chi\in C_c^\infty(I)\) equal to
one near \(\overline{I'}\). There are exact physical solutions on
the fixed ball \(\Omega_{\mathrm{CGO}}\) whose normalized canonical adjoint lifts form
\begin{equation}\label{eq:Yh-matrix}
 \mathcal Y_h=(-hK\boldsymbol z_h^{F,1},-hK\boldsymbol z_h^{F,2},K\boldsymbol z_h^S),\qquad K=S^TT,
\end{equation}
and obey
\begin{equation}\label{eq:Yh-limit}
 e^{\theta\cdot x/h}\mathcal Y_h=W_\chi+E_h,
 \qquad \|E_h\|_{H^3(\Omega_{\mathrm{CGO}})}\leq Ch,
 \qquad W_\chi=\mathsf U_\theta Y\ \hbox{where }x_\perp\in I'.
\end{equation}
Here \(W_\chi\) is a smooth \(4\times3\) localized ambient amplitude
on \(\Omega_{\mathrm{CGO}}\), of rank three on the set \(\chi=1\).
The constants are uniform on compact direction charts with the finite
amplitude and correction bounds of Theorem~\ref{thm:autonomous-cgo}.
For any nonnegative integer \(r\), the analogous \(O(h)\) error
estimate in \(H^r\) follows from a reduced \(O(h^2)\) remainder
in \(H^{r+2}\) and the corresponding amplitude bounds.
The rank assertion uses the \(H^3\) estimate \eqref{eq:Yh-limit} and
Sobolev embedding. The parameter estimates concern the smooth
leading amplitudes and their transport corrections.
In particular, the unscaled third column uses the
$H^5$ bound for the reduced remainder in \eqref{eq:localized-cgo-remainder}:
the term $h^{-1}J_\theta r_h^S$ is $O(h)$ in $H^3$ by
\eqref{eq:proof-slow-lift-error-bounds}.
\end{theorem}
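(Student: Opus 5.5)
The proof has two parts: the limiting frame \(Y\) and its determinant, then the exact realization \eqref{eq:Yh-limit}.

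\emph{The limiting frame.} On the cylinder \(\mathcal C_{U,I,L}\), I apply Lemma~\ref{lem:matrix-dbar-frame} to the two-component system \eqref{eq:fast-matrix-dbar}, with the direction and \(x_\perp\) as parameters, on a leaf disk of radius larger than \(L\). After shrinking \(U,I\), this gives a jointly smooth \(\Phi\) with \(\det\Phi=1\) by \eqref{eq:fast-frame-normalization}. The columns of \(\Phi\) give the ambient amplitudes \(\boldsymbol z_0^{F,j}\) of \eqref{eq:proof-fast-ambient-amplitude}, and \eqref{eq:Jfast} gives the principal columns \(Y_F^{(j)}\).

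For the additional column, I fix the spinor chart, the polarization \eqref{eq:slow-polarization-normalization}, and the correction \eqref{eq:proof-slow-cauchy-correction} built from an ambient frame \(C\). This frame also comes from Lemma~\ref{lem:matrix-dbar-frame}, applied to the \(4\times4\) system, and is parameter smooth. This yields \(Y_S\) as in \eqref{eq:slow-column-explicit}.

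All three columns lie in \(E_\theta^*=\operatorname{im}\mathsf U_\theta\), so I write \(\mathsf U_\theta Y\) for the \(4\times 3\) matrix of columns, with \(Y\) its \(3\times3\) coordinate matrix. Since \(D_\theta\mathsf U_\theta=0\), the identities \eqref{eq:fast-dual-defect} and \eqref{eq:slow-dual-defect}, together with \(-A_\theta^T\mathsf U_\theta=\mathsf U_\theta\Gamma_\theta\), give \(\mathsf U_\theta(2D_\theta Y+\Gamma Y)=0\). Injectivity of \(\mathsf U_\theta\) then gives \(2D_\theta Y+\Gamma Y=0\).

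For the determinant, I write the spinor coordinates of each column. The vector \(c f_j\theta\) has coordinate \(-2cf_j n\), by \eqref{eq:Qn}. The scalar coordinate of the \(j\)-th principal column is \(-R_j\). The additional column has \(\xi\)-coordinate \(\xi_q+2cf_1^S n\) and scalar entry \(R_1+\tfrac32c\,m\cdot q_\theta\). Column operations remove the \(n\)-component and the scalar entry of the third column, since the first two columns span \(\mathcal F_\theta\) as shown by \eqref{eq:proof-fast-column-determinant}. This leaves a block computation,
\[
 \det Y=\det\begin{pmatrix}-2cf_1n&-2cf_2n&\xi_q\\-R_1&-R_2&0\end{pmatrix}.
\]
I expand this along \(\xi_q\) and use \(\det(n,\xi_q)=-1\). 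The result should be \(-2c\det\Phi\det(n,\xi_q)=2c\), up to a sign convention that I will check carefully. The derivative bounds on \(Y\) and \(Y^{-1}\) then follow from the bounds on \(\Phi\), on \(C\), and from Lemma~\ref{lem:cauchy-leaf-estimate}, since \(c\) is bounded below.

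\emph{Exact realization.} I multiply each amplitude by \(\chi(x_\perp)\). The principal columns then satisfy the transport equation exactly, because \(D_\theta\chi=0\). For the additional column, multiplying both \(\boldsymbol z_0^S\) and \(\boldsymbol z_1^S\) by \(\chi\) preserves \eqref{eq:slow-z1} wherever the transverse cutoff is one. Away from that region, the commutator \([M,\chi]\boldsymbol z_0^S\) enters as an extra source at order \(h^{0}\) in the residual.

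I handle this source by putting \(\chi\boldsymbol z_0^S\) and \(\chi\boldsymbol z_1^S+\) (a Cauchy correction of the commutator term) as the pair \((a_0,a_1)\) in Theorem~\ref{thm:autonomous-cgo}, applied to the background-conjugated operator \(\widetilde M\) with \(k=5\). This gives \(O(h^2)\) remainders in \(H^5\). I then undo the conjugation by \(\mathcal R\), as allowed by Proposition~\ref{prop:bg-covariance}. Lemmas~\ref{lem:fast-physical-lift} and \ref{lem:slow-physical-lift} then give \eqref{eq:Yh-limit}, with \(W_\chi=\mathsf U_\theta Y\) where \(\chi=1\). The leading quantity is \(J_\theta\) or \(K_0\) applied to the localized amplitudes, which is smooth on all of \(\Omega_{\mathrm{CGO}}\).

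Rank three on \(\{\chi=1\}\) follows from \(\det Y=2c\ne0\). The statement about exact physical solutions follows from \(u=T\boldsymbol z_h\) and \(L^T=L\).

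\emph{Main obstacle.} The hardest step is the third column outside the region where \(\chi=1\). There the transverse derivatives of \(\chi\) enter \(M(\chi\boldsymbol z_0^S)\) at order \(h^0\), because \(\boldsymbol z_0^S\) is constant and has no decay. I must make sure the first correction absorbs this commutator, so that the unscaled lift still has only an \(O(h)\) error in \(H^3\). That error bound requires the \(h^{-1}J_\theta r_h^S\) control from \eqref{eq:proof-slow-lift-error-bounds}. If this causes trouble, I will instead localize only in the transverse variable where the leaf disk exceeds \(\Omega_1\), so that \([M,\chi]\) acts only in \(x_\perp\) and its transport correction is given by the same Cauchy inverse.
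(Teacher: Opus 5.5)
Your proposal is correct and follows essentially the same route as the paper: the normalized frame $\Phi$ with $\det\Phi=1$, the quotient-coordinate matrix $Y$ expanded along the $\xi_q$-coordinate (the sign does come out as $\det Y=-2c\det\Phi\det(n,\xi_q)=2c$), and the localized pair $\chi\boldsymbol z_0^S$, $\chi\boldsymbol z_1^S+w$ with a leafwise commutator correction $w$. These are realized with $k=5$ through $\mathcal R$, and the two physical-lift lemmas then give the $O(h)$ error in $H^3$.

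A few points should be made explicit. First, $w$ is a Cauchy integral taken at fixed $x_\perp$, so it vanishes on the transverse region where $\chi\equiv1$; this is exactly what gives $W_\chi=\mathsf U_\theta Y$ over $I'$. Second, rank three of the exact matrix $\mathcal Y_h$ on compact subsets of that region still needs the $H^3$ estimate, Sobolev embedding and the bounded inverse of $Y$; you only argued rank three for $W_\chi$. Third, your ``fallback'' in the last paragraph is not a different option, since $\chi=\chi(x_\perp)$ is already a transverse cutoff.
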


\begin{proof}
Choose the frames on a disk larger than \(\overline{D_L}\), where
\(D_L=\{z:|z|<L\}\), and shrink \(U,I\) so that the disks are fixed
after coordinate pullback. Lemma~\ref{lem:matrix-dbar-frame} and
\eqref{eq:Jfast}, \eqref{eq:slow-column-explicit} give smooth columns;
\eqref{eq:fast-dual-defect} and \eqref{eq:slow-dual-defect} prove their
dual transport.

Write \(\boldsymbol z_1^S=(v_S,f_S)\), \(R_S=c\theta\cdot v_S\), to distinguish
the first correction \(\boldsymbol z_1^S\) from the two columns of \(\Phi\). In quotient coordinates,
\eqref{eq:Qn} gives
\begin{equation}\label{eq:proof-graded-frame-coordinates}
 Y=\begin{pmatrix}
   -2cf_1n&-2cf_2n&\xi_q+2cf_Sn\\
   -R_1&-R_2&R_S+\tfrac32c\,m\cdot q_\theta
 \end{pmatrix}.
\end{equation}
Expanding the two-row top block in the basis \((n,\xi_q)\) along
the third column's \(\xi_q\)-coordinate yields
\begin{equation}\label{eq:proof-graded-determinant-expansion}
 \det Y=\det(n,\xi_q)\,[-2c(f_1R_2-f_2R_1)].
\end{equation}
The normalizations \(\det\Phi=1\), \(\det(n,\xi_q)=-1\) give
\eqref{eq:graded-determinant}.
Also \(\tr\Gamma=-2D_\theta\log c\), so \(2c\) satisfies determinant
transport. Since \(\min c>0\) on compact subcylinders, the adjugate
formula and joint smoothness give the finite inverse bounds.

Extend \(\chi(x_\perp)\boldsymbol z_0^{F,j}\) by zero in \(x_\perp\); homogeneous
transport is preserved because \(D_\theta\chi=0\). For the additional column set
\begin{equation}\label{eq:graded-slow-localization}
 a_{0,\chi}^S=\chi \boldsymbol z_0^S,\qquad
 a_{1,\chi}^S=\chi \boldsymbol z_1^S+w,\qquad
 (2D_\theta+A_\theta)w=[M,\chi]\boldsymbol z_0^S.
\end{equation}
The source \([M,\chi]\boldsymbol z_0^S=M(\chi \boldsymbol z_0^S)-\chi M\boldsymbol z_0^S\) is
\begin{equation}\label{eq:proof-slow-localization-source}
 [M,\chi]\boldsymbol z_0^S=
 \binom{\chi''q_\theta-\chi'm(\nu_\theta\cdot q_\theta)}
       {d_{\mu,\kappa}\chi'(\nu_\theta\cdot q_\theta)}.
\end{equation}
Primes denote \(x_\perp\)-derivatives. Choose the ambient frame
\(C\) on the entire larger disk and a fixed leaf cutoff
\(\chi_{\rm leaf}\) inside its domain, equal to one near
\(\overline{D_L}\). The source need not be compact in the leaf variable;
its cutoff product defines
\begin{equation}\label{eq:proof-slow-localization-correction}
 w=\frac12C\mathcal C_\theta
       \bigl(\chi_{\rm leaf}C^{-1}[M,\chi]\boldsymbol z_0^S\bigr).
\end{equation}
Lemma~\ref{lem:cauchy-leaf-estimate} solves the last equation of
\eqref{eq:graded-slow-localization} near \(\overline{D_L}\).
The integral acts at fixed \(x_\perp\), so \(w\) is supported in
\(\supp\chi'\cup\supp\chi''\Subset I\) and vanishes with all spatial
derivatives on each transverse neighborhood where \(\chi=1\).
Thus the amplitudes extend smoothly by zero, and
\begin{equation}\label{eq:proof-localized-slow-transport}
 \begin{aligned}
 (2D_\theta+A_\theta)a_{1,\chi}^S
 &=\chi M\boldsymbol z_0^S+[M,\chi]\boldsymbol z_0^S=Ma_{0,\chi}^S,\\
 J_\theta a_{0,\chi}^S&=0.
 \end{aligned}
\end{equation}
On transverse neighborhoods where \(\chi\) is identically one, the amplitudes equal \(\boldsymbol z_0^S,\boldsymbol z_1^S\). Since \(L>R_{\rm ext}\),
they and their transport equations are defined near the whole
\(\overline\Omega_0\) after transverse extension.

Apply Theorem~\ref{thm:autonomous-cgo} with \(k=5\) to the conjugated
reduced amplitudes for the first two columns and to the prescribed pair
\(\mathcal R^{-1}a_{0,\chi}^S,\mathcal R^{-1}a_{1,\chi}^S\).
The covariance \eqref{eq:transport-covariance} and
\(\widetilde M=\mathcal R^{-1}M\mathcal R\) carry both transport
equations to these variables. Returning by \(\mathcal R\) preserves
the reduced \(O(h^2)\) remainder in \(H^5\) on the fixed ball.
Equation~\eqref{eq:factorization} gives exact physical fields
\(u_h^{F,j}=-hT\boldsymbol z_h^{F,j}\), \(u_h^S=T\boldsymbol z_h^S\), whose canonical lifts
are \eqref{eq:Yh-matrix}. Their leading amplitude on the entire working ball is
\begin{equation}\label{eq:proof-localized-graded-amplitude}
 W_\chi=\bigl(\chi Y_F^{(1)},\ \chi Y_F^{(2)},
                K_0a_{0,\chi}^S-J_\theta a_{1,\chi}^S\bigr).
\end{equation}
On these transverse neighborhoods, the matrix is \(\mathsf U_\theta Y\):
\(a_{0,\chi}^S,a_{1,\chi}^S\) and all their spatial derivatives agree
with \(\boldsymbol z_0^S,\boldsymbol z_1^S\) there.
Lemmas~\ref{lem:fast-physical-lift} and \ref{lem:slow-physical-lift}
give \eqref{eq:Yh-limit} on the entire working ball.
Their expansions also give the \(H^r\) assertion from a reduced
\(H^{r+2}\) remainder and the corresponding amplitude bounds.
Uniformity uses the finite bounds of
Theorem~\ref{thm:autonomous-cgo};
Proposition~\ref{prop:appendix-physical-realization} in
Appendix~\ref{app:cgo-input} provides the corresponding \(k=5\)
remainder bounds.

Fix \(K_{\rm pl}\) compactly inside the open transverse region on which \(\chi\) is identically one. The left inverse
\(\mathsf V_\theta^T\mathsf U_\theta=I_3\), Sobolev embedding,
and the inverse bounds give
\begin{equation}\label{eq:proof-graded-rank-estimate}
 \left\|Y^{-1}\bigl(\mathsf V_\theta^T
       e^{\theta\cdot x/h}\mathcal Y_h-Y\bigr)\right\|_{C^0(K_{\rm pl})}
 \leq C_{\rm pl}h.
\end{equation}
For \(C_{\rm pl}h<1/2\), the projected matrix is invertible, hence
\(\mathcal Y_h\) has rank three on \(K_{\rm pl}\), uniformly on compact
direction subcharts.
\end{proof}

The error estimate on the entire working ball permits the exact physical lifts to
enter the boundary identity before taking limits; rank is retained on
compact subsets of the open region where the transverse cutoff is identically one.

Theorem~\ref{thm:graded-frame} completes the dual quotient frame
for the canonical lift \(K=S^TT\). The first nonzero term
\(K_0a_{0,\chi}^S-J_\theta a_{1,\chi}^S\) in
\eqref{eq:proof-localized-graded-amplitude} supplies the additional
direction, with nonvanishing determinant \eqref{eq:graded-determinant}.
The ordinary \(H^5\) reduced remainder controls the differentiated lift
through \eqref{eq:proof-slow-lift-error-bounds}, giving the full-domain
\(O(h)\) physical error in \(H^3\). In
\cite[Proposition~4.2]{OpenAIElasticity2026}, the three columns are
realized through a compatible displacement--actual-divergence recursion
and a correction of the physical displacement. The distinction concerns
the realization and error control of the quotient frame; both
constructions recover all three physical transport directions.

\section{Matrix moments and relative quotient transport}\label{sec:transport}

\subsection{Common extension and enclosing leaf disks}

The two media are first extended through every boundary component so that they
agree outside \(\Omega\). Equality of the DN maps is then transferred to an enclosing ball.
The resulting common exterior contains the contours used to reconstruct
relative quotient transport. Leaf disks are then chosen to enclose the entire
coefficient-difference support, while transport frames are constructed on
cylinders large enough for the exact CGO realizations. Boundary flatness is the condition that
permits the smooth common extension.

\begin{lemma}\label{lem:common-extension}
Let \(\Omega\subset\R^3\) be a bounded \(C^\infty\) domain and
let the smooth real Lam\'e pairs satisfy \eqref{eq:convexity}.
If their full DN maps agree on every component of \(\partial\Omega\),
then the pairs have equal boundary derivatives of every order. They admit smooth real
strongly convex extensions to \(\R^3\) which coincide outside
\(\Omega\) and equal one common constant pair outside a compact set.
In particular, for a sufficiently large enclosing ball
\(B_R=B(0,R)\) with \(\overline\Omega\Subset B_R\), these
extensions equal that constant pair near \(\partial B_R\).
\end{lemma}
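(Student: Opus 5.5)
The proof has three steps: boundary jets, a common exterior extension, and a constant pair far out. First, for the boundary jets I would invoke the boundary determination result, Theorem~\ref{thm:appendix-boundary-jets}. It is proved in Appendix~\ref{audrev-app-boundary-jets} through the layered impedance matrix \(\mathfrak Z\) and its symbol coefficients \(\mathsf Z_j\). Since the DN map is known on every component of \(\partial\Omega\), the full symbol of \(\DN_{\lambda_j,\mu_j}\) near each component is the same for \(j=1,2\). The recursive identification of the \(\mathsf Z_j\) then yields \(\partial_{r_{\mathrm{in}}}^k\lambda_1=\partial_{r_{\mathrm{in}}}^k\lambda_2\) and \(\partial_{r_{\mathrm{in}}}^k\mu_1=\partial_{r_{\mathrm{in}}}^k\mu_2\) on \(\partial\Omega\) for all \(k\). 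Tangential derivatives of these normal derivatives agree trivially, so all boundary derivatives of every order coincide. The argument is local at each boundary point, so several components cause no difficulty.

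Second, to build the common extension I would fix an exterior extension of the first pair and use it for both. Take a Seeley (or Whitney) extension \((\lambda_*,\mu_*)\) of \((\lambda_1,\mu_1)\) from \(\overline\Omega\) to a smooth real pair on \(\R^3\). Define each extended pair to be \((\lambda_j,\mu_j)\) on \(\overline\Omega\) and \((\lambda_*,\mu_*)\) on \(\R^3\setminus\overline\Omega\). The first extended pair is \((\lambda_*,\mu_*)\) itself and is smooth. The second is smooth because \((\lambda_2,\mu_2)-(\lambda_1,\mu_1)\) vanishes to infinite order at \(\partial\Omega\) by the first step. Extending that difference by zero therefore gives a \(C^\infty\) function on \(\R^3\), and adding it to \((\lambda_*,\mu_*)\) reproduces the piecewise definition. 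Both pairs coincide outside \(\Omega\) by construction.

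Third, I would enforce strong convexity and a constant pair far out. Work in the variables \((\mu,\kappa)\) of \eqref{eq:kappa-def}, where \eqref{eq:convexity} becomes the open convex condition \eqref{eq:kappa-range}. By compactness there is a thin collar \(N\) of \(\partial\Omega\) in \(\R^3\setminus\Omega\) on which \((\mu_*,\kappa_*)\) satisfies \eqref{eq:kappa-range} with margin. Choose a constant admissible pair \((\mu_0,\kappa_0)\), say \(\mu_0=1\) and \(\kappa_0=1/2\), and a cutoff \(\psi\in C^\infty(\R^3)\) that equals one on \(\overline\Omega\) together with an inner part of \(N\) and vanishes outside \(N\cup\Omega\). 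Replace the exterior data by the convex combination \(\psi(\mu_*,\kappa_*)+(1-\psi)(\mu_0,\kappa_0)\). On \(\overline\Omega\) this returns the original pairs, so smoothness across \(\partial\Omega\) is untouched. Pointwise convexity of the set \(\{\mu>0,\ 1/4<\kappa<1\}\) keeps the new pair strongly convex everywhere. It equals the constant pair outside \(N\cup\Omega\), which is a compact set. Converting back through \eqref{eq:lambda-kappa} gives real smooth Lam\'e pairs, still common outside \(\Omega\). Taking \(R\) so large that \(\overline\Omega\cup\overline N\Subset B_R\) gives the final assertion about \(\partial B_R\).

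The main obstacle is the first step, i.e.\ Theorem~\ref{thm:appendix-boundary-jets}. Here I rely on the appendix, whose key point is that the leading symbols separate \(\lambda\) from \(\mu\) under \eqref{eq:convexity}, and each subsequent symbol \(\mathsf Z_j\) depends affinely and invertibly on the new normal jet. The extension steps are routine once infinite-order agreement is known. The only subtlety there is to perform the convex interpolation in a parametrization whose admissible set is convex, which is why I use \((\mu,\kappa)\) rather than \((\lambda,\mu)\) directly.
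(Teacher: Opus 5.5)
Your proposal is correct and follows essentially the paper's route: boundary flatness from Theorem~\ref{thm:appendix-boundary-jets}, a common exterior extension of the first pair with the flat difference extended by zero, and convex interpolation to a constant admissible pair through a cutoff equal to one near \(\overline\Omega\). One small correction to your closing remark: the admissible set \(\{\mu>0,\ 3\lambda+2\mu>0\}\) is already an open convex cone in the \((\lambda,\mu)\) variables, and the paper interpolates there directly, so passing to \((\mu,\kappa)\) is harmless but not needed.
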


\begin{proof}

The equality of all boundary derivatives follows from
Theorem~\ref{thm:appendix-boundary-jets} in
Appendix~\ref{audrev-app-boundary-jets}, see
\eqref{audrev-boundary-all-jets}, for the outward DN convention
\eqref{eq:DN}. In each known boundary-normal chart and at every fixed
finite order, equal localized complete DN symbols give equal normal
traces of both Lam\'e parameters; tangential differentiation gives
all mixed boundary derivatives. The argument includes every boundary component and
the isotropic repeated stable root. Hence the coefficient difference
is flat on \(\partial\Omega\). The cited boundary determination works
\cite{NakamuraUhlmann1995,LinNakamura2017} provide the historical
context; the smooth isotropic conclusion required here follows from
the appendix's flux and symbol recursion.

Write \(a_j=(\lambda_j,\mu_j)\) and define \(d^0\) to equal
\(a_1-a_2\) on \(\overline\Omega\) and zero outside.
Boundary flatness makes \(d^0\) smooth on \(\R^3\).
Extend \(a_1\) smoothly through disjoint sufficiently thin collars
of all boundary components to a neighborhood of \(\overline\Omega\).
The uniform positive lower bounds in \eqref{eq:convexity} and
continuity allow the extension \(\widehat a_1\) to remain in the
open convex cone
\[
 \mathcal K_{\mathrm{ad}}=\{(\lambda,\mu)\in\R^2:
                   \mu>0,\ 3\lambda+2\mu>0\}.
\]
Set \(\widehat a_2=\widehat a_1-d^0\). It equals \(a_2\)
in \(\Omega\) and \(\widehat a_1\) on the exterior side of
every collar. Both extensions are smooth and strongly convex.

Choose a constant \(a_*\in\mathcal K_{\mathrm{ad}}\) and a cutoff
\(0\leq\chi\leq1\), equal to one near \(\overline\Omega\)
and supported in the neighborhood on which the collar extensions
are defined. Set
\[
 a_j^{\rm ext}=\chi\widehat a_j+(1-\chi)a_*
\]
there, and define \(a_j^{\rm ext}=a_*\) beyond the cutoff support,
including the remaining parts of bounded complementary components.
The definitions glue smoothly, and convexity of \(\mathcal K_{\mathrm{ad}}\)
preserves strong convexity. The pairs coincide outside \(\Omega\)
and are constant outside a compact set. A sufficiently large
origin-centered ball contains that set and \(\overline\Omega\)
strictly in its interior.
\end{proof}

\begin{lemma}\label{lem:enlargement}
For the common extensions of Lemma~\ref{lem:common-extension},
equality of the full DN maps on \(\partial\Omega\) implies
\[
 \DN_1^{B_R}=\DN_2^{B_R}.
\]
The complementary region \(B_R\setminus\overline\Omega\)
may be disconnected.
\end{lemma}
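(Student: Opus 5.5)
To prove Lemma~\ref{lem:enlargement}, I would use the standard transmission argument: equal Cauchy data on \(\partial\Omega\) let me glue elastic solutions across the boundary. Let \(a^{\rm ext}_1,a^{\rm ext}_2\) be the common extensions of Lemma~\ref{lem:common-extension}. They coincide on \(B_R\setminus\overline\Omega\), where I write \(L_{\rm out}\) for the common operator. I would prove \(\DN_1^{B_R}\subset\DN_2^{B_R}\) as an inclusion of Cauchy-data graphs; symmetry then gives equality, since both maps are single-valued on \(H^{1/2}(\partial B_R)\).

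Fix \(g\in H^{1/2}(\partial B_R;\R^3)\) and let \(u_1\in H^1(B_R)\) solve \(L_1u_1=0\) in \(B_R\) with \(u_1=g\) on \(\partial B_R\). Strong convexity and Korn's inequality make this Dirichlet problem uniquely solvable. Set \(f=u_1|_{\partial\Omega}\in H^{1/2}(\partial\Omega)\); this trace is taken on all components of \(\partial\Omega\) simultaneously. Let \(w\in H^1(\Omega)\) solve \(L_{\lambda_2,\mu_2}w=0\) in \(\Omega\) with \(w|_{\partial\Omega}=f\). Define \(u_2=w\) in \(\Omega\) and \(u_2=u_1\) in \(B_R\setminus\overline\Omega\). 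The traces from both sides agree, so \(u_2\in H^1(B_R)\).

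The key step is to show that \(L_2u_2=0\) weakly in \(B_R\). For \(\varphi\in C_c^\infty(B_R;\R^3)\), I would split the bilinear form \(\int_{B_R}\sigma_2(u_2):\eps(\varphi)\) into its contributions over \(\Omega\) and over \(B_R\setminus\overline\Omega\). Each piece reduces, by the weak definition of traction, to a boundary pairing on \(\partial\Omega\). The interior piece gives \(\langle\DN_{\lambda_2,\mu_2}^\Omega f,\varphi\rangle\). By the hypothesis \(\DN^\Omega_{\lambda_2,\mu_2}f=\DN^\Omega_{\lambda_1,\mu_1}f\), this equals the interior traction of \(u_1\), since \(u_1|_\Omega\) is the \(L_1\)-solution with data \(f\). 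The exterior piece is identical for \(u_1\) and \(u_2\), with outward normals reversed relative to \(\Omega\). The total form for \(u_2\) therefore equals the total form for \(u_1\) under \(\sigma_1\), and that vanishes because \(L_1u_1=0\).

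To avoid splitting on the exterior, I would phrase this as follows. Take an \(H^1(B_R)\) extension \(\widetilde\varphi\) of \(\varphi|_\Omega\). Then the difference of the two bilinear forms for \(u_1\) and \(u_2\) involves only \(\Omega\), and it equals the difference of the two weak DN pairings of \(f\) tested against \(\varphi|_{\partial\Omega}\), which is zero. Disconnectedness of \(B_R\setminus\overline\Omega\) and multiple boundary components cause no trouble, because all pairings are global on \(\partial\Omega\).

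Finally, \(u_2=g\) on \(\partial B_R\). Uniqueness then identifies \(u_2\) as the \(L_2\)-solution with data \(g\). Near \(\partial B_R\) the coefficients coincide and \(u_2=u_1\), so the tractions agree: \(\DN_2^{B_R}g=\DN_1^{B_R}g\). I expect the only delicate point to be the weak-traction bookkeeping for \(H^{1/2}\) data: defining \(\DN\) through the bilinear form with an arbitrary \(H^1\) extension of the test function, and checking independence of that extension. This is routine given the weak definition recalled after \eqref{eq:DN}.
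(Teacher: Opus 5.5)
Your argument is correct, but it follows a different route from the paper.

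The paper argues variationally. By the Dirichlet principle, $\langle\DN_j^{B_R}f,f\rangle$ is the minimum over interface traces $g$ of the common annular energy $\mathcal E_A(f,g)$ plus $\langle\DN_j^\Omega g,g\rangle$. Both terms coincide for $j=1,2$, so the outer quadratic forms agree on real data, and bilinear symmetry plus polarization then identify the operators.

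You instead glue solutions directly. Replacing $u_1|_\Omega$ by the medium-2 solution $w$ with the same trace gives an $H^1(B_R)$ field. Its weak form differs from $a_1(u_1,\varphi)=0$ only by $\langle(\DN_2^\Omega-\DN_1^\Omega)f,\varphi|_{\partial\Omega}\rangle=0$, and uniqueness then identifies it as the medium-2 Dirichlet solution.

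What each route buys:
\begin{itemize}
\item Your route does not need the minimization structure. It does not need Dirichlet problems on the possibly disconnected region $B_R\setminus\overline\Omega$, since $u_1$ already lives on all of $B_R$ and only its interior part is replaced. It also does not need the symmetry and polarization step.
\item Your route gives the stronger conclusion that the two Dirichlet solutions coincide on $B_R\setminus\overline\Omega$.
\item The paper's route stays entirely at the level of energies and quadratic forms.
\end{itemize}

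Two cosmetic points.
\begin{itemize}
\item The remark about inclusion of Cauchy-data graphs and symmetry is unnecessary, because you prove $\DN_2^{B_R}g=\DN_1^{B_R}g$ directly for every $g$.
\item No extension $\widetilde\varphi$ is needed, since $\varphi\in C_c^\infty(B_R)$ already serves.
\end{itemize}
For the final traction comparison on $\partial B_R$, state explicitly that the $H^1$ lifting of the boundary test function is taken supported in a collar of $\partial B_R$ where the coefficients and the fields coincide.
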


\begin{proof}
Take real traces \(f\in H^{1/2}(\partial B_R;\R^3)\) and
\(g\in H^{1/2}(\partial\Omega;\R^3)\), and set
\(A=B_R\setminus\overline\Omega\). We denote by \(u_{f,g}\) the
Dirichlet solution for the common medium on \(A\), with outer
trace \(f\) and interface trace \(g\) on every component of
\(\partial\Omega\). On a component of \(A\) that does not meet
\(\partial B_R\), all boundary values are prescribed by \(g\).
Strong convexity and Korn's inequality give coercivity on
\(H^1_0(A;\R^3)\), and hence existence and uniqueness componentwise.
Its summed energy is
\begin{equation}\label{eq:proof-domain-annular-energy}
 \mathcal E_A(f,g)=\int_A
 \bigl[\lambda_A(\Div u_{f,g})^2
       +2\mu_A\,\eps(u_{f,g}):\eps(u_{f,g})\bigr]\,dx,
\end{equation}
where \((\lambda_A,\mu_A)\) is the common extended pair on \(A\).
The Dirichlet principle gives
\begin{equation}\label{eq:proof-enlargement-dirichlet-minimum}
 \langle\DN_j^{B_R}f,f\rangle
 =\min_{g\in H^{1/2}(\partial\Omega;\R^3)}
       \bigl\{\mathcal E_A(f,g)
                 +\langle\DN_j^\Omega g,g\rangle\bigr\}.
\end{equation}
Indeed, restriction of a global competitor to \(\Omega\) and
all components of \(A\) gives a common interface trace and an
energy at least the sum of the corresponding Dirichlet minima.
Conversely, minimizers with matching trace \(g\) glue to an
\(H^1(B_R;\R^3)\) competitor with that summed energy. The
global Dirichlet minimizer exists by coercivity on
\(H^1_0(B_R;\R^3)\); its interface trace attains the minimum
in \eqref{eq:proof-enlargement-dirichlet-minimum}.
The common exterior energy and equality of the inner DN quadratic
forms therefore give equal outer quadratic forms for every real
\(f\). Bilinear polarization identifies the symmetric DN maps on
real traces, and their complex-linear extensions agree as well.
\end{proof}

Lemma~\ref{lem:enlargement} transfers equality of the physical DN maps to
\(\partial B_R\), including when the complementary region is disconnected.
Henceforth we work with these enlarged-ball maps and suppress their
superscript \(B_R\). We choose \(R_0<R\) so that the two extended media
coincide outside \(\overline{B_{R_0}}\).

For every subsequent CGO application, we fix concentric balls
\[
 \overline{B_R}\Subset\Omega_{\mathrm{CGO}}\Subset\Omega_1\Subset\Omega_0.
\]

The CGO working ball \(\Omega_{\mathrm{CGO}}\) contains the enlarged
measurement ball \(B_R\); the original domain remains denoted by
\(\Omega\). The common background conjugation of
Section~\ref{sec:reduction} puts both reduced systems into the
compact-coefficient form of Theorem~\ref{lem:ER-solvability}.
We choose the source ball \(K_{\mathrm{src}}\Subset\Omega_1\) with interior containing
\(\overline\Omega_{\mathrm{CGO}}\) and both coefficient supports.
The exact correction on \(\Omega_{\mathrm{CGO}}\) gives physical solutions
smooth near \(\overline{B_R}\), where the boundary identity is applied.
The Plemelj disks lie in \(B_R\). The nested CGO balls, the enclosing ball $B_R$, the leaf disks, and the cutoff supports are fixed before the limit \(h\to0\).

The contour on each leaf must enclose its entire coefficient-difference
support. A two-sided annulus in the common exterior will allow the
identity boundary trace of the reconstructed transport to extend smoothly
across that contour.

\begin{lemma}\label{lem:uniform-leaf-disks}
Suppose that the two quotient connections agree outside
\(\overline{B_{R_0}}\), and choose radii
\(R_0<R_\perp<R_1<R\). Locally write a normalized null direction as
\(\theta=e_1+\mathrm i e_2\), where \((e_1,e_2,\nu_\theta)\) is an oriented
orthonormal triple, with \(\nu_\theta=e_1\times e_2\). Here \(x_\perp\) is the
real transverse coordinate. For \(|x_\perp|<R_\perp\), put
\[
 B_{\theta,x_\perp}=\{x_\perp\nu_\theta+x_1e_1+x_2e_2:
                    x_1^2+x_2^2<R_1^2-x_\perp^2\}.
\]
Each disk contains the coefficient-difference support on its leaf and has
a two-sided boundary annulus in the common exterior. The disks and annuli
vary smoothly in the local direction and transverse parameters. For
\(R_0<|x_\perp|<R_\perp\), the two connections agree on the entire leaf.
\end{lemma}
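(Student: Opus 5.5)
The statement is elementary Euclidean geometry of slices of the ball \(B_{R_1}\) by the affine planes \(\{\nu_\theta\cdot x=x_\perp\}\). The plan is to verify three properties in turn: containment of the coefficient-difference support, existence of a two-sided annulus in the common exterior, and smooth parameter dependence.

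First, every point of \(B_{\theta,x_\perp}\) lies in the plane \(\{\nu_\theta\cdot x=x_\perp\}\). Since \((e_1,e_2,\nu_\theta)\) is orthonormal, such a point has \(|x|^2=x_\perp^2+x_1^2+x_2^2\). Hence the disk is exactly the slice \(B_{R_1}\cap\{\nu_\theta\cdot x=x_\perp\}\), and \(|x_\perp|<R_\perp<R_1\) makes it nonempty.

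The coefficient differences \(\Gamma_{1}-\Gamma_{2}\), or equivalently \(A_{1,\theta}-A_{2,\theta}\), are supported in \(\overline{B_{R_0}}\), because the extended media coincide outside \(\overline{B_{R_0}}\) by Lemma~\ref{lem:common-extension}. Since \(R_0<R_1\), the intersection of this support with the leaf lies in the closed disk of radius \(\sqrt{\max(R_0^2-x_\perp^2,0)}\). That radius is strictly smaller than \(\sqrt{R_1^2-x_\perp^2}\).

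For the two-sided annulus, I take the planar annulus
\[
\{\sqrt{\max(R_0^2-x_\perp^2,0)}+\delta<\rho<\sqrt{R^2-x_\perp^2}-\delta\}
\]
on the leaf, where \(\rho=(x_1^2+x_2^2)^{1/2}\). It contains the circle \(\rho=\sqrt{R_1^2-x_\perp^2}\) on both of its sides.

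The key point is a uniform gap. On \(|x_\perp|\le R_\perp\) the differences \(\sqrt{R_1^2-x_\perp^2}-\sqrt{\max(R_0^2-x_\perp^2,0)}\) and \(\sqrt{R^2-x_\perp^2}-\sqrt{R_1^2-x_\perp^2}\) are continuous and positive. By compactness they are therefore bounded below, so one fixed \(\delta>0\) works. Points of this annulus satisfy \(R_0<|x|<R\), so they lie in \(B_R\setminus\overline{B_{R_0}}\), which is the common exterior.

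When \(R_0<|x_\perp|<R_\perp\), every point of the leaf has \(|x|\ge|x_\perp|>R_0\). The two media, and hence the two quotient connections \eqref{eq:Gamma} built pointwise from \(\mu,\kappa\) and their derivatives, agree there on the entire leaf.

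Smoothness in the parameters comes from the local smooth choice of the frame \(e_1(\theta),e_2(\theta),\nu_\theta\). The map \((\theta,x_\perp,x_1,x_2)\mapsto x_\perp\nu_\theta+x_1e_1+x_2e_2\) is then a smooth family of affine isometries. In leaf coordinates the disk is the centered planar disk of radius \(\sqrt{R_1^2-x_\perp^2}\). That radius is smooth and bounded below by \(\sqrt{R_1^2-R_\perp^2}>0\), and it is independent of \(\theta\).

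The only mild obstacle is that \(\sqrt{\max(R_0^2-x_\perp^2,0)}\) is not smooth at \(|x_\perp|=R_0\). I will avoid this by using the smooth fixed-radius annulus built from the uniform gap, rather than the exact support radius. Alternatively, one can take annulus radii \(\sqrt{R_1^2-x_\perp^2}\pm\delta\) with \(\delta\) chosen from the compactness bound.
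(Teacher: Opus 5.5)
Your argument is correct and is essentially the paper's: the identity $|x|^2=x_\perp^2+\rho^2$ on the leaf places the difference support strictly inside the disk and the annulus inside $\{R_0<|x|<R\}$, with smoothness coming from the local orthonormal frames. The only variation is the annulus. The paper takes the leaf's intersection with the fixed shell $R_1-\delta<|x|<R_1+\delta$, with radii $\sqrt{(R_1\pm\delta)^2-x_\perp^2}$, which are smooth without any compactness argument. Your constant-width choice $\sqrt{R_1^2-x_\perp^2}\pm\delta$ works equally well. You should commit to it, since your first annulus is not smooth at $|x_\perp|=R_0$, and it immediately yields the fixed reference annulus $1-\varepsilon<|w|<1+\varepsilon$ with $\varepsilon=\delta/R_1$, which the paper's proof also records for later use.
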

\begin{proof}
On the leaf \(x_\perp=\nu_\theta\cdot x\), we use the complex coordinate
\(z=e_1\cdot x+\mathrm i e_2\cdot x\). The disk is
\(|z|<r(x_\perp)\), where \(r(x_\perp)=\sqrt{R_1^2-x_\perp^2}\). Its boundary lies on
\(\{|x|=R_1\}\), whereas the coefficient-difference support lies in
\(\{|x|\leq R_0\}\). Thus the disk contains all differences on its
leaf. Leaves with \(|x_\perp|>R_0\) miss their support entirely.

Choose \(\delta>0\) such that
\(\max(R_0,R_\perp)<R_1-\delta<R_1+\delta<R\). The intersection of the
leaf with \(R_1-\delta<|x|<R_1+\delta\) is the two-sided annulus
\[
 \sqrt{(R_1-\delta)^2-x_\perp^2}<|z|
                    <\sqrt{(R_1+\delta)^2-x_\perp^2}.
\]
Both radii are positive and smooth for \(|x_\perp|<R_\perp\), and the two
connections agree throughout this annulus. The disk pullback from
the unit disk is the complex-affine map \(w\mapsto r(x_\perp)w\).
On a compact transverse subinterval, the ratios of the annulus
radii to \(r(x_\perp)\) are uniformly separated from one. Hence one
fixed \(\varepsilon>0\) makes the image of
\(1-\varepsilon<|w|<1+\varepsilon\) lie in the annulus for all
parameters in that subchart. The local smooth orthonormal frames
then give the claimed joint smoothness in the direction as well.
\end{proof}

The leaf moments will be obtained from exact fields on
\(\Omega_{\mathrm{CGO}}\), so the disk frames must extend to amplitudes
near the larger CGO domains. We construct the frames on an ambient
cylinder and multiply their lifts by transverse cutoffs; these cutoffs
preserve the homogeneous leading transport because \(D_\theta x_\perp=0\).

\begin{lemma}
\label{lem:uniform-leafwise-frames}
Fix the nested balls used for the system CGO construction, with largest ball
\(\Omega_0=B_{R_{\rm ext}}\), and choose \(L>R_{\rm ext}\).
For every normalized null direction \(\theta_0\) and transverse parameter
\(x_{\perp,0}\in(-R_\perp,R_\perp)\), there are parameter neighborhoods \(U\) of
\(\theta_0\) and \(I\Subset(-R_\perp,R_\perp)\) of \(x_{\perp,0}\) on which the primal quotient frame
\(X_1\), a smooth ambient transport lift \(Z_1\), and the dual
quotient frame \(Y_2\) can be constructed on the whole cylinder
\[
 \{x=x_\perp\nu_\theta+x_1e_1(\theta)+x_2e_2(\theta):
             x_\perp\in I,\ |x_1+\mathrm i x_2|<L\}.
\]
The quotient frames have size \(3\times3\), and \(Z_1\) has size
\(4\times3\). They satisfy
\[
 \begin{gathered}
 (2D_\theta+\mathcal A_{1,\theta})X_1=0,\qquad
 (2D_\theta+A_{1,\theta})Z_1=0,\qquad
 \mathsf U_\theta^TZ_1=X_1,\\
 (2D_\theta+\Gamma_{2,\theta})Y_2=0.
 \end{gathered}
\]
All these objects are smooth jointly in the spatial and local direction
variables. On compact subcylinders and compact direction subcharts,
the quotient determinants are bounded away from zero, and every fixed
finite order of derivatives of the frames, their inverses, and the
ambient lift is bounded. In particular they extend across the closures
of the Plemelj disks and their exterior annuli.

For each fixed direction and every \(b\in C_c^\infty(I)\), the primal
leading amplitudes \(b(x_\perp)g(z)Z_1\), for holomorphic polynomials \(g\), have
exact CGO realizations throughout the physical ball. The columns of \(Y_2\)
have exact normalized physical dual realizations whose leading
ambient amplitude matrix is \(\mathsf U_\theta Y_2\) on a neighborhood
of \(\operatorname{supp}b\) in the transverse variable. The reduced CGO remainder may be taken \(O(h^2)\)
in \(H^5\); the resulting physical adjoint remainder is \(O(h)\) in \(H^3\).
The constants in the reduced $H^5$ remainder bound $O(h^2)$ and the normalized physical-adjoint $H^3$ remainder bound $O(h)$ may depend on the fixed direction, $b$, and $g$.
\end{lemma}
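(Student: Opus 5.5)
The plan assembles the statement from the leafwise frame constructions already available, applied on the enlarged cylinder of radius \(L>R_{\rm ext}\). The dual frame \(Y_2\) for medium \(2\) comes from Theorem~\ref{thm:graded-frame}, applied to the common-exterior extension of \((\mu_2,\kappa_2)\) given by Lemma~\ref{lem:common-extension}. That theorem already works on a cylinder \(\mathcal C_{U,I,L}\) around \((\theta_0,x_{\perp,0})\). It supplies a jointly smooth solution of \((2D_\theta+\Gamma_{2,\theta})Y_2=0\) with \(\det Y_2=2c_2\), which is bounded away from zero on compact subcylinders. The same theorem gives the inverse and derivative bounds and the exact normalized physical dual realizations \eqref{eq:Yh-matrix}--\eqref{eq:Yh-limit}, whose leading amplitude equals \(\mathsf U_\theta Y_2\) where the transverse cutoff is one. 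Choosing that cutoff equal to one near \(\supp b\) gives the last dual assertion with the \(H^5\)/\(H^3\) remainder orders.

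For the primal frame \(X_1\), I will solve the leafwise quotient system \((2D_\theta+\mathcal A_{1,\theta})X_1=0\) by Lemma~\ref{lem:matrix-dbar-frame}, with \(m=3\) and \(\mathcal B=-\mathcal A_{1,\theta}\) written in the normalized leaf coordinate \(z\). The direction \(\theta\) and \(x_\perp\) serve as the finite-dimensional real parameters. After pulling back by the local smooth rotations, the disk of radius larger than \(L\) is fixed. Shrinking \(U,I\) to the neighborhoods where that lemma's Neumann-series construction applies, I obtain an invertible frame that is jointly smooth, with inverse and derivative bounds on compact subcharts. An alternative route is \(X_1=Y_1^{-T}\), where \(Y_1\) is the dual frame of medium \(1\) from Theorem~\ref{thm:graded-frame}. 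This works because \(\Gamma_1=-\mathcal A_{1,\theta}^T\), so \(Y^{-T}\) satisfies the primal equation. I prefer this route because it reuses the established determinant bound. The ambient lift \(Z_1\) then comes from Lemma~\ref{lem:transport-lift}, with \(r=3\) and smooth parameter dependence, giving \(\mathsf U_\theta^TZ_1=X_1\).

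\textbf{Main obstacle.} Lemma~\ref{lem:transport-lift} is stated only on small leafwise product neighborhoods, whereas here the lift must hold on the whole large disk. I will redo its Cauchy correction \eqref{eq:proof-characteristic-cauchy-correction} with disks \(D_L\Subset D_1\) after the fixed pullback and a leaf cutoff \(\chi\) equal to one near \(\overline{D_L}\). The residual \(f\) is then defined on all of \(D_1\times I\), because \(\mathsf V_\theta\) depends only on the spinor chart, which is fixed on \(U\). Lemma~\ref{lem:cauchy-leaf-estimate} then yields the smoothness and the derivative bounds for \(Z_1\).

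For the exact primal realizations, note that \(b(x_\perp)g(z)Z_1\) satisfies the leading transport because \(D_\theta b=D_\theta g=0\). A holomorphic polynomial \(g\) is entire, so it meets the planar-projection condition of Theorem~\ref{thm:autonomous-cgo}. The cylinder radius \(L>R_{\rm ext}\) ensures that the slices over \(\supp b\) cover the relevant neighborhood of \(\overline\Omega_1\). I will conjugate by \(\mathcal R^{-1}\) using Proposition~\ref{prop:bg-covariance} and apply Theorem~\ref{thm:autonomous-cgo} with \(k=5\), for the sign-reversed direction. This gives an \(O(h^2)\) reduced remainder in \(H^5\), and returning by \(\mathcal R\) preserves that order. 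Finally, the Plemelj disks and annuli of Lemma~\ref{lem:uniform-leaf-disks} lie in \(B_R\subset\Omega_0\), and hence inside the cylinder, so all frames extend across their closures. The constants are allowed to depend on \(\theta\), \(b\) and \(g\), so no uniformity beyond the frame bounds is needed.
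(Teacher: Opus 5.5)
Your proposal is correct and matches the paper's proof essentially step for step. The paper also obtains \(X_1\) from Lemma~\ref{lem:matrix-dbar-frame} on a fixed disk larger than \(D_L\). It builds \(Z_1\) by Lemma~\ref{lem:transport-lift} with a planar cutoff equal to one near \(\overline{D_L}\), and takes \(Y_2\) and its localized dual realizations from Theorem~\ref{thm:graded-frame}. The primal realizations come from Theorem~\ref{thm:autonomous-cgo} with direction \(-\theta\) through the common background conjugation. Your variant \(X_1=Y_1^{-T}\) is equally valid, since \(\Gamma_1=-\mathcal A_{1,\theta}^T\) and \(\det Y_1=2c_1\); the paper simply uses the direct \(\bar\partial\)-frame.
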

\begin{proof}
Choose a slightly larger planar disk than \(D_L=\{z:|z|<L\}\), where
\(z=e_1(\theta)\cdot x+\mathrm i e_2(\theta)\cdot x\). After pullback by the
smooth orthonormal coordinate map, every transport operator is a smooth
matrix \(\bar\partial\)-operator on this fixed disk, with \((\theta,x_\perp)\)
as parameters. The constructive parameter version in
Lemma~\ref{lem:matrix-dbar-frame} gives a primal quotient frame there.
Lemma~\ref{lem:transport-lift}, using a planar cutoff equal to one near
\(\overline{D_L}\), gives its ambient lift on the same cylinder.
For the dual frame we apply Theorem~\ref{thm:graded-frame} on the same
cylinder. In the local column normalization of \eqref{eq:graded-determinant},
\(\det Y_2=2\mu_2^{-1/2}\); hence its inverse is bounded on compact
subcylinders. Joint smoothness and compactness give the corresponding
bounds for both quotient frames and the ambient lift.

Since \(L>R_{\rm ext}\), multiplication by \(b(x_\perp)g(z)\) and zero
extension in \(x_\perp\) make the primal leading amplitude smooth near
\(\overline\Omega_0\). Its transport equation is preserved because
\(D_\theta b=D_\theta g=0\). Through the common background conjugation,
Theorem~\ref{thm:autonomous-cgo} with direction \(-\theta\) supplies its
positive-phase realization. The covariance law
\eqref{eq:transport-covariance} and the bounded derivatives of
\(\mathcal R^{\pm1}\) preserve the transport equations and Sobolev
orders when returning to the original variables.

Choose \(\chi\in C_c^\infty(I)\) equal to one near \(\supp b\).
Theorem~\ref{thm:graded-frame}, with the commutator correction for the additional column
\eqref{eq:graded-slow-localization}, gives the negative-phase physical
dual realizations with leading matrix \(\mathsf U_\theta Y_2\) on this
transverse neighborhood. Both realizations are exact on \(\Omega_{\mathrm{CGO}}\);
the reduced errors are \(O(h^2)\) in \(H^5\), and the normalized dual
errors are \(O(h)\) in \(H^3\). They may be restricted to \(B_R\).
Parameter smoothness concerns the limiting frames and amplitudes;
no parameter derivative of an exact CGO field is taken.
\end{proof}

\subsection{The physical Betti identity}

The reduced pairing must be tied to physical boundary traces. We use the
factorization of \(L_j\) to pair a primal reduced solution with the
canonical adjoint lift \(S_2^Tu_2\) of a physical solution.
The comparison field \(T_2\boldsymbol z_1\) has the same boundary trace and traction
as \(T_1\boldsymbol z_1\) in the common collar, although it may have a volume source
for \(L_2\). We therefore first state Betti's identity for fields with
such a source. In this subsection, \(\nu_{\partial B_R}\) is the outward unit normal to
\(\partial B_R\).

\begin{lemma}
\label{lem:sobolev-betti-dn}
For smooth real-valued Lam\'e coefficients strongly convex on
\(\overline{B_R}\), the DN map
\(H^{1/2}(\partial B_R;\C^3)\to H^{-1/2}(\partial B_R;\C^3)\)
is bounded and complex-bilinearly symmetric. More generally, if
\(u,v\in H^1(B_R;\C^3)\) and \(Lu,Lv\in L^2(B_R;\C^3)\)
in the distributional sense, their weak tractions \(t_u,t_v\)
belong to \(H^{-1/2}(\partial B_R;\C^3)\), and
\begin{equation}\label{eq:proof-sobolev-betti}
 \int_{B_R}(u^TLv-v^TLu)\,dx
 =\langle t_v,\operatorname{Tr} u\rangle_{\partial B_R}
  -\langle t_u,\operatorname{Tr} v\rangle_{\partial B_R},
\end{equation}
where \(\operatorname{Tr}:H^1(B_R;\C^3)\to H^{1/2}(\partial B_R;\C^3)\)
is the trace. Boundary duality is written with the traction in
the first slot and is complex-bilinear. Thus the identity also
applies when only one field solves the homogeneous equation.
\end{lemma}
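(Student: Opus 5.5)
The approach is the standard weak Green formula for the Lamé system, realized through the bilinear energy form. For \(u,v\in H^1(B_R;\C^3)\) set
\[
 \mathcal B(u,v)=\int_{B_R}\bigl[\lambda(\Div u)(\Div v)+2\mu\,\eps(u):\eps(v)\bigr]\dd x,
\]
which is complex-bilinear and symmetric, since \(\lambda,\mu\) are real.

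\emph{Step 1: weak traction.} For \(u\in H^1\) with \(Lu\in L^2\), define \(t_u\in H^{-1/2}(\partial B_R;\C^3)\) by
\[
 \langle t_u,\phi\rangle=\mathcal B(u,E\phi)+\int_{B_R}(E\phi)^TLu\,\dd x,
\]
where \(E:H^{1/2}(\partial B_R)\to H^1(B_R)\) is a bounded right inverse of the trace.

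Independence of the extension reduces to a check on \(H^1_0\). For \(w\in H^1_0\), the distributional definition of \(Lu=\Div\sigma(u)\) gives \(\mathcal B(u,w)=-\int w^TLu\,\dd x\). This holds first for \(w\in C_c^\infty\) and then by density, since both sides are continuous in the \(H^1\) norm.

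The bound \(|\langle t_u,\phi\rangle|\le C(\|u\|_{H^1}+\|Lu\|_{L^2})\|\phi\|_{H^{1/2}}\) follows from boundedness of \(E\) and the smooth bounded coefficients. For smooth \(u\), classical integration by parts shows that \(t_u=\sigma(u)\nu_{\partial B_R}\), so the definition is consistent with \eqref{eq:DN}.

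\emph{Step 2: Betti identity.} Since \(v-E\operatorname{Tr}v\in H^1_0\), independence of the extension gives \(\langle t_u,\operatorname{Tr}v\rangle=\mathcal B(u,v)+\int v^TLu\,\dd x\). Exchanging the roles of \(u\) and \(v\), subtracting, and using symmetry of \(\mathcal B\) yields \eqref{eq:proof-sobolev-betti}. No homogeneity was used, so the identity also applies when only one field solves the equation.

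\emph{Step 3: the DN map.} Under \eqref{eq:convexity}, Korn's inequality makes \(\mathcal B\) coercive on \(H^1_0(B_R;\R^3)\). The real pointwise coercivity \(\lambda(\tr\eps)^2+2\mu|\eps|^2\ge c|\eps|^2\) passes to complex fields when applied to real and imaginary parts via the Hermitian form \(\mathcal B(u,\bar u)\). Lax--Milgram then gives the unique solution \(u_f=Ef+w\) with \(w\in H^1_0\) and \(\|u_f\|_{H^1}\le C\|f\|_{H^{1/2}}\). Hence \(\DN f=t_{u_f}\) is bounded. Applying Step 2 with \(Lu_f=Lu_g=0\) gives \(\langle\DN g,f\rangle=\langle\DN f,g\rangle\), which is the bilinear symmetry.

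The only delicate point is the well-definedness and density argument of Step 1, namely extending the distributional identity from \(C_c^\infty\) to \(H^1_0\). Everything else is routine.
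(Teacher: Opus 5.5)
Your proposal is correct and follows essentially the same route as the paper. The paper also defines the weak traction by $a(u,W)+\int_{B_R}(Lu)^TW\,dx$ for any $H^1$ lifting $W$ of the boundary datum, proves independence of the lifting by density in $H^1_0$, derives Betti's identity by using each field as the lifting for the other's traction and invoking symmetry of $a$, and obtains boundedness and complex-bilinear symmetry of the DN map from Korn-based coercivity together with the case $Lu_f=Lu_g=0$.
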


\begin{proof}
Define
\[
 a(u,w)=\int_{B_R}
 [\lambda(\Div u)(\Div w)+2\mu\,\eps(u):\eps(w)]\,dx.
\]
Using strong convexity and Korn's inequality, we obtain, for some \(c_0>0\),
\begin{equation}\label{eq:proof-betti-coercivity}
 a(w,\overline w)\geq c_0\|w\|_{H^1}^2,
 \qquad w\in H^1_0(B_R;\C^3).
\end{equation}
Using \eqref{eq:proof-betti-coercivity} and a bounded trace right inverse \(E\),
we obtain the Dirichlet estimate
\begin{equation}\label{eq:proof-betti-dirichlet-estimate}
 \|u_f\|_{H^1}\leq C\|f\|_{H^{1/2}}.
\end{equation}
For \(u\in H^1\) with \(Lu\in L^2\), we define
\begin{equation}\label{eq:proof-weak-traction-definition}
 \langle t_u,g\rangle
 =a(u,W)+\int_{B_R}(Lu)^TW\,dx,\qquad \operatorname{Tr} W=g.
\end{equation}
The distributional equation makes the definition
\eqref{eq:proof-weak-traction-definition} independent of the lifting:
the difference of two liftings has zero trace and its contribution
vanishes by density in \(H^1_0\). Taking \(W=Eg\) in
\eqref{eq:proof-weak-traction-definition}, we obtain
\begin{equation}\label{eq:proof-betti-traction-estimate}
 \|t_u\|_{H^{-1/2}}\leq C(\|u\|_{H^1}+\|Lu\|_{L^2}).
\end{equation}
For smooth fields, \(t_u\) is the outward traction \(\sigma(u)\nu_{\partial B_R}\).
In \eqref{eq:proof-weak-traction-definition}, we use \(u\) as a lifting
for \(t_v\), and \(v\) for \(t_u\). By subtraction and the symmetry
of \(a\), we obtain \eqref{eq:proof-sobolev-betti}. 
For homogeneous Dirichlet solutions, \(t_{u_f}=\DN f\).
The estimates \eqref{eq:proof-betti-dirichlet-estimate} and
\eqref{eq:proof-betti-traction-estimate}, with \(Lu_f=0\), give the
boundedness of \(\DN\); applying \eqref{eq:proof-sobolev-betti} to two
homogeneous solutions gives its complex-bilinear symmetry.

\end{proof}

Betti's identity now converts equality of the elastic DN maps into a
reduced volume pairing. The adjoint test is \(y_2=S_2^Tu_2\), with
\(L_2u_2=0\), so both boundary traces in this conversion belong to
physical elasticity solutions.

\begin{proposition}\label{prop:primal-adjoint}
Assume that the two Lam\'e pairs coincide on a neighborhood of
\(\partial B_R\) and that \(\DN_1=\DN_2\) as bounded operators
\(H^{1/2}(\partial B_R;\C^3)\to H^{-1/2}(\partial B_R;\C^3)\).
Let \(M_1\boldsymbol z_1=0\), set \(u_1=T_1\boldsymbol z_1\), and let \(u_2\) solve
\(L_2u_2=0\). Assume that \(\boldsymbol z_1\) and \(u_2\) are smooth near
\(\overline{B_R}\), as holds for the CGO constructions on a larger ball.
Put \(y_2=S_2^Tu_2\). All volume integrals in this proposition are
over \(B_R\). Then
\begin{equation}\label{eq:primal-adjoint}
 \int y_2^T(M_1-M_2)\boldsymbol z_1\,dx=0.
\end{equation}
\end{proposition}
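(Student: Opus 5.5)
The plan is to express the volume pairing through the factorization \eqref{eq:factorization} for the two media and then reduce it to boundary terms via Lemma~\ref{lem:sobolev-betti-dn}. Equality of the DN maps and agreement of the coefficients near \(\partial B_R\) should make those boundary terms cancel.

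\textbf{Step 1: rewrite the volume integrand.} Since \(M_1\boldsymbol z_1=0\), we have
\[
 (M_1-M_2)\boldsymbol z_1=-M_2\boldsymbol z_1 .
\]
Put \(w=T_2\boldsymbol z_1\). By \eqref{eq:factorization} for the second medium,
\[
 L_2w=S_2M_2\boldsymbol z_1 .
\]
The identity \(y_2^T S_2 g=u_2^T S_2 g\) cannot be used pointwise, because \(S_2^T\) is a formal transpose. We therefore integrate by parts with the explicit form \eqref{eq:Smu} over \(B_R\):
\[
 \int u_2^TS_2(p,q)\,dx=\int (S_2^Tu_2)^T(p,q)\,dx
 +\int_{\partial B_R}\frac{q\,u_2\cdot\nu}{1-\kappa_2}\,dS .
\]
Here \((p,q)=M_2\boldsymbol z_1\), and the boundary term comes from the gradient in \(S_2\). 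Hence
\[
 -\int y_2^T(M_1-M_2)\boldsymbol z_1
 =\int u_2^TL_2w-\int_{\partial B_R}\frac{(M_2\boldsymbol z_1)_4\,u_2\cdot\nu}{1-\kappa_2}.
\]
Near \(\partial B_R\) the two media coincide, so \(M_2=M_1\) there and \((M_2\boldsymbol z_1)_4=0\) on a neighborhood of \(\partial B_R\). The boundary term therefore vanishes.

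\textbf{Step 2: apply Betti.} Apply \eqref{eq:proof-sobolev-betti} with \(L=L_2\), \(u=u_2\) and \(v=w\); all fields are smooth near \(\overline{B_R}\). Since \(L_2u_2=0\), this gives
\[
 \int u_2^TL_2w=\langle t^{(2)}_w,u_2\rangle-\langle t^{(2)}_{u_2},w\rangle .
\]
Because \(T_2=T_1\) near \(\partial B_R\), the function \(w\) agrees with \(u_1=T_1\boldsymbol z_1\) on a neighborhood of the boundary. Consequently \(w\) and \(u_1\) have the same trace \(f\), and their \(L_2\)-tractions both equal \(\sigma_1(u_1)\nu\), since the coefficients agree there. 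As \(L_1u_1=0\), this traction is \(\DN_1 f\). The tractions are smooth and classical, so the weak traction of Lemma~\ref{lem:sobolev-betti-dn} coincides with \(\sigma\nu\); this can also be checked by comparing liftings supported near the boundary.

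\textbf{Step 3: conclude with DN equality and symmetry.} Writing \(g=u_2|_{\partial B_R}\), we have \(t^{(2)}_{u_2}=\DN_2 g\). Using \(\DN_1=\DN_2\) and the complex-bilinear symmetry of \(\DN_2\) from Lemma~\ref{lem:sobolev-betti-dn},
\[
 \langle \DN_1 f,g\rangle-\langle \DN_2 g,f\rangle
 =\langle \DN_2 f,g\rangle-\langle \DN_2 g,f\rangle=0 .
\]
This yields \eqref{eq:primal-adjoint}.

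The main obstacle is the bookkeeping in Step 1. One must confirm that \(M_1-M_2\) and \(T_1-T_2\) vanish on a full neighborhood of \(\partial B_R\), not just on the boundary, so that both the \(S_2\) integration-by-parts term and the traction difference disappear. The sign and normal conventions in \eqref{eq:Stranspose} must also be kept consistent.
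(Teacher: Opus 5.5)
Your proposal is correct and follows the paper's proof essentially step for step. The paper also writes \((M_1-M_2)\boldsymbol z_1=-M_2\boldsymbol z_1\), moves \(S_2^T\) across using the single boundary term \(\int_{\partial B_R}(u_2\cdot\nu)(M_2\boldsymbol z_1)_4/(1-\kappa_2)\,dS\), which vanishes in the common collar, and applies Lemma~\ref{lem:sobolev-betti-dn} to \(u_2\) and the comparison field \(T_2\boldsymbol z_1\) (whose trace and traction agree with those of \(u_1\) near \(\partial B_R\)), concluding from \(\DN_1=\DN_2\) and the bilinear symmetry of \(\DN_2\).
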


\begin{proof}
The factorization \eqref{eq:factorization} and its bilinear transpose give
\[
 L_1u_1=S_1M_1\boldsymbol z_1=0,\qquad
 M_2^Ty_2=M_2^TS_2^Tu_2=T_2^TL_2u_2=0.
\]
Write \(w=M_2\boldsymbol z_1=(w',w_4)^T\), with \(w'\in\C^3\).
In the common boundary collar, \(M_2=M_1\), so \(w=0\). Thus
\[
 \int y_2^T(M_1-M_2)\boldsymbol z_1\,dx
 =-\int (S_2^Tu_2)^Tw\,dx.
\]
Using \eqref{eq:Stranspose}, integration by parts gives
\begin{equation}\label{eq:proof-adjoint-transfer}
 \int (S_2^Tu_2)^Tw\,dx
 =\int u_2^TS_2w\,dx
  -\int_{\partial B_R}(u_2\cdot\nu_{\partial B_R})
                       \frac{w_4}{1-\kappa_2}\,dS.
\end{equation}
Since \(w_4=0\) in the collar, the boundary term vanishes, and
\eqref{eq:factorization} for the second medium gives
\[
 \int y_2^T(M_1-M_2)\boldsymbol z_1\,dx
 =-\int u_2^TS_2M_2\boldsymbol z_1\,dx
 =-\int u_2^TL_2v\,dx,\qquad v=T_2\boldsymbol z_1.
\]

The smooth fields \(u_2,v\) satisfy Lemma~\ref{lem:sobolev-betti-dn}.
Set \(f=\operatorname{Tr} u_1\) and \(g=\operatorname{Tr} u_2\). In the common collar,
\(T_2=T_1\) and \(v=u_1\); equality of the coefficients and fields gives
\[
 \operatorname{Tr} v=f,\qquad
 t_v^{(2)}=t_{u_1}^{(1)}=\DN_1f,\qquad
 t_{u_2}^{(2)}=\DN_2g.
\]
The traction superscript specifies its medium. The comparison field
\(v\) need not satisfy \(L_2v=0\): \(L_2v\in L^2\) and \(L_2u_2=0\)
suffice for Betti's identity \eqref{eq:proof-sobolev-betti}, which gives
\[
 \begin{aligned}
 -\int u_2^TL_2v\,dx
 &=-\langle\DN_1f,g\rangle
       +\langle\DN_2g,f\rangle\\
 &=\langle(\DN_2-\DN_1)f,g\rangle=0,
 \end{aligned}
\]
by bilinear symmetry of \(\DN_2\) and equality of the DN maps.
These Sobolev duality pairings satisfy
\[
 |\langle\DN_jf,g\rangle|
 \leq\|\DN_j\|_{H^{1/2}\to H^{-1/2}}
       \|f\|_{H^{1/2}}\|g\|_{H^{1/2}}.
\]

Under the collar hypotheses, every exact physical pair satisfies
\eqref{eq:primal-adjoint}. Each fixed-\(h\) CGO pairing is therefore
zero before phase cancellation. The limit uses phase-removed interior
bounds, without uniform bounds on exponentially weighted boundary traces.

For the common background conjugation, set
\[
 \widetilde{\boldsymbol z}_1=\mathcal R^{-1}\boldsymbol z_1,\qquad
 \widetilde y_2=\mathcal R^Ty_2,\qquad
 \widetilde M_j=\mathcal R^{-1}M_j\mathcal R.
\]
The identity of differential operators
\(\widetilde M_1-\widetilde M_2
  =\mathcal R^{-1}(M_1-M_2)\mathcal R\) gives
\begin{equation}\label{eq:proof-exact-moment-covariance}
 \int\widetilde y_2^T(\widetilde M_1-\widetilde M_2)
                         \widetilde{\boldsymbol z}_1\,dx
 =\int y_2^T(M_1-M_2)\boldsymbol z_1\,dx=0.
\end{equation}
This exact covariance includes the zeroth-order terms and derivatives
of \(\mathcal R\); \eqref{eq:moment-covariance} gives its leading
first-order counterpart.
\end{proof}

\subsection{Complete matrix moments from physical data}

We apply the exact physical pairing to opposite-phase CGO fields
and pass to the first-order coefficient difference to derive the
matrix-moment identities. The same invertible primal
and physical dual frames are used for every pair of columns and every
holomorphic polynomial. Transverse tests localize the resulting matrix
identity to each enclosing leaf disk.

Fix a normalized null direction \(\theta=e_1+\mathrm i e_2\) and use
the orthonormal leaf coordinates
\[
 \nu_\theta=e_1\times e_2,\qquad
 z=e_1\cdot x+\mathrm i e_2\cdot x,\qquad x_\perp=\nu_\theta\cdot x.
\]
Here \(\nu_\theta\) is the leaf normal and \(x_\perp\) is the real transverse
coordinate. The direction and its spinor frame are fixed when
differentiating in \(x\). For medium \(j\), we write
\(\mathcal A_{j,\theta}\) for
\eqref{eq:primal-quotient-connection}, suppress this fixed direction
in the notation, and set
\[
 P_j=\frac12\mathcal A_j.
\]
Since \(D_\theta=2\partial_{\bar z}\), division of the primal
transport equation by two gives
\begin{equation}\label{eq:leaf-primal}
 2\partial_{\bar z}X_j+P_jX_j=0,
\end{equation}
and the dual equation, with \(\Gamma_j=-\mathcal A_j^T\), gives
\begin{equation}\label{eq:leaf-dual}
 2\partial_{\bar z}Y_j-P_j^TY_j=0.
\end{equation}
The two media use the same coefficient-independent quotient basis
\(\mathsf U_\theta\) and right inverse \(\mathsf V_\theta\).

\begin{corollary}
\label{lem:DN-to-moment}
Let \(D=\sum_k A^k\partial_k+B\), \(\boldsymbol z_h\), and \(y_h\) satisfy
Lemma~\ref{lem:CGO-product-limit} on \(B_R\), with the coefficients of
\(D\) compactly supported there. Its \(O(h)\) estimate holds on \(B_R\),
and both integrals are unchanged by enlarging the integration domain
through a region where the coefficients of \(D\) vanish, with the fields
held fixed.
If \(D=M_1-M_2\), \(M_1\boldsymbol z_h=0\), and \(y_h=S_2^Tu_{2,h}\) with
\(L_2u_{2,h}=0\), under the common-collar and exact-solution hypotheses
of Proposition~\ref{prop:primal-adjoint}, equality of the elastic DN maps
makes the exact integral zero and hence
\[
 \int_{B_R}y^T A_\theta a\,dx=0.
\]
The assertion applies to every pair of columns of matrix-valued fields.
\end{corollary}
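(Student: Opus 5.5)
The plan is to combine Lemma~\ref{lem:CGO-product-limit} with Proposition~\ref{prop:primal-adjoint} and then let \(h\to0\). There are three parts: the domain-enlargement remark, the identity for a single column pair, and the extension to matrices.

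\emph{Enlarging the domain.} Lemma~\ref{lem:CGO-product-limit} is stated on a fixed bounded domain. I would apply it with that domain taken to be \(B_R\). The hypotheses are \(H^3\) bounds on the amplitudes and remainders, and these restrict from \(\Omega_{\mathrm{CGO}}\) to \(B_R\) with no larger constants. The approximate integral \(h\int y_h^TD\boldsymbol z_h\) and the limit integral \(\int y^TA_\theta a\) each involve only the coefficients of \(D\) and of \(A_\theta=\sum_k\theta_kA^k\). If these vanish on \(B'\setminus B_R\) for a larger domain \(B'\), the integrands vanish pointwise there. So the integrals over \(B'\) and over \(B_R\) coincide when the fields are held fixed. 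This is immediate; it only records that the choice of integration domain is immaterial once it contains the coefficient supports.

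\emph{Single columns.} Take \(D=M_1-M_2\). Its coefficients are supported in \(\overline{B_{R_0}}\Subset B_R\) by Lemma~\ref{lem:common-extension}. Since \(M_1\boldsymbol z_h=0\) and \(y_h=S_2^Tu_{2,h}\) with \(L_2u_{2,h}=0\), Proposition~\ref{prop:primal-adjoint} applies for each fixed \(h\); the fields are smooth near \(\overline{B_R}\) by the CGO constructions. It gives \(\int_{B_R}y_h^TD\boldsymbol z_h\,dx=0\) exactly, before any phase cancellation, so multiplying by \(h\) still gives zero. Estimate \eqref{eq:CGO-product-limit} then yields \(\bigl|\int_{B_R}y^TA_\theta a\,dx\bigr|\le Ch\) for all small \(h\). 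The left side does not depend on \(h\), so it vanishes.

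\emph{Matrices and the main obstacle.} For matrix-valued fields I apply the scalar argument to each column of the primal field and each column of the dual field. Each such pair consists of an exact physical pair satisfying the same hypotheses, so all nine, or generally all \(r\times r'\), entries of \(\int y^TA_\theta a\) vanish. The main point to check is that the \(O(h)\) estimate really uses only phase-removed interior bounds on \(B_R\). The pairing must vanish exactly at each fixed \(h\) through the physical Betti identity, which it does by Proposition~\ref{prop:primal-adjoint}, and no uniform control of exponentially weighted boundary traces is needed. The hypothesis \(\|r_h\|_{H^3}\le Ch^2\) is also needed so that \(h^{-1}A_\theta r_h\) stays bounded. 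Both are guaranteed by the stated hypotheses and by Theorem~\ref{thm:graded-frame}.
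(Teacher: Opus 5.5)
Your proposal is correct and is essentially the paper's own argument: Proposition~\ref{prop:primal-adjoint} makes the exact pairing \(h\int_{B_R}y_h^T(M_1-M_2)\boldsymbol z_h\,dx\) vanish at each fixed \(h\), so the \(O(h)\) estimate of Lemma~\ref{lem:CGO-product-limit} on \(B_R\) forces the \(h\)-independent moment \(\int_{B_R}y^TA_\theta a\,dx\) to be zero; the domain-enlargement claim follows from the vanishing of the coefficients of \(D\), and the matrix claim follows by applying this to each column pair. One minor point: boundedness of \(h^{-1}A_\theta r_h\) only requires \(\|r_h\|_{L^2}=O(h^2)\), since \(A_\theta\) acts by multiplication, so the \(H^3\) bound is sufficient rather than needed.
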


\begin{proof}

In Lemma~\ref{lem:CGO-product-limit}, the exponential phases cancel in
the volume pairing before the limit is taken. For a physical adjoint
field, Proposition~\ref{prop:primal-adjoint} sets that exact pairing
to zero at each fixed \(h\). Its \(O(h)\) interior error therefore
bounds the leading moment. Vanishing of the coefficients of \(D\)
proves the integration-domain assertion, and applying the scalar
pairing to each pair of columns proves the matrix assertion.
\end{proof}

The corollary gives one moment for each admissible physical adjoint test.
To recover a full matrix identity, we use the two principal and one additional
columns of Theorem~\ref{thm:graded-frame}, and realize their fixed leading
frame together on the cylinder of Lemma~\ref{lem:uniform-leafwise-frames}.

\begin{theorem}
\label{thm:physical-nine-moments}\label{prop:quotient-moment}
Let the two smooth strongly convex Lam\'e pairs have the common
exterior extensions and fixed CGO domains chosen above, with their
coefficient difference supported in \(\overline{B_{R_0}}\), and assume
that their physical DN maps on \(B_R\) agree. Fix a normalized null
direction and one of the local transverse intervals
\(I\Subset(-R_\perp,R_\perp)\) of
Lemma~\ref{lem:uniform-leafwise-frames}.

There are invertible primal and dual quotient frames \(X_1,Y_2\)
on the ambient cylinder, chosen before any test functions, such that
the dual frame consists of two principal and one additional canonical physical
adjoint polarizations. In the normalizations of
Theorem~\ref{thm:graded-frame},
\[
 \det Y_2=2\mu_2^{-1/2}\ne0.
\]
For every fixed \(b\in C_c^\infty(I)\) and holomorphic polynomial
\(g\), these frames have exact realizations on the fixed CGO ball:
the primal reduced fields solve \(M_1\boldsymbol z_{1,h}=0\), and every dual
column is \(S_2^Tu_{2,h}\) with \(L_2u_{2,h}=0\). Their reduced
remainders are \(O(h^2)\) in \(H^5\), and the normalized physical adjoint
remainders are \(O(h)\) in \(H^3\). On a transverse neighborhood of
\(\supp b\), the leading primal and dual amplitudes are respectively
\(b(x_\perp)g(z)Z_1\) and \(\mathsf U_\theta Y_2\), with
\(\mathsf U_\theta^TZ_1=X_1\). The exact physical adjoint matrix has
rank three on compact subsets of this region for sufficiently small
\(h\).

The exact primal and three physical-adjoint CGO realizations give all entries of the matrix identity
\begin{equation}\label{eq:3Dmoment}
 \int b(x_\perp)g(z)Y_2^T(P_1-P_2)X_1\,dx=0.
\end{equation}
The integral is over \(B_R\), with zero extension outside the
transverse support of \(b\). On every enclosing disk
\(B=B_{\theta,x_\perp}\), \(x_\perp\in I\), they also give
\begin{equation}\label{eq:leaf-moment}
 \int_B g(z)Y_2^T(P_1-P_2)X_1\,dA=0.
\end{equation}
Both are full \(3\times3\) matrix identities for the same quotient
frames and all holomorphic polynomials. They give nine scalar identities
for each polynomial weight, rather than only finitely many weighted
moment conditions. The leafwise identity holds
at every transverse parameter. The only boundary map used is the
physical elasticity DN map.
\end{theorem}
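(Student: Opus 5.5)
The frames come from Lemma~\ref{lem:uniform-leafwise-frames}, applied to the fixed direction and the interval \(I\). It supplies a primal quotient frame \(X_1\) for medium one with ambient lift \(Z_1\) satisfying \(\mathsf U_\theta^TZ_1=X_1\). It also supplies a dual frame \(Y_2\) for medium two, built by Theorem~\ref{thm:graded-frame} from the two principal columns and the additional column; that theorem gives \(\det Y_2=2\mu_2^{-1/2}\). Both frames are fixed before any \(b\) or \(g\) is chosen.

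For given \(b\in C_c^\infty(I)\) and a holomorphic polynomial \(g\), the same lemma gives exact positive-phase reduced fields \(\boldsymbol z_{1,h}\) with \(M_1\boldsymbol z_{1,h}=0\) and leading amplitude \(b g Z_1\). It also gives exact negative-phase physical adjoint columns \(y_{2,h}=S_2^Tu_{2,h}\) with \(L_2u_{2,h}=0\) and leading matrix \(\mathsf U_\theta Y_2\) near \(\supp b\), cut off by some \(\chi\) equal to one there. The stated remainder orders are inherited from that lemma. The rank-three claim is \eqref{eq:proof-graded-rank-estimate}, which uses Sobolev embedding on compact subsets where \(\chi=1\).

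For each pair of columns, apply Corollary~\ref{lem:DN-to-moment} with \(D=M_1-M_2\). Its hypotheses hold: Proposition~\ref{prop:primal-adjoint} makes the exact pairing vanish for every fixed \(h\), and the coefficients of \(D\) are supported in \(\overline{B_{R_0}}\subset B_R\). The corollary then yields
\[
\int_{B_R}(\chi\,\mathsf U_\theta Y_2)^T(A_{1,\theta}-A_{2,\theta})\,b g Z_1\,dx=0 .
\]
Next convert this to quotient form. Because \(\chi=1\) on \(\supp b\), the factor \(\chi\) drops out. Since \(A_{j,\theta}e_\theta=0\) and \(Z_1-\mathsf V_\theta X_1\in\C e_\theta\), we get
\[
\mathsf U_\theta^T(A_{1,\theta}-A_{2,\theta})Z_1=(\mathcal A_1-\mathcal A_2)X_1=2(P_1-P_2)X_1 .
\]
This gives \eqref{eq:3Dmoment}, with all nine entries coming from the three-by-three choice of columns. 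Proposition~\ref{prop:bg-covariance}, through \eqref{eq:moment-covariance}, confirms that the background conjugation does not change these leading moments.

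To get \eqref{eq:leaf-moment}, write \(dx=dA\,dx_\perp\) in the coordinates \((z,x_\perp)\). By Lemma~\ref{lem:uniform-leaf-disks}, the integrand on each leaf is supported in the disk \(B_{\theta,x_\perp}\subset B_R\). Hence
\[
F(x_\perp)=\int_{B_{\theta,x_\perp}}gY_2^T(P_1-P_2)X_1\,dA
\]
is continuous on \(I\), in fact smooth because the frames are jointly smooth. Equation \eqref{eq:3Dmoment} then says \(\int_I bF\,dx_\perp=0\) for every \(b\in C_c^\infty(I)\), so \(F\equiv0\) on \(I\).

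The main obstacle I expect is bookkeeping. The dual frame \(Y_2\) must be the same for all \(b\) and \(g\), while the exact realizations depend on cutoffs. This is handled by choosing \(\chi\) depending on \(\supp b\) but keeping the limiting frame fixed, since the \(O(h)\) constants are allowed to depend on \(b\) and \(g\).
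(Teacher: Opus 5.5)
Your proposal is correct and follows the paper's proof essentially step for step: the frames $X_1,Z_1,Y_2$ are fixed from Lemma~\ref{lem:uniform-leafwise-frames} and Theorem~\ref{thm:graded-frame}, and Corollary~\ref{lem:DN-to-moment} is applied to all nine column pairs of the exact primal and physical adjoint fields. The contraction $\mathsf U_\theta^T(A_{1,\theta}-A_{2,\theta})Z_1=2(P_1-P_2)X_1$ then yields \eqref{eq:3Dmoment}, arbitrary transverse tests $b$ with continuity in $x_\perp$ give \eqref{eq:leaf-moment}, and in the cutoff transition region the leading dual amplitude is really $W_\chi$ from \eqref{eq:Yh-limit} rather than $\chi\,\mathsf U_\theta Y_2$, which does not affect your identity because $b$ vanishes there.
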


\begin{proof}
Theorem~\ref{thm:graded-frame} supplies canonical physical adjoint
columns: the principal pair spans \(\mathcal F_\theta\), and
\eqref{eq:Yslow-mod} and \eqref{eq:slow-polarization-normalization} make
the additional column independent, with determinant
\eqref{eq:graded-determinant}. Together with
Lemma~\ref{lem:uniform-leafwise-frames}, it gives \(X_1,Y_2,Z_1\)
on the cylinder, fixed before \(b,g\). The principal columns give the
first two rows and the additional column the third row of
\(Y_2^T(P_1-P_2)X_1\); all nine entries use these same frames.
The rank of the exact fields follows from \eqref{eq:Yh-limit}, the bounded
inverse of \(Y_2\), and Sobolev embedding, as in
\eqref{eq:proof-graded-rank-estimate}.
\space{}For fixed \(\theta,b,g\), Lemma~\ref{lem:uniform-leafwise-frames}
gives the \(4\times3\) amplitude \(a=b(x_\perp)g(z)Z_1\), smoothly extended
by zero in \(x_\perp\) near the full fixed CGO domain, with
\((2D_\theta+A_{1,\theta})a=0\). Apply
Theorem~\ref{thm:autonomous-cgo} columnwise through the common
background conjugation with direction \(-\theta\). Its leading
transport is the negative of this homogeneous equation, and its
first correction in the original variables satisfies
\[
 (2D_\theta+A_{1,\theta})a_1=-M_1a
 \quad\hbox{near }\overline\Omega_{\mathrm{CGO}},
\]
The exact primal fields, with their two-order remainder controlled by
Proposition~\ref{prop:appendix-physical-realization}, are
\begin{equation}\label{eq:proof-moment-primal-realization}
 \begin{gathered}
 \boldsymbol z_{1,h}=e^{\theta\cdot x/h}(a+h a_1+r_h),
 \qquad M_1\boldsymbol z_{1,h}=0\quad\hbox{in }\Omega_{\mathrm{CGO}},\\
 \|r_h\|_{H^5(\Omega_{\mathrm{CGO}})}=O(h^2).
 \end{gathered}
\end{equation}
The fixed CGO ball \(\Omega_{\mathrm{CGO}}\) contains \(\overline{B_R}\).

Choose a transverse cutoff equal to one near \(\supp b\).
Lemma~\ref{lem:uniform-leafwise-frames} realizes the fixed dual frame
as a \(4\times3\) matrix of exact physical adjoint fields
\begin{equation}\label{eq:proof-moment-dual-realization}
 y_{2,h}=e^{-\theta\cdot x/h}(\widehat y_2+q_h),
 \qquad \|q_h\|_{H^3(\Omega_{\mathrm{CGO}})}=O(h),
\end{equation}
Here \(\widehat y_2=\mathsf U_\theta Y_2\) near \(\supp b\).
Every column is exactly \(S_2^Tu_{2,h}\), with \(L_2u_{2,h}=0\):
the two normalized physical fields are \(-hT_2\boldsymbol z_h^{F,j}\), and the unscaled field
is \(T_2\boldsymbol z_h^S\). The physical-adjoint column normalizations $(-h,-h,1)$ preserve
\eqref{eq:primal-adjoint}. The cutoff and corrections may depend
on \(b\), while their leading columns on its support stay fixed.
Lemma~\ref{lem:slow-physical-lift} already accounts for the additional
lift's \(h^{-1}\) factor and gives its unscaled \(O(h)\) conjugated
\(H^3\) error. Outside the open region where the cutoff is identically one \(a=0\), so the leading product
vanishes.

The phase-removed principal adjoint lifts
\(K_2\boldsymbol z_h^{F,j}\), \(j=1,2\), are of order
\(O(h^{-1})\), whereas the supplementary lift
\(K_2\boldsymbol z_h^S\) is of order \(O(1)\).
Since the conjugated first-order operator difference
contributes a factor \(h^{-1}\), the unnormalized
pairings have respective orders
\(O(h^{-2}),O(h^{-2}),O(h^{-1})\).
Combining the physical normalization factors
\((-h,-h,1)\) with the common factor \(h\) in
\eqref{eq:proof-matrix-moment-limit} gives
\begin{equation}\label{eq:physical-moment-row-normalization}
 h\,\operatorname{diag}(-h,-h,1)
 =\operatorname{diag}(-h^2,-h^2,h).
\end{equation}
All three rows thus admit finite limits, while the
rescaled Betti identities remain exactly zero for
every \(h>0\).

For each fixed direction, the primal and adjoint frames
are chosen independently of the test functions \(b\) and
\(g\). The limit \(h\to0\) is taken for each fixed test
pair, yielding the matrix identities for all such tests
without requiring estimates uniform in the polynomial
degree or direction derivatives of the exact fields.
Only the invertibility of the frames is needed; no
nondegeneracy assumption on the coefficient difference
is imposed.

In particular, the third row is obtained from the first
nonzero physical lift \(K_0\boldsymbol z_0^S-J_\theta \boldsymbol z_1^S\), not by prescribing an
independent row of auxiliary Cauchy data. The same invertible \(Y_2\)
is used for all three primal columns and for every fixed polynomial
test. Each entry therefore follows from the exact elasticity Betti
identity \eqref{eq:primal-adjoint}, together with the whole-domain
\(O(h)\) limit in \eqref{eq:proof-matrix-moment-limit}.

All fields are smooth near \(\overline{B_R}\). In the original
variables, the operator difference is
\[
 M_1-M_2=\sum_{k=1}^3(A_{1,k}-A_{2,k})\partial_k+(B_1-B_2),
\]
Its coefficients are compactly supported in \(B_R\). The remainder
estimates \eqref{eq:proof-moment-primal-realization} and
\eqref{eq:proof-moment-dual-realization}, restricted to \(B_R\),
and the fixed \(H^3\) amplitude bounds satisfy
Corollary~\ref{lem:DN-to-moment}. The error estimate \eqref{eq:CGO-product-limit}
controls corrections on the whole integration domain,
including where \(a=0\). Applied to all nine column pairs, the
corollary gives, in a fixed matrix norm,
\begin{equation}\label{eq:proof-matrix-moment-limit}
 \begin{aligned}
 \Big\|{}&h\int_{B_R}y_{2,h}^T(M_1-M_2)\boldsymbol z_{1,h}\,dx\\
 &-\int_{B_R}b(x_\perp)g(z)Y_2^T\mathsf U_\theta^T
       (A_{1,\theta}-A_{2,\theta})Z_1\,dx\Big\|\leq Ch.
 \end{aligned}
\end{equation}
The first integral is zero by \eqref{eq:primal-adjoint}, so the
second vanishes as \(h\to0\). Write
\(Z_1=\mathsf V_\theta X_1+e_\theta\alpha\). The common quotient
basis, \((A_{1,\theta}-A_{2,\theta})e_\theta=0\), and
\eqref{eq:primal-quotient-connection} give
\begin{equation}\label{eq:proof-moment-quotient-contraction}
 \mathsf U_\theta^T(A_{1,\theta}-A_{2,\theta})Z_1
 =(\mathcal A_{1,\theta}-\mathcal A_{2,\theta})X_1
 =2(P_1-P_2)X_1.
\end{equation}
Thus \eqref{eq:proof-matrix-moment-limit} and
\eqref{eq:proof-moment-quotient-contraction} give
\[
 0=\int_{B_R}b(x_\perp)g(z)Y_2^T
       (\mathcal A_{1,\theta}-\mathcal A_{2,\theta})X_1\,dx
   =2\int_{B_R}b(x_\perp)g(z)Y_2^T(P_1-P_2)X_1\,dx,
\]
proving \eqref{eq:3Dmoment}. 
The common background conjugation preserves the leading moment by
\eqref{eq:moment-covariance}.

Fix a polynomial \(g\) and the enclosing disks of
Lemma~\ref{lem:uniform-leaf-disks}.
Identify \(B_{x_\perp}=\{z:|z|<r(x_\perp)\}\), where
\(r(x_\perp)=\sqrt{R_1^2-x_\perp^2}\), with the unit disk \(\mathbb D\)
by \(\Phi_{x_\perp}(w)=r(x_\perp)w\).
Set
\[
\begin{aligned}
F_g(x_\perp,z)&=g(z)Y_2^T(P_1-P_2)X_1,\\
H(x_\perp)&=\int_{B_{x_\perp}}F_g(x_\perp,z)\,dA(z).
\end{aligned}
\]
The pullback \(\widehat F_{g,x_\perp}(w)=r(x_\perp)^2F_g(x_\perp,r(x_\perp)w)\)
is jointly smooth on \(J\times\overline{\mathbb D}\) for every
\(J\Subset I\); hence \(H\) is continuous on \(I\).
Each disk contains the leafwise difference support. Using
\(dx=dA\,d x_\perp\), \eqref{eq:3Dmoment} gives
\(\int_I b(x_\perp)H(x_\perp)\,d x_\perp=0\) for every \(b\in C_c^\infty(I)\).
Entrywise distributional vanishing and continuity imply
\(H(x_\perp)=0\) for every \(x_\perp\in I\).

This proves \eqref{eq:leaf-moment} at every \(x_\perp\in I\).
The local intervals cover all enclosing disks, and the integrand
vanishes on leaves outside the difference support.
\end{proof}

Remark~\ref{rem:audit-vii-low-regularity-moments} shows that
only the \(k=1\) two-level CGO bounds are needed for the preceding
nine moment identities. The stronger \(H^5\) construction in the proof
also establishes the separately stated pointwise rank of the exact
physical frame.

Theorem~\ref{thm:physical-nine-moments} gives all nine scalar
moment identities in \eqref{eq:leaf-moment} for every holomorphic
polynomial, with the same \(X_1,Y_2\) fixed before the tests.
The Cauchy--Green argument therefore supplies the contour conditions
\eqref{eq:boundary-moments} for one product \(\mathsf W=Y_2^TX_1\),
before its Cauchy--Plemelj factor is constructed.
Invertibility of that factor uses both quotient frames on the entire
closed disk. The construction in
\cite[Proposition~5.3]{OpenAIElasticity2026} instead transfers physical
solutions and takes weak limits to obtain second-medium columns from
an invertible first-medium frame, without an initially invertible
second-medium frame. The two constructions use the same physical
boundary data; the difference is their passage to the relative transport.

\begin{remark}
\label{rem:ER-leading-and-slow}
The identity \(H_2=0\) in
\cite[Section~3, equations (3.4)--(3.5) and the paragraph following (3.5)]{EskinRalstonSystems2004}
depends on the two contracted leading reduced amplitudes
\((\theta\cdot r_0,s_0)\), where \(r_0,s_0\) are the vector and
scalar amplitudes in that reference. The relation obtained after
its equation (3.5) is equivalent to five scalar equations and,
by itself, admits nonconstant candidate coefficients even when
the first pair is constant.

The same work also derives the full sequence of asymptotic
invariants and uses \(H_0=0\) in its small-gradient argument
\cite[Section~3]{EskinRalstonSystems2004}; see also
\cite[Section~5]{EskinRalston2002}.
Here higher-order terms in canonical physical adjoint lifts are
organized into the complete weighted quotient pairing.
The two principal columns \eqref{eq:Jfast} span \(\mathcal F_\theta\)
and produce the first two rows for the ordered frame of
Theorem~\ref{thm:physical-nine-moments}. When
\(J_\theta \boldsymbol z_0^S=0\), the unscaled lift retains
\(Y_S=K_0\boldsymbol z_0^S-J_\theta \boldsymbol z_1^S\); its class
\eqref{eq:Yslow-mod} lies outside \(\mathcal F_\theta\).

Write \(Y_2=(y_{F,1},y_{F,2},y_S)\) in quotient coordinates,
so that the ambient additional amplitude is \(\mathsf U_\theta y_S\).
For the standard basis vectors \(\mathsf e_k\) of \(\C^3\),
the additional polarization supplies the three additional identities

\begin{equation}\label{eq:slow-third-row-moments}
 \int_B g(z)y_S^T(P_1-P_2)X_1\mathsf e_k\,dA=0,
 \qquad k=1,2,3.
\end{equation}

Combining the third-row identities in
\eqref{eq:slow-third-row-moments} with the two principal
rows yields the full matrix identity \eqref{eq:leaf-moment},
with both transport weights retained.
The derivation relies on the exact primal--physical-adjoint
identity \eqref{eq:primal-adjoint} and the physical elastic
Dirichlet-to-Neumann maps.
The resulting complete matrix identities provide the contour
compatibility required for the subsequent Cauchy--Plemelj
construction.

The additional row completes the two-dimensional image of the leading physical adjoint lift
in this fixed-frame formulation. All nine scalar entries of the matrix moment identity \eqref{eq:leaf-moment} come from the same physical elasticity DN map. The comparison made here is
with the contracted \(H_2\) identity; the argument does not require
a term-by-term identification with the full asymptotic sequence
\(\{H_j\}\), or independence from that sequence.

\end{remark}

\subsection{Relative transport from complete moments}

Relative transport is now reconstructed from the full physical matrix
moments. The passage through boundary moments, a holomorphic Cauchy
factor, and an exterior-normalized relative field follows
\cite[Section~3.3, proof of Theorem~3.4,
equations (3.15)--(3.18)]{Cekic2025}. The closed-disk invertibility and parameter-regularity arguments
are given for the rank-three physical quotient, followed by verification
of its bundle transitions and fixed-\(x\) holomorphic dependence. The matrix transpose throughout
is bilinear, with the sign prescribed by \eqref{eq:leaf-dual}.

All disk boundaries in this subsection are oriented counterclockwise,
and \(dA\) is the positive Euclidean area measure in the leaf coordinate.
The square frame product is denoted by \(\mathsf W\); the symbol
\(Q\) is reserved for the \(3\times2\) spinor matrix in \eqref{eq:qspinor}.

\begin{lemma}
\label{lem:cauchy-green-moment}
Let \(B\subset\C\) be a disk and let
\(\mathsf W\in C^1(\overline B;M_m(\C))\). Let \(X_1,Y_2,P_1,P_2\) be
smooth \(m\times m\) matrix fields near \(\overline B\). If
\begin{equation}\label{eq:proof-cauchy-green-source}
 2\partial_{\bar z}\mathsf W=-Y_2^T(P_1-P_2)X_1
\end{equation}
and
\begin{equation}\label{eq:proof-cauchy-green-volume-moments}
 \int_B g(z)Y_2^T(P_1-P_2)X_1\,dA=0
\end{equation}
for every holomorphic polynomial \(g\), then
\[
 \oint_{\partial B}g(z)\mathsf W(z)\,dz=0
 \qquad\text{for every holomorphic polynomial }g.
\]
The assertion is entrywise and remains valid for jointly smooth
parameter families after complex-affine pullback to a fixed disk.
\end{lemma}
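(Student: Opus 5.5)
The plan is to apply the Cauchy--Green (complex Stokes) formula to the matrix field \(g\mathsf W\) on the disk \(B\). For a \(C^1\) matrix function \(F\) on \(\overline B\), with \(B\) oriented counterclockwise and \(dA\) the Euclidean area measure,
\[
 \oint_{\partial B}F\,dz=\int_B d(F\,dz)=\int_B\partial_{\bar z}F\,d\bar z\wedge dz=2\mathrm i\int_B\partial_{\bar z}F\,dA,
\]
since \(d\bar z\wedge dz=2\mathrm i\,dx_1\wedge dx_2\). This holds entrywise, so it suffices to treat scalar \(C^1\) functions up to the boundary. The \(C^1(\overline B)\) hypothesis on \(\mathsf W\) is exactly what Stokes' theorem requires; no additional regularity is needed.

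Next take \(F=g\mathsf W\) with \(g\) a holomorphic polynomial. Then \(\partial_{\bar z}g=0\), and \eqref{eq:proof-cauchy-green-source} gives
\[
 \partial_{\bar z}(g\mathsf W)=g\,\partial_{\bar z}\mathsf W=-\tfrac12\,g\,Y_2^T(P_1-P_2)X_1 .
\]
Therefore
\[
 \oint_{\partial B}g\mathsf W\,dz=-\mathrm i\int_B g\,Y_2^T(P_1-P_2)X_1\,dA .
\]
The right-hand side vanishes by \eqref{eq:proof-cauchy-green-volume-moments}. Applying this to each of the \(m^2\) entries gives the entrywise contour identity for every holomorphic polynomial \(g\).

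For the parameter version, pull the disk back to the unit disk by a complex-affine map \(z=z_0+rw\) with \(r>0\). Holomorphic polynomials in \(z\) correspond exactly to holomorphic polynomials in \(w\). We have \(dz=r\,dw\), \(\partial_{\bar w}=r\,\partial_{\bar z}\), and \(dA_z=r^2\,dA_w\), so the source equation and the moment hypothesis transform into the same form up to nonzero constant factors. The argument above can then be repeated verbatim at each fixed parameter value; joint smoothness is needed only so that the pulled-back fields remain \(C^1\) up to the boundary.

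I expect no real obstacle here. The only point requiring care is the sign and orientation bookkeeping in the constant \(2\mathrm i\), and that constant is irrelevant because the conclusion is a vanishing statement.
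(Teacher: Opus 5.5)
Your proof is correct and matches the paper's: the Cauchy--Green formula applied to $g\mathsf W$, with $\partial_{\bar z}g=0$, gives $\oint_{\partial B}g\mathsf W\,dz=-\mathrm i\int_B g\,Y_2^T(P_1-P_2)X_1\,dA=0$ entrywise, and the parameter version follows by affine pullback. The only difference is cosmetic. The paper allows a complex scaling $\Phi_p(w)=a_pw+b_p$, so that $dz=a_p\,dw$ and $dA_z=|a_p|^2dA_w$, and it notes that parameter derivatives may be taken under the fixed-circle integral; you restrict to $a_p=r>0$, which changes nothing essential.
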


\begin{proof}
With the stated orientation, the Cauchy--Green formula and
\eqref{eq:proof-cauchy-green-source}--\eqref{eq:proof-cauchy-green-volume-moments}
give
\[
 \begin{aligned}
 \oint_{\partial B}g\mathsf W\,dz
 &=2\mathrm i\int_B\partial_{\bar z}(g\mathsf W)\,dA\\
 &=-\mathrm i\int_BgY_2^T(P_1-P_2)X_1\,dA=0.
 \end{aligned}
\]
Here \(\partial_{\bar z}g=0\), and all identities hold entrywise.
For \(\Phi_p(w)=a_pw+b_p\), the contour integral becomes
\[
 a_p\oint_{\partial\mathbb D}
       g(\Phi_p(w))\mathsf W_p(\Phi_p(w))\,dw.
\]
Thus the contour differential contributes \(a_p\), whereas the
area differential contributes \(|a_p|^2\). On compact parameter
subcharts the pulled-back integrand and all its parameter derivatives
are continuous on a fixed circle, so differentiation under that
integral is justified.
\end{proof}

Boundary moments determine a holomorphic Cauchy factor with boundary
trace \(\mathsf W\). To use its inverse, we must also exclude
interior determinant zeros. Invertibility of \(\mathsf W\) on the
whole closed disk makes its boundary determinant loop contractible
in \(\C^*\); the argument principle then gives the required
nonvanishing. For \(\mathsf W=Y_2^TX_1\), the closed-disk assumption
follows from the invertible quotient frames.

\begin{lemma}\label{lem:matrix-plemelj}
Let \(B\subset\C\) be a disk and let
\(\mathsf W\in C^\infty(\overline B;\mathrm{GL}(m,\C))\). If
\begin{equation}\label{eq:proof-plemelj-contour-moments}
 \oint_{\partial B}z^n\mathsf W(z)\,dz=0\qquad(n=0,1,2,\ldots),
\end{equation}
then the Cauchy factor
\begin{equation}\label{eq:proof-plemelj-cauchy-factor}
 \mathsf R(z)=\frac{1}{2\pi\mathrm i}\oint_{\partial B}
              \frac{\mathsf W(\zeta)}{\zeta-z}\,d\zeta,\qquad z\in B,
\end{equation}
is holomorphic in \(B\), extends smoothly and invertibly to
\(\overline B\), and has boundary trace \(\mathsf W|_{\partial B}\).

Moreover, in the Euclidean matrix operator norm,
\begin{equation}\label{eq:plemelj-maximum-inverse-bound}
 \sup_{z\in\overline B}\|\mathsf R(z)^{-1}\|_{\mathrm{op}}
 \leq \sup_{z\in\partial B}\|\mathsf W(z)^{-1}\|_{\mathrm{op}},
 \qquad
 \min_{z\in\overline B}|\det\mathsf R(z)|
 \geq\min_{z\in\partial B}|\det\mathsf W(z)|>0.
\end{equation}

For a jointly smooth local family under smooth complex-affine
pullbacks from a fixed disk, \(\mathsf R\) and \(\mathsf R^{-1}\) are jointly smooth
on the pulled-back closed disks. Their derivatives of each fixed
finite order are bounded on compact parameter subcharts.
\end{lemma}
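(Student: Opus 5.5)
The plan is to treat $\mathsf W$ as the boundary data of a holomorphic function and to use the moment condition \eqref{eq:proof-plemelj-contour-moments} to identify $\mathsf R$ as its holomorphic extension. By density of polynomials, the moments say that the Cauchy integral of $\mathsf W$ taken from the exterior vanishes. Smoothness of $\mathsf W$ on $\partial B$ then lets Plemelj give boundary traces.

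Concretely, for $z\notin\overline B$ with $|z|$ large, expand $(\zeta-z)^{-1}=-\sum_n\zeta^n z^{-n-1}$. Then \eqref{eq:proof-plemelj-contour-moments} gives
\[
 \mathsf R_{\mathrm{ext}}(z)=\frac{1}{2\pi\mathrm i}\oint_{\partial B}\frac{\mathsf W(\zeta)}{\zeta-z}\,d\zeta=0 .
\]
This holds on $|z|>\sup_{\partial B}|\zeta|$, and hence on all of $\C\setminus\overline B$ by analytic continuation, since that set is connected. Because $\mathsf W|_{\partial B}$ is smooth, the Sokhotski--Plemelj formulas show that both Cauchy integrals extend smoothly up to $\partial B$ from either side, with jump $\mathsf R_+-\mathsf R_-=\mathsf W$. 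Smoothness up to the boundary follows by differentiating along the circle and integrating by parts against the Cauchy kernel, so I would cite it as standard. With $\mathsf R_-=0$, we get $\mathsf R\in C^\infty(\overline B)$, $\mathsf R$ holomorphic in $B$, and $\mathsf R|_{\partial B}=\mathsf W|_{\partial B}$.

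For invertibility, consider $d=\det\mathsf R$. It is holomorphic in $B$, continuous on $\overline B$, and nonvanishing on $\partial B$, where it equals $\det\mathsf W$. Its number of zeros in $B$ is the winding number of $\det\mathsf W|_{\partial B}$ about $0$. Since $\det\mathsf W$ is continuous and nonvanishing on the whole closed disk, this boundary loop is null-homotopic in $\C^*$ (contract along the circles $|z-z_0|=r$). So the winding number is zero and $d$ has no zeros on $\overline B$. This step is the main point: the interior of $\mathsf W$ matters only through this topological fact.

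The determinant bound is the minimum modulus principle applied to the zero-free holomorphic function $d$, i.e.\ the maximum principle for $1/d$. For the operator-norm bound, I would apply the maximum principle to the holomorphic matrix function $\mathsf R^{-1}$. For fixed unit vectors $u,v$, the function $v^*\mathsf R(z)^{-1}u$ is holomorphic, so its modulus is maximized on $\partial B$, where $\mathsf R^{-1}=\mathsf W^{-1}$. Taking the supremum over $u,v$ gives \eqref{eq:plemelj-maximum-inverse-bound}.

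For the parameter statement, pull back by $\Phi_p(w)=a_pw+b_p$ to the unit circle. The Cauchy integral becomes a fixed-contour integral in which the smooth pulled-back density and its parameter derivatives appear. Differentiating under the integral, together with the boundary-regularity estimates for the Cauchy integral on the fixed circle, gives joint smoothness and uniform bounds for $\mathsf R$ on compact subcharts. Joint smoothness of $\mathsf R^{-1}$ then follows from the adjugate formula, using the uniform lower bound $\min|\det\mathsf R|\ge\min_{\partial B}|\det\mathsf W|$, which is bounded below on compact subcharts by continuity.
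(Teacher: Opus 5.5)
Your proposal is correct and follows the paper's proof essentially step by step. Both arguments use the same ingredients: vanishing of the exterior Cauchy transform from the expansion at infinity, the Plemelj jump formula for the boundary trace, zero winding of $\det\mathsf W$ (forced by invertibility on the whole closed disk) to rule out interior zeros, the maximum principle for $v^*\mathsf R^{-1}u$ and $1/\det\mathsf R$, and the adjugate formula for $\mathsf R^{-1}$. The only difference is how you get smoothness up to the contour and the parameter bounds. The paper expands the pulled-back boundary data in Fourier series, $\mathsf R_p(w)=\sum_{n\ge0}\mathsf W_{n,p}w^n$, and uses rapid decay of the coefficients. You instead cite classical Plemelj--Privalov boundary regularity of Cauchy integrals, which is equally valid.
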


\begin{proof}
For \(|z|\) larger than every \(|\zeta|\) on \(\partial B\), we expand
\((\zeta-z)^{-1}=-\sum_{n\geq0}\zeta^nz^{-n-1}\). The moments
\eqref{eq:proof-plemelj-contour-moments} make the exterior Cauchy
transform zero there, hence throughout the connected exterior.
The Plemelj jump formula therefore gives
\(\mathsf R|_{\partial B}=\mathsf W|_{\partial B}\).

For the parameter assertion, we denote the smooth matrix family by
\(\mathsf W_p\) and its complex-affine pullbacks from the unit disk by \(\Phi_p\).
Set \(\mathsf W_p^\sharp=\mathsf W_p\circ\Phi_p\) and denote by \(\mathsf R_p\)
its Cauchy factor on that disk. An affine change of contour
preserves polynomial moments, so the negative Fourier modes vanish.
The remaining coefficients and Cauchy factor are

\begin{equation}\label{eq:plemelj-fourier-series}
 \mathsf W_{n,p}=\frac{1}{2\pi}\int_0^{2\pi}
       \mathsf W_p^\sharp(e^{it})e^{-int}\,dt,\qquad
 \mathsf R_p(w)=\sum_{n\geq0}\mathsf W_{n,p}w^n.
\end{equation}

Repeated integration by parts gives, for every \(N\) and parameter
multi-index \(\alpha\), uniformly on compact parameter subcharts,
\begin{equation}\label{eq:proof-plemelj-fourier-decay}
 \|\partial_p^\alpha\mathsf W_{n,p}\|
       \le C_{\alpha,N}(1+n)^{-N},\qquad n\geq0.
\end{equation}
A spatial derivative of order \(r\) contributes at most
\(C_r(1+n)^r\); taking \(N>r+1\) makes every fixed mixed spatial
and parameter derivative uniformly summable on the closed disk.
Thus \(\mathsf R_p\) is jointly smooth up to the contour, also for a fixed
matrix viewed as a constant parameter family.

The nonvanishing continuous function \(\det\mathsf W\) on
\(\overline B\) contracts its boundary loop in \(\C^*\), so its
winding number is zero. The boundary trace gives the same loop for
\(\det \mathsf R\). Interior circles sufficiently close to \(\partial B\)
have that winding number because the boundary determinant is
nonzero. The argument principle therefore excludes interior zeros
of \(\det \mathsf R\), proving invertibility on the closed disk.

For fixed Euclidean unit vectors \(u,v\in\C^m\), the scalar function
\(z\mapsto v^*\mathsf R(z)^{-1}u\) is holomorphic in \(B\).
The maximum-modulus principle and the equality
\(\mathsf R=\mathsf W\) on \(\partial B\) imply
\[
 |v^*\mathsf R(z)^{-1}u|
 \leq \sup_{\partial B}\|\mathsf W^{-1}\|_{\mathrm{op}}.
\]
Taking the supremum over \(u,v\) proves the first estimate in
\eqref{eq:plemelj-maximum-inverse-bound}. The minimum determinant
bound follows by applying the same principle to the holomorphic
function \(1/\det\mathsf R\). Both inequalities apply pointwise
in every smooth parameter family, giving parameter-uniform inverse
bounds whenever the boundary inverse-frame bounds are uniform.

In a compact parameter subchart, continuity and nonvanishing of
\(\det \mathsf R_p\) on the product with the closed reference disk give a
uniform positive lower bound. The adjugate formula then gives joint
smoothness and every stated finite derivative bound for \(\mathsf R_p^{-1}\).
\end{proof}

The inverse Cauchy factor is now available up to the contour, with
mixed spatial and parameter derivative bounds on compact subcharts.
The mixed spatial and real-parameter derivative bounds established
above apply to the Cauchy factor $\mathsf R_p$ and its inverse
$\mathsf R_p^{-1}$. The smooth leading transport frames $X_1,Y_2$
supply the fixed data entering this construction. No parameter
derivative of an exact CGO field is required.

On each enclosing leaf disk \(B\), we fix the invertible quotient
frames \(X_1,Y_2\) constructed above and set
\begin{equation}\label{eq:Qdef}
 \mathsf W=Y_2^TX_1.
\end{equation}
This product is invertible on a neighborhood of \(\overline B\).
The product rule and \eqref{eq:leaf-primal}--\eqref{eq:leaf-dual} give
\begin{equation}\label{eq:dbarQ}
 2\partial_{\bar z}\mathsf W=-Y_2^T(P_1-P_2)X_1.
\end{equation}
Thus Lemma~\ref{lem:cauchy-green-moment} applied to
\eqref{eq:leaf-moment} gives
\begin{equation}\label{eq:boundary-moments}
 \oint_{\partial B}g(z)\mathsf W(z)\,dz=0
\end{equation}
for every holomorphic polynomial \(g\).

The factorization constructs a relative field for each choice of disk and
frames. Independence of those choices will follow from uniqueness with
an exterior open normalization set. This uniqueness is local and needs
no frame on an entire annulus.

\begin{lemma}\label{lem:Gunique}
Let \(P_1,P_2\) be smooth square matrix coefficients on a connected
open leaf domain. Every smooth matrix \(W\) satisfying
\[
 2\partial_{\bar z}W+P_1W-WP_2=0
\]
and vanishing on a nonempty open subset vanishes everywhere.
Consequently two relative transports with the same equation and
the same values on a nonempty open subset coincide. In particular,
identity-normalized transports coincide on each connected leaf
overlap meeting their common normalization set.
\end{lemma}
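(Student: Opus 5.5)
The plan is to reduce Lemma~\ref{lem:Gunique} to the unique continuation property for a first-order elliptic system in the plane, namely a $\bar\partial$-system with smooth zeroth-order coupling. Since $W\mapsto P_1W-WP_2$ is a linear map on $M_m(\C)$ depending smoothly on $z$, we view $W$ as a vector $w\in\C^{m^2}$. The equation then becomes
\[
 \partial_{\bar z}w=\mathcal B(z)w,
 \qquad
 \mathcal B=-\tfrac12\bigl(P_1\otimes I-I\otimes P_2^T\bigr),
\]
with $\mathcal B$ smooth on the connected leaf domain $\mathcal O$. The strategy is an open--closed argument. Let $Z$ be the interior of the zero set of $w$; it is open and nonempty by hypothesis, and we will show it is closed in $\mathcal O$.

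\textbf{Local step.} Take a boundary point $z_0\in\overline Z\cap\mathcal O$ and a small disk $D\ni z_0$ with $\overline D\subset\mathcal O$. By Lemma~\ref{lem:matrix-dbar-frame}, with $m^2$ components and the coefficient $2\mathcal B$, there is a smooth invertible matrix $\Phi$ on a neighborhood of $\overline D$ with $\partial_{\bar z}\Phi=\mathcal B\Phi$. Setting $w=\Phi v$ gives
\[
 \Phi\,\partial_{\bar z}v=\partial_{\bar z}w-\mathcal B w=0,
\]
so $v=\Phi^{-1}w$ is holomorphic on $D$. Since $v$ vanishes on the nonempty open set $D\cap Z$, the identity theorem applied componentwise on the connected disk gives $v\equiv0$ on $D$. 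Hence $w\equiv0$ on $D$, and $z_0\in Z$. Thus $Z$ is closed in $\mathcal O$, and connectedness gives $Z=\mathcal O$.

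\textbf{Consequences.} If $G,G'$ both solve $2\partial_{\bar z}G+P_1G-GP_2=0$, the equation is linear, so $G-G'$ solves it as well. If $G$ and $G'$ agree on an open set, the first part yields $G=G'$ throughout the domain. For identity-normalized transports, apply this on each connected component of the leaf overlap that meets the common normalization set. That set is open, and on it both transports equal $I$, so they coincide on that component.

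\textbf{Expected obstacle.} The only delicate point is the local holomorphic gauge. Here I rely on Lemma~\ref{lem:matrix-dbar-frame}, which already provides an invertible frame on a disk. Its Koszul--Malgrange and Oka--Grauert argument, or simply the Neumann-series construction on a small disk around a constant $\mathcal B(z_0)$ frame, works equally for the $m^2$-dimensional system. Alternatively, one could cite Carleman's unique continuation for $|\partial_{\bar z}w|\le C|w|$, but the frame argument is self-contained given the earlier lemma.
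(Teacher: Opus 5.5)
Your proof is correct and follows essentially the paper's route. A local invertible $\bar\partial$-frame from Lemma~\ref{lem:matrix-dbar-frame} turns $W$ into a holomorphic object, the identity theorem makes it vanish on a disk, and connectedness of the leaf domain propagates the vanishing. The only differences are cosmetic: the paper avoids vectorization by conjugating directly as $X_1^{-1}WX_2$ with separate $m\times m$ frames for $P_1$ and $P_2$, and it propagates along a chain of overlapping disks rather than by your open--closed argument.
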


\begin{proof}
On a small disk we choose invertible frames \(X_j\) satisfying
\(2\partial_{\bar z}X_j+P_jX_j=0\), by
Lemma~\ref{lem:matrix-dbar-frame}. We differentiate their inverses to obtain
\[
 2\partial_{\bar z}(X_1^{-1}WX_2)
 =X_1^{-1}(2\partial_{\bar z}W+P_1W-WP_2)X_2=0.
\]
The transformed matrix is holomorphic. If it vanishes on a
nonempty open part of that disk, the identity theorem makes it
zero on the whole disk. A chain of overlapping disks along a path
in the connected leaf domain propagates this conclusion to every
point. We apply the result to the difference of two transports.
Only local frames are used, including on a multiply connected annulus.
\end{proof}

The Cauchy factor gives identity trace on the disk boundary. We next
continue the reconstructed field through a common two-sided annulus
and then combine continuation, parameter regularity, and uniqueness.
The annulus argument uses the cancellation of the interior and
exterior traces in the distributional transport equation.

\begin{lemma}
\label{lem:distributional-annulus-gluing}
Let \(B\subset\C\) be a disk and let \(A\) be a connected two-sided
annulus of \(\partial B\). Let \(P_1,P_2\) be smooth matrix
coefficients near \(\overline B\) and on \(A\), with \(P_1=P_2\)
on \(A\). Suppose that
\(G_{\mathrm{in}}\in C^\infty(B)\cap C^0(\overline B)\) solves
\begin{equation}\label{eq:proof-annulus-interior-transport}
 2\partial_{\bar z}G_{\mathrm{in}}+P_1G_{\mathrm{in}}-G_{\mathrm{in}}P_2=0
\end{equation}
in \(B\), and that \(G_{\mathrm{in}}|_{\partial B}=I\). Define
\[
 G(z)=
 \begin{cases}
 G_{\mathrm{in}}(z),&z\in B,\\
 I,&z\in A\setminus\overline B\text{ or }z\in\partial B.
 \end{cases}
\]
Then \(G\) is a distributional solution of the relative equation
on \(B\cup A\), is smooth across the contour, and equals \(I\)
throughout \(A\), including its inner part.
\end{lemma}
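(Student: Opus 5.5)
The plan is to verify the distributional equation directly against test functions supported in \(B\cup A\), and then to upgrade it to smoothness by using the exterior structure. Note first that on \(A\setminus\overline B\) the constant field \(I\) solves the relative equation, because \(P_1=P_2\) there gives \(P_1I-IP_2=0\). Also, \(G\) is continuous on \(B\cup A\): the interior trace is \(I\) by hypothesis, and the exterior value is \(I\) by definition.

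For the weak formulation, fix a matrix test function \(\phi\in C_c^\infty(B\cup A)\) and compute
\[
 \int\bigl(-2G\,\partial_{\bar z}\phi+(P_1G-GP_2)\phi\bigr)\,dA .
\]
Since \(G_{\mathrm{in}}\) is only known to be \(C^0\) up to \(\partial B\), I will not integrate by parts on \(B\) directly. Instead I integrate over the shrunken disks \(B_\rho\), \(\rho\uparrow\) the radius, where \(G_{\mathrm{in}}\) is smooth, and on the exterior region \(A\setminus\overline{B_{\rho'}}\), \(\rho'\downarrow\) the radius, where \(G=I\). On each piece the interior equation reduces the area integral to a boundary term, namely a constant multiple of \(\oint G\phi\,dz\) on the corresponding circle. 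The inner and outer circle terms appear with opposite orientations. Uniform continuity of \(G_{\mathrm{in}}\) up to \(\partial B\) and continuity of \(\phi\) make both terms converge to \(\pm\oint_{\partial B}\phi\,dz\), so they cancel. The remaining area terms vanish by dominated convergence, since \(G\) is bounded. This proves that \(G\) is a distributional solution on \(B\cup A\).

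The regularity and the identity claim come from uniqueness. Take \(W=G-I\) on \(B\cup A\). It is a continuous distributional solution of
\[
 2\partial_{\bar z}W+P_1W-WP_2=-(P_1-P_2),
\]
and the source vanishes on \(A\). On any small disk \(D\subset A\) centered on \(\partial B\), choose frames \(X_j\) with \(2\partial_{\bar z}X_j+P_jX_j=0\) by Lemma~\ref{lem:matrix-dbar-frame}. Then \(X_1^{-1}WX_2\) satisfies \(\partial_{\bar z}(X_1^{-1}WX_2)=0\) in distributions; the product rule is valid because the frames are smooth and \(W\) is continuous. Weyl's lemma therefore makes \(X_1^{-1}WX_2\) holomorphic, so \(W\) is smooth on \(D\), and smoothness across the contour follows.

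Moreover, this holomorphic function vanishes on the open part \(D\setminus\overline B\). By the identity theorem it vanishes on all of \(D\), so \(W=0\) there. Lemma~\ref{lem:Gunique}, applied on the connected annulus \(A\) where the coefficients agree, then gives \(G=I\) throughout \(A\), including its inner part inside \(B\).

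I expect the main obstacle to be the limiting cancellation of the two circle traces, since only \(C^0\) control up to the contour is assumed. The shrinking-circle argument above is meant to handle exactly this, and after that every remaining step is local elliptic regularity for \(\bar\partial\).
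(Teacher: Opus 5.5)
Your proposal is correct and follows essentially the same route as the paper: shrinking-circle integration by parts with cancelling inner and outer traces for the distributional equation, then a local invertible frame, Weyl's lemma and the identity theorem on a small disk crossing the contour, and finally Lemma~\ref{lem:Gunique} to propagate \(G=I\) through the connected annulus \(A\). The differences are cosmetic: you work with \(W=G-I\) and two frames instead of \(G\) and one frame for the common coefficient, and your weak form uses only the boundedness of \(G\), so you do not need the paper's bound on \(\partial_{\bar z}G_{\mathrm{in}}\) near the circle.
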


\begin{proof}
Continuity on \(\overline B\) and \eqref{eq:proof-annulus-interior-transport}
make
\(\partial_{\bar z}G_{\mathrm{in}}\) bounded near the circle.
Apply integration by parts with a compactly supported smooth test
function on concentric interior disks and let their radii approach
the boundary radius. The traces converge uniformly, and the bounded
source permits passing to the limit in the volume integral. On
the exterior part of \(A\), the inner boundary is oriented oppositely.
The two boundary contributions therefore cancel because the inside
and outside traces both equal \(I\). Equivalently, the contour
measure in the distributional derivative is proportional to their
zero trace jump. Since \(P_1=P_2\) on \(A\), the outside field \(I\)
also satisfies the relative equation. This proves the distributional
equation on \(B\cup A\).

We choose a small disk contained in \(A\) and crossing the circle.
An invertible local frame \(X\) for the common coefficient \(P\)
satisfies \(2\partial_{\bar z}X+PX=0\). The product rule for
distributions gives
\begin{equation}\label{eq:proof-annulus-holomorphic-conjugation}
 2\partial_{\bar z}(X^{-1}GX)
 =X^{-1}(2\partial_{\bar z}G+PG-GP)X=0.
\end{equation}
The Weyl lemma for \(\bar\partial\), applied to
\eqref{eq:proof-annulus-holomorphic-conjugation}, makes \(X^{-1}GX\)
holomorphic.
On the exterior part of that disk it equals \(I\), so the identity
theorem gives \(X^{-1}GX=I\) everywhere on the disk. Hence \(G=I\)
on a neighborhood of every point of the circle. These neighborhoods
prove smoothness across it. Now \(G-I\) is a smooth solution of
the common relative equation on the connected annulus \(A\).
Lemma~\ref{lem:Gunique} propagates its vanishing throughout \(A\).
No global annular frame is required.
\end{proof}

The following proposition isolates the leafwise statement used in the
reconstruction. Its sufficient direction is the Cauchy--Plemelj passage
of \cite[Section~3.3, proof of Theorem~3.4,
equations (3.15)--(3.18)]{Cekic2025}, under the closed-disk and
common-annulus assumptions stated here. The proof below includes the converse and the uniqueness assertion.

\begin{proposition}
\label{prop:complete-moment-relative-transport}
Let \(B\subset\C\) be a disk, with Euclidean area measure \(dA\),
and let \(A\) be a connected two-sided annulus of \(\partial B\).
Let \(P_1,P_2\) be smooth
\(m\times m\) matrix coefficients on a neighborhood of
\(\overline B\cup A\), with \(P_1=P_2\) on \(A\).
Suppose that smooth frames \(X_1,Y_2\) are invertible throughout
that neighborhood and satisfy
\[
 2\partial_{\bar z}X_1+P_1X_1=0,\qquad
 2\partial_{\bar z}Y_2-P_2^TY_2=0.
\]
Then the following conditions are equivalent:
\begin{enumerate}[label=\textup{(\roman*)}]
\item For every holomorphic polynomial \(g\),
\[
 \int_B g(z)Y_2^T(P_1-P_2)X_1\,dA=0.
\]
\item There exists a smooth \(\mathrm{GL}(m,\C)\)-valued field
\(G\) on \(B\cup A\) satisfying
\[
 2\partial_{\bar z}G+P_1G-GP_2=0,\qquad G=I\quad\hbox{on }A.
\]
\end{enumerate}
When these conditions hold, \(G\) is unique. The moment identity
also holds for every \(g\in\mathcal O(B)\), without a boundary
regularity assumption on \(g\).
\end{proposition}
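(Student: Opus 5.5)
The implication (i)$\Rightarrow$(ii) follows the chain already assembled above. Set $\mathsf W=Y_2^TX_1$. It is smooth and invertible on a neighborhood of $\overline B\cup A$, and by \eqref{eq:dbarQ} it satisfies $2\partial_{\bar z}\mathsf W=-Y_2^T(P_1-P_2)X_1$. Lemma~\ref{lem:cauchy-green-moment} converts (i) into the contour moments \eqref{eq:boundary-moments}. Lemma~\ref{lem:matrix-plemelj} then gives a holomorphic Cauchy factor $\mathsf R$, smooth and invertible on $\overline B$, with $\mathsf R|_{\partial B}=\mathsf W|_{\partial B}$. Define
\[
 G_{\mathrm{in}}=X_1\mathsf R^{-1}Y_2^T\quad\text{on }\overline B .
\]
Using $2\partial_{\bar z}X_1=-P_1X_1$, $2\partial_{\bar z}Y_2^T=Y_2^TP_2$ and $\partial_{\bar z}\mathsf R^{-1}=0$, the product rule gives $2\partial_{\bar z}G_{\mathrm{in}}+P_1G_{\mathrm{in}}-G_{\mathrm{in}}P_2=0$. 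On $\partial B$ we have $G_{\mathrm{in}}=X_1(Y_2^TX_1)^{-1}Y_2^T=I$, because $X_1$ and $Y_2$ are square and invertible. Lemma~\ref{lem:distributional-annulus-gluing} then extends $G_{\mathrm{in}}$ by $I$ to a smooth solution on $B\cup A$ that equals $I$ on all of $A$. Invertibility on $B$ holds because each of the three factors is invertible, and on $A$ it is trivial.

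For (ii)$\Rightarrow$(i), I reverse the computation. Given $G$, set $\mathsf R=X_1^{-1}G^{-1}\ldots$; more precisely, define $\mathsf S=Y_2^TGX_1$ on $B\cup A$. The transport equations give
\[
 2\partial_{\bar z}\mathsf S=Y_2^TP_2GX_1+Y_2^T(-P_1G+GP_2)X_1-Y_2^TGP_1X_1 .
\]
This does not simplify directly, so I use $H=G^{-1}$ instead. The field $\mathsf R:=X_1^{-1}GX_1$ is not holomorphic either, so I aim for the holomorphic object $\mathsf N=Y_2^TG^{-1}X_1$. Differentiating $GG^{-1}=I$ gives $2\partial_{\bar z}G^{-1}=G^{-1}P_1-P_2G^{-1}$. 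Hence
\[
 2\partial_{\bar z}\mathsf N=Y_2^TP_2G^{-1}X_1+Y_2^T(G^{-1}P_1-P_2G^{-1})X_1-Y_2^TG^{-1}P_1X_1=0 ,
\]
so $\mathsf N$ is holomorphic on $B\cup A$. Since $G=I$ on $A$, we get $\mathsf N=\mathsf W$ on $A$. Applying Cauchy's theorem to $g\mathsf N$ on a circle $\partial B$ contained in $B\cup A$ gives $\oint_{\partial B}g\mathsf W\,dz=0$. The Cauchy--Green computation of Lemma~\ref{lem:cauchy-green-moment}, run in reverse using \eqref{eq:dbarQ}, then yields (i). This argument works for any $g\in\mathcal O(B)$ that is continuous up to the circle. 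To remove boundary regularity on $g$, I use the fact that $\mathsf W$ is smooth near $\partial B$ and that $P_1=P_2$ on $A$, so the integrand of (i) is supported in a compact set $K\Subset B$. Replacing $B$ by a slightly smaller concentric disk $B'$ with $K\subset B'$ whose boundary still lies in $A$, the same argument holds for every $g\in\mathcal O(B)$, since such $g$ is smooth on $\overline{B'}$.

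Uniqueness follows from Lemma~\ref{lem:Gunique}. Two solutions of (ii) satisfy the same relative equation and agree, both equal to $I$, on the nonempty open set $A$. The domain $B\cup A$ is connected, so they coincide everywhere.

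The main obstacle I expect is the (ii)$\Rightarrow$(i) direction. One must pick the correct holomorphic combination, which is $Y_2^TG^{-1}X_1$ rather than something built from $G$ itself. One must also justify that the support of $P_1-P_2$ avoids a neighborhood of $\partial B$; this is what permits arbitrary $g\in\mathcal O(B)$. Everything else reduces to lemmas already established.
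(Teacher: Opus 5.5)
Your proof is correct and follows the paper's own argument. For (i)$\Rightarrow$(ii) you take the Cauchy--Plemelj factor $\mathsf R$ of $\mathsf W=Y_2^TX_1$ and set $G=X_1\mathsf R^{-1}Y_2^T$, extending by $I$; for the converse you use the holomorphic combination $Y_2^TG^{-1}X_1$, which agrees with $\mathsf W$ on $A$; uniqueness comes from leafwise uniqueness. The only difference is cosmetic: you apply Cauchy's theorem on a slightly smaller circle inside $A$, whereas the paper integrates $2\partial_{\bar z}\bigl(g(\mathsf W-Y_2^TG^{-1}X_1)\bigr)$ over $B$ using compact support. These are equivalent, and your observation that $\overline B\setminus A\Subset B$ is exactly what justifies arbitrary $g\in\mathcal O(B)$.
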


\begin{proof}
Put \(\delta=2\partial_{\bar z}\) and
\(\mathsf W=Y_2^TX_1\). The frame equations give
\[
 \delta\mathsf W=-Y_2^T(P_1-P_2)X_1.
\]
Assume \textup{(i)}. Cauchy--Green gives
\(\oint_{\partial B}z^n\mathsf W\,dz=0\) for every \(n\ge0\).
Consequently the exterior Cauchy transform is zero, by its expansion
at infinity and the identity theorem. The interior Cauchy factor
\[
 \mathsf R(z)=\frac{1}{2\pi\mathrm i}\oint_{\partial B}
       \frac{\mathsf W(\zeta)}{\zeta-z}\,d\zeta
\]
is holomorphic and has boundary trace \(\mathsf W\), by the
Cauchy--Plemelj formula. The Fourier-decay argument of
Lemma~\ref{lem:matrix-plemelj} gives a smooth extension to
\(\overline B\).
Since \(\mathsf W\) is invertible throughout \(\overline B\), its
boundary determinant loop has winding number zero. The same boundary
loop is \(\det \mathsf R\), so the argument principle excludes zeros of
\(\det \mathsf R\) in \(B\). Thus \(\mathsf R\) and \(\mathsf R^{-1}\) are smooth
on the closed disk.

Both hypotheses used above are substantive: all polynomial
moments are needed to make the exterior Cauchy transform vanish,
whereas the invertibility of \(\mathsf W\) on the \emph{whole closed disk}
forces zero winding of \(\det\mathsf W\) on its boundary. Boundary
nonvanishing alone would not exclude interior zeros of \(\det \mathsf R\).

Set \(G=X_1\mathsf R^{-1}Y_2^T\) in \(B\). The product rule gives the
relative equation, and its boundary value is
\(X_1(Y_2^TX_1)^{-1}Y_2^T=I\).
Extend it by \(I\) on the exterior part of \(A\).
The equal traces cancel the contour contribution to its
distributional derivative. Because \(P_1=P_2\) on \(A\), the
extended field solves the relative equation there as a distribution.
On a small disk crossing the contour, choose, by Lemma~\ref{lem:matrix-dbar-frame}, an invertible local
frame \(X\) with \(\delta X=-P_1X\). Then
\(\delta(X^{-1}GX)=0\), and the Weyl lemma makes this matrix
holomorphic. It equals \(I\) on the exterior open part, so the
identity theorem gives \(G=I\) across the contour. Local
holomorphic uniqueness propagates that equality throughout the
connected annulus \(A\). The resulting field is smooth and
invertible on \(B\cup A\), proving \textup{(ii)}.

Conversely, assume \textup{(ii)} and define
\(\widetilde{\mathsf R}=Y_2^TG^{-1}X_1\).
Since
\(\delta G^{-1}=G^{-1}P_1-P_2G^{-1}\), the product rule in this
matrix order gives \(\delta\widetilde{\mathsf R}=0\).
The difference \(Q_*=\mathsf W-\widetilde{\mathsf R}\) vanishes on the
annulus and is therefore compactly supported inside \(B\).
For every \(g\in\mathcal O(B)\),
\[
 \int_BgY_2^T(P_1-P_2)X_1\,dA
 =-\int_B\delta(gQ_*)\,dA=0.
\]
Compact support justifies this integration without a boundary trace
of \(g\). This proves \textup{(i)} and the assertion for all
holomorphic weights.

For uniqueness, the difference of two relative fields solves the
homogeneous relative equation and vanishes on \(A\).
Local invertible frames for the two coefficients transform that
difference into a holomorphic matrix. The identity theorem on a
chain of overlapping disks then gives equality throughout the
connected set \(B\cup A\).
\end{proof}

The criterion holds for square matrices of any size. In elasticity,
Theorem~\ref{thm:physical-nine-moments} provides its hypothesis at
rank three from physical boundary data. Proposition~
\ref{prop:complete-moment-relative-transport} concerns holomorphic
weights on one leaf. Smooth dependence on parameters, bundle covariance,
and holomorphic dependence in the projective direction require the
additional hypotheses of Theorem~\ref{thm:global-plemelj-gluing}.
Thus the criterion can be reused once closed-disk frames and complete
weighted moments have been proved for a specified system.

The continuation lemma turns identity trace into identity on an open
annulus, where relative uniqueness applies. The following theorem
uses this normalization first to compare disks and frames, then to
compare bundle charts. Its final assertion has additional hypotheses:
a fixed physical product, holomorphic connection matrices, and one
normalization set meeting every connected leaf section.

\begin{theorem}
\label{thm:global-plemelj-gluing}
\label{lem:parameter-plemelj}
\label{rem:plemelj-global-convention}
Let \(p\) range in a smooth finite-dimensional real parameter chart.
In complex leaf coordinates suppose that the disks \(B_p\) and
two-sided boundary annuli are images of fixed reference sets under
\(\Phi_p(w)=a_pw+b_p\), where \(a_p\ne0\) and \(a_p,b_p\) are smooth.
Let \(P_{j,p}\) be jointly smooth connection matrices on neighborhoods
of these sets, with \(P_{1,p}=P_{2,p}\) throughout the annuli.
Assume that jointly smooth invertible frames extend across the closed disks
and annuli, satisfy
\begin{equation}\label{eq:proof-global-plemelj-frame-equations}
 2\partial_{\bar z}X_{1,p}+P_{1,p}X_{1,p}=0,\qquad
 2\partial_{\bar z}Y_{2,p}-P_{2,p}^TY_{2,p}=0,
\end{equation}
and have uniformly bounded derivatives and inverses on compact subcharts.
Set \(\mathsf W_p=Y_{2,p}^TX_{1,p}\), and suppose that
\begin{equation}\label{eq:proof-global-plemelj-monomial-moments}
 \oint_{\partial B_p} z^n\mathsf W_p(z)\,dz=0
 \qquad(n\ge0).
\end{equation}
Then the Cauchy--Plemelj construction gives a unique invertible relative
transport \(G_p\), jointly smooth on the disks and annuli and equal to \(I\)
on the annuli, satisfying
\[
 2\partial_{\bar z}G_p+P_{1,p}G_p-G_pP_{2,p}=0.
\]
Different frames and enclosing disks give the same transport on each connected
overlap where their equations agree and both transports equal \(I\)
on a common nonempty open subset.

For bundle gluing, suppose additionally that the invariant relative
operators are covariant under smooth invertible bundle transition matrices
independent of \(x\), together with nonzero scalar changes of null
representative. Assume that each connected leaf component of a chart
overlap meets a common nonempty open set where both transports equal
\(I\). Then the transports obey the corresponding conjugation law
and define a global smooth bundle automorphism.

Suppose that this automorphism is defined on a common physical product
\(\mathcal U\times U\) in each spinor chart, with invariant equation
\begin{equation}\label{eq:proof-parameter-invariant-transport}
 2D_{\theta(\zeta)}G+\mathcal A_1(x,\zeta)G-G\mathcal A_2(x,\zeta)=0.
\end{equation}
Assume that the bundle transitions, the nonzero complex null representative
\(\theta\), and the matrices \(\mathcal A_j\) are holomorphic in \(\zeta\)
at fixed physical \(x\). Assume also that \(G=I\) on one fixed physical
open set \(E\subset\mathcal U\), and that each connected component of
\(\Pi\cap\mathcal U\), for every affine complex leaf \(\Pi\) and every
parameter, meets \(E\cap\Pi\) in a nonempty relatively open set.
Then \(G\) is holomorphic in the spinor parameter at fixed physical \(x\).
\end{theorem}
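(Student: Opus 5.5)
The plan has three stages: a single disk, then comparison of disks, frames and bundle charts, and finally holomorphy in $\zeta$.

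\textbf{Single disk.} For fixed $p$, the moment conditions \eqref{eq:proof-global-plemelj-monomial-moments} and closed-disk invertibility of $\mathsf W_p=Y_{2,p}^TX_{1,p}$ are exactly the hypotheses of Lemma~\ref{lem:matrix-plemelj}. That lemma gives a Cauchy factor $\mathsf R_p$ that is invertible up to the contour and jointly smooth in $(p,z)$, since the disks are affine images of a fixed reference disk. Then
\[
G_p=X_{1,p}\mathsf R_p^{-1}Y_{2,p}^T
\]
solves the relative equation by the product rule. Its boundary trace is $X_1(Y_2^TX_1)^{-1}Y_2^T=I$. Lemma~\ref{lem:distributional-annulus-gluing} extends $G_p$ by $I$ smoothly across the contour, with $G_p=I$ on the whole annulus. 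Joint smoothness follows from the parameter clauses of Lemma~\ref{lem:matrix-plemelj}. Uniqueness, and independence of the frames and disks on connected overlaps that meet a common identity set, are Lemma~\ref{lem:Gunique} applied to differences.

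\textbf{Bundle gluing.} On a chart overlap, take the transported field $D$-conjugated, and rescale if the null representative changes (which rescales $D_\theta$ by a constant). It satisfies the relative equation in the other chart, by the assumed covariance, and still equals $I$ on the common exterior set. Lemma~\ref{lem:Gunique} on each connected leaf component then gives the conjugation law. These local fields therefore define a global automorphism.

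\textbf{Holomorphy in $\zeta$.} This is the main obstacle, because the leaves themselves move with $\zeta$, so $\partial_{\bar\zeta}$ does not obviously commute with the transport. I would fix physical $x$ and set $W=\partial_{\bar\zeta}G$ (in real coordinates, $G$ is smooth in $\zeta$). Differentiating \eqref{eq:proof-parameter-invariant-transport} in $\bar\zeta$ uses $\partial_{\bar\zeta}\theta=0$, $\partial_{\bar\zeta}\mathcal A_j=0$ at fixed $x$, and the fact that $\partial_{\bar\zeta}$ commutes with $D_{\theta(\zeta)}=\theta(\zeta)\cdot\nabla_x$. This gives
\[
2D_\theta W+\mathcal A_1W-W\mathcal A_2=0
\]
on $\mathcal U$. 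Also $W=0$ on $E$, since $G\equiv I$ there for all $\zeta$. Lemma~\ref{lem:Gunique}, applied leafwise on each connected component of $\Pi\cap\mathcal U$ (each meets $E$ in an open set), gives $W\equiv0$.

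Two points need checking. First, transition matrices holomorphic in $\zeta$ preserve this property between charts. Second, the differentiation under parameters in the first two stages must be justified in real parameters before the complex one is specialized.
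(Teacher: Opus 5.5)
Your proposal is correct and follows the paper's proof essentially step for step. You pull back to a reference disk to apply Lemma~\ref{lem:matrix-plemelj}, form $G_p=X_{1,p}\mathsf R_p^{-1}Y_{2,p}^T$ with identity contour trace, extend it by Lemma~\ref{lem:distributional-annulus-gluing}, use Lemma~\ref{lem:Gunique} for uniqueness and the chart conjugation law, and then differentiate the invariant equation in $\bar\zeta$ at fixed $x$, so that $\partial_{\bar\zeta}G$ solves the same homogeneous relative equation and vanishes on $E$. The paper spells out one step you leave implicit: at each fixed parameter it chooses a constant $b\neq0$ with $b\theta$ normalized, so that $D_\theta=2b^{-1}\partial_{\bar z}$ and the equation takes the form required by Lemma~\ref{lem:Gunique}.
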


\begin{proof}
Take the reference disk to be the unit disk and set
\(\mathsf W_p^\sharp=\mathsf W_p\circ\Phi_p\). The affine contour change gives
\begin{equation}\label{eq:proof-global-reference-moments}
 \oint_{|w|=1}w^n\mathsf W_p^\sharp(w)\,dw
 =a_p^{-1}\oint_{\partial B_p}
       \left(\frac{z-b_p}{a_p}\right)^n
       \mathsf W_p(z)\,dz=0,\qquad n\geq0.
\end{equation}
Since \(((z-b_p)/a_p)^n\) is a polynomial in \(z\), we use
\eqref{eq:proof-global-plemelj-monomial-moments} and linearity to prove
\eqref{eq:proof-global-reference-moments}. Applying
Lemma~\ref{lem:matrix-plemelj} to the jointly smooth invertible family
\(\mathsf W_p^\sharp\), we obtain
\begin{equation}\label{eq:Rplemelj}
 \mathsf R_p(w)=\frac{1}{2\pi\mathrm i}\oint_{|\zeta|=1}
             \frac{\mathsf W_p^\sharp(\zeta)}{\zeta-w}\,\dd\zeta,
 \qquad |w|<1,
\end{equation}
with boundary trace \(\mathsf W_p^\sharp\). Its Fourier estimate
\eqref{eq:proof-plemelj-fourier-decay} and determinant argument give
joint smoothness of \(\mathsf R_p,\mathsf R_p^{-1}\) on the closed reference disks,
with all fixed finite derivative bounds on compact subcharts.

In physical leaf coordinates, we set
\begin{equation}\label{eq:proof-plemelj-transport-product}
 F_p(z)=\mathsf R_p(\Phi_p^{-1}(z))^{-1},\qquad
 G_p(z)=X_{1,p}(z)F_p(z)Y_{2,p}(z)^T.
\end{equation}
The affine inverse is holomorphic in \(z\), so \(F_p\) is leafwise
holomorphic. Using \eqref{eq:proof-global-plemelj-frame-equations} and
\eqref{eq:proof-plemelj-transport-product}, we derive the relative
equation in \eqref{eq:Gtransport}. On the
contour \(F_p=\mathsf W_p^{-1}\), and
\(X_{1,p}(Y_{2,p}^TX_{1,p})^{-1}Y_{2,p}^T=I\). Thus
\begin{equation}\label{eq:Gtransport}
 2\partial_{\bar z}G_p+P_{1,p}G_p-G_pP_{2,p}=0,\qquad
 G_p=I\quad\hbox{on }\partial B_p.
\end{equation}
Lemma~\ref{lem:distributional-annulus-gluing} extends \(G_p\) by
\(I\) and gives \(G_p=I\) on the entire two-sided annulus. After
pullback this annulus is fixed locally in the parameters. The
jointly smooth closed-disk formula is therefore \(I\) on a fixed
inner collar and glues smoothly to \(I\) across the moving contour.
Invertibility inside follows from the three invertible factors in
\eqref{eq:proof-plemelj-transport-product}, and outside from \(G_p=I\).

Lemma~\ref{lem:Gunique} identifies two choices of frames or enclosing
disks on each connected overlap with the same equation and a common
normalization open set. On a bundle-chart overlap we conjugate one
transport by the prescribed transition matrix. Covariance makes it
solve the other chart's equation, after a nonzero scalar change of
null representative. Both transports equal \(I\) on an open part of
each connected leaf component, so the same uniqueness lemma proves
the conjugation law. The fundamental frames themselves need not obey
that transition.

For the final assertion, we set \(V=\partial_{\bar\zeta}G\).
Joint smoothness permits commuting this derivative with spatial
differentiation. Differentiating
\eqref{eq:proof-parameter-invariant-transport} and using holomorphicity
of \(\theta\) and \(\mathcal A_j\) gives
\begin{equation}\label{eq:proof-global-parameter-derivative}
 2D_{\theta(\zeta)}V+\mathcal A_1V-V\mathcal A_2=0.
\end{equation}

The derivative \(V=\partial_{\bar\zeta}G\) is taken in a
holomorphic quotient frame at fixed physical \(x\). The assumed
joint smoothness on \(\mathcal U\times U\) accounts for the
parameter dependence of the moving leaf coordinates. Since
\(G(x,\zeta)=I\) for every \(x\in E\) and every parameter,
\(V=0\) on that same fixed set. Only the reconstructed transport
is differentiated here.

At a fixed parameter, we choose \(b\ne0\) such that
\(b\theta=e_1+\mathrm i e_2\) is normalized. In its leaf coordinate
\(D_\theta=2b^{-1}\partial_{\bar z}\). Multiplication of
\eqref{eq:proof-global-parameter-derivative} by \(b/2\) puts the
equation for \(V\) in the form of Lemma~\ref{lem:Gunique}, with
coefficients \(b\mathcal A_j/2\). Each connected leaf component meets
\(E\) in a nonempty relatively open set, so that lemma gives \(V=0\)
on every component. Thus \(G\) is holomorphic at fixed physical \(x\).
\end{proof}

Theorem~\ref{thm:global-plemelj-gluing} separates smooth dependence
of the leafwise Cauchy construction from holomorphic dependence in
the direction. Holomorphic dependence on the projective null direction follows by differentiating the invariant relative equation at fixed $x$ and using the common exterior normalization. We now check the required transition law for the
paired physical quotient bundles.

\begin{lemma}
\label{lem:spinor-overlap-plemelj}
Express the primal Plemelj transports \(G_0,G_\infty\) in the quotient
frames associated with the holomorphic spinors \((1,\zeta)\) and
\((\zeta^{-1},1)\), respectively. Let
\[
 D(\zeta)=\begin{pmatrix}\zeta I_2&0\\0&1\end{pmatrix}
\]
be the dual quotient transition matrix. On each connected leaf overlap
meeting their common exterior normalization open set, one has
\[
 G_0=D(\zeta)^TG_\infty D(\zeta)^{-T}.
\]
Thus these local transports glue as primal bundle automorphisms on
the overlaps used in the enclosing-ball construction.
\end{lemma}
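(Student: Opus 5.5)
The plan is to reduce the transition law to the uniqueness statement of Lemma~\ref{lem:Gunique}, as in the bundle-gluing part of Theorem~\ref{thm:global-plemelj-gluing}. The first step is to fix how the two chart operators are related. By \eqref{eq:proof-dual-frame-transition}, $\mathsf U_0=\mathsf U_\infty D(\zeta)$. By Lemma~\ref{lem:holomorphic-lifts}, $\mathsf V_0=\mathsf V_\infty D^{-T}+e_{\theta_0}\mathsf L$. The null representatives satisfy $\theta_0=\theta(1,\zeta)=\zeta^2\theta(\zeta^{-1},1)=\zeta^2\theta_\infty$. Since $A_\theta$ is linear in $\theta$ and annihilates $e_\theta$, I would obtain
\[
 \mathcal A_{j,\theta_0}=\mathsf U_0^TA_{j,\theta_0}\mathsf V_0
 =\zeta^2D^T\mathcal A_{j,\theta_\infty}D^{-T},
 \qquad 2D_{\theta_0}=\zeta^2\,2D_{\theta_\infty}.
\]
The quotient coordinates also transform by $u_0=\mathsf U_0^T\boldsymbol z=D^Tu_\infty$. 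Hence the primal transport operators are intertwined by conjugation with $D^T$, up to the scalar factor $\zeta^2$. This scalar factor only rescales the same complex leaf direction and does not change the leaves.

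The second step is to transfer the relative equation. Suppose $G_\infty$ solves $2D_{\theta_\infty}G_\infty+\mathcal A_{1,\theta_\infty}G_\infty-G_\infty\mathcal A_{2,\theta_\infty}=0$. Set $\widehat G=D^TG_\infty D^{-T}$; because $D$ is independent of $x$, it commutes with $D_\theta$. Then $\widehat G$ satisfies
\[
 2D_{\theta_0}\widehat G+\mathcal A_{1,\theta_0}\widehat G-\widehat G\mathcal A_{2,\theta_0}
 =\zeta^2D^T\bigl(2D_{\theta_\infty}G_\infty+\mathcal A_{1,\theta_\infty}G_\infty-G_\infty\mathcal A_{2,\theta_\infty}\bigr)D^{-T}=0.
\]
This is the chart-$U_0$ relative equation. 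On any leaf disk, normalizing $\theta_0=b\,\theta_{\mathrm n}$ converts it to the $\partial_{\bar z}$-form used in Lemma~\ref{lem:Gunique}, with coefficients $b\mathcal A_j/2$. Both $G_0$ and $\widehat G$ are therefore solutions of the same leafwise equation.

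The third step is normalization and uniqueness. By Theorem~\ref{thm:global-plemelj-gluing} and Lemma~\ref{lem:distributional-annulus-gluing}, $G_0=I$ and $G_\infty=I$ on the two-sided annuli of Lemma~\ref{lem:uniform-leaf-disks}, which lie in the common exterior. On that set, $\widehat G=D^TID^{-T}=I$ as well. On each connected leaf overlap meeting the common exterior normalization open set, the difference $G_0-\widehat G$ solves the homogeneous relative equation and vanishes on an open subset. Lemma~\ref{lem:Gunique} then gives $G_0=\widehat G$ on the whole component. The gluing statement follows, because the transition matrix for primal coordinates is $D^T$.

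The main obstacle I expect is the bookkeeping of the primal transition, since it is the inverse transpose of the dual transition. I also need $\mathcal A_\theta$ to be independent of the $e_\theta\mathsf L$ ambiguity in $\mathsf V_\theta$, which was noted after \eqref{eq:primal-quotient-connection}. The other point to check is that the normalization annuli for the two charts meet each connected overlap leaf in a common open set. For leaves with $|x_\perp|<R_\perp$ this holds via the annulus $R_1-\delta<|x|<R_1+\delta$. Leaves with $R_0<|x_\perp|$ lie entirely in the common exterior, where both transports equal $I$.
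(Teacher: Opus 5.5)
Your proposal is correct and follows the paper's proof. You derive $\theta_0=\zeta^2\theta_\infty$ and $\mathcal A_{j,0}=\zeta^2D^T\mathcal A_{j,\infty}D^{-T}$, conjugate the infinity-chart relative equation by the $x$-independent $D^T$, multiply by $\zeta^2$, and conclude with Lemma~\ref{lem:Gunique} using the common exterior normalization. The only difference is that you rederive the connection covariance from $\mathsf U_0=\mathsf U_\infty D$ and Lemma~\ref{lem:holomorphic-lifts}, whereas the paper cites \eqref{eq:connection-chart-scaling}.
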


\begin{proof}
The spinor identities \eqref{eq:theta-spinor}--\eqref{eq:qspinor} and
the connection covariance \eqref{eq:connection-chart-scaling} give
\(\theta_0=\zeta^2\theta_\infty\) and
\(\mathcal A_{j,0}=\zeta^2D^T\mathcal A_{j,\infty}D^{-T}\).
Since \(D\) is independent of \(x\), the infinity-chart
relative equation is conjugated by \(D^T\) and then multiplied by
\(\zeta^2\) to give the equation for \(D^TG_\infty D^{-T}\) in the
zero chart. It is the same equation as for \(G_0\). Both transports
equal \(I\) on their common normalization open set.
Lemma~\ref{lem:Gunique} therefore proves the identity on the connected
leaf overlap. The primal coordinate change is \(D^T\); the matrix
\(D\) itself is the dual frame transition.
\end{proof}

The CGO construction uses normalized directions, while the projective
charts use holomorphic spinor representatives. Multiplication by a
nonzero scalar preserves the intrinsic leaf and relative equation;
rescaling the spinor also changes the quotient frame. We keep rescaling of the null representative and the associated quotient-frame change explicit in the following comparison.

\begin{lemma}\label{lem:representative-invariance}
Let \(b\in\C^*\) be independent of \(x\). The characteristic lines,
paired physical quotient bundles, and affine complex leaves of
\(\theta\) and \(b\theta\) are canonically identical. Under this
identification, a relative quotient transport solves the equation for
\(\theta\) if and only if it solves that for \(b\theta\). Its exterior
normalization and leafwise uniqueness are therefore independent of
the nonzero representative of the projective null direction.
\end{lemma}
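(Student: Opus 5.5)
The plan is to check directly how each object attached to the null direction transforms under $\theta\mapsto b\theta$, with $b\in\C^*$ constant in $x$, and then to reduce the normalization and uniqueness claims to Lemma~\ref{lem:Gunique}.

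First, the geometric objects. Since $e_{b\theta}=b\,e_\theta$, the characteristic lines $\C e_\theta$ and $\C e_{b\theta}$ coincide. Hence the primal quotients $\C^4/\C e_\theta$ and their annihilators $E_\theta^*$ are literally the same spaces. The annihilator condition $\theta^Tp=0$ is homogeneous in $\theta$, so it is unchanged by the rescaling. The affine complex leaves are the translates of the real plane spanned by $\operatorname{Re}\theta$ and $\operatorname{Im}\theta$. When $b\theta$ is written in the form $e_1'+\mathrm i e_2'$ after normalization, these spans agree, because multiplication by $b$ acts on $\Span_\R\{\operatorname{Re}\theta,\operatorname{Im}\theta\}$ as a real-linear automorphism (a rotation composed with a dilation). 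The leaf normal $\nu$ is therefore unchanged up to sign, which gives the canonical identification.

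Second, the equation. For a relative transport $G$ expressed in a fixed quotient frame $\mathsf U$ and a paired lift $\mathsf V$, linearity gives $D_{b\theta}=bD_\theta$ and $A_{b\theta}=bA_\theta$. Consequently $\mathcal A_{j,b\theta}=\mathsf U^TA_{b\theta}\mathsf V=b\,\mathcal A_{j,\theta}$ whenever the same frame is used. This holds because $\mathsf U$ depends only on the line, and any two admissible lifts differ by $e_\theta\mathsf L$, which $A_\theta$ annihilates. It follows that
\[
2D_{b\theta}G+\mathcal A_{1,b\theta}G-G\mathcal A_{2,b\theta}
=b\bigl(2D_\theta G+\mathcal A_{1,\theta}G-G\mathcal A_{2,\theta}\bigr),
\]
and since $b\ne0$ one side vanishes if and only if the other does. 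If instead one uses the spinor frames attached to a rescaled spinor representative, the frames change by an $x$-independent invertible matrix such as $D(\zeta)$. The equation is then conjugated as in Lemma~\ref{lem:spinor-overlap-plemelj}, and solvability is again equivalent.

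Third, normalization and uniqueness. The exterior normalization $G=I$ on an open set is frame-independent, because conjugating $I$ by a constant matrix returns $I$. The set itself lies in the common exterior, which does not depend on $\theta$ at all. Leafwise uniqueness then follows from Lemma~\ref{lem:Gunique}, applied on the common leaf with coefficients $b\mathcal A_j/2$, where normalized coordinates give $D_{b\theta}=2\partial_{\bar z}$.

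The only delicate point is keeping track of the accompanying quotient-frame change when a spinor rescaling is used instead of a fixed frame. I would handle this by checking that the change is an $x$-independent invertible matrix, which is the situation already covered by the covariance argument of Lemma~\ref{lem:spinor-overlap-plemelj}.
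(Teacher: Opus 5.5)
Your proposal is correct and follows essentially the same route as the paper. The lines, annihilators and leaves coincide because multiplication by $b$ acts on $\Span_\R\{\operatorname{Re}\theta,\operatorname{Im}\theta\}$ as a rotation composed with a dilation. In a common quotient basis, $A_{b\theta}=bA_\theta$ and $D_{b\theta}=bD_\theta$ multiply the relative operator by $b$, and the $x$-independent frame change is tracked separately when spinor coordinates are rescaled.

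One small refinement: that real map has determinant $|b|^2>0$. So the oriented leaf normal $\nu_\theta$ is preserved exactly, not merely up to sign.
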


\begin{proof}
Multiplication by \(b\) gives an invertible real rotation and dilation
on the pair \((\operatorname{Re}\theta,\operatorname{Im}\theta)\).
Their real span, and hence the affine leaf planes, are unchanged.
The symbol and derivative satisfy
\[
 A_{b\theta}=bA_\theta,\qquad D_{b\theta}=bD_\theta,
 \qquad \C e_{b\theta}=\C e_\theta.
\]
Use the same quotient basis under this canonical identification.
The induced relative operator is multiplied by \(b\), so its
solution set and normalization are unchanged. If coordinates are
instead supplied by rescaled spinors, the corresponding quotient
frame change must also be applied, as in
\eqref{eq:normalization-frame-change} below.
\end{proof}

Representative invariance permits the same relative transport to be
expressed in the two direction conventions. Its independence of the
auxiliary invertible frames and enclosing radii follows from the
holomorphic frame changes and exterior uniqueness already proved.

\begin{corollary}\label{cor:frame-independence}
The relative transport is independent of the invertible primal and
dual \(\bar\partial\)-frames on a neighborhood of the closed leaf
disk used to form \(\mathsf W=Y_2^TX_1\). It is also independent of the
radius of a concentric enclosing disk whose boundary annulus lies
in the common exterior and which contains the entire support of
\(P_1-P_2\) on that leaf.
\end{corollary}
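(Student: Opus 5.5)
The argument rests entirely on the uniqueness assertions already established, namely Lemma~\ref{lem:Gunique} and the continuation Lemma~\ref{lem:distributional-annulus-gluing}. There are two independence statements to prove, one for the frames and one for the radius. In each case we compare two relative transports produced by the Cauchy--Plemelj construction of Proposition~\ref{prop:complete-moment-relative-transport}. Both solve the same relative equation $2\partial_{\bar z}G+P_1G-GP_2=0$ on the leaf, and both equal $I$ on an open exterior set. Uniqueness on connected domains then forces them to agree.

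\emph{Frame independence.} Fix the disk $B$ and its two-sided annulus $A$. Let $(X_1,Y_2)$ and $(X_1',Y_2')$ be two pairs of invertible frames near $\overline B\cup A$ that solve the primal and dual $\bar\partial$-equations. The hypothesis (i) of Proposition~\ref{prop:complete-moment-relative-transport} must hold for the primed pair as well. To see this, write $X_1'=X_1h_1$ and $Y_2'=Y_2h_2$, where $h_1=X_1^{-1}X_1'$ and $h_2=Y_2^{-1}Y_2'$ are holomorphic by the product rule. The integrand then becomes $h_2^TY_2^T(P_1-P_2)X_1h_1$.

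The point I expect to need care is justifying the moment identity for this integrand. Each entry of $h_2^TMh_1$ is a finite sum of terms $(h_2)_{ki}(h_1)_{jl}M_{ij}$, with $M=Y_2^T(P_1-P_2)X_1$. Each weight $(h_2)_{ki}(h_1)_{jl}$ is holomorphic on a neighborhood of $\overline B$. The final assertion of Proposition~\ref{prop:complete-moment-relative-transport} states that (i) for polynomials implies the identity for all $g\in\mathcal O(B)$. This gives the moments for the new pair, and the analogous identity holds entrywise for the matrix integrand. Alternatively, and more simply, one can note that (ii) is a frame-free statement: once a single $G$ exists, the converse direction of the proposition yields (i) for any frames.

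With (i) available for both pairs, each pair produces a smooth invertible $G$ or $G'$ on $B\cup A$ that equals $I$ on $A$. The uniqueness clause of Proposition~\ref{prop:complete-moment-relative-transport} then gives $G=G'$.

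\emph{Radius independence.} Take two concentric enclosing disks $B\subset B'$. Each contains the support of $P_1-P_2$ on the leaf, and each has a boundary annulus $A$ or $A'$ lying in the common exterior, where $P_1=P_2$. Let $G$ and $G'$ be the corresponding transports, defined on $B\cup A$ and $B'\cup A'$. Since $P_1=P_2$ on the whole region between the two supports, extend $G$ by $I$ outside $B\cup A$ throughout the exterior part of $B'\cup A'$. This extension is smooth, because $G=I$ on $A$, and it solves the relative equation, because $I$ solves it wherever $P_1=P_2$.

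The set $A'$ lies outside $\overline B\cup A$. After shrinking $A$ if necessary, this uses that the annulus between $\partial B$ and $\partial B'$ lies in the common exterior. The extended $G$ and the transport $G'$ both equal $I$ on $A'$, a nonempty open subset of the connected domain $B'\cup A'$. Lemma~\ref{lem:Gunique} gives $G=G'$ there, and in particular on $B\cup A$.

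The main obstacle is the geometric point that the annular region between the two boundary circles lies in the common exterior. This follows from the hypothesis that each disk contains the whole difference support, so it is routine. The frame step is handled by the frame-free characterization (ii).
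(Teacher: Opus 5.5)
Your proof is correct. The radius part is essentially the paper's argument: extend the smaller-disk transport by $I$ through the layer where $P_1=P_2$, then apply Lemma~\ref{lem:Gunique} on an exterior annulus of the larger disk. You obtain smoothness of the extension directly from $G=I$ on the whole two-sided annulus $A$, whereas the paper re-invokes Lemma~\ref{lem:distributional-annulus-gluing}. You also do not need $A'$ to lie outside $\overline B\cup A$, because the extended field already equals $I$ on the outer half of $A'$.

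For frame independence you take a genuinely different route. The paper computes directly. With the holomorphic changes $C_1=X_1^{-1}X_1'$ and $C_2=Y_2^{-1}Y_2'$, the new product satisfies $\mathsf W'=C_2^T\mathsf W C_1=C_2^T\mathsf R C_1$ on the contour. Hence $C_2^T\mathsf R C_1$ is the new Cauchy factor, and $X_1'(\mathsf R')^{-1}(Y_2')^T=X_1\mathsf R^{-1}Y_2^T$ holds pointwise by algebra. This verifies the new contour moments and the equality in one stroke, using only Cauchy's theorem and integral formula, with no appeal to the converse implication or to any identity-theorem uniqueness.

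You instead observe that condition (ii) of Proposition~\ref{prop:complete-moment-relative-transport} is frame-free. The implication (ii)$\Rightarrow$(i) then transfers the moments to the new frames, and the uniqueness clause identifies the two Plemelj fields. This is conceptually cleaner: it shows the transport is characterized intrinsically by its equation and exterior normalization, the same principle the paper uses for radius and chart independence. The cost is that it rests on the heavier machinery behind the uniqueness clause (local frames plus the identity theorem). It also needs both frame pairs defined on a common neighborhood containing an annulus in the common exterior, which is harmless after shrinking $A$. Your alternative via holomorphic weights, using the extension of (i) to all of $\mathcal O(B)$, is also valid.
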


\begin{proof}
For alternative frames, we define
\begin{equation}\label{eq:proof-holomorphic-frame-changes}
 C_1=X_1^{-1}X_1',\qquad C_2=Y_2^{-1}Y_2'.
\end{equation}
Using \eqref{eq:leaf-primal} and \eqref{eq:leaf-dual}, we prove that
the frame changes in \eqref{eq:proof-holomorphic-frame-changes} are
holomorphic. Their invertibility follows from that of the quotient frames.
On the contour the new product satisfies
\begin{equation}\label{eq:proof-alternative-plemelj-trace}
 \mathsf W'=C_2^T\mathsf W C_1=C_2^T\mathsf RC_1.
\end{equation}
Thus \(\mathsf R'=C_2^T\mathsf RC_1\) is its holomorphic Cauchy factor: the new
contour moments vanish by Cauchy's theorem, and the Cauchy formula
recovers this product. Consequently
\[
 X_1'(\mathsf R')^{-1}(Y_2')^T=X_1\mathsf R^{-1}Y_2^T=G.
\]
For two enclosing radii, we extend the smaller-disk transport by \(I\)
through the layer where \(P_1=P_2\). Lemma~\ref{lem:distributional-annulus-gluing}
makes this a smooth relative solution on the larger disk. It agrees
with the larger-disk transport on an exterior annulus, so the
uniqueness assertion of Theorem~\ref{thm:global-plemelj-gluing} identifies them.
\end{proof}
\subsection{Projective covariance and holomorphic transport}

The local leafwise transports are assembled into a transport on the fixed product
\(B_R\times\mathbb{CP}^1\). This requires the spinor frame change,
a common exterior normalization on chart overlaps, and smooth
continuation across leaves that miss the difference support. After
these properties are established, the final assertion of
Theorem~\ref{thm:global-plemelj-gluing} gives holomorphic dependence
in the projective direction at fixed physical \(x\).

By Lemma~\ref{lem:representative-invariance}, replacing a normalized
null representative by a holomorphic spinor representative preserves the
intrinsic relative transport. Its coordinate matrices also undergo the
corresponding frame change. The projective null cone is covered by the
two spinor charts
\[
 U_0=\{s\neq0\},\qquad U_\infty=\{t\neq0\}.
\]
On a local chart, we denote by \(\theta_{\mathrm h}\) its nonzero holomorphic
spinor representative. Its real and imaginary parts are orthogonal,
have the same positive length, and vary smoothly in the real chart
coordinates. Thus a smooth normalized representative may be chosen as
\(\theta_{\mathrm n}=a\theta_{\mathrm h}\), with \(a\ne0\) independent
of \(x\); for example,
\[
 a=\left(\frac{\theta_{\mathrm h}\cdot
              \overline{\theta_{\mathrm h}}}{2}\right)^{-1/2}>0.
\]
Choose locally the smooth positive spinor factor
\(\varsigma=\sqrt a\). The normalized spinor is
\(\varsigma\) times the holomorphic spinor, so
\[
 \mathsf U_{\mathrm n}=\mathsf U_{\mathrm h}S_\varsigma,
 \qquad S_\varsigma=\operatorname{diag}(\varsigma I_2,1).
\]
Consequently the frames, connection, and primal relative transport obey
\begin{equation}\label{eq:normalization-frame-change}
 \begin{gathered}
 Y_{\mathrm h}=S_\varsigma Y_{\mathrm n},\qquad
 X_{\mathrm h}=S_\varsigma^{-T}X_{\mathrm n},\\
 \mathcal A_{j,\mathrm n}
    =aS_\varsigma^T\mathcal A_{j,\mathrm h}S_\varsigma^{-T},\qquad
 G_{\mathrm h}=S_\varsigma^{-T}G_{\mathrm n}S_\varsigma^T.
 \end{gathered}
\end{equation}
At fixed physical \(x\), \eqref{eq:normalization-frame-change} gives
the matrix-density law
\begin{equation}\label{eq:proof-normalized-moment-density}
 Y_{\mathrm n}^T(\mathcal A_{1,\mathrm n}
                   -\mathcal A_{2,\mathrm n})X_{\mathrm n}
 =aY_{\mathrm h}^T(\mathcal A_{1,\mathrm h}
                   -\mathcal A_{2,\mathrm h})X_{\mathrm h}.
\end{equation}
The factor \(a\) is independent of \(x\). Holding the physical
measure and the leafwise test functions fixed,
\eqref{eq:proof-normalized-moment-density} therefore preserves vanishing
of the moment \eqref{eq:leaf-moment}. The normalization matrices
and their inverses are bounded on compact parameter subcharts.
In particular, \(\det Y_{\mathrm h}=a\det Y_{\mathrm n}\), so the
quantitative nonvanishing of Theorem~\ref{thm:graded-frame} persists.
Choices of normalized representative and local square root give the
same intrinsic transport by Lemma~\ref{lem:representative-invariance},
\eqref{eq:normalization-frame-change}, and Lemma~\ref{lem:Gunique}
with the common exterior normalization.
The normalization factor is smooth in the real chart variables;
the parameter equation will be differentiated in the holomorphic
quotient frame.

Recall from Subsection~\ref{subsec:quotient and transport} that the
dual quotient transition matrix on \(U_0\cap U_\infty\) is
\begin{equation}\label{eq:spinor-transition}
 D(\zeta)=
 \begin{pmatrix}
 \zeta\Id_2&0\\
 0&1
 \end{pmatrix},
 \qquad \zeta=t/s.
\end{equation}
Here \(Q_0=Q(1,\zeta)\) and \(Q_\infty=Q(\zeta^{-1},1)\)
are the \(3\times2\) spinor matrices of \eqref{eq:qspinor}.
Use the corresponding representatives \(\theta_0,\theta_\infty\).
Directly from
\eqref{eq:theta-spinor}--\eqref{eq:qspinor},
\[
 Q_0=\zeta Q_\infty,\qquad \theta_0=\zeta^2\theta_\infty.
\]
Thus dual coordinates \(v=(\xi,\eta)^T\) satisfy \(v_0=D^{-1}v_\infty\).
Coordinates \(u\) in the paired primal quotient satisfy \(u_0=D^Tu_\infty\),
so that \(v_0^Tu_0=v_\infty^Tu_\infty\). By
Lemma~\ref{lem:holomorphic-lifts}, we choose the local right inverses in
\eqref{eq:primal-quotient-connection} compatibly by
\begin{equation}\label{eq:proof-compatible-quotient-right-inverses}
 \mathsf V_0=\mathsf V_\infty D^{-T}\pmod{\C e_{\theta_0}}.
\end{equation}
Equation~\eqref{eq:proof-compatible-quotient-right-inverses} gives a right
inverse for \(\mathsf U_0^T\), where \(\mathsf U_0=\mathsf U_\infty D\),
and any other such choice gives the same
quotient connection. The connection matrices therefore obey
\begin{equation}\label{eq:connection-chart-scaling}
 \Gamma_{j,0}=\zeta^2D^{-1}\Gamma_{j,\infty}D,
 \qquad
 \mathcal A_{j,0}=\zeta^2D^T\mathcal A_{j,\infty}D^{-T},
 \qquad \mathcal A_j=-\Gamma_j^T.
\end{equation}
The differential operators obey \(D_{\theta_0}=\zeta^2D_{\theta_\infty}\).

Since the transition matrix \(D\) is independent of \(x\), its
conjugation introduces no spatial derivative term. Together with
\(D_{\theta_0}=\zeta^2D_{\theta_\infty}\), the connection law
\eqref{eq:connection-chart-scaling} therefore identifies the two
relative equations on the overlap.

The common leaf normalization on chart overlaps can be checked in
physical coordinates. For any nonzero complex null representative, we set
\begin{equation}\label{eq:proof-projective-leaf-normal}
 \nu_\theta=\frac{\operatorname{Re}\theta\times
                         \operatorname{Im}\theta}
                        {|\operatorname{Re}\theta|^2}.
\end{equation}
Equation~\eqref{eq:proof-projective-leaf-normal} gives the unit normal
of the oriented real leaf plane. Multiplication
of \(\theta\) by \(b\in\C^*\) multiplies both the numerator and
denominator by \(|b|^2\), so it preserves \(\nu_\theta\), the real transverse
coordinate \(x_\perp=\nu_\theta\cdot x\), and the physical affine leaf. In particular,
the representatives \(\theta_0=\zeta^2\theta_\infty\) use the same
enclosing disk \(B_{R_1}\cap\Pi_{\theta,x_\perp}\). Their common exterior
annulus may be chosen from the same fixed radial shell supplied by
Lemma~\ref{lem:uniform-leaf-disks}. Thus each connected leaf overlap
used here meets an open set where both transports equal \(I\).

Let \(G_0,G_\infty\) denote the primal transports in the holomorphic
quotient frames on the common leaf disks. The common annulus just
verified supplies the normalization hypothesis of
Lemma~\ref{lem:spinor-overlap-plemelj}, which gives
\begin{equation}\label{eq:G-chart-covariance}
 G_0=D^TG_\infty D^{-T}=: \widehat G_\infty.
\end{equation}
In particular, inverse transpose gives the \emph{dual} covariance
\begin{equation}\label{eq:H-chart-covariance}
 H_0=D^{-1}H_\infty D,\qquad H=G^{-T}.
\end{equation}
Equations~\eqref{eq:G-chart-covariance}--\eqref{eq:H-chart-covariance}
are the paired covariance laws on the common leaf disks.
The continuation below extends them to a common physical product.

We first verify the smooth-family hypotheses of
Theorem~\ref{thm:global-plemelj-gluing} for the normalized leaf disks.
The spinor frame change will then express the resulting transports
in holomorphic quotient coordinates.

Lemma~\ref{lem:uniform-leaf-disks} supplies the common exterior annuli,
where the two connections agree. The disk parametrization is
\(w\mapsto (R_1^2-x_\perp^2)^{1/2}w\) in the normalized leaf coordinate.
On a compact local parameter subchart, we choose a fixed reference annulus
\(1-\varepsilon<|w|<1+\varepsilon\) whose image lies in that common
exterior for every parameter, as in Lemma~\ref{lem:uniform-leaf-disks}.
Restricting the annulus to this image gives precisely the fixed-set
hypothesis of the gluing theorem.
Lemma~\ref{lem:uniform-leafwise-frames} supplies the smooth invertible
frames on neighborhoods of the closed disks; their equations are
\eqref{eq:leaf-primal} and \eqref{eq:leaf-dual}.
Equation~\eqref{eq:boundary-moments} supplies every required contour moment.
Using Theorem~\ref{thm:global-plemelj-gluing}, the Fourier estimate
\eqref{eq:proof-plemelj-fourier-decay}, and the determinant argument of
Lemma~\ref{lem:matrix-plemelj}, we obtain joint smoothness on the closed
reference disks and smooth inverses on compact parameter subcharts.
In leaf coordinates, the reconstructed transport depends smoothly on
\((z,x_\perp,\operatorname{Re}\zeta,\operatorname{Im}\zeta)\).
The coordinate changes and the frame changes
\eqref{eq:normalization-frame-change} and \eqref{eq:spinor-transition}
are smooth in these real variables. The covariance data
\eqref{eq:connection-chart-scaling} and the common leaf annulus verify
both overlap hypotheses of the bundle assertion. Uniqueness identifies
constructions from different local parameter neighborhoods, by
Lemma~\ref{lem:Gunique}. For \(|x_\perp|<R_\perp\), we extend the Plemelj transport by \(I\) from
\(B_{\theta,x_\perp}\) to the entire affine leaf \(\Pi_{\theta,x_\perp}\).
The extension remains a smooth solution of \eqref{eq:Gtransport}:
Lemma~\ref{lem:distributional-annulus-gluing} gives \(I\) on a two-sided
annulus of its boundary, and the
two connections coincide outside \(\overline{B_{R_0}}\).
The fixed reference annuli give joint smoothness across the moving
contour locally in the parameters. The extensions agree on overlapping
parameter neighborhoods by leafwise uniqueness.

For fixed real \(x\), the functions
\(x_\perp=\nu_{\theta(\zeta)}\cdot x\) and
\(z=e_1(\zeta)\cdot x+\mathrm i e_2(\zeta)\cdot x\)
are smooth functions of the real variables
\((x,\operatorname{Re}\zeta,\operatorname{Im}\zeta)\).
Composing with these functions gives smooth physical-coordinate
matrices on the open subset
\begin{equation}\label{eq:proof-physical-product-inside}
 \mathcal U_< =\{(x,\zeta)\in B_R\times U:
                        |\nu_{\theta(\zeta)}\cdot x|<R_\perp\}.
\end{equation}
On the second open subset
\begin{equation}\label{eq:proof-physical-product-outside}
 \mathcal U_> =\{(x,\zeta)\in B_R\times U:
                        |\nu_{\theta(\zeta)}\cdot x|>R_0\}
\end{equation}
we define the transport to be \(I\). Each such leaf misses the entire
coefficient-difference support, so \(G=I\) solves its relative
equation. On \(\mathcal U_<\cap\mathcal U_>\), the extended Plemelj
transport also solves that equation and equals \(I\) on an exterior
annulus. Lemma~\ref{lem:Gunique} on the connected full leaf makes it
identically \(I\). Since \(R_0<R_\perp\), the open subsets
\eqref{eq:proof-physical-product-inside} and
\eqref{eq:proof-physical-product-outside} cover \(B_R\times U\).
Their equal smooth matrices on the overlap therefore define a jointly
smooth matrix on this fixed product, including at \(|x_\perp|=R_\perp\).
Since the local inverses agree on the overlap of
\eqref{eq:proof-physical-product-inside} and
\eqref{eq:proof-physical-product-outside}, we glue them on the same
physical product. The extended covariance
laws \eqref{eq:G-chart-covariance}--\eqref{eq:H-chart-covariance}
hold throughout the physical products on chart overlaps.

The exterior portion of a full leaf is connected: it is the
complement of a closed disk for \(|x_\perp|<R_0\), the complement of one
point for \(|x_\perp|=R_0\), and the whole leaf for \(|x_\perp|>R_0\).
The two connection matrices agree there. Thus \(G-I\) solves the
common homogeneous relative equation and vanishes on the enclosing
annulus, or on the whole leaf in the last case. Lemma~\ref{lem:Gunique}
gives \(G=I\) throughout that exterior portion. Thus its identity normalization
holds on the fixed physical shell
\begin{equation}\label{eq:proof-fixed-exterior-shell}
 E=\{x\in B_R:R_0<|x|<R\},
\end{equation}
independently of the direction. Every nonempty section
\(B_R\cap\Pi_{\theta,x_\perp}\) is a connected disk and meets \(E\) in a
nonempty relatively open set. Indeed, for \(|x_\perp|<R_0\) its exterior part
contains the annulus
\[
 \sqrt{R_0^2-x_\perp^2}<|z|<\sqrt{R^2-x_\perp^2}.
\]
For \(|x_\perp|=R_0\), only the center of the section is excluded from
\(E\); for \(R_0<|x_\perp|<R\), the entire section belongs to \(E\).
In every case its intersection with \(E\) is nonempty and relatively
open. The cover \eqref{eq:proof-physical-product-inside}--
\eqref{eq:proof-physical-product-outside} and the shell
\eqref{eq:proof-fixed-exterior-shell} verify the fixed-product,
connected-leaf, and exterior normalization hypotheses of the parameter
assertion of
Theorem~\ref{thm:global-plemelj-gluing}. In particular, every chart
overlap on the fixed ball has this same normalization set on each
nonempty leaf section, as required for the global bundle gluing.

It remains to check the holomorphic-parameter hypotheses of
Theorem~\ref{thm:global-plemelj-gluing}. We transform to the holomorphic
quotient frame \(\mathsf U_{\mathrm h}\) using
\eqref{eq:normalization-frame-change}; by
Lemma~\ref{lem:representative-invariance}, the zero chart may use
\eqref{eq:theta-spinor} with spinor \((1,\zeta)\):
\[
 \theta(\zeta)=(1-\zeta^2,\,\mathrm i(1+\zeta^2),\,2\zeta)^T.
\]
This representative is nonzero throughout the chart. In this frame,
\(\mathcal A_j=-\Gamma_j^T\), and \eqref{eq:Gamma} expresses
\(\Gamma_j\) in terms of the polynomial spinor functions
\(\theta,Q,n\), and \(\mathfrak r_{m_j}=-2m_j^TQ\).
The coefficient fields depend only on \(x\), so both the direction
and the connection matrices are holomorphic in \(\zeta\) at fixed
physical \(x\). Lemma~\ref{lem:holomorphic-lifts} gives the same
induced connection independently of the local right inverse.

The holomorphic connection matrices just checked, together with the
fixed product and normalization verified in
\eqref{eq:proof-physical-product-inside}--\eqref{eq:proof-fixed-exterior-shell},
satisfy the final hypotheses of
Theorem~\ref{thm:global-plemelj-gluing}. It follows that \(G_0\) is
holomorphic in \(\zeta\) at fixed physical \(x\).

The infinity chart uses \(\upsilon=s/t\) and the nonzero holomorphic
representative \(\theta(\upsilon,1)\); the same polynomial argument
verifies its connection hypothesis and gives holomorphicity of
\(G_\infty\). The transition matrix \(D(\zeta)\) and its inverse
are holomorphic on \(\zeta\ne0\), so
\eqref{eq:G-chart-covariance} glues these matrices as one global
holomorphic primal bundle automorphism for each fixed \(x\).

The local Cauchy fields have a common physical domain, the fixed-shell normalization \eqref{eq:proof-fixed-exterior-shell}, and the bundle transition law \eqref{eq:G-chart-covariance}. Combined with holomorphic direction dependence, these properties yield the relative transport in Theorem~\ref{thm:quotient-reconstruction} from equality of the physical DN maps.

The regularity assertions here are logically separate: the
Cauchy factor and its inverse are first controlled on closed leaf
disks with smooth \emph{real} parameters; the local fields are then
identified on one fixed physical product using the common exterior
shell \eqref{eq:proof-fixed-exterior-shell}; only afterwards is the
parameter transport equation differentiated at fixed \(x\) to prove
holomorphicity. No parameter derivatives of the exact CGO solutions
are required.

\begin{theorem}\label{thm:quotient-reconstruction}
For the common exterior extensions and fixed physical ball \(B_R\)
chosen in this section, assume \(\DN_1=\DN_2\). Then there is a unique
relative automorphism \(G\) of the rank-three primal quotient bundle
over \(B_R\times\mathbb{CP}^1\) which is jointly smooth in local
spatial and real chart coordinates, holomorphic in the projective
null direction at fixed \(x\), and equal to \(I\) on the fixed shell
\(E=\{R_0<|x|<R\}\). It intertwines the two quotient transports:
\begin{equation}\label{eq:proof-reconstructed-primal-transport}
 2D_\theta G+\mathcal A_{1,\theta}G-G\mathcal A_{2,\theta}=0.
\end{equation}
Its local matrices obey \eqref{eq:G-chart-covariance}.
\end{theorem}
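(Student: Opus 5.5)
The proof is essentially an assembly of the constructions carried out in this subsection, so the plan is to organize them into existence, regularity, holomorphy, and uniqueness. For \textbf{existence}, fix a spinor chart and a compact subchart of normalized directions and transverse parameters. By Theorem~\ref{thm:physical-nine-moments}, equality \(\DN_1=\DN_2\) on \(\partial B_R\) gives the complete leaf moments \eqref{eq:leaf-moment} for the fixed frames \(X_1,Y_2\) of Lemma~\ref{lem:uniform-leafwise-frames} on every enclosing disk \(B_{\theta,x_\perp}\). Lemma~\ref{lem:cauchy-green-moment} then converts these into the contour moments \eqref{eq:boundary-moments} for \(\mathsf W=Y_2^TX_1\). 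We then apply Theorem~\ref{thm:global-plemelj-gluing} with the following data:
\begin{itemize}
\item the affine disk family \(w\mapsto(R_1^2-x_\perp^2)^{1/2}w\);
\item the fixed reference annuli supplied by Lemma~\ref{lem:uniform-leaf-disks};
\item the jointly smooth, invertible, boundedly differentiable frames of Lemma~\ref{lem:uniform-leafwise-frames}.
\end{itemize}
This produces, locally in the real parameters, a jointly smooth invertible \(G=X_1\mathsf R^{-1}Y_2^T\) solving the leafwise relative equation and equal to \(I\) on the annuli. Passing from \(P_j=\mathcal A_j/2\) and \(2\partial_{\bar z}=D_\theta\) gives \eqref{eq:proof-reconstructed-primal-transport} for normalized \(\theta\). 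The frame change \eqref{eq:normalization-frame-change} and Lemma~\ref{lem:representative-invariance} then transfer the equation to the holomorphic spinor representative.

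For the \textbf{global assembly}, we first extend \(G\) by \(I\) to full leaves via Lemma~\ref{lem:distributional-annulus-gluing}. We then glue the inside and outside regions \(\mathcal U_<,\mathcal U_>\) of \eqref{eq:proof-physical-product-inside}--\eqref{eq:proof-physical-product-outside}, using Lemma~\ref{lem:Gunique} on their overlap. Together these yield a jointly smooth matrix on \(B_R\times U\) for each chart. Independence of the choices of frames, radii, and parameter neighborhoods follows from Corollary~\ref{cor:frame-independence} and Lemma~\ref{lem:Gunique}. On chart overlaps, Lemma~\ref{lem:spinor-overlap-plemelj} gives the transition law \eqref{eq:G-chart-covariance}, so the local matrices define a bundle automorphism. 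Leafwise uniqueness on the connected exterior part of each leaf gives \(G=I\) on the fixed shell \(E\).

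For \textbf{holomorphy}, the connection matrices \(\mathcal A_j=-\Gamma_j^T\) are polynomial in \(\zeta\) at fixed \(x\), by \eqref{eq:Gamma} and \eqref{eq:qspinor}. Every section \(B_R\cap\Pi\) meets \(E\) in a nonempty relatively open set. With these two facts, the final assertion of Theorem~\ref{thm:global-plemelj-gluing} applies to \(V=\partial_{\bar\zeta}G\) and gives holomorphy of \(G\) in \(\zeta\) at fixed \(x\), in each chart. Since \(D(\zeta)^{\pm1}\) is holomorphic on the overlap, the chart matrices patch into a holomorphic automorphism over \(\mathbb{CP}^1\).

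\textbf{Uniqueness} follows from the same leafwise argument. Suppose \(G'\) is another automorphism with the stated properties. On each leaf, \(G-G'\) solves the homogeneous relative equation and vanishes on the open set \(E\cap\Pi\), which is nonempty, and \(B_R\cap\Pi\) is connected. Lemma~\ref{lem:Gunique} then gives \(G=G'\) on every leaf, and the leaves through \(B_R\) cover it for each direction.

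The step I expect to be the main obstacle is verifying that the smooth real-parameter hypotheses of Theorem~\ref{thm:global-plemelj-gluing} hold uniformly. This requires a fixed reference annulus inside the common exterior, and frames whose inverse bounds remain uniform on compact subcharts, including near \(|x_\perp|\to R_\perp\). I would handle this by shrinking subcharts and relying on the \(\mathcal U_>\) region to cover leaves with \(|x_\perp|>R_0\), so that the Plemelj construction is never needed near the edge of the transverse interval.
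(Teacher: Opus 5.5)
Your proposal is correct and follows essentially the same route as the paper: the Cauchy--Plemelj construction from the complete leaf moments via Theorem~\ref{thm:global-plemelj-gluing}, extension by \(I\) to full leaves and gluing over \(\mathcal U_<\cup\mathcal U_>\) with the fixed shell \(E\), the transition law from Lemma~\ref{lem:spinor-overlap-plemelj}, holomorphy from the final assertion of Theorem~\ref{thm:global-plemelj-gluing}, and uniqueness by Lemma~\ref{lem:Gunique} on connected leaf sections meeting \(E\). Your treatment of the edge \(|x_\perp|\to R_\perp\), where \(\mathcal U_>\) covers the leaves with \(|x_\perp|>R_0\), is exactly how the paper arranges the cover.
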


\begin{proof}
The construction from Theorem~\ref{thm:global-plemelj-gluing}, extended
using \eqref{eq:proof-physical-product-inside}--
\eqref{eq:proof-physical-product-outside}, verifies existence, invertibility,
and joint smoothness. Equation~\eqref{eq:G-chart-covariance} gives the
transition law, and \eqref{eq:proof-fixed-exterior-shell} specifies the fixed
exterior normalization. 
In each holomorphic spinor chart, \eqref{eq:theta-spinor} and
\eqref{eq:Gamma} give holomorphic direction and connection matrices.
The fixed product and the shell meeting every connected leaf section
were verified in \eqref{eq:proof-physical-product-inside}--
\eqref{eq:proof-fixed-exterior-shell}. The final assertion of
Theorem~\ref{thm:global-plemelj-gluing} therefore gives holomorphicity
of the relative transport at fixed \(x\).

For uniqueness, we subtract two such local matrices in the same quotient
frame. Their difference solves the homogeneous relative equation and
vanishes on \(E\). Each connected disk \(B_R\cap\Pi_{\theta,x_\perp}\)
meets that set in a relatively open subset, so Lemma~\ref{lem:Gunique}
gives equality on every section and hence on the whole ball.
\end{proof}

The reconstruction theorem produces a primal bundle automorphism;
the rigidity argument is written for the dual connection
\(\Gamma_j\). Taking the inverse transpose respects their paired
transition laws and preserves the order of the two media in the
relative equation.

\begin{corollary}\label{cor:dual-relative}
Let \(G\) be the primal relative transport of Theorem~\ref{thm:quotient-reconstruction} and set
\[
 H=G^{-T}.
\]
Then \(H\) is smooth in \(x\), is a global holomorphic automorphism in the
projective null direction of the dual physical quotient bundle, equals the
identity on the fixed exterior shell \(E\), and satisfies
\begin{equation}\label{eq:dual-relative-transport}
 2D_\theta H+\Gamma_1H-H\Gamma_2=0,
\end{equation}
where \(\Gamma_j\) is the dual Lam\'e quotient connection \eqref{eq:Gamma}.
\end{corollary}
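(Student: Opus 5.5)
The plan is to transfer every property of \(G\) from Theorem~\ref{thm:quotient-reconstruction} through the pointwise operation \(G\mapsto G^{-T}\). That operation is smooth, preserves holomorphy, and turns the primal identities into dual ones.

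\textbf{Regularity and normalization.} Since \(G\) is jointly smooth and invertible, with locally smooth inverse, the adjugate formula makes \(H=G^{-T}\) smooth in \(x\) and in the real chart coordinates. At fixed \(x\), \(G\) and hence \(\det G\) are holomorphic in \(\zeta\), and \(\det G\) does not vanish. The entries of \(G^{-1}\), which are cofactors divided by \(\det G\), are therefore holomorphic, and so is their transpose \(H\). On the shell \(E\) we have \(G=I\), hence \(H=I\).

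\textbf{Bundle transitions.} The dual covariance law \eqref{eq:H-chart-covariance}, \(H_0=D^{-1}H_\infty D\), follows by taking the inverse transpose of \eqref{eq:G-chart-covariance}. This is exactly the conjugation law for automorphisms of the dual quotient bundle, whose coordinates satisfy \(v_0=D^{-1}v_\infty\). So the local matrices \(H_0,H_\infty\) glue to a global holomorphic automorphism of that bundle. Since \(D\) is holomorphic and invertible on \(\zeta\neq0\), holomorphy is preserved across the chart overlap.

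\textbf{Transport equation.} Differentiating \(GG^{-1}=I\) gives
\(2D_\theta G^{-1}=-G^{-1}(2D_\theta G)G^{-1}\). Substituting \eqref{eq:proof-reconstructed-primal-transport} yields
\[
 2D_\theta G^{-1}=G^{-1}\mathcal A_{1,\theta}-\mathcal A_{2,\theta}G^{-1}.
\]
Transposing, and using \(\Gamma_j=-\mathcal A_{j,\theta}^T\), we get
\[
 2D_\theta H=\mathcal A_{1,\theta}^TH-H\mathcal A_{2,\theta}^T=-\Gamma_1H+H\Gamma_2,
\]
which is \eqref{eq:dual-relative-transport}.

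The only point needing care is bookkeeping. The order of the media must stay as stated: \(\Gamma_1\) acts on the left and \(\Gamma_2\) on the right. The transition matrix must change from \(D^T(\cdot)D^{-T}\) to \(D^{-1}(\cdot)D\), consistent with the pairing \(v_0^Tu_0=v_\infty^Tu_\infty\). Both follow directly from the identities above.
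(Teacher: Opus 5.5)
Your proposal is correct and follows essentially the same route as the paper. Both differentiate the inverse transpose and substitute the primal relative equation, using \(\Gamma_j=-\mathcal A_j^T\). Both take inverse transposes of \eqref{eq:G-chart-covariance} to obtain \eqref{eq:H-chart-covariance}, and both note that inversion preserves smoothness, holomorphy in the direction, and the normalization on \(E\). Your adjugate argument, and your check that \(D^{-1}(\cdot)D\) matches the dual coordinate change \(v_0=D^{-1}v_\infty\), spell out details the paper states in a single line.
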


\begin{proof}
In paired quotient coordinates, \(\Gamma_j=-\mathcal A_j^T\).
Differentiate the inverse transpose and use
\eqref{eq:proof-reconstructed-primal-transport} to obtain
\[
 \begin{aligned}
 2D_\theta H
 &=-H(2D_\theta G)^TH\\
 &=\mathcal A_1^TH-H\mathcal A_2^T
  =-\Gamma_1H+H\Gamma_2.
 \end{aligned}
\]
Rearranging gives \eqref{eq:dual-relative-transport}, with the order of the
two media unchanged. Inversion of an invertible smooth holomorphic
matrix preserves its spatial smoothness and parameter holomorphicity.
Also \(G=I\) on the fixed shell implies \(H=I\) there.
Taking inverse transposes in \eqref{eq:G-chart-covariance} gives
\eqref{eq:H-chart-covariance}. Thus the local matrices of \(H\)
define the dual bundle automorphism with all the asserted properties.
\end{proof}

\section{Rigidity of the quotient transport}\label{sec:rigidity}

The recovery of the Lam\'e coefficients uses the dual intertwiner
\(H=G^{-T}\) constructed in Section~\ref{sec:transport}. The bundle
transition restricts its holomorphic block form, and its transport
equation then yields a trace-free Hessian equation for
\(\log(\mu_1/\mu_2)\).

For the boundary inverse problem, we set \(\mathcal D=B_R\) and
\(E=\{x\in B_R:R_0<|x|<R\}\), using the common extensions and
radii of Section~\ref{sec:transport}. Lemma~\ref{lem:common-extension}
and \eqref{eq:kappa-range} give smooth real coefficient pairs with
\(\mu_j>0\) and \(1/4<\kappa_j<1\), coinciding on \(E\).
After DN equality is transferred by Lemma~\ref{lem:enlargement},
Theorem~\ref{thm:quotient-reconstruction} and
Corollary~\ref{cor:dual-relative} supply \(H=G^{-T}\), jointly smooth
in spatial and real chart coordinates and holomorphic in the projective
direction. Its matrices obey \eqref{eq:H-chart-covariance} and
\eqref{eq:dual-relative-transport}, and \(H=I\) on \(E\). Thus all hypotheses of
Theorem~\ref{thm:rigidity} hold on the connected domain
\(\mathcal D\), with the same fixed nonempty open set \(E\).

\subsection{The quotient bundle and holomorphic automorphisms}

The spinor transition determines the dual quotient bundle.
The growth allowed by that transition in the two affine charts determines
the blocks of a global holomorphic automorphism.

Use the dual quotient frame matrices \(\mathsf U_0\) and
\(\mathsf U_\infty\) associated with the spinors \((1,\zeta)\) and
\((\upsilon,1)\), where \(\upsilon=\zeta^{-1}\) on the overlap.
Equation~\eqref{eq:spinor-transition} gives
\(\mathsf U_0=\mathsf U_\infty D(\zeta)\), with
\begin{equation}\label{eq:transition}
 D(\zeta)=\begin{pmatrix}\zeta\Id_2&0\\0&1\end{pmatrix}.
\end{equation}
The spinor matrix \(Q\) has rank two in each chart, so its columns
are local holomorphic frames for the vector annihilator
\(\theta^\perp\). Each of these two frame columns transforms by
\(\zeta\), while the scalar frame column is the same in both charts.
Consequently the diagonal transition gives the explicit splitting
\begin{equation}\label{eq:bundle-splitting}
 E^*\simeq\cO(-1)\oplus\cO(-1)\oplus\cO.
\end{equation}
To fix the sign, the tautological line over \(\mathbb{CP}^1\) has
local frames \(e_0=(1,\zeta)^T\) and \(e_\infty=(\upsilon,1)^T\),
which satisfy \(e_0=\zeta e_\infty\). This is our convention for
\(\mathcal O(-1)\). The corresponding coordinates transform by the
inverse multiplier, \(v_0=\zeta^{-1}v_\infty\).
The splitting \eqref{eq:bundle-splitting} therefore follows directly
from the spinor frames in \eqref{eq:qspinor} and the transition
\eqref{eq:transition}, without a bundle classification theorem.

For an endomorphism of the split bundle, the same transition gives
separate growth bounds for the four blocks. Holomorphicity on both charts
therefore determines the following matrix form.

\begin{lemma}\label{lem:Hform}
For each fixed \(x\), let \(H(x,\cdot):E^*\to E^*\) be a global
holomorphic automorphism. In a local spinor frame it has the form
\begin{equation}\label{eq:Hform}
 H=\begin{pmatrix}A(x)&0\\C(x;s,t)&\delta_H(x)\end{pmatrix},
\end{equation}
where \(A\in GL(2,\C)\), \(\delta_H\in\C^\times\), and \(C\) is a
\(1\times2\) row homogeneous of degree one in \((s,t)\).
If the local matrices of \(H\) are jointly smooth in \(x\) and the
real chart coordinates, then \(A\), \(\delta_H\), and the two
coefficient rows of \(C\) are smooth in \(x\).
\end{lemma}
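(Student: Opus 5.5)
Fix \(x\) and write the local matrices of \(H\) in the two charts as \(H_0(\zeta)\), holomorphic on \(U_0\cong\C\), and \(H_\infty(\upsilon)\), holomorphic on \(U_\infty\cong\C\). The plan is to read off the allowed growth of each block from the transition law, apply Liouville to each block, and then check invertibility and smoothness.

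The dual transition law \eqref{eq:H-chart-covariance} gives \(H_0=D^{-1}H_\infty D\) on \(\zeta\ne0\), with \(D=\operatorname{diag}(\zeta I_2,1)\). Split \(H\) into blocks of sizes \(2+1\):
\[
 H=\begin{pmatrix}A&b\\ C&\delta\end{pmatrix}.
\]
Conjugation by \(D\) then gives
\[
 A_0=A_\infty,\qquad b_0=\zeta^{-1}b_\infty,\qquad C_0=\zeta C_\infty,\qquad \delta_0=\delta_\infty,
\]
where all functions on the right are evaluated at \(\upsilon=\zeta^{-1}\). I would treat each block with Liouville's theorem.
\begin{itemize}[label=\textbullet]
 \item \(A_0\) is entire in \(\zeta\) and bounded as \(\zeta\to\infty\), since it equals \(A_\infty(1/\zeta)\). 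Hence it is constant, and likewise \(\delta_0\).
 \item \(b_0\) is entire and satisfies \(b_0=\zeta^{-1}b_\infty(1/\zeta)\to0\) at infinity, so \(b_0\equiv0\).
 \item \(C_0\) is entire with \(|C_0|\le C|\zeta|\) at infinity, so it is affine in \(\zeta\): \(C_0=c_0+c_1\zeta\).
\end{itemize}
The same computation in the infinity chart gives \(C_\infty=c_0\upsilon+c_1\). Passing to homogeneous coordinates, with \(\zeta=t/s\) and the chart representatives \((1,\zeta)\), \((\upsilon,1)\), the row is \(C(s,t)=c_0s+c_1t\). This is homogeneous of degree one and restricts correctly to both charts, and it is consistent with the \(S_\varsigma\) scalings in \eqref{eq:normalization-frame-change}.

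Because \(H\) is now block lower triangular, \(\det H=\det A\cdot\delta\). This is nonzero because \(H\) is an automorphism, so \(A\in GL(2,\C)\) and \(\delta_H\ne0\).

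For smoothness, I would recover the coefficients from the chart matrix by evaluation at fixed parameters: \(A(x)=A_0(x,0)\), \(\delta_H(x)=\delta_0(x,0)\), \(c_0=C_0(x,0)\), and \(c_1=C_0(x,1)-C_0(x,0)\). Each is a restriction of a jointly smooth function to a fixed real parameter value, hence smooth in \(x\).

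The main point needing care is the convention bookkeeping. One must confirm that \eqref{eq:H-chart-covariance} is \(H_0=D^{-1}H_\infty D\) and not the transposed version. Otherwise \(b\) and \(C\) exchange roles and the upper-right block survives instead of the lower-left. I would verify this directly from \(v_0=D^{-1}v_\infty\) for dual coordinates, which makes the \(\mathcal O(-1)\) columns lie in the upper block.
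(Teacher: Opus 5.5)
Your proposal is correct and follows essentially the same route as the paper: the $2+1$ block form of $H_0=D^{-1}H_\infty D$, and Liouville's theorem (with Cauchy estimates for the linearly growing lower-left row) applied block by block. This gives constant diagonal blocks, a vanishing upper-right block, and an affine row $C_0=C_s+\zeta C_t$ that glues to $sC_s+tC_t$; invertibility then follows from block triangularity, and smoothness from the fixed evaluations at $\zeta=0,1$. Your convention check also comes out as you expect, since \eqref{eq:H-chart-covariance} is stated exactly as $H_0=D^{-1}H_\infty D$.
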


\begin{proof}
Fix \(x\) and write the chart matrices in the \(2+1\) block
decomposition as
\[
 H_0(\zeta)=\begin{pmatrix}A_0(\zeta)&B_0(\zeta)\\
                          C_0(\zeta)&\delta_{H,0}(\zeta)\end{pmatrix},
 \qquad
 H_\infty(\upsilon)=\begin{pmatrix}A_\infty(\upsilon)&B_\infty(\upsilon)\\
                          C_\infty(\upsilon)&\delta_{H,\infty}(\upsilon)\end{pmatrix}.
\]
Every block is entire in its own affine chart coordinate. The dual
transition law \eqref{eq:H-chart-covariance}, written as
\[
 H_0(\zeta)=D(\zeta)^{-1}H_\infty(\zeta^{-1})D(\zeta)
\]
is equivalent on \(\zeta\ne0\) to
\begin{equation}\label{eq:proof-h-block-transitions}
 \begin{aligned}
 A_0(\zeta)&=A_\infty(\zeta^{-1}),&
 \delta_{H,0}(\zeta)&=\delta_{H,\infty}(\zeta^{-1}),\\
 B_0(\zeta)&=\zeta^{-1}B_\infty(\zeta^{-1}),&
 C_0(\zeta)&=\zeta C_\infty(\zeta^{-1}).
 \end{aligned}
\end{equation}
By \eqref{eq:proof-h-block-transitions}, holomorphicity of the
infinity-chart blocks near \(\upsilon=0\)
makes \(A_0\) and \(\delta_{H,0}\) bounded for large \(|\zeta|\).
They are bounded on compact sets as well, so Liouville's theorem
applied entrywise makes them constant in \(\zeta\). We denote these
blocks by \(A(x)\) and \(\delta_H(x)\). The entire column \(B_0\)
is \(O(|\zeta|^{-1})\) at infinity, so the same theorem gives
\(B_0=0\), and \eqref{eq:proof-h-block-transitions} gives
\(B_\infty=0\).

Equation~\eqref{eq:proof-h-block-transitions} makes the entire row
\(C_0\) \(O(1+|\zeta|)\). Cauchy's estimates
on expanding circles show that every derivative of order at least
two vanishes. Hence there are \(1\times2\) rows \(C_s(x),C_t(x)\)
such that
\begin{equation}\label{eq:proof-h-affine-row}
 C_0(x,\zeta)=C_s(x)+\zeta C_t(x),\qquad
 C_\infty(x,\upsilon)=\upsilon C_s(x)+C_t(x).
\end{equation}
The expressions in \eqref{eq:proof-h-affine-row} are the restrictions
to \((1,\zeta)\) and \((\upsilon,1)\) of
the same homogeneous row
\[
 C(x;s,t)=sC_s(x)+tC_t(x).
\]
Thus the coefficient rows are common to both charts, even though
their coordinate expressions differ. Block triangularity gives
\(\det H=\det A\,\delta_H\); invertibility of \(H\) therefore forces
\(A\in GL(2,\C)\) and \(\delta_H\ne0\).

Finally, the coefficients can be recovered by fixed chart evaluations:
\begin{equation}\label{eq:proof-h-coefficient-evaluations}
 \begin{gathered}
 A(x)=A_0(x,0),\qquad \delta_H(x)=\delta_{H,0}(x,0),\\
 C_s(x)=C_0(x,0),\qquad
 C_t(x)=C_0(x,1)-C_0(x,0).
 \end{gathered}
\end{equation}
Joint smoothness of the chart matrices makes each function in
\eqref{eq:proof-h-coefficient-evaluations} smooth in \(x\). On compact spatial subsets, their derivatives of
every fixed finite order, \(A^{-1}\), and \(\delta_H^{-1}\) are bounded
by smoothness, nonvanishing, and compactness.
\end{proof}

The zero upper-right block and the degree-one lower-left row are
consequences of holomorphicity and the transition law; no spatial
transport equation has yet been used. The coefficient evaluations
\eqref{eq:proof-h-coefficient-evaluations} also fix the same smooth
functions across the two charts.

For the transport \(H\) of Corollary~\ref{cor:dual-relative}, we use
the identity normalization on \(E\) and \eqref{eq:Hform} to obtain
\[
 A=I_2,\qquad \delta_H=1,\qquad C_s=C_t=0\quad\text{on }E.
\]
The block transport equations will use the smooth functions
\(A,\delta_H,C_s,C_t\) identified in
\eqref{eq:proof-h-coefficient-evaluations}, with their chart expressions
fixed by \eqref{eq:proof-h-affine-row}.

\subsection{Transport identities and coefficient recovery}

The block equations satisfied by the dual relative transport
\(H\) provide the final step in the recovery of the Lam\'e
parameters. They first determine the auxiliary parameter
\(\kappa\) and subsequently yield a differential relation
leading to the determination of \(\mu\). The analysis of
the lower-left block involves a direction-independent
matrix that preserves every complex null line. The
following lemma characterizes matrices with this property.

\begin{lemma}\label{lem:null-line-rigidity}
Let \(M_{\mathrm{null}}\in M_3(\C)\).  If
\[
 M_{\mathrm{null}}\theta\in\C\theta
 \qquad\text{for every }\theta\in\C^3\setminus\{0\}
 \text{ with }\theta\cdot\theta=0,
\]
then \(M_{\mathrm{null}}=q\Id_3\) for some \(q\in\C\).
\end{lemma}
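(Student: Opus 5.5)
The plan is to use the spinor parametrization \eqref{eq:theta-spinor}, which realizes every null direction as \(\theta(s,t)=(s^2-t^2,\,\mathrm i(s^2+t^2),\,2st)^T\). The hypothesis says that the vectors \(M_{\mathrm{null}}\theta\) and \(\theta\) are linearly dependent for every \((s,t)\). The first step is to show that the proportionality factor can be taken as one polynomial. Write \(M_{\mathrm{null}}\theta(s,t)=q(s,t)\theta(s,t)\). Since \(\theta_1-\mathrm i\theta_2=2s^2\), we have \(q=(w_1-\mathrm i w_2)/(2s^2)\), where \(w=M_{\mathrm{null}}\theta\), on the set \(s\ne0\). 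Hence \(q\) is a rational function. It is also homogeneous of degree zero, because \(w\) and \(\theta\) are both quadratic forms in \((s,t)\).

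The second step shows that \(q\) is constant. One route is to use the analogous formula \(q=-(w_1+\mathrm i w_2)/(2t^2)\), valid for \(t\ne0\). Then \(q\) is holomorphic on all of \(\mathbb{CP}^1\), so it is constant. A more elementary route compares coefficients. Since \(w\) has entries that are quadratic forms, the relation \(2s^2\,w_3=2st\,(w_1-\mathrm i w_2)\) gives \(s\,w_3=t(w_1-\mathrm i w_2)\). It follows that \(s\) divides \(w_1-\mathrm i w_2\), and comparing degrees then forces the quotient to be a constant multiple of \(s^2\). Either way \(q\) is a constant, and we get the polynomial identity \((M_{\mathrm{null}}-qI_3)\theta(s,t)=0\) for all \((s,t)\).

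The third step shows that the null vectors span enough of \(M_3(\C)\) to force \(N:=M_{\mathrm{null}}-qI_3\) to vanish. The identity \(N\theta(s,t)=0\) is linear in the quadratic monomials \(s^2,st,t^2\). It therefore says that \(N\) kills the coefficient vectors of these monomials in \(\theta(s,t)\), namely \((1,\mathrm i,0)\), \((0,0,2)\) and \((-1,\mathrm i,0)\). These three vectors span \(\C^3\), so \(N=0\).

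The only delicate point is the first two steps. Taken pointwise, the scalar \(q(\theta)\) could a priori vary with the direction, and it is undefined where \(\theta\) vanishes, although \(\theta\) never vanishes here. A cleaner way to organize the argument avoids \(q\) altogether. Pick three null vectors forming a basis, for instance \(\theta(1,0),\theta(0,1),\theta(1,1)\). Each is an eigenvector of \(M_{\mathrm{null}}\). Then use a fourth null vector \(\theta(1,\zeta)\) with generic \(\zeta\), whose expansion in this basis has all three coefficients nonzero. Being an eigenvector as well, it forces the three eigenvalues to coincide. This gives \(M_{\mathrm{null}}=qI_3\) directly, and it is the version I would write out, checking only that the fourth vector's coordinates are nonzero.
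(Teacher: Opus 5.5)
Your proof is correct, and the version you say you would write out takes a different route from the paper's.

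\textbf{Comparison with the paper.} The paper argues entrywise. It tests the hypothesis on the six null vectors $e_i\pm\mathrm i e_j$. Because the $k$-th coordinate of these vectors vanishes, the $k$-th coordinate of their images must vanish too, which kills $(M_{\mathrm{null}})_{ki}$ and $(M_{\mathrm{null}})_{kj}$. It then compares the $i$-th and $j$-th components of $M_{\mathrm{null}}(e_i+\mathrm i e_j)$ to show the diagonal entries are equal. You instead use the principle that a matrix with a basis of eigenvectors, plus one more eigenvector whose coordinates in that basis are all nonzero, must be scalar. This needs only four null directions.

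\textbf{The check you left open.} It does go through. The columns $\theta(1,0),\theta(0,1),\theta(1,1)$ have determinant $4\mathrm i\neq0$, and
\[
 \theta(1,\zeta)=(1-\zeta)\,\theta(1,0)+\zeta(\zeta-1)\,\theta(0,1)+\zeta\,\theta(1,1).
\]
So any $\zeta\notin\{0,1\}$ works; for example $\zeta=-1$ gives coefficients $2,2,-1$.

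\textbf{What each approach buys.} The paper's argument is fully hands-on and never needs the spinor parametrization. Yours isolates the minimal input: four null lines in general position suffice. Your first formulation, where the eigenvalue is a holomorphic function on $\mathbb{CP}^1$ and hence constant, also parallels the Liouville argument of Lemma~\ref{lem:Hform}.

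\textbf{A gap in your ``more elementary route'' in the second step.} The relation $s\,w_3=t(w_1-\mathrm i w_2)$ only gives $s\mid(w_1-\mathrm i w_2)$, that is, $w_1-\mathrm i w_2=s\ell$ with $\ell$ linear. Comparing degrees does not upgrade this to $s^2\mid(w_1-\mathrm i w_2)$. You also need the identity from the other chart,
\[
 t^2(w_1-\mathrm i w_2)=-s^2(w_1+\mathrm i w_2),
\]
which comes from $\theta_1+\mathrm i\theta_2=-2t^2$. With it you can conclude that $w_1-\mathrm i w_2$ is a constant multiple of $s^2$. Your final eigenvector argument does not use this route, so the proof you intend to write is unaffected.
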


\begin{proof}
Fix pairwise distinct indices \(i,j,k\). Apply the hypothesis to \(e_i\pm\mathrm i e_j\).
Their \(k\)-th coordinates vanish, so the \(k\)-th coordinates of their images give
\[
 (M_{\mathrm{null}})_{ki}+\mathrm i(M_{\mathrm{null}})_{kj}=0,\qquad (M_{\mathrm{null}})_{ki}-\mathrm i(M_{\mathrm{null}})_{kj}=0.
\]
Hence \((M_{\mathrm{null}})_{ki}=(M_{\mathrm{null}})_{kj}=0\). We use the triples
\((i,j,k)=(1,2,3),(1,3,2),(2,3,1)\) to eliminate every
off-diagonal entry. The matrix is now diagonal. For each \(i\ne j\),
the components with indices \(i\) and \(j\) of
\(M_{\mathrm{null}}(e_i+\mathrm i e_j)\in\C(e_i+\mathrm i e_j)\)
give \((M_{\mathrm{null}})_{ii}=(M_{\mathrm{null}})_{jj}\). All three diagonal entries therefore
coincide, so \(M_{\mathrm{null}}=qI_3\), with \(q=\tr M_{\mathrm{null}}/3\).
\end{proof}

The conclusion requires one fixed matrix for all null directions.
In the rigidity proof, this condition is verified before applying the
lemma to the matrix obtained from the projected connection equation.

A trace-free equation for \(\nabla^2\rho\) does not directly determine
\(\Delta\rho\). Adjoining the Laplacian to \(\nabla\rho\) and commuting
third derivatives gives a homogeneous first-order system for these four
components, which can be propagated along spatial paths.

\begin{lemma}\label{lem:tracefree-closure}
Let \(\mathcal D\subset\R^3\) be open, let
\(\rho\in C^3(\mathcal D)\), and let
\(\beta\in C^1(\mathcal D;\R^3)\). For a \(3\times3\) matrix
\(M_{\mathrm{test}}\), write \(\tf M_{\mathrm{test}}=M_{\mathrm{test}}-(\tr M_{\mathrm{test}})I_3/3\).
Define \(\mathscr H_\beta(x)v=\tf(\beta(x)\otimes v+v\otimes\beta(x))\).
Set \(r=\nabla\rho\), \(w=\Delta\rho\), and
\begin{equation}\label{eq:proof-closure-tensor}
 T_{ij}^{\ k}=\beta_i\delta_{jk}+\beta_j\delta_{ik}
                   -\frac23\delta_{ij}\beta_k.
\end{equation}
If
\begin{equation}\label{eq:proof-closure-tracefree}
 \tf(\nabla^2\rho)=\tf(\beta\otimes r+r\otimes\beta)=\mathscr H_\beta(x)r,
\end{equation}
then \(U=(r_1,r_2,r_3,w)^T\) satisfies a closed first-order system
\begin{equation}\label{eq:proof-closure-system}
 \partial_jU=\mathcal C_jU\qquad (j=1,2,3)
\end{equation}
with continuous \(4\times4\) coefficient matrices \(\mathcal C_j\).
Here all indices range from one to three, and repeated indices are
summed. More precisely,
\[
 \partial_jr_i=\frac13\delta_{ij}w+T_{ij}^{\ k}r_k,\qquad
 \partial_jw=A_jw+B_j^{\ k}r_k,
\]
where
\begin{equation}\label{eq:proof-closure-a-coefficient}
 A_j=\frac12\sum_iT_{ij}^{\ i}=\frac53\beta_j
\end{equation}
and
\begin{equation}\label{eq:proof-closure-b-coefficient}
 B_j^{\ k}=\frac32\left(\sum_i\partial_iT_{ij}^{\ k}
                  +\sum_{i,\ell}T_{ij}^{\ \ell}T_{\ell i}^{\ k}\right).
\end{equation}
\end{lemma}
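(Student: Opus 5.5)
The plan is to verify the two component equations by direct differentiation. The first equation comes straight from the hypothesis; the second from commuting third derivatives. Write \eqref{eq:proof-closure-tracefree} entrywise. Since \(\tr(\beta\otimes r+r\otimes\beta)=2\beta\cdot r\) and \(\tr\nabla^2\rho=w\), it reads
\[
 \partial_i\partial_j\rho-\tfrac13\delta_{ij}w
 =\beta_ir_j+\beta_jr_i-\tfrac23\delta_{ij}\beta\cdot r
 =T_{ij}^{\ k}r_k .
\]
Because \(\partial_jr_i=\partial_i\partial_j\rho\), this is already the first equation \(\partial_jr_i=\tfrac13\delta_{ij}w+T_{ij}^{\ k}r_k\). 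Note that \(T_{ij}^{\ k}\) is symmetric in \(i,j\) and \(C^1\), since \(\beta\in C^1\).

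For the equation for \(w\), I will use the contracted third-derivative identity. It follows from \(\rho\in C^3\):
\[
 \partial_jw=\partial_j\partial_i\partial_i\rho=\partial_i(\partial_i\partial_j\rho).
\]
Using \(\partial_i\partial_j\rho=\tfrac13\delta_{ij}w+T_{ij}^{\ k}r_k\), one obtains
\[
 \partial_jw=\tfrac13\partial_jw+(\partial_iT_{ij}^{\ k})r_k+T_{ij}^{\ k}\partial_ir_k .
\]
Next substitute the first equation for \(\partial_ir_k\), namely \(\partial_ir_k=\tfrac13\delta_{ik}w+T_{ki}^{\ \ell}r_\ell\). The \(w\)-term becomes \(\tfrac13T_{ij}^{\ i}w\), and the quadratic term becomes \(T_{ij}^{\ k}T_{ki}^{\ \ell}r_\ell\). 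Moving \(\tfrac13\partial_jw\) to the left and multiplying by \(\tfrac32\) gives
\[
 \partial_jw=\tfrac12T_{ij}^{\ i}w+\tfrac32\bigl(\partial_iT_{ij}^{\ k}+T_{ij}^{\ \ell}T_{\ell i}^{\ k}\bigr)r_k
\]
after relabelling dummy indices. This matches \eqref{eq:proof-closure-a-coefficient} and \eqref{eq:proof-closure-b-coefficient}.

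It then remains to evaluate \(\sum_iT_{ij}^{\ i}=3\beta_j+\beta_j-\tfrac23\beta_j=\tfrac{10}{3}\beta_j\), so \(A_j=\tfrac53\beta_j\).

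The one delicate point is regularity. A priori \(w\) is only \(C^1\), and the derivative \(\partial_i(\partial_i\partial_j\rho)\) of the right-hand side involves \(\partial_iT\), which is continuous because \(\beta\in C^1\). The relation obtained above is therefore an identity between continuous functions, and the solved form is legitimate. The coefficient matrices \(\mathcal C_j\) have entries built from \(\beta\) and \(\nabla\beta\), hence they are continuous. Assembling both equations into \(\partial_jU=\mathcal C_jU\) completes the argument.

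The main thing to watch is the index bookkeeping in the quadratic term. I will check the symmetry \(T_{ki}^{\ \ell}=T_{ik}^{\ \ell}\) so that the contracted product matches the stated ordering \(T_{ij}^{\ \ell}T_{\ell i}^{\ k}\).
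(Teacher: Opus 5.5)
Your proposal is correct and follows the paper's proof essentially step for step. You restore the trace to get the first equation, take the $x_i$-divergence using commuting third derivatives, substitute the first equation for $\partial_i r_k$, and solve for $\partial_j w$ using $\sum_i T_{ij}^{\ i}=\tfrac{10}{3}\beta_j$. The paper additionally evaluates the linear and quadratic contractions in $B_j^{\ k}$ explicitly, which displays the continuity of $\mathcal C_j$; your observation that the entries involve only $\beta$ and $\nabla\beta$ already suffices for that.
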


\begin{proof}
Since \(\rho\in C^3\), its Hessian is symmetric, its third
derivatives commute, and \(U=(r,w)^T\) belongs to \(C^1\).
Restore the trace in \eqref{eq:proof-closure-tracefree}, using
\eqref{eq:proof-closure-tensor}, to obtain
\begin{equation}\label{eq:proof-closure-restored-hessian}
 \partial_jr_i=\frac13\delta_{ij}w+T_{ij}^{\ k}r_k.
\end{equation}
Differentiate \eqref{eq:proof-closure-restored-hessian} in
\(x_i\) and sum over \(i\). The left side becomes
\(\partial_j\sum_i\partial_i r_i=\partial_jw\), while the trace
term contributes \(\partial_jw/3\). Substitute
\eqref{eq:proof-closure-restored-hessian} for \(\partial_i r_k\) to obtain
\begin{equation}\label{eq:proof-closure-laplacian-derivative}
 \frac23\partial_jw=(\partial_iT_{ij}^{\ k})r_k
       +T_{ij}^{\ k}\left(\frac13\delta_{ki}w+T_{ki}^{\ \ell}r_\ell\right).
\end{equation}
In \eqref{eq:proof-closure-laplacian-derivative}, we collect the
coefficient of \(w\) and rename the summed indices in the coefficient
of \(r\) to prove \eqref{eq:proof-closure-a-coefficient} and
\eqref{eq:proof-closure-b-coefficient}. By direct contraction of
\eqref{eq:proof-closure-tensor}, we obtain
\begin{equation}\label{eq:proof-closure-linear-contractions}
 \sum_iT_{ij}^{\ i}=\frac{10}{3}\beta_j,\qquad
 \sum_i\partial_iT_{ij}^{\ k}
  =(\Div\beta)\delta_{jk}+\partial_k\beta_j-\frac23\partial_j\beta_k.
\end{equation}
For the quadratic contraction of \eqref{eq:proof-closure-tensor},
symmetry of the two lower indices
and \(\sum_iT_{ii}^{\ k}=0\) give
\[
 \begin{aligned}
 \sum_{i,\ell}T_{ij}^{\ \ell}T_{\ell i}^{\ k}
 &=\sum_i\beta_iT_{ji}^{\ k}
   +\beta_j\sum_iT_{ii}^{\ k}
   -\frac23\sum_\ell\beta_\ell T_{\ell j}^{\ k}\\
 &=\frac13\left(|\beta|^2\delta_{jk}
                         +\frac13\beta_j\beta_k\right).
 \end{aligned}
\]
Thus
\begin{equation}\label{eq:proof-closure-quadratic-contraction}
 \sum_{i,\ell}T_{ij}^{\ \ell}T_{\ell i}^{\ k}
 =\frac13|\beta|^2\delta_{jk}+\frac19\beta_j\beta_k.
\end{equation}
Substitute \eqref{eq:proof-closure-linear-contractions} and
\eqref{eq:proof-closure-quadratic-contraction} into
\eqref{eq:proof-closure-a-coefficient} and
\eqref{eq:proof-closure-b-coefficient} to obtain the closed-system matrices
\[
 \mathcal C_j=
 \begin{pmatrix}
   (T_{ij}^{\ k})_{i,k=1}^3 & (\delta_{ij}/3)_{i=1}^3\\
   (B_j^{\ k})_{k=1}^3 & 5\beta_j/3
 \end{pmatrix},
\]
where
\[
 B_j^{\ k}=\frac32\left[
  (\Div\beta)\delta_{jk}+\partial_k\beta_j-\frac23\partial_j\beta_k
  +\frac13|\beta|^2\delta_{jk}+\frac19\beta_j\beta_k
 \right].
\]
The entries involve only \(\beta\), its first derivatives, and
their products, so \(\beta\in C^1\) makes them continuous.
The resulting system has no independent source term: every spatial
derivative of \(U\) is a linear combination of its four components.
The field \(U\) is already known to solve all three equations in
\eqref{eq:proof-closure-system};
propagation of its zero value along paths therefore requires no
flatness assumption on the matrices \(\mathcal C_j\).
\end{proof}

The fourth component \(w\) restores the trace omitted from
\eqref{eq:proof-closure-tracefree}. The formulas
\eqref{eq:proof-closure-a-coefficient}--\eqref{eq:proof-closure-b-coefficient}
use only the first derivatives of \(\beta\), so their continuity follows
from the stated \(C^1\) hypothesis.

We now apply the holomorphic block form to the explicit connection
\eqref{eq:Gamma}. The rigidity statement assumes a global intertwiner;
its construction from DN equality is supplied separately by
Theorem~\ref{thm:quotient-reconstruction} and
Corollary~\ref{cor:dual-relative}. In the boundary inverse problem,
Lemma~\ref{lem:common-extension} gives the coefficient hypotheses and
agreement on \(E\), while Theorem~\ref{thm:physical-nine-moments} gives
the complete matrix moments used in that reconstruction.

\begin{theorem}\label{thm:rigidity}
Let \(\mathcal D\subset\R^3\) be connected and open, and let
\((\mu_j,\kappa_j)\in C^\infty(\mathcal D;\R^2)\), \(j=1,2\),
satisfy \(\mu_j>0\) and \(1/4<\kappa_j<1\). Assume that the
two pairs coincide on a nonempty exterior open set
\(E\subset\mathcal D\).

Suppose that their dual quotient connections \eqref{eq:Gamma} are
intertwined by a bundle automorphism \(H\) whose local matrices are
jointly smooth in \(x\) and the real spinor-chart coordinates,
holomorphic in the projective null direction for each fixed \(x\),
and satisfy the dual transition law \eqref{eq:H-chart-covariance}.
Assume also that \(H=I\) on \(E\) for every direction and that its
local matrices satisfy \eqref{eq:Hintertwine}. Then
\[
 \mu_1=\mu_2,\qquad \kappa_1=\kappa_2.
\]
\end{theorem}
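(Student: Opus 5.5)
The plan is to insert the block form of Lemma~\ref{lem:Hform} into the intertwining equation \eqref{eq:dual-relative-transport},
\[
 2D_\theta H+\Gamma_1H-H\Gamma_2=0,
\]
with \(\Gamma_j\) given by \eqref{eq:Gamma}. We then read off the four blocks as polynomial identities in the spinor \((s,t)\) at fixed \(x\). Write \(H=\begin{pmatrix}A&0\\C&\delta_H\end{pmatrix}\), with \(A,\delta_H,C_s,C_t\) smooth in \(x\) and \(C=sC_s+tC_t\). In a spinor chart, \(\theta\) is quadratic in \((s,t)\), \(Q\) and \(n\) are linear, and \(D_\theta\) is quadratic. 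Every block equation is therefore a homogeneous polynomial identity in \((s,t)\), and we may compare coefficients.

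\emph{Upper-right block.} Since the upper-right block of \(H\) vanishes, this block reads \(2\kappa_1\sqrt{\mu_1}\,n\,\delta_H-2\kappa_2\sqrt{\mu_2}\,A n=0\) for all spinors. Because \(A\) and \(\delta_H\) do not depend on \((s,t)\), we get \(A=\delta_H\,(\kappa_1\sqrt{\mu_1})/(\kappa_2\sqrt{\mu_2})\,I_2=:aI_2\). In particular \(A\) is scalar.

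\emph{Upper-left block.} This block gives
\[
 2D_\theta A+n\mathfrak r_{m_1}A-An\mathfrak r_{m_2}+2\kappa_1\sqrt{\mu_1}\,nC=0 .
\]
With \(A=aI_2\), the equation becomes \(2(D_\theta a)I_2+a\,n(\mathfrak r_{m_1}-\mathfrak r_{m_2})+2\kappa_1\sqrt{\mu_1}\,nC=0\). Every term except the first has rank at most one, with image \(\C n\). As \(n\) varies, the \(I_2\) term must therefore vanish, which gives \(D_\theta a=0\) for every null \(\theta\). Since \(\operatorname{Re}\theta\) and \(\operatorname{Im}\theta\) span arbitrary real 2-planes, \(\nabla a=0\). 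Connectedness and the normalization on \(E\) then give \(a\equiv1\). What remains is
\[
 C=-\frac{\mathfrak r_{m_1}-\mathfrak r_{m_2}}{2\kappa_1\sqrt{\mu_1}}=\frac{(m_1-m_2)^TQ}{\kappa_1\sqrt{\mu_1}},
\]
which is indeed linear in the spinor, consistent with Lemma~\ref{lem:Hform}.

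\emph{Lower-right block.} This block gives \(2D_\theta\delta_H+2\kappa_2\sqrt{\mu_2}\,Cn=0\). Using \(Qn=-\theta/2\), we have \(Cn=-(m_1-m_2)\cdot\theta/(2\kappa_1\sqrt{\mu_1})\). The resulting equation holds for all null \(\theta\), so the real gradient identity
\[
 \nabla\delta_H=\frac{\kappa_2\sqrt{\mu_2}}{2\kappa_1\sqrt{\mu_1}}\,\nabla\log(\mu_1/\mu_2)
\]
follows. Combined with \(a=1\), i.e.\ \(\delta_H=\kappa_2\sqrt{\mu_2}/(\kappa_1\sqrt{\mu_1})\), this gives an ODE relation between \(\rho=\log(\mu_1/\mu_2)\) and \(\sigma=\log(\kappa_1/\kappa_2)\). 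Writing \(\delta_H=e^{-\sigma-\rho/2}\), we obtain \(-\nabla(\sigma+\rho/2)=\tfrac12\nabla\rho\), that is, \(\nabla(\sigma+\rho)=0\). Vanishing on \(E\) then gives \(\kappa_1\mu_1=\kappa_2\mu_2\).

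\emph{Lower-left block.} This is the step I expect to be hardest. It reads
\[
 2D_\theta C-\theta^TF_{\mu_1}Q+Cn\mathfrak r_{m_2}+\delta_H\theta^TF_{\mu_2}Q-C\,n\mathfrak r_{m_2}\text{-type terms}=0 ,
\]
with the precise terms to be written out from \(\Gamma_1H-H\Gamma_2\). Every term is a row \(w_\theta^TQ\) with \(w_\theta\in\C^3\) depending linearly on \(\theta\), say \(w_\theta=\mathcal M(x)\theta\) plus contributions from \(2D_\theta\) acting on \(C\). Because \(Q\) is injective on \(\theta^\perp\) with \(\theta^TQ=0\), the identity \(w_\theta^TQ=0\) means \(w_\theta\in\C\theta\). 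After substituting \(C\), this amounts to a fixed matrix \(\mathcal M(x)\), built from \(\nabla^2\rho\), \(m_j\otimes m_j\) and the terms \(F_{\mu_j}\), satisfying \(\mathcal M\theta\in\C\theta\) for every null \(\theta\). Here one must check that the \(D_\theta(m^TQ)\) term produces \(\theta^T\nabla(\cdot)Q\), so that everything is genuinely of the form \(\theta^T\mathcal M Q\). Lemma~\ref{lem:null-line-rigidity} then gives \(\tf\mathcal M=0\). Inserting the relation \(\kappa_1\sqrt{\mu_1}\delta_H=\kappa_2\sqrt{\mu_2}\) and expanding \(F_\mu\) by \eqref{eq:Fmu}, this should yield a trace-free Hessian equation of the form \eqref{eq:proof-closure-tracefree} for \(\rho\), with \(\beta\) a smooth combination of \(m_1,m_2\). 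Lemma~\ref{lem:tracefree-closure} then makes \(U=(\nabla\rho,\Delta\rho)\) solve a homogeneous closed first-order system. Since \(U=0\) on \(E\), propagation along paths in the connected \(\mathcal D\) gives \(U\equiv0\), hence \(\rho\) is constant, and so \(\rho\equiv0\) by \(E\). Thus \(\mu_1=\mu_2\), and then \(\kappa_1=\kappa_2\) from \(\kappa_1\mu_1=\kappa_2\mu_2\).

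The main risk lies in the bookkeeping of this last block: the lower-left algebra must reduce exactly to a fixed matrix times \(\theta\), with no residual spinor dependence.
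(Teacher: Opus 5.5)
Your overall route is the paper's: split the intertwining equation into blocks, get $A=\alpha I_2$ from the upper-right block, use the rank argument for $\alpha\equiv1$ and $C=(m_1-m_2)^TQ/d_1$ (with $d_j=\kappa_j\sqrt{\mu_j}$), then apply null-line rigidity, the trace-free closure lemma, and propagation from $E$. However, you compute the lower-right block with the wrong sign, and this breaks what follows. The $(3,3)$ entry of $\Gamma_1H$ is $0$, while that of $H\Gamma_2$ is $CN_2=2d_2\,Cn$. So the equation is $2D_\theta\delta_H-2d_2\,Cn=0$. With $Cn=-(m_1-m_2)\cdot\theta/(2d_1)$ this becomes $2D_\theta\delta_H+(d_2/d_1)D_\theta\rho=0$. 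Since $\delta_H=d_2/d_1>0$, you get $\nabla\log\delta_H=-\tfrac12\nabla\rho$, hence $\delta_H=e^{-\rho/2}$ after normalizing on $E$. Comparing with $\delta_H=e^{-\sigma-\rho/2}$ gives $\nabla\sigma=0$, so $\kappa_1=\kappa_2$ directly. Your relation $\kappa_1\mu_1=\kappa_2\mu_2$ (equivalently $\delta_H=e^{\rho/2}$) does not follow from the block equations.

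This is not cosmetic, because the lower-left reduction needs exactly these two facts. With $A=I_2$, that block reads $2D_\theta C-\theta^TF_{\mu_1}Q-Cn\,\mathfrak r_{m_2}+\delta_H\theta^TF_{\mu_2}Q=0$. Substituting $C=\gamma^TQ$ with $\gamma=\nabla\rho/d_1$, it becomes $Q^T\mathcal K\theta=0$, where $\mathcal K=2\nabla\gamma-F_{\mu_1}+\delta_HF_{\mu_2}-m_2\gamma^T$. Since $\delta_H\mu_2^{-1/2}=\mu_1^{-1/2}\,\delta_H\sqrt{\mu_1/\mu_2}$, the second-order part of $-F_{\mu_1}+\delta_HF_{\mu_2}$ is $-2\mu_1^{-1/2}\nabla^2\rho$ plus $2\mu_1^{-1/2}(\delta_H\sqrt{\mu_1/\mu_2}-1)\nabla^2\log\mu_2$. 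The second term vanishes only when $\delta_H=e^{-\rho/2}$. With your $\delta_H=e^{\rho/2}$ the leftover factor is $e^{\rho}-1$, and the equation is no longer a trace-free Hessian equation for $\rho$ of the form required by Lemma~\ref{lem:tracefree-closure}.

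Once the sign is fixed, you still need to check a point your sketch leaves open. The coefficient of $\tf\nabla^2\rho$ in $\mathcal K$ is $2\mu_1^{-1/2}(1-\kappa)/\kappa$, coming from $2\nabla\gamma$ and the two $F_\mu$ terms. You can divide by it only because $\kappa<1$, and this is where that hypothesis is used. The resulting $\beta$ also contains $\nabla\log\kappa$, not only $m_1,m_2$. With these corrections your argument coincides with the paper's, with $\kappa_1=\kappa_2$ obtained before, not after, $\mu_1=\mu_2$.
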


\begin{proof}
By Lemma~\ref{lem:Hform}, \(A\) and \(\delta_H\) are smooth and
direction-independent, and \(C=sC_s+tC_t\) has smooth coefficient rows.
The spinor representative is held fixed under spatial differentiation,
so \(D_\theta Q=D_\theta n=D_\theta\theta=0\).
Write
\[
 \begin{gathered}
 \Gamma_j=\begin{pmatrix}M_j&N_j\\\mathsf P_j&0\end{pmatrix},\qquad
 m_j=\nabla\log\mu_j,\qquad d_j=\kappa_j\sqrt{\mu_j},\\
 M_j=n\mathfrak r_{m_j},\qquad N_j=2d_jn,\qquad
 \mathsf P_j=-\theta^TF_{\mu_j}Q.
 \end{gathered}
\]
The row \(\mathsf P_j\) is the \(1\times2\) lower-left block of
\(\Gamma_j\); the leaf connection matrices \(P_j\) of Section~\ref{sec:transport}
have size \(3\times3\). Positivity gives \(d_j>0\)
everywhere, so division by \(d_j\) is legitimate.
Using \eqref{eq:Hform}, we split the intertwining equation
\begin{equation}\label{eq:Hintertwine}
 2D_\theta H+\Gamma_1H-H\Gamma_2=0
\end{equation}
into the four equations
\begin{align}
 2D_\theta A+M_1A+N_1C-AM_2&=0,\label{eq:H-block-ul}\\
 N_1\delta_H-AN_2&=0,\label{eq:H-block-ur}\\
 2D_\theta C+\mathsf P_1A-CM_2-\delta_H \mathsf P_2&=0,\label{eq:H-block-ll}\\
 2D_\theta\delta_H-CN_2&=0.\label{eq:H-block-lr}
\end{align}

Equations~\eqref{eq:H-block-ul}--\eqref{eq:H-block-lr} hold in
either affine spinor chart. They also
hold for every nonzero homogeneous spinor: by
\eqref{eq:theta-spinor} and \eqref{eq:qspinor}, under
\((s,t)\mapsto b(s,t)\), the quantities \(Q,n,C\) scale by
\(b\), \(\theta\) scales by \(b^2\), and \(A,\delta_H\) are
unchanged. The upper-left and lower-right equations have degree
two, the upper-right equation degree one, and the lower-left
equation degree three. This homogeneity carries the chart equations
to all representatives of the projective null cone.

The upper-right equation \eqref{eq:H-block-ur}, with the blocks
in \eqref{eq:Gamma}, gives
\[
 d_1\delta_H\,n=d_2An.
\]
At fixed \(x\), we apply \eqref{eq:H-block-ur} to the spinors
\((1,0)\) and \((0,1)\),
which give \(n=(0,1)^T\) and \(n=(-1,0)^T\), respectively.
The same scalar \(d_1\delta_H/d_2\) is therefore the multiplier
of \(A\) on both coordinate vectors of \(\C^2\). Hence
\begin{equation}\label{eq:Aalpha}
 A=\alpha\Id_2,
 \qquad
 \alpha=\frac{d_1\delta_H}{d_2}.
\end{equation}
Substituting \eqref{eq:Aalpha} and the blocks of \eqref{eq:Gamma}
into \eqref{eq:H-block-ul} gives
\begin{equation}\label{eq:proof-rigidity-alpha-equation}
 2(D_\theta\alpha)\Id_2+n\left[\alpha(\mathfrak r_{m_1}-\mathfrak r_{m_2})+2d_1C\right]=0.
\end{equation}
The outer product in \eqref{eq:proof-rigidity-alpha-equation}
has rank at most one, whereas a nonzero scalar
multiple of \(I_2\) has rank two. Hence \(D_\theta\alpha=0\)
for every null direction. Since
\(2e_i=(e_i+\mathrm i e_j)+(e_i-\mathrm i e_j)\) for \(i\ne j\),
these null directions span \(\C^3\). Hence \(\nabla\alpha=0\),
and connectedness makes \(\alpha\) constant on \(\mathcal D\).
On \(E\), the equal coefficient pairs give
\(d_1=d_2\), and \(H=I\) gives \(\delta_H=1\). Thus \(\alpha=1\)
throughout \(\mathcal D\), and \eqref{eq:Aalpha} gives \(A=I_2\).
Set \(\alpha=1\) in \eqref{eq:proof-rigidity-alpha-equation}
to obtain the remaining outer product in the form \(n\mathsf r=0\) with \(n\ne0\); its row
\(\mathsf r\) must vanish. Consequently
\begin{equation}\label{eq:Cblock}
 C=-\frac{\mathfrak r_{m_1-m_2}}{2d_1}.
\end{equation}
Set
\[
 \rho=\log(\mu_1/\mu_2).
\]
Positivity of the shear moduli makes \(\rho\) a smooth real function.
Equation~\eqref{eq:H-block-lr}, with \eqref{eq:Cblock}, and the
identity \(\mathfrak r_{\nabla\rho}n=D_\theta\rho\) from
\eqref{eq:Qn} and \eqref{eq:rmathfrak} give
\begin{equation}\label{eq:proof-rigidity-delta-equation}
 2D_\theta\delta_H+\frac{d_2}{d_1}D_\theta\rho=0.
\end{equation}
Since \(\alpha=1\), \eqref{eq:Aalpha} gives
\(\delta_H=d_2/d_1>0\), so \eqref{eq:proof-rigidity-delta-equation}
is \(2D_\theta\delta_H+\delta_H D_\theta\rho=0\).
Applying the integrating factor to \eqref{eq:proof-rigidity-delta-equation}, we obtain
\begin{equation}\label{eq:proof-rigidity-integrating-factor}
 D_\theta(\delta_H e^{\rho/2})
 =e^{\rho/2}\left(D_\theta\delta_H+\tfrac12\delta_H D_\theta\rho\right)=0
\end{equation}
for every null direction. The null vectors \(e_i\pm\mathrm i e_j\),
\(i\ne j\), span \(\C^3\), since
\(2e_i=(e_i+\mathrm i e_j)+(e_i-\mathrm i e_j)\).
Using \eqref{eq:proof-rigidity-integrating-factor}, we therefore obtain
\(\nabla(\delta_H e^{\rho/2})=0\). Connectedness makes
\(\delta_H e^{\rho/2}\) constant on \(\mathcal D\). On \(E\),
\(\rho=0\) and \(\delta_H=1\), so the constant equals one. Hence
\begin{equation}\label{eq:proof-rigidity-delta}
 \delta_H=e^{-\rho/2}.
\end{equation}
The definition
\eqref{eq:dmu} of \(d_j\) also gives
\[
 \frac{d_2}{d_1}=e^{-\rho/2}\frac{\kappa_2}{\kappa_1}.
\]
Combining \eqref{eq:dmu}, \eqref{eq:Aalpha}, and
\eqref{eq:proof-rigidity-delta}, with \(\alpha=1\), we obtain
\begin{equation}\label{eq:kappaequal}
 \kappa_1=\kappa_2=: \kappa.
\end{equation}

It remains to prove \(\rho=0\) from the lower-left equation
\eqref{eq:H-block-ll}; the other blocks have already given
\eqref{eq:kappaequal}. We write
\begin{equation}\label{eq:proof-rigidity-gamma}
 \gamma=\frac{\nabla\rho}{\kappa\sqrt{\mu_1}}.
\end{equation}
With \(r=\nabla\rho\), \eqref{eq:rmathfrak} gives
\(\mathfrak r_{\nabla\rho}=-2r^TQ\), so \eqref{eq:Cblock} and
\eqref{eq:proof-rigidity-gamma} give
\(C=\gamma^TQ\). Equation~\eqref{eq:H-block-ll}, with \(A=I_2\), is
\[
 2D_\theta C-\theta^TF_{\mu_1}Q-Cn\mathfrak r_{m_2}
 +\delta_H\theta^TF_{\mu_2}Q=0.
\]
The identities \eqref{eq:Qn} and \eqref{eq:rmathfrak}, namely
\(Qn=-\theta/2\) and \(\mathfrak r_{m_2}=-2m_2^TQ\), give
\[
 Cn=-\frac{\gamma\cdot\theta}{2},\qquad
 -Cn\mathfrak r_{m_2}=-(\gamma\cdot\theta)m_2^TQ.
\]
With \((\nabla\gamma)_{ij}=\partial_j\gamma_i\), we also have
\(D_\theta C=\theta^T(\nabla\gamma)^TQ\).
Substituting \eqref{eq:Qn} and \eqref{eq:rmathfrak} into
\eqref{eq:H-block-ll} therefore gives
\begin{equation}\label{eq:proof-rigidity-projected-equation}
 \theta^T\bigl[2(\nabla\gamma)^T-F_{\mu_1}
       +\delta_H F_{\mu_2}-\gamma m_2^T\bigr]Q=0.
\end{equation}
Both \(F_{\mu_j}\) are symmetric by \eqref{eq:Fmu}. We take the
bilinear transpose of \eqref{eq:proof-rigidity-projected-equation},
using \eqref{eq:proof-rigidity-delta}, to obtain
\(Q^T\mathcal K\theta=0\), where
\begin{equation}\label{eq:Kcal}
 \mathcal K=2\nabla\gamma-F_{\mu_1}+e^{-\rho/2}F_{\mu_2}-m_2\gamma^T.
\end{equation}
For every nonzero spinor, \eqref{eq:qspinor} gives
\(\theta^TQ=0\) and \(Q\) has rank two. For clarity,
\(Q(a,b)^T=0\) gives \(ta=0\), \(sb=0\), and \(sa-tb=0\), which imply
\(a=b=0\) even at \(s=0\) or \(t=0\). Thus \(\operatorname{im}Q=\theta^\perp\).
Nondegeneracy of the complex-bilinear dot product gives
\((\theta^\perp)^\perp=\C\theta\), including when \(\theta\) is null.
Equivalently, \(\ker Q^T=\C\theta\), so
\(\mathcal K\theta\in\C\theta\) for every null direction.
At each fixed \(x\), the matrix \(\mathcal K(x)\) of
\eqref{eq:Kcal} is independent of the spinor representative and appears in
the projected identities obtained from
\eqref{eq:proof-rigidity-projected-equation}: it has no direction dependence. Thus
Lemma~\ref{lem:null-line-rigidity} applies pointwise and gives
\(\mathcal K=q(x)I_3\), with the smooth scalar \(q=\tr\mathcal K/3\).

The scalar-matrix conclusion makes the symmetric trace-free part of
\eqref{eq:Kcal} vanish. To express that part in coefficient gradients
and the Hessian of \(\rho\), we put
\[
 r=\nabla\rho,
 \qquad
 k=\nabla\log\kappa,
 \qquad
 c_1=\mu_1^{-1/2}.
\]
By \eqref{eq:proof-rigidity-delta},
\[
 \delta_H\mu_2^{-1/2}=\mu_1^{-1/2}=c_1.
\]
Moreover, \eqref{eq:proof-rigidity-gamma} gives
\begin{equation}\label{eq:proof-rigidity-gamma-weight}
 \gamma=\frac{c_1}{\kappa}r,
 \qquad
 \nabla\log\left(\frac{c_1}{\kappa}\right)
 =-\frac12m_1-k.
\end{equation}
With the convention \((\nabla\gamma)_{ij}=\partial_j\gamma_i\),
differentiating \eqref{eq:proof-rigidity-gamma-weight} gives
\begin{equation}\label{eq:grad-gamma-expanded}
 2\nabla\gamma
 =\frac{2c_1}{\kappa}\nabla^2\rho
 -\frac{c_1}{\kappa}r\otimes m_1
 -\frac{2c_1}{\kappa}r\otimes k.
\end{equation}
On the other hand, we use \eqref{eq:Fmu},
\eqref{eq:proof-rigidity-delta}, and \(m_1-m_2=r\) to obtain
\begin{align*}
 -F_{\mu_1}+\delta F_{\mu_2}
 =2c_1\Bigl[
 -\nabla^2\rho
 +(m_1\otimes m_1-m_2\otimes m_2)
 +\Xi_\rho\Id
 \Bigr],
\end{align*}
where
\[
 \Xi_\rho=\Delta\rho-(m_1+m_2)\cdot r.
\]
The identity-matrix contribution from \eqref{eq:Fmu} disappears
under the trace-free projection. We use the identities
\[
 m_1\otimes m_1-m_2\otimes m_2
 =\frac12\Bigl[(m_1+m_2)\otimes r+r\otimes(m_1+m_2)\Bigr],
\]
\[
 -m_2\gamma^T=-\frac{c_1}{\kappa}m_2\otimes r,
\]
and the remaining product identity
\begin{equation}\label{eq:proof-rigidity-mixed-product}
 r\otimes m_1+m_2\otimes r
 =\frac12\bigl[(m_1+m_2)\otimes r+r\otimes(m_1+m_2)\bigr]
\end{equation}
in \eqref{eq:Kcal}.
Combining \eqref{eq:proof-rigidity-mixed-product}, \eqref{eq:Kcal},
\eqref{eq:grad-gamma-expanded}, and \eqref{eq:Fmu}, we obtain the full expansion
\begin{equation}\label{eq:proof-rigidity-k-expansion}
 \begin{aligned}
 \frac{\mathcal K}{c_1}={}&
 2\left(\frac1\kappa-1\right)\nabla^2\rho\\
 &+\left(1-\frac1{2\kappa}\right)
   \bigl[(m_1+m_2)\otimes r+r\otimes(m_1+m_2)\bigr]\\
 &-\frac2\kappa r\otimes k+2\Xi_\rho I_3.
 \end{aligned}
\end{equation}
Take the trace-free part of the symmetric part in
\eqref{eq:proof-rigidity-k-expansion},
\((\mathcal K+\mathcal K^T)/(2c_1)\), which vanishes because
Lemma~\ref{lem:null-line-rigidity} gives \(\mathcal K=qI_3\), and obtain
\begin{align}\label{eq:tf-hessian}
0={}&2\left(\frac1\kappa-1\right)\tf(\nabla^2\rho)\\
&+\tf\Bigg[
\left(1-\frac1{2\kappa}\right)
\left((m_1+m_2)\otimes r+r\otimes(m_1+m_2)\right)\notag\\
&\hspace{18mm}-\frac1\kappa\left(r\otimes k+k\otimes r\right)
\Bigg].\notag
\end{align}
Equation~\eqref{eq:proof-rigidity-k-expansion} also gives
\[
 \frac{\mathcal K-\mathcal K^T}{2}
 =-\frac{c_1}{\kappa}(r\otimes k-k\otimes r).
\]
Thus the skew-symmetric part of \eqref{eq:proof-rigidity-k-expansion},
with \(\mathcal K=q\Id_3\), additionally gives
\begin{equation}\label{eq:skew-extra}
 r\otimes k-k\otimes r=0,
\end{equation}
although this identity is not needed below.

Since \eqref{eq:kappa-range} and \eqref{eq:kappaequal} give
\(1/4<\kappa<1\), the Hessian coefficient
\(2(1/\kappa-1)=2(1-\kappa)/\kappa\) is strictly positive.
Divide \eqref{eq:tf-hessian} by the Hessian coefficient and collect
the terms linear in \(r\) to obtain the smooth real vector field
\begin{equation}\label{eq:beta-explicit}
 \beta=\frac{2k-(2\kappa-1)(m_1+m_2)}{4(1-\kappa)}.
\end{equation}
Then \eqref{eq:tf-hessian} is precisely
\begin{equation}\label{eq:tf-linear}
 \tf(\nabla^2\rho)=\tf(\beta\otimes r+r\otimes\beta)
                    =\mathscr H_\beta(x)r,
\end{equation}
where, with indices ranging from 1 to 3 and repeated indices summed,
\begin{equation}\label{eq:T-explicit}
 T_{ij}^{\ k}=\beta_i\delta_{jk}+\beta_j\delta_{ik}
                   -\frac23\delta_{ij}\beta_k.
\end{equation}
Here \(T_{ij}^{\ k}=T_{ji}^{\ k}\) and \(\sum_iT_{ii}^{\ k}=0\).
For the fixed candidate pair, \(\beta\) and \(T\) are smooth coefficients
of a homogeneous linear system for \(U=(\nabla\rho,\Delta\rho)^T\).

Set \(w=\Delta\rho\) and \(U=(r_1,r_2,r_3,w)^T\).  Since
\(\mu_j,\kappa_j\in C^\infty\), we have \(\rho\in C^\infty\),
\(\beta\in C^\infty\), and hence \(T_{ij}^{\ k}\in C^1\).  Thus
Lemma~\ref{lem:tracefree-closure} applies to \eqref{eq:tf-linear} and
\eqref{eq:T-explicit}; hence \(U\) satisfies the closed system
\eqref{eq:proof-closure-system}, \(\partial_jU=\mathcal C_jU\),
with continuous \(4\times4\)
coefficient matrices. Since \(\rho=0\) on the open set \(E\), all
its derivatives vanish there; in particular \(r=0\), \(w=0\),
and hence \(U=0\) on \(E\).
Choose \(x_0\in E\) and an arbitrary \(x_1\in\mathcal D\). A piecewise
\(C^1\) path \(x(t)\subset\mathcal D\) joins them because
\(\mathcal D\) is connected and open. On each smooth segment, the chain rule and
\eqref{eq:proof-closure-system} give, for
\(V(t)=U(x(t))\),
\begin{equation}\label{eq:proof-propagation-path-ode}
 V'(t)=\left(\sum_jx_j'(t)\mathcal C_j(x(t))\right)V(t).
\end{equation}
The coefficient in \eqref{eq:proof-propagation-path-ode} is continuous.
Using uniqueness for the linear ODE
\eqref{eq:proof-propagation-path-ode} with zero initial value, we obtain
\(V=0\) on successive segments.
Thus \(U(x_1)=0\), and \(U=0\) throughout \(\mathcal D\).
ODE uniqueness applies to the existing solution \(U\) of
\eqref{eq:proof-closure-system}; no flatness assumption on
the matrices \(\mathcal C_j\) is required.
Thus \(\nabla\rho=0\); connectedness and \(\rho|_E=0\)
give \(\rho=0\) on \(\mathcal D\).
The definition of \(\rho\) yields \(\mu_1=\mu_2\), and
\eqref{eq:kappaequal} gives \(\kappa_1=\kappa_2\). \qedhere
\end{proof}

The open-set agreement supplies both the gradient and the Laplacian
of \(\rho\) as zero initial values for
\eqref{eq:proof-closure-system}. Equality of the coefficient values at
a single point does not supply this full initial state.

\section{Proof of the main theorem}\label{sec:mainproof}

The preceding results can now be combined to prove
Theorem~\ref{thm:main}. The common-extension argument
reduces the problem to two coefficient pairs on an
enclosing ball that agree in the exterior.
The complete physical matrix-moment identities then
yield, through the Cauchy--Plemelj construction,
an exterior-normalized relative transport.
The corresponding dual transport satisfies the
hypotheses of Theorem~\ref{thm:rigidity}, which implies
equality of the extended Lam\'e parameters and hence
of the original coefficients in \(\Omega\).

\begin{proof}[Proof of Theorem~\ref{thm:main}]
Lemmas~\ref{lem:common-extension} and \ref{lem:enlargement} give
smooth real strongly convex extensions to an origin-centered
enclosing ball \(B_R\), with equal DN maps and a common constant
exterior pair. We choose \(R_0<R\) with
\(\overline\Omega\subset B_{R_0}\), so the extensions coincide
outside \(\overline{B_{R_0}}\), and we set
\[
 E=\{x\in B_R:R_0<|x|<R\}.
\]
Fix the CGO balls and source ball as in Section~\ref{sec:transport}:
\[
 \overline{B_R}\Subset\Omega_{\mathrm{CGO}}\Subset\Omega_1\Subset\Omega_0,
 \qquad \overline\Omega_{\mathrm{CGO}}\subset\operatorname{int}K_{\mathrm{src}},
 \qquad K_{\mathrm{src}}\Subset\Omega_1,
\]
with both conjugated coefficient supports in \(\operatorname{int}K_{\mathrm{src}}\).

Theorem~\ref{thm:physical-nine-moments} supplies the fixed primal and
physical dual frames and their exact localized realizations. In the
original reduced variables these satisfy, columnwise,
\begin{equation}\label{eq:proof-main-exact-fields}
 M_1\boldsymbol z_{1,h}=0,\qquad u_{1,h}=T_1\boldsymbol z_{1,h},\qquad
 y_{2,h}=S_2^Tu_{2,h},\qquad L_2u_{2,h}=0
 \quad\hbox{in }\Omega_{\mathrm{CGO}}.
\end{equation}
The \(H^5\) reduced-remainder and \(H^3\) physical-adjoint bounds in
Theorem~\ref{thm:physical-nine-moments}, applied in
\eqref{eq:primal-adjoint}, give every entry of
\eqref{eq:leaf-moment} on each enclosing leaf disk. The limits are taken
at fixed direction and fixed tests, using the smooth frames selected
before the tests.

Applying Theorem~\ref{thm:quotient-reconstruction} to
\eqref{eq:leaf-moment} and the common exterior annuli, we obtain
a jointly smooth primal relative
automorphism \(G\) on \(B_R\times\mathbb{CP}^1\), holomorphic in the
projective direction at fixed \(x\), normalized on \(E\), and satisfying
\eqref{eq:proof-reconstructed-primal-transport} and
\eqref{eq:G-chart-covariance}.
Corollary~\ref{cor:dual-relative} gives \(H=G^{-T}\).
Using Lemma~\ref{lem:common-extension}, \eqref{eq:kappa-range},
Theorem~\ref{thm:quotient-reconstruction}, and
Corollary~\ref{cor:dual-relative}, we verify all hypotheses of
Theorem~\ref{thm:rigidity} on the connected domain
\(\mathcal D=B_R\), with the same \(E\). Applying
Theorem~\ref{thm:rigidity}, we obtain
\begin{equation}\label{eq:proof-main-coefficient-equality}
 \mu_1=\mu_2,\qquad \kappa_1=\kappa_2.
\end{equation}
Using \eqref{eq:proof-main-coefficient-equality} in
\eqref{eq:lambda-kappa}, we obtain \(\lambda_1=\lambda_2\).
Restrict the extended coefficients to \(\Omega\) to complete the proof.
\end{proof}

The quotient moments and the intertwiner used in this proof are
consequences of the physical DN equality. The recovery therefore uses
the boundary map stated in Theorem~\ref{thm:main}, without an additional
boundary measurement for the reduced system.

\appendix

\section{Physical realization and Sobolev remainder estimates}\label{app:cgo-input}

The localized right inverse constructed in
Proposition~\ref{prop:appendix-interior-right-inverse},
together with the transport construction in
Theorem~\ref{thm:autonomous-cgo},
yields the exact CGO realizations used in
Theorems~\ref{thm:graded-frame} and
\ref{thm:physical-nine-moments}. 
The following proposition records both ordinary Sobolev orders
for one correction; the \(k=1\) case controls the physical moment
limit, while \(k=5\) also provides higher regularity on the entire
working ball for the pointwise rank estimate of the exact physical
adjoint frame.
The fixed balls satisfy
\[
 \overline{B_R}\Subset\Omega_{\mathrm{CGO}}\Subset\Omega_1\Subset\Omega_0,
 \qquad \overline\Omega_{\mathrm{CGO}}\subset\operatorname{int}K_{\mathrm{src}},
 \qquad K_{\mathrm{src}}\Subset\Omega_1,
\]
Both conjugated coefficient supports lie in
\(\operatorname{int}K_{\mathrm{src}}\). With
\(T_\theta=2D_\theta+\widetilde A_\theta\),
\begin{equation}\label{eq:proof-app-transport-recursion}
 \begin{aligned}
 T_\theta\widetilde a_0&=0
       &&\text{near }\overline\Omega_1,\\
 T_\theta\widetilde a_1&=\widetilde M\widetilde a_0
       &&\text{near }\overline\Omega_{\mathrm{CGO}}.
 \end{aligned}
\end{equation}
Choose \(\chi_0\in C_c^\infty(\operatorname{int}K_{\mathrm{src}})\), equal to one
near \(\overline\Omega_{\mathrm{CGO}}\), with support where the
first-correction equation holds.

\begin{proposition}
\label{prop:appendix-physical-realization}
Fix a normalized complex null direction \(\theta\) and the nested
balls above. Let smooth amplitudes \(\widetilde a_0,\widetilde a_1\)
satisfy \eqref{eq:proof-app-transport-recursion} on the specified
neighborhoods, and suppose that
\(F=\chi_0\widetilde M\widetilde a_1\in H^k(\Omega_0)\)
for a fixed integer \(k\geq1\). Then there exists a correction
\(\widetilde r_h\), obtained from the \emph{same} right inverse at
Sobolev order \(k\), such that
\[
 \widetilde{\boldsymbol z}_h=e^{-\theta\cdot x/h}
    (\widetilde a_0+h\widetilde a_1+\widetilde r_h),
 \qquad \widetilde M\widetilde{\boldsymbol z}_h=0
 \quad\text{in }\Omega_{\mathrm{CGO}},
\]
and
\begin{equation}\label{eq:appendix-cgo-two-sobolev}
 \|\widetilde r_h\|_{H^k(\Omega_0)}\leq Ch^2\|F\|_{H^k(\Omega_0)},
 \qquad
 \|\widetilde r_h\|_{H^{k+1}(\Omega_0)}\leq Ch\|F\|_{H^k(\Omega_0)}.
\end{equation}
The constants are uniform in compact normalized-direction families
when the prescribed source norms are uniformly bounded. No support
condition is asserted for \(\widetilde r_h\) itself.
\end{proposition}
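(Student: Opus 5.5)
The plan is to define the remainder as the localized right inverse applied to the cut-off residual, exactly as in the proof of Theorem~\ref{thm:autonomous-cgo}, and then read off both Sobolev bounds from the single estimate \eqref{eq:system-cgo-right-inverse} at order \(k\). Concretely, set
\[
 \widetilde r_h=-R_{h,\theta}^{(k)}\bigl(hF\bigr),\qquad F=\chi_0\widetilde M\widetilde a_1 .
\]
This is admissible: \(F\in H^k(\Omega_0)\) by hypothesis, and \(\supp F\subset\supp\chi_0\subset\operatorname{int}K_{\mathrm{src}}\). Hence Theorem~\ref{lem:ER-solvability} applies with the given compact family of directions.

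First, I verify exactness in \(\Omega_{\mathrm{CGO}}\). By the conjugation identity \eqref{eq:conjugated-cgo-operator},
\[
 e^{\theta\cdot x/h}\widetilde M e^{-\theta\cdot x/h}=\widetilde M-h^{-1}T_\theta .
\]
Using the transport recursion \eqref{eq:proof-app-transport-recursion}, the residual of \(\widetilde a_0+h\widetilde a_1\) is \(h\widetilde M\widetilde a_1\) near \(\overline\Omega_{\mathrm{CGO}}\); this is the computation \eqref{eq:proof-cgo-uncorrected-residual}. Since \(\chi_0=1\) near \(\overline\Omega_{\mathrm{CGO}}\), this residual equals \(hF\) there. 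The right-inverse identity \eqref{eq:system-cgo-right-inverse-equation} holds on \(\Omega_1\supset\Omega_{\mathrm{CGO}}\), so the conjugated operator applied to \(\widetilde r_h\) gives \(-hF\). The sum therefore vanishes in \(\Omega_{\mathrm{CGO}}\). Multiplying back by the phase gives \(\widetilde M\widetilde{\boldsymbol z}_h=0\) there.

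Second, I read off the bounds. Estimate \eqref{eq:system-cgo-right-inverse} with source \(hF\) gives
\[
 h^{-1}\|\widetilde r_h\|_{H^k(\Omega_0)}+\|\widetilde r_h\|_{H^{k+1}(\Omega_0)}\le C_k\,h\|F\|_{H^k(\Omega_0)} .
\]
Multiplying the first term by \(h\) yields the \(O(h^2)\) bound in \(H^k\), and the second term is the \(O(h)\) bound in \(H^{k+1}\). Both come from the same operator \(R^{(k)}_{h,\theta}\), which is the content of the \emph{same right inverse} clause.

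Third, uniformity. The constant \(C_k\) and the threshold \(h_0(k)\) are uniform over compact families by Theorem~\ref{lem:ER-solvability} and Remark~\ref{rem:CGO-uniformity}, so the bounds are uniform whenever \(\|F\|_{H^k}\) is. No support statement for \(\widetilde r_h\) is needed, since the right inverse is only controlled on \(\Omega_0\).

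The only delicate point is making sure the identity region and the cutoff are compatible. Specifically, \(\chi_0\) must equal one on a neighborhood of \(\overline\Omega_{\mathrm{CGO}}\) and be supported where the first-correction equation holds, so that \(hF\) coincides with the true residual on \(\Omega_{\mathrm{CGO}}\). This is guaranteed by the stated choice of \(\chi_0\), so I expect no real obstacle beyond this bookkeeping.
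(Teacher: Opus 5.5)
Your proposal is correct and follows the paper's proof: you set \(\widetilde r_h=-R^{(k)}_{h,\theta}(hF)\), use the right-inverse identity on \(\Omega_1\) together with \(\chi_0=1\) near \(\overline\Omega_{\mathrm{CGO}}\) to cancel the residual \(h\widetilde M\widetilde a_1\), and read both bounds off \eqref{eq:system-cgo-right-inverse} for the source \(hF\). Your sign and support bookkeeping (source in \(K_{\mathrm{src}}\), residual cancellation near \(\overline\Omega_{\mathrm{CGO}}\)) is accurate.
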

\begin{proof}
Extend \(F\) by zero from \(\Omega_0\) and apply the
localized right inverse of Theorem~\ref{lem:ER-solvability}
at the selected order \(k\):
\begin{equation}\label{eq:proof-app-residual-correction}
 \widetilde r_h=-R_{h,\theta}^{(k)}(hF)
 =-R_{h,\theta}^{(k)}
                 (h\chi_0\widetilde M\widetilde a_1).
\end{equation}
By \eqref{eq:system-cgo-right-inverse-equation}, the conjugated
reduced equation annihilates the leading residual on the region
where \(\chi_0=1\). The two bounds in
\eqref{eq:appendix-cgo-two-sobolev} follow from
\eqref{eq:system-cgo-right-inverse}. In particular, for \(k=5\),
we have
\begin{equation}\label{eq:proof-app-cgo-estimate}
 \begin{gathered}
 \widetilde{\boldsymbol z}_h=e^{-\theta\cdot x/h}
       (\widetilde a_0+h\widetilde a_1+\widetilde r_h),\\
 \widetilde M\widetilde{\boldsymbol z}_h=0\quad\text{in }\Omega_{\mathrm{CGO}},\\
 \|\widetilde r_h\|_{H^5(\Omega_{\mathrm{CGO}})}
 \leq C h^2\|\chi_0\widetilde M\widetilde a_1\|_{H^5(\Omega_0)}.
 \end{gathered}
\end{equation}

Applying the same localized right inverse, with $k=5$ in
\eqref{eq:system-cgo-right-inverse}, to the source
$h\chi_0\widetilde M\widetilde a_1$ yields both ordinary-Sobolev bounds

\begin{equation}\label{eq:app-H5-H6-remainder}
\begin{aligned}
 \|\widetilde r_h\|_{H^5(\Omega_0)}
       &\leq Ch^2\|\chi_0\widetilde M\widetilde a_1\|_{H^5(\Omega_0)},\\
 \|\widetilde r_h\|_{H^6(\Omega_0)}
       &\leq Ch\|\chi_0\widetilde M\widetilde a_1\|_{H^5(\Omega_0)}.
\end{aligned}
\end{equation}

For a prescribed first correction, an \(H^7\) bound on the cutoff
support controls the source norm. For a constructed correction, the
fixed cutoffs and the \(C_{x_\perp}^9C_z^{8,\alpha}\) coefficient, amplitude,
frame, inverse, and partition-weight bounds in
Theorem~\ref{thm:autonomous-cgo} suffice. Multiplication by the common
\(\mathcal R\) preserves Sobolev orders. The reduced \(H^5\) error is
therefore \(O(h^2)\) on the entire working ball, and the additional
\(h^{-1}J_\theta r_h\) term is \(O(h)\) in \(H^3\), retaining
\(K_0a_0-J_\theta a_1\).
\end{proof}

\begin{remark}
\label{rem:appendix-minimal-moment-regularity}
For a supplementary physical adjoint polarization with
\(J_\theta a_0=0\), the normalized lift has an error consisting of
\(hK_0a_1-h^{-1}J_\theta r_h+K_0r_h\).
Since \(J_\theta\) has order at most one and \(K_0\) order at most
two, the estimates from Proposition~\ref{prop:appendix-physical-realization}
at \(k=1\) imply, for fixed smooth amplitudes,
\[
 \|hK_0a_1-h^{-1}J_\theta r_h+K_0r_h\|_{L^2}
 \leq C\bigl(h\|a_1\|_{H^2}
       +h^{-1}\|r_h\|_{H^1}+\|r_h\|_{H^2}\bigr)
 =O(h).
\]
This is enough for the \(L^2\)-based Betti moment limit.
The \(k=5\) specialization is retained for the stronger
\(H^3\) physical lift estimate and the pointwise rank control
of Theorem~\ref{thm:graded-frame}. The exact correction is not
assumed to vanish on the region where the source cutoff is one.
\end{remark}

The residual cutoff \(\chi_0\) multiplies the source, so its derivatives
do not enter the residual equation where \(\chi_0=1\). The exact
elliptic correction need not share its support or vanish where \(\chi=1\). Its estimate holds on the entire integration domain and
controls the product error through Lemma~\ref{lem:CGO-product-limit}.
By contrast, the additional commutator correction
\eqref{eq:proof-slow-localization-correction} preserves transverse
support and vanishes with its source on transverse neighborhoods where \(\chi\) is identically one.
Lemma~\ref{lem:cauchy-leaf-estimate} gains one leaf derivative and
no transverse derivative. The cylinder construction of
Lemma~\ref{lem:uniform-leafwise-frames} supplies smooth transverse
zero extensions and the required ambient transport domains.
Since \(D_\theta b=D_\theta g=0\), transverse tests and holomorphic
polynomials preserve leading transport. Each moment uses fixed
\(\theta,b,g\); neither bounds over all polynomial degrees nor
parameter derivatives of exact CGO fields are required.

\section{Checks of the factorization and phase signs}\label{app:factorization-check}

The appendix checks the scalar and vector coefficients of
Proposition~\ref{prop:factorization} in components and then verifies the
phase signs in the physical adjoint lift. Set
\[
 a=\mu^{-1}=c^2,\qquad c=\mu^{-1/2},\qquad
 \mathsf K=\lambda+\mu,\qquad \Sigma=\lambda+2\mu.
\]
Indices range from one to three and repeated indices are summed.
Write \(f_i=\partial_i f\), \(a_{ij}=\partial_i\partial_j a\),
and similarly for the other scalar fields.

For \(u^{(f)}=a\nabla f-f\nabla a\) and
\(p_f=\Delta f-(\Delta a/a)f\), the cancellation of mixed
first derivatives gives
\(\eps(u^{(f)})_{ij}=af_{ij}-fa_{ij}\) and
\(\Div u^{(f)}=ap_f\). Thus

\begin{equation*}
 \sigma(u^{(f)})_{ij}
   =a\lambda p_f\delta_{ij}+2f_{ij}-2\mu f a_{ij}.
\end{equation*}

Taking the divergence of \(\sigma(u^{(f)})\) gives
\eqref{eq:L-scalar-part}. The scalar coefficient identities are
\begin{equation}\label{eq:proof-app-scalar-coefficients}
 \begin{gathered}
 \nabla a=-am,\qquad
 \nabla^2a=a(m\otimes m-\nabla^2\ell),\qquad
 -\Delta a/a=q_\mu,\\
 \Delta a\,I_3-\nabla^2a=\frac c2F_\mu,\qquad
 a\Sigma=(1-\kappa)^{-1}.
 \end{gathered}
\end{equation}
Using \eqref{eq:proof-app-scalar-coefficients} in
\eqref{eq:L-scalar-part}, we identify the coefficients of
\(\nabla f\), \(mf\), and the scalar gradient in \eqref{eq:M-explicit}.

For the vector contribution, set \(\Psi_v=c\mathsf K\,\Div v+(\lambda-\mu)c_jv_j\).
In \eqref{eq:L-general-expanded}, the mixed first derivatives cancel
by \(2\mu c_j+c\mu_j=0\) and
\(\mathsf K-(\lambda-\mu)=2\mu\). After subtracting the scalar gradient \(\nabla\Psi_v\), the
remaining vector operator is

\begin{equation*}
 (L_{\lambda,\mu}(cv))_i-\partial_i\Psi_v
 =c^{-1}(\Delta v_i-m_i\Div v)+Z_{ij}v_j,
\end{equation*}

with
\begin{equation}\label{eq:proof-app-vector-zero-order}
 Z_{ij}=2\mu c_{ij}+\mu_i c_j+\mu_j c_i
                  +(\mu\Delta c+\mu_kc_k)\delta_{ij}.
\end{equation}
Using \(\mu=c^{-2}\) in \eqref{eq:proof-app-vector-zero-order},
we obtain \(Z=c^{-1}V_\mu\), where

\begin{equation*}
 \begin{aligned}
 V_\mu={}&2c^{-1}\nabla^2c+c^{-1}(\Delta c)I_3\\
         &-4c^{-2}\nabla c\otimes\nabla c
              -2c^{-2}|\nabla c|^2I_3.
 \end{aligned}
\end{equation*}

Substitution of

\begin{equation*}
 \begin{gathered}
 \nabla c=-\tfrac12cm,\qquad
 \nabla^2c=c\left(-\tfrac12\nabla^2\ell+\tfrac14m\otimes m\right),\\
 \Delta c=c\left(-\tfrac12\Delta\ell+\tfrac14|m|^2\right)
 \end{gathered}
\end{equation*}

recovers \eqref{eq:Vmu}. The gradient coefficients are those of
\eqref{eq:vector-gradient-convert1}--\eqref{eq:vector-gradient-convert2}.

For the adjoint and phase conventions, the scalar gradient in \eqref{eq:Smu} has bilinear
transpose \(-\Div u/(1-\kappa)\); see \eqref{eq:Stranspose}.
The component identities

\begin{equation*}
 \begin{aligned}
 c^{-1}T_\mu(v,f)&=v+c\nabla f+cfm,\\
 \Div T_\mu(v,f)&=c\Div v-\tfrac12c\,m\cdot v
                         +a(\Delta f+q_\mu f)
 \end{aligned}
\end{equation*}

give \eqref{eq:Kexact}. For \(K=S^TT\), \(K_0=K\), and
\(\varphi=\theta\cdot x\) with constant \(\theta\), we put
\(\sigma\in\{1,-1\}\). The shifts

\begin{equation*}
 \begin{aligned}
 e^{-\sigma\varphi/h}\partial_j e^{\sigma\varphi/h}
   &=\partial_j+\sigma\theta_j/h,\\
 e^{-\sigma\varphi/h}\Delta e^{\sigma\varphi/h}
   &=\Delta+2\sigma D_\theta/h+(\theta\cdot\theta)/h^2
 \end{aligned}
\end{equation*}

give, before imposing nullness,
\begin{equation}\label{eq:proof-app-full-conjugation}
 \begin{aligned}
 e^{-\sigma\varphi/h}Ke^{\sigma\varphi/h}\binom vf
 ={}&K_0\binom vf+\frac{\sigma}{h}J_\theta\binom vf\\
    &-\frac{\theta\cdot\theta}{h^2}
       \begin{pmatrix}0\\ \dfrac{af}{1-\kappa}\end{pmatrix}.
 \end{aligned}
\end{equation}
Equation~\eqref{eq:proof-app-full-conjugation} specifies the complete
\(h^{-2}\) term and its sign. We impose \(\theta\cdot\theta=0\)
in \eqref{eq:proof-app-full-conjugation} and take \(\sigma=\pm1\)
to obtain \eqref{eq:Kconjugated} and \eqref{eq:Kconjugated-minus}.

For a null phase, the first-order term \(J_\theta/h\) changes sign
between \eqref{eq:Kconjugated} and \eqref{eq:Kconjugated-minus}, while
\(K_0\) is unchanged. These are the signs used in the principal normalization
and the unscaled additional lift.

\section{Localized right inverses with unweighted Sobolev estimates}
\label{audrev-ordinary-parametrix}

This appendix constructs the localized right inverse used in
Theorem~\ref{lem:ER-solvability}, with both estimates in
\eqref{eq:system-cgo-right-inverse} provided by the same operator.
The construction uses matrix transport frames and pseudodifferential
estimates following \cite{Eskin2001,EskinRalston2002,EskinRalstonSystems2004}.
The free inverse is first estimated in weighted Sobolev spaces, whereas
the localized right inverse satisfies estimates in unweighted Sobolev
spaces. Here ``localized'' refers to the region where the right-inverse
identity holds, not to preservation of the support of the source.
In both cases the derivative norms are non-semiclassical: spatial
derivatives are not multiplied by powers of the large parameter.

\subsection{The free weighted right inverse}
Put \(D=-\mathrm i\nabla\), and consider
\begin{equation}\label{audrev-system-operator}
 P_\delta=(D+\delta)^2+
             2\sum_{j=1}^3 A_j(x)(D_j+\delta_j)+q(x),\qquad
 \delta=\tau(\omega_0+\mathrm i\nu),
\end{equation}
where \(\tau\geq1\), \(\omega_0,\nu\) are real orthonormal vectors, and
the matrix coefficients are fixed members of \(C_c^\infty\).
No smallness condition on these coefficients is imposed.
The free symbol is
\[
 p_\delta(\xi)=|\xi|^2+2\delta\cdot\xi.
\]
For an integer \(j\geq0\) and a real weight exponent \(\sigma\), use
the weighted norm

\begin{equation}\label{audrev-weighted-Sobolev-norm}
 \|u\|_{H^j_\sigma}^2
   =\sum_{|\alpha|\leq j}
       \|\langle x\rangle^\sigma\partial^\alpha u\|_{L^2(\mathbb R^3)}^2,
 \qquad \langle x\rangle=(1+|x|^2)^{1/2}.
\end{equation}

\begin{lemma}
\label{lem:appendix-free-inverse}
Let \(P_\delta^0=(D+\delta)^2\), with
\(\delta=\tau(\omega_0+i\nu)\), where \(\tau\geq1\) and
\(\omega_0,\nu\) form a real orthonormal pair. The Fourier multiplier
\(E_\delta\) with symbol \(1/p_\delta\), interpreted by its locally
integrable representative, satisfies
\(P_\delta^0 E_\delta f=f\) and, for every integer \(j\geq0\),
\begin{equation}\label{audrev-free-estimates}
 \|E_\delta f\|_{H^j_{-1}}\leq C_j\tau^{-1}\|f\|_{H^j_1},
 \qquad
 \|E_\delta f\|_{H^{j+1}_{-1}}\leq C_j\|f\|_{H^j_1}.
\end{equation}
The constants are uniform under rotations of the orthonormal pair.
\end{lemma}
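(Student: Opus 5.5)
The plan is to argue entirely on the Fourier side, since \(E_\delta\) is a constant-coefficient multiplier. First, \(E_\delta\) is well defined and inverts \(P_\delta^0\). The symbol of \(P_\delta^0\) is
\[
 (\xi+\delta)\cdot(\xi+\delta)=|\xi|^2+2\delta\cdot\xi+\delta\cdot\delta,
\]
and \(\delta\cdot\delta=\tau^2(|\omega_0|^2-|\nu|^2+2\mathrm i\,\omega_0\cdot\nu)=0\), so this symbol is exactly \(p_\delta\). Writing \(p_\delta(\xi)=|\xi|^2+2\tau\omega_0\cdot\xi+2\mathrm i\tau\nu\cdot\xi\), its zero set is the circle
\[
 \Sigma_\tau=\{\nu\cdot\xi=0,\ |\xi+\tau\omega_0|=\tau\}
\]
of radius \(\tau\). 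Near \(\Sigma_\tau\) the two real functions \(\operatorname{Re}p_\delta\) and \(\operatorname{Im}p_\delta\) have linearly independent gradients; in particular \(|\nabla\operatorname{Im}p_\delta|=2\tau\). Hence \(|p_\delta|\gtrsim\tau\operatorname{dist}(\xi,\Sigma_\tau)\) there, and \(1/p_\delta\) is locally integrable, because \(1/|y|\) is integrable in the two normal variables \(y\in\R^2\). For \(f\) in \(\mathcal S\) or \(H^j_1\), the product \(\hat f/p_\delta\) is then a tempered distribution, and \(P_\delta^0E_\delta f=f\) follows from \(p_\delta\cdot(1/p_\delta)=1\) almost everywhere.

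Second, reduce to the case \(j=0\). Derivatives commute with \(E_\delta\), and the commutator of \(\partial^\alpha\) with multiplication by \(\langle x\rangle\) is bounded on \(L^2_\sigma\) with a loss of weight, not of derivatives. So the weighted \(H^j\) estimates follow from
\[
 \|E_\delta f\|_{L^2_{-1}}\le C\tau^{-1}\|f\|_{L^2_1},
 \qquad
 \|\nabla E_\delta f\|_{L^2_{-1}}\le C\|f\|_{L^2_1}.
\]
The second bound is needed only because of the gain of one derivative; for \(|\xi|\ge4\tau\) one has \(|p_\delta|\gtrsim|\xi|^2\) directly. Rotation invariance lets us fix \(\omega_0=e_1\), \(\nu=e_2\). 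Scaling \(\xi=\tau\eta\), \(x=y/\tau\) then reduces everything to the fixed symbol \(p(\eta)=|\eta|^2+2\eta_1+2\mathrm i\eta_2\) and the rescaled weight \(\langle y/\tau\rangle\). This is the standard Sylvester--Uhlmann estimate, and the factor \(\tau^{-1}\) arises exactly from this scaling.

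The main obstacle is the weighted \(L^2_1\to L^2_{-1}\) bound for the singular multiplier \(1/p\) near the characteristic circle. I would prove it as Sylvester--Uhlmann do, or via Agmon--Hörmander. Take a partition of unity in \(\eta\) and split \(1/p\) into three pieces.
\begin{itemize}
\item \emph{Away from the circle.} Here \(1/p\) is a smooth symbol of order \(-2\) at infinity and is bounded on the compact part, so \(L^2\) boundedness is trivial.
\item \emph{Near the circle.} Straighten locally to coordinates \((\operatorname{Re}p,\operatorname{Im}p,s)\), so that the multiplier becomes \((a+\mathrm ib)^{-1}\) in two variables, tensored with a smooth factor in the third.
\item \emph{The two-variable multiplier.} Use the Agmon--Hörmander trace estimate
\[
 \int_{\Sigma}|\hat u|^2\,dS\lesssim\|u\|_{L^2_{1/2+\epsilon}}^2
\]
for a hypersurface \(\Sigma\), applied on the level sets \(\{\operatorname{Re}p=a\}\). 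Combine it with the one-dimensional bound that the Hilbert transform in \(b\), which is the multiplier \(1/(a+\mathrm ib)\) in that variable, maps \(\langle b\rangle^{1}L^2\)-weighted inputs into \(\langle b\rangle^{-1}\)-weighted outputs with uniform control. The weight exponents \(\pm1\) exceed the \(1/2\) that this needs.
\end{itemize}
After the rescaling the constants are uniform in \(\tau\ge1\), because \(\langle y/\tau\rangle\le\langle y\rangle\) runs in the favourable direction for the weights \(\sigma=\pm1\). Rotation uniformity holds because the constants depend only on the fixed reference symbol.
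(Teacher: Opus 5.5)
The preliminaries are fine: \(\delta\cdot\delta=0\), the characteristic circle, local integrability of \(1/p_\delta\), and the reduction to \(j=0\). The proof breaks at the claim that the constants are uniform in \(\tau\) after the dilation \(\xi=\tau\eta\). With \(f(x)=g(\tau x)\) you have \(E_\delta f(x)=\tau^{-2}(Eg)(\tau x)\). So the first estimate at \(j=0\) is equivalent to
\[
 \|\langle y/\tau\rangle^{-1}Eg\|_{L^2}\le C\tau\,\|\langle y/\tau\rangle g\|_{L^2},
\]
and the derivative estimate is the same inequality with \(\nabla Eg\) on the left.

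The comparison \(\langle y/\tau\rangle\le\langle y\rangle\) is unfavourable on \emph{both} sides. It makes the output weight \(\langle y/\tau\rangle^{-1}\) larger than \(\langle y\rangle^{-1}\), and the input weight smaller than \(\langle y\rangle\). The only uniform comparisons are \(\langle y/\tau\rangle^{-1}\le\tau\langle y\rangle^{-1}\) and \(\langle y\rangle\le\tau\langle y/\tau\rangle\). With these, your fixed-symbol bound \(\|\langle y\rangle^{-1}Eg\|\le C\|\langle y\rangle g\|\) gives \(C\tau^2\), not \(C\tau\). Undoing the dilation, you get only \(\|E_\delta f\|_{L^2_{-1}}\le C\|f\|_{L^2_1}\) and \(\|\nabla E_\delta f\|_{L^2_{-1}}\le C\tau\|f\|_{L^2_1}\). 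Both estimates lose exactly the power of \(\tau\) that is the content of the lemma.

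The Sylvester--Uhlmann scaling you are recalling works for the pair \(\langle x\rangle^{\delta},\langle x\rangle^{\delta+1}\) with \(-1<\delta<0\). There the two losses are \(\tau^{-\delta}\) and \(\tau^{\delta+1}\), and their product is exactly \(\tau\). For exponents \(\mp1\) the product is \(\tau^2\). Your trace-lemma route cannot produce an exponent pair with total weight one at fixed scale, since it needs more than \(1/2\) of weight on each side.

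What is missing is an estimate whose constants track the scale. The paper never dilates. It rotates to \(\omega_0=e_1,\nu=e_2\), puts the weight only in \(t=x_2\), and conjugates by \(e^{-\mathrm i\tau x_1}\). It then takes the partial Fourier transform in \((x_1,x_3)\) and factors the ODE as \(-(\partial_t-a_+)(\partial_t-a_-)\), with \(a_\pm=\tau\pm b\).

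The explicit Green kernel satisfies \(|K_b|\le(\tau+b)^{-1}\), \(b|K_b|\le1\) and \(|K_b'|\le2\). Hence \(\langle t\rangle^{-\rho}K_b(t-s)\langle s\rangle^{-\rho}\), with \(\rho>1/2\), has Hilbert--Schmidt norm at most \(C_\rho(\tau+b)^{-1}\). Partial Plancherel, together with \(\langle x\rangle^{-1}\le\langle t\rangle^{-1}\) and \(\langle t\rangle\le\langle x\rangle\), then gives both bounds. These comparisons are genuinely favourable. The factor \(\tau^{-1}\) comes from the kernel, and the modulation cost \(\tau K_b\) is bounded by one.

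In your rescaled variables the same mechanism is \(\int\langle t/\tau\rangle^{-2\rho}\,dt=C_\rho\tau\). A one-variable trace or Hilbert--Schmidt bound done with the \emph{dilated} weight gives \(\tau^{1/2}\) per side, which is exactly the missing \(\tau\). That computation, not a comparison with fixed weights, must replace your last paragraph.

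Separately, the multiplier \(1/(a+\mathrm ib)\) is not the Hilbert transform, so the fixed-scale near-circle step also needs a correct formulation. That is not where the argument fails.
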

\begin{proof}
The estimates \eqref{audrev-free-estimates} follow from a one-dimensional
factorization of the free operator.  Rotate so that \(\omega_0=e_1,\nu=e_2\), write \(t=x_2\), and
set \(u=e^{-\mathrm i\tau x_1}v\).
After Fourier transformation in \((x_1,x_3)\), the free operator on
\(v\) is
\[
 L_b=-\partial_t^2+2\tau\partial_t+b^2-\tau^2
     =-(\partial_t-a_+)(\partial_t-a_-),
 \quad b=(\eta_1^2+\eta_3^2)^{1/2},\quad a_\pm=\tau\pm b.
\]
For a nonzero real \(a\), let
\[
 G_a(t)=
 \begin{cases}
  -{\bf1}_{(-\infty,0)}(t)e^{at},&a>0,\\
  {\bf1}_{(0,\infty)}(t)e^{at},&a<0.
 \end{cases}
\]
Then \((\partial_t-a)G_a=\delta_0\).
The convolution kernel \(K_b=-G_{a_+}*G_{a_-}\) of \(L_b^{-1}\)
satisfies
\begin{equation}\label{audrev-green-bounds}
 |K_b|\leq(\tau+b)^{-1},\qquad
 b|K_b|\leq1,\qquad |K_b'|\leq2.
\end{equation}
Indeed \(\|G_{a_+}\|_{L^1}=(\tau+b)^{-1}\),
\(\|G_{a_-}\|_{L^\infty}\leq1\), and
\(K_b'=a_-K_b-G_{a_+}\).
At \(b=\tau\), one may take \(G_0={\bf1}_{(0,\infty)}\);
the set of tangential frequencies $\{b=\tau\}$ has measure zero.
For $\rho_{\mathrm{wt}}>1/2$ put $w_{\rho_{\mathrm{wt}}}(t)=\langle t\rangle^{-\rho_{\mathrm{wt}}}$
and $C_{\rho_{\mathrm{wt}}}=\int_{\mathbb R}w_{\rho_{\mathrm{wt}}}(t)^2\,dt$. The pointwise kernel
bounds \eqref{audrev-green-bounds} yield the following weighted
Hilbert--Schmidt estimates (for almost every tangential frequency $b$):

\begin{equation}\label{audrev-green-HS-estimates}
\begin{aligned}
 \|w_{\rho_{\mathrm{wt}}}(t)K_b(t-s)w_{\rho_{\mathrm{wt}}}(s)\|_{L^2(\mathbb R^2_{t,s})}
    &\leq C_{\rho_{\mathrm{wt}}}(\tau+b)^{-1},\\
 \|w_{\rho_{\mathrm{wt}}}(t)bK_b(t-s)w_{\rho_{\mathrm{wt}}}(s)\|_{L^2(\mathbb R^2_{t,s})}
    &\leq C_{\rho_{\mathrm{wt}}},\\
 \|w_{\rho_{\mathrm{wt}}}(t)K_b'(t-s)w_{\rho_{\mathrm{wt}}}(s)\|_{L^2(\mathbb R^2_{t,s})}
    &\leq 2C_{\rho_{\mathrm{wt}}}.
\end{aligned}
\end{equation}

Partial Plancherel, followed by
\(\langle x\rangle^{-\rho_{\mathrm{wt}}}\leq\langle t\rangle^{-\rho_{\mathrm{wt}}}\) for the resulting function and \(\langle t\rangle^{\rho_{\mathrm{wt}}}\leq\langle x\rangle^{\rho_{\mathrm{wt}}}\)
for the source function, proves the weighted \(L^2\) estimate and its first
derivative version.  The modulation in \(x_1\) costs the additional
term \(\tau K_b\), which is bounded by one.

The direct integral of the kernels \(K_b\) gives the Fourier multiplier
\(1/p_\delta\), with no additional distribution on its characteristic set.
For \(b\ne\tau\), its Fourier transform in \(t\) is
\[
 -\big[(\mathrm i\eta_2-a_+)(\mathrm i\eta_2-a_-)\big]^{-1}
  =\big[\eta_2^2+2\mathrm i\tau\eta_2+b^2-\tau^2\big]^{-1}.
\]
Near \(b=\tau,\eta_2=0\), the real and imaginary parts on the
right have a nonsingular two-dimensional Jacobian.  Its reciprocal
is thus locally integrable.  Fubini on compact frequency sets and
then on Schwartz test functions identifies the direct-integral
kernel with this locally integrable reciprocal.  The value selected
at \(b=\tau\) does not change the direct integral.
Undoing the modulation gives \(1/p_\delta\) and
\(p_\delta E_\delta=I\).
The free-inverse estimates \eqref{audrev-free-estimates} extend from Schwartz functions by density.
Since the original \(E_\delta\) commutes with constant-coefficient
derivatives, the \(L^2\) and first derivative estimates with
\(\rho_{\mathrm{wt}}=1\) imply \eqref{audrev-free-estimates} for every integer \(j\).
\end{proof}

\subsection{Matrix transport frames on fixed disks}
The matrix symbols require transport frames on fixed leaf disks,
with smooth dependence on parameters in local charts. We first construct
a plane frame: if \(B\in C_c^\infty(\mathbb C;M_m(\mathbb C))\), there is
a smooth everywhere invertible matrix \(C\) on \(\mathbb C\) with
\begin{equation}\label{audrev-plane-frame}
 \partial_{\bar z}C=BC.
\end{equation}
The following proof also specifies the Fredholm spaces used in this
assertion.

Let
\[
 \mathcal F f(z)=\frac1\pi\int_{\mathbb C}\frac{f(w)}{z-w}\,dA(w),
 \qquad H=L^2_{-\rho_{\mathrm{wt}}}(\mathbb C;\mathbb C^m),\quad \tfrac12<\rho_{\mathrm{wt}}<1.
\]
The weighted Cauchy-operator bound, with
$H=L^2_{-\rho_{\mathrm{wt}}}(\mathbb C;\mathbb C^m)$, is

\begin{equation}\label{audrev-plane-Cauchy-weighted-estimate}
 \|\mathcal F f\|_{L^2_{-\rho_{\mathrm{wt}}}}
 \leq C_{\rho_{\mathrm{wt}}}\|f\|_{L^2_{\rho_{\mathrm{wt}}}},\qquad\tfrac12<\rho_{\mathrm{wt}}<1.
\end{equation}

To verify this bound, conjugate its absolute kernel by the weights and apply
Schur's test with \(h(z)=\langle z\rangle^{-1}\).
Splitting the integral into
\(|w|\leq|z|/2\), \(|z-w|\leq|z|/2\), and their complement gives
\[
 \int_{\mathbb C}\frac{\langle w\rangle^{-1-\rho_{\mathrm{wt}}}}{|z-w|}\,dA(w)
       \leq C_{\rho_{\mathrm{wt}}}\langle z\rangle^{-\rho_{\mathrm{wt}}}.
\]
The Schur quotient is bounded by
\(C_{\rho_{\mathrm{wt}}}\langle z\rangle^{1-2\rho_{\mathrm{wt}}}\leq C_{\rho_{\mathrm{wt}}}\);
the second Schur quotient obeys the same bound after interchanging $z$ and $w$.
The operator \(\mathcal F B:H\to H\) is compact.
On each fixed disk it maps bounded subsets of \(H\) to bounded
subsets of \(H^1\): use
\(\partial_{\bar z}\mathcal F f=f\) and the \(L^2\) boundedness of
the derivative \(\partial_z\mathcal F\), whose Fourier multiplier
has modulus one.  Rellich compactness applies locally.
Outside a disk containing \(\supp B\),
\[
 |\mathcal F Bu(z)|\leq C|z|^{-1}\|u\|_H.
\]
Consequently its squared \(H\)-norm outside radius \(R\) is at most
\(CR^{-2\rho_{\mathrm{wt}}}\|u\|_H^2\).  This proves compactness including the
tail.  Thus \(I-\mathcal F B\) is Fredholm of index zero.

One must use its weighted Hilbert adjoint for the cokernel.
Write \(\varpi(z)=\langle z\rangle^{-2\rho_{\mathrm{wt}}}\).
If \(h\in\ker(I-\mathcal F B)^*_H\), put
\(\phi=\mathcal F^*(\varpi h)\), where \(\mathcal F^*\) denotes
the unweighted integral adjoint.  The weighted boundedness just
proved makes this definition valid, and the adjoint equation gives
\[
 \varpi h=B^*\phi,\qquad
 \phi=\mathcal F^*(B^*\phi),\qquad
 -\partial_z\phi=B^*\phi.
\]
For a compactly supported source \(f\), the relevant pairing is
\((\mathcal F f,h)_H=(f,\phi)_{L^2}\).
These equations distinguish the weighted adjoint from the
unweighted formal differential adjoint.

The adjoint equation \(-\partial_z\phi=B^*\phi\) has unique
continuation. We prove this locally by invertible frames, using the
same construction for \eqref{audrev-plane-frame}.
On a sufficiently small disk, cut off \(B\) in a disk of twice the
radius.  Its Cauchy operator has norm at most
\(Cr\|B\|_\infty\) on bounded continuous vector functions.
The equation \(d=I+\mathcal F(B_{\rm cut}d)\) has a solution uniformly
close to \(I\), hence invertible, when the radius \(r\) is small.
Local Cauchy regularity makes \(d\) smooth.  For any local solution
\(u\) of \(\partial_{\bar z}u=Bu\), the vector \(d^{-1}u\) is
holomorphic.  The identity theorem, continued through overlapping
small disks, proves unique continuation.  Complex conjugation
reduces the adjoint equation to the same argument.

Take a basis \(h_1,\ldots,h_r\) of the cokernel, with corresponding
\(\phi_1,\ldots,\phi_r\).
The functionals on vector polynomials
\[
 \ell_j(P)=(BP,\phi_j)_{L^2}
\]
are linearly independent.  Otherwise a nonzero combination
\(\phi\) has every exterior Laurent coefficient of
\(\mathcal F^*(B^*\phi)\) equal to zero, since those coefficients
are the moments of the compact source \(B^*\phi\).
It vanishes outside a large disk, hence everywhere by unique
continuation; then \(\varpi h=B^*\phi\) contradicts independence
of the \(h_j\).
Select finitely many vector polynomials of degrees at most \(N\)
on which these functionals have full row rank.
For each standard vector \(e_i\), choose a polynomial
\[
 P_i(z)=z^{N+1}e_i+\text{a vector polynomial of degree at most }N
\]
such that all \(\ell_j(P_i)\) vanish.
The Fredholm alternative gives \(v_i\in H\) with
\((I-\mathcal F B)v_i=\mathcal F(BP_i)\).
It follows that \(P_i+v_i\) solves \eqref{audrev-plane-frame},
is smooth, and equals \(P_i+O(|z|^{-1})\) outside a large disk.
The matrix with these columns has determinant not identically zero.
Indeed its leading term at infinity is \(z^{m(N+1)}\).
Jacobi's formula shows that
\[
 \det C=\exp(\mathcal F\,\operatorname{tr}B)\,F(z),
\]
where \(F\) is an entire polynomial, so there are only finitely many
zeros.  Near any zero use the local invertible frame \(d\) above to
write \(C=dH\) with \(H\) holomorphic.  A constant invertible change
of columns makes one column of \(H\) vanish at that point.
Dividing that column by \(z-z_0\) gives another global smooth
solution, removes one determinant zero counted with multiplicity,
and creates no other zero.  After finitely many divisions the
matrix is invertible everywhere.  This proves
\eqref{audrev-plane-frame}.

Next fix \(L<M\) and a compact set of parameters \(y\) for smooth
coefficients \(B_y\) on a neighborhood of \(\overline{D_M}\).
Extend the coefficient at a fixed \(y_0\) smoothly to a compactly
supported coefficient on the plane and choose the frame \(C_0\)
just constructed.  For \(p>2\), define on \(D_M\)
\[
 S_0 f=C_0\,\mathcal F\big({\bf1}_{D_M}C_0^{-1}f\big).
\]
It maps \(L^p(D_M)\) to \(W^{1,p}(D_M)\) and satisfies
\((\partial_{\bar z}-B_{y_0})S_0 f=f\).
For \(y\) sufficiently close to \(y_0\), the equation
\begin{equation}\label{audrev-parameter-frame}
 C_y=C_0+S_0\big((B_y-B_{y_0})C_y\big)
\end{equation}
is a contraction in \(W^{1,p}(D_M)\).
Sobolev embedding and the positive minimum singular value of
\(C_0\) on \(\overline{D_L}\) make \(C_y\) invertible there.
The bounded inverse of $I-S_0(B_y-B_{y_0})$ on $W^{1,p}(D_M)$ depends smoothly on $y$;
interior elliptic regularity gives joint smoothness and bounds
for every fixed finite collection of derivatives of \(C_y\) and
\(C_y^{-1}\) on \(\overline{D_L}\).
Only the difference of two nearby parameter coefficients has
been made small.  The original coefficient can be arbitrarily
large.
This supplies an independent analytic proof of the
parameter-local assertion of Lemma~\ref{lem:matrix-dbar-frame}.
There is no need to introduce a second theorem statement with the same
hypotheses; the frames below are the local choices furnished by
that lemma and the present construction.
For trace-free \(B_y\), multiplication of the first column by
\((\det C_y)^{-1}\) normalizes the determinant to one.
The determinant is nonvanishing and holomorphic in \(z\), so this
operation preserves the equation and all the stated bounds on a
compact parameter subchart.

\subsection{Paired symbols and the localized parametrix}
Choose fixed cutoffs
\(\psi_2,\psi_3,\chi_{\mathrm{sp}}\in C_c^\infty(\Omega_0)\), equal to one
successively on neighborhoods of
\(\overline\Omega_1,\supp\psi_2,\supp\psi_3\).
They may be chosen radial.  Fix a radius \(R_*\) larger than that
of \(\Omega_0\), and \(R_*<L<M\).
Apply the parameter construction \eqref{audrev-parameter-frame} to
\[
 x=z_1\omega+z_2\nu+x_\perp\nu_\perp,\quad
 \nu_\perp=\omega\times\nu,\quad
 B_{(\omega,\nu,x_\perp)}(z)=-\frac{\mathrm i}{2}A_{\omega+\mathrm i\nu}(x),
 \quad |\!x_\perp\!|\leq R_*.
\]
The parameter set of ordered orthonormal pairs and
\(|x_\perp|\leq R_*\) is compact.
It has a finite cover \(U_a\) with smooth frames \(C_a\) on the
whole fixed disk \(D_L\) satisfying
\begin{equation}\label{audrev-frame-transport}
 \mathrm i(\omega+\mathrm i\nu)\cdot\nabla C_a
          =A_{\omega+\mathrm i\nu}C_a.
\end{equation}
Choose real smooth weights \(w_a(\omega,\nu,x_\perp)\), supported in their
respective parameter charts, with \(\sum_a w_a^2=1\) on this
compact parameter set.  Shrinking the charts used for the weights
ensures uniform bounds for all fixed derivatives of the frames
and their inverses there.  For a fixed pair, these weights are
constant along its complex leaves:
\((\omega+\mathrm i\nu)\cdot\nabla_xw_a=0\).
Each frame is paired with its own inverse; there is no need for the
matrix average \(\sum_a w_a C_a\) to be invertible.

For the frequency-dependent pair write
\[
 \xi_\nu=\xi\cdot\nu,\quad
 \xi'=\xi-\xi_\nu\nu,\quad
 r=|\xi'+\tau\omega_0|,\quad
 \omega=\frac{\xi'+\tau\omega_0}{r}.
\]
Choose a real smooth function \(\varrho\), equal to one on
\([-1,1]\), zero outside \([-2,2]\), and between zero and one.
Set

\begin{equation}\label{audrev-frequency-cutoffs}
\begin{aligned}
 t_\tau(\xi)&=
   \varrho(\xi_\nu/\sqrt\tau)\,
   \varrho((r^2-\tau^2)/\tau^{3/2}),\\
 \chi_N(\xi)&=\sin(\pi t_\tau(\xi)/2),&
 \chi_F(\xi)&=\cos(\pi t_\tau(\xi)/2).
\end{aligned}
\end{equation}

For sufficiently large \(\tau\), \(r\asymp\tau\) on the support
of \(\chi_N\), and \(\omega\) is defined there.
Where \(r=0\), that cutoff vanishes on a neighborhood; products
containing it are defined by zero.
For \(a\geq1\), define the symbols
\begin{equation}\label{audrev-paired-symbols}
 \begin{aligned}
 b_a(x,\xi)&=\chi_{\mathrm{sp}}(x)\chi_N(\xi)w_a(\omega,\nu,x_\perp)C_a(z;\omega,\nu,x_\perp),\\
 d_a(x,\xi)&=\chi_{\mathrm{sp}}(x)\chi_N(\xi)w_a(\omega,\nu,x_\perp)C_a^{-1}(z;\omega,\nu,x_\perp).
 \end{aligned}
\end{equation}
Here \(x_\perp=x\cdot\nu_\perp\), \(z_1=x\cdot\omega,z_2=x\cdot\nu\);
put \(b_0=d_0=\chi_F I_m\).
Terms supported in individual charts extend smoothly by zero.
All disks and parameter subcharts used here are fixed independently
of \(\tau\).
On \(\supp\psi_3\),
\begin{equation}\label{audrev-symbol-product}
 \sum_{a\geq0}b_a d_a=I_m.
\end{equation}
For every fixed \(\alpha,\beta\), these symbols satisfy
\begin{equation}\label{audrev-symbol-seminorms}
 |\partial_x^\alpha\partial_\xi^\beta b_a|
 +|\partial_x^\alpha\partial_\xi^\beta d_a|
       \leq C_{\alpha\beta}\tau^{-|\beta|/2}.
\end{equation}
Indeed derivatives of the cutoffs cost at most \(\tau^{-1/2}\);
derivatives of the direction and leaf coordinates cost
\(\tau^{-1}\) on their fixed supports.  In the region
\(\chi_{\mathrm{sp}}=1\), \eqref{audrev-frame-transport} and the constancy
of \(w_a\) along the leaf give

\begin{equation}\label{audrev-symbol-transport-cancellation}
 (\omega+\mathrm i\nu)\cdot(A b_a-\mathrm i\nabla b_a)=0,\qquad a\geq1.
\end{equation}

The cancellation identity \eqref{audrev-symbol-transport-cancellation} eliminates the component of the first-order
symbol along \(\omega+\mathrm i\nu\). The weights must therefore be
constant along each leaf; a partition varying within the leaf would
add a leading derivative term.

\begin{lemma}
\label{lem:appendix-local-error}
Fix the nested domains, compact coefficient supports, spatial cutoffs,
and a compact family of normalized directions as above. For every
fixed integer \(j\geq0\), the operator \(Q_\tau\) constructed from
the paired symbols in \eqref{audrev-parametrix} satisfies
\[
 P_\delta Q_\tau=\psi_3I_m+S_\tau+\mathcal E_\tau,
 \qquad
 \|\psi_3(S_\tau+\mathcal E_\tau)\|_{H^j\to H^j}
 \leq C_j\tau^{-1/2},
\]
and
\[
 \|\psi_2Q_\tau g\|_{H^j}\leq C_j\tau^{-1}\|g\|_{H^j},\qquad
 \|\psi_2Q_\tau g\|_{H^{j+1}}\leq C_j\|g\|_{H^j}.
\]
All bounds are uniform on the indicated compact family of directions.
The smallness assertion concerns the \emph{localized} error; no
smallness of \(P_\delta Q_\tau-I\) on the whole space is claimed.
\end{lemma}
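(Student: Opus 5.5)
The operator \(Q_\tau\) in \eqref{audrev-parametrix} is presumably
\(Q_\tau=\psi_2\sum_{a\ge0}\mathrm{Op}(b_a)E_\delta\,\mathrm{Op}(d_a)\psi_3\)
or an equivalent sandwich. The plan is to compute \(P_\delta Q_\tau\) by commuting \(P_\delta\) through the left factor \(\mathrm{Op}(b_a)\). Write \(P_\delta=P_\delta^0+2A\cdot(D+\delta)+q\). Symbolically,
\[
P_\delta\,\mathrm{Op}(b_a)=\mathrm{Op}(b_a)P_\delta^0
+\mathrm{Op}\bigl(2(\xi+\delta)\cdot D_xb_a+2A\cdot(\xi+\delta)b_a\bigr)+\text{lower order}.
\]
The first term, after \(E_\delta\) is applied, gives \(\sum_a\mathrm{Op}(b_a)\mathrm{Op}(d_a)\psi_3\). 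By \eqref{audrev-symbol-product} and symbol composition with the seminorm bounds \eqref{audrev-symbol-seminorms}, this equals \(\psi_3I_m\) plus an error \(S_\tau\) of size \(\tau^{-1/2}\) on \(H^j\). The \(\tau^{-1/2}\) comes from each \(\xi\)-derivative gaining \(\tau^{-1/2}\) while \(x\)-derivatives cost \(O(1)\).

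The second term is the main obstacle. On \(\supp\chi_N\) one has \(\xi+\delta=\xi_\nu\nu+(\xi'+\tau\omega_0)+\mathrm i\tau\nu\). With \(\xi'+\tau\omega_0=r\omega\) and \(r=\tau+O(\sqrt\tau)\), \(\xi_\nu=O(\sqrt\tau)\), this gives
\[
\xi+\delta=\tau(\omega+\mathrm i\nu)+O(\sqrt\tau).
\]
The order-\(\tau\) part of the first-order symbol is then \(2\tau(\omega+\mathrm i\nu)\cdot(D_xb_a+Ab_a)\). This vanishes by \eqref{audrev-symbol-transport-cancellation}, since \(D_x=-\mathrm i\nabla\). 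What remains is \(O(\sqrt\tau)\) times bounded symbols, plus \(O(1)\) terms from \(q\) and \(D_x^2 b_a\).

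For \(a=0\), \(b_0=\chi_F\) is supported where \(|p_\delta|\gtrsim\tau^{3/2}\). There, \(\chi_F E_\delta\) is an elliptic inverse of size \(\tau^{-3/2}\) times first-order growth, so \(\tau\cdot\tau^{-3/2}\) is again \(\tau^{-1/2}\).

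These remainders form \(\mathcal E_\tau=R_\tau E_\delta\mathrm{Op}(d_a)\psi_3\) with \(R_\tau=O(\sqrt\tau)\) in operator norm. The free estimate \eqref{audrev-free-estimates} with the compactly supported cutoffs converts weighted norms into unweighted ones on \(\Omega_0\), and \(\|E_\delta\|\le C\tau^{-1}\). Together these give \(\|\psi_3\mathcal E_\tau\|_{H^j\to H^j}\le C\tau^{-1/2}\).

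Care is needed in three places:
\begin{itemize}
\item Products \(A\,\mathrm{Op}(b_a)\) mix spatial supports, which is why the bound is stated only after localizing by \(\psi_3\).
\item The \(\xi\)-derivatives of \(\omega\) and of the leaf coordinates \(z,x_\perp\) gain \(\tau^{-1}\) and are harmless.
\item Commutators \([D,\mathrm{Op}(b_a)]\) must be routed through \(E_\delta\) using the \(H^{j+1}\) bound, which loses no \(\tau\).
\end{itemize}

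The two bounds on \(\psi_2Q_\tau\) follow directly. The operators \(\mathrm{Op}(b_a)\) and \(\mathrm{Op}(d_a)\) are bounded on every \(H^j\) uniformly in \(\tau\), by \eqref{audrev-symbol-seminorms} and Calderón--Vaillancourt in the \(\tau^{1/2}\)-rescaled class. The cutoffs let weighted and unweighted norms be interchanged. Lemma~\ref{lem:appendix-free-inverse} then supplies \(\tau^{-1}\) at order \(j\) and \(O(1)\) at order \(j+1\).

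Uniformity over the compact direction family follows from the finite chart cover, the fixed disks, and the bounds of the frames and their inverses.
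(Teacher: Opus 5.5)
Your plan follows the paper's proof step for step. It has the exact left-quantized identity $P_\delta\operatorname{Op}(b_a)E_\delta=\operatorname{Op}(b_a)+\operatorname{Op}(r_{0,a})E_\delta+\operatorname{Op}(r_{1,a})E_\delta$, and the composition estimate with $\varepsilon=\tau^{-1/2}$ plus \eqref{audrev-symbol-product} for $S_\tau$. It removes the $\tau(\omega+\mathrm i\nu)$ component by \eqref{audrev-symbol-transport-cancellation}, leaving $(r-\tau)\omega+\xi_\nu\nu=O(\sqrt\tau)$ on $\supp\chi_N$. For $a=0$ it uses the far-region lower bound $|p_\delta|\gtrsim\tau^{3/2}$ (or $|\xi|^2$). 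Using Calder\'on--Vaillancourt instead of the paper's Schur test for compactly $x$-supported symbols makes no difference.

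One correction to your setup: $Q_\tau$ must not contain the left factor $\psi_2$. Since $\psi_3\equiv1$ on $\supp\nabla\psi_2$, the commutator $\psi_3[P_\delta,\psi_2]$ contains $2(D\psi_2)\cdot\delta=O(\tau)$. It would therefore contribute a localized error of size $O(\tau)\cdot O(\tau^{-1})=O(1)$, not $O(\tau^{-1/2})$. The paper's operator is $Q_\tau=\sum_a\operatorname{Op}(b_a)E_\delta\psi_3\operatorname{Op}(d_a)$, with $\psi_2$ appearing only in $\psi_2Q_\tau$; your computation implicitly uses this form.

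The step you have not justified is the return from the weighted free estimate to unweighted $H^j$. $E_\delta$ is not bounded on unweighted spaces, because its symbol $1/p_\delta$ is unbounded near the characteristic circle. So $\|E_\delta\|\le C\tau^{-1}$ holds only as a map $H^j_1\to H^j_{-1}$, and $E_\delta h$ lies merely in $H^j_{-1}$. The cutoff $\psi_3$ (respectively $\psi_2$) sits to the left of the nonlocal operator $\operatorname{Op}(r_{1,a})$ (respectively $\operatorname{Op}(b_a)$), so it cannot absorb the weight by itself. What you need is that an operator whose symbol has compact $x$-support maps $H^j_{-1}\to H^j$ uniformly in $\tau$, which requires uniform control of $\partial_\xi a$ and $\partial_\xi^2 a$. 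The paper gets this from the identity $\operatorname{Op}(a)\langle x\rangle^2=\langle x\rangle^2\operatorname{Op}(a)-2\mathrm i\sum_jx_j\operatorname{Op}(\partial_{\xi_j}a)-\operatorname{Op}(\Delta_\xi a)$. It applies this to $\psi_3r_{0,a}$, to $\tau^{-1/2}\psi_3r_{1,a}$ (whose $\xi$-derivatives obey \eqref{audrev-near-error-symbol-bound}), and to $\psi_2b_a$, including $a=0$.

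The source side has a related issue. Placing $\psi_3$ between $E_\delta$ and $\operatorname{Op}(d_a)$ makes every source compactly supported. With your ordering, $\operatorname{Op}(d_0)\psi_3g=\chi_F(D)\psi_3g$ is not compactly supported, and bounding its $H^j_1$ norm needs the same $\xi$-derivative argument.

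Finally, drop your third bullet. In the exact expansion no first-order factor is left over: $r_{0,a}$ has order zero, and after cancellation $\psi_3r_{1,a}$ is bounded by $\sqrt\tau$. Routing a factor of $D$ through the $H^{j+1}$ bound would give no decay in $\tau$ at all. All the smallness comes from the cancellation and the count $\sqrt\tau\cdot\tau^{-1}$ at order $j$.
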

\begin{proof}
Use left, or Kohn--Nirenberg, quantization
\[
 \operatorname{Op}(a)u(x)
  =(2\pi)^{-3}\int e^{\mathrm ix\cdot\xi}a(x,\xi)\widehat u(\xi)\,d\xi.
\]
The operators and their composition remainders are estimated on
\(H^j\) using their full Fourier kernels, including spatial tails.
For a symbol with fixed compact \(x\)-support and bounded
derivatives in \(x\), its partial Fourier transform satisfies
\[
 |\widehat a(\eta-\xi,\xi)|\leq C_N\langle\eta-\xi\rangle^{-N}.
\]
Schur's test with
\(\langle\eta\rangle^j/\langle\xi\rangle^j
 \leq C_j\langle\eta-\xi\rangle^j\)
therefore proves boundedness on \(H^j\).
More precisely, for compactly supported $x$-symbols with
uniform bounds on the spatial derivatives through sufficiently high order,

\begin{equation}\label{audrev-unweighted-pdo-bound}
 \|\operatorname{Op}(a)u\|_{H^j}
 \leq C_j\|u\|_{H^j}.
\end{equation}

Here $C_j$ depends on the fixed spatial support and finitely
many uniform symbol seminorms.
If a compactly supported symbol \(c\) is composed on the right
with such an \(a\), the difference from the product symbol has
Fourier kernel
\[
 \int
  \big[\widehat a(\eta-\zeta,\zeta)
       -\widehat a(\eta-\zeta,\xi)\big]
       \widehat c(\zeta-\xi,\xi)\,d\zeta.
\]
If the compact-support symbol seminorms of $\partial_\xi a$ used in \eqref{audrev-unweighted-pdo-bound} are
\(O(\varepsilon)\), the mean value theorem bounds this kernel by
\[
 C_N\varepsilon\int
 |\zeta-\xi|\langle\eta-\zeta\rangle^{-N}
              \langle\zeta-\xi\rangle^{-N}\,d\zeta.
\]
Choosing \(N\) large and applying Schur again gives an
\(O(\varepsilon)\) norm on \(H^j\).
Under the stated uniform symbol assumptions on $a,c$ and
the $O(\varepsilon)$ compact-support seminorm bounds for $\partial_\xi a$,
this reads

\begin{equation}\label{audrev-unweighted-composition-bound}
 \|\operatorname{Op}(a)\operatorname{Op}(c)
       -\operatorname{Op}(ac)\|_{H^j\to H^j}
 \leq C_j\varepsilon.
\end{equation}

When \(a\) is a Fourier multiplier the kernel is instead
\([a(\eta)-a(\xi)]\widehat c(\eta-\xi,\xi)\), with the same bound.
The composition estimate applies to
\(\operatorname{Op}(b_a)\psi_3\operatorname{Op}(d_a)\), since its right
symbol \(\psi_3d_a\) has compact \(x\)-support. The Fourier-kernel
bounds include the operators' spatial tails.

An additional estimate is needed from a weighted Sobolev space for a symbol with compact spatial support.
For \(a\) with fixed compact \(x\)-support and bounded \(x\)
derivatives of \(a,\partial_\xi a,\partial_\xi^2a\), the exact
identity on Schwartz functions is
\begin{equation}\label{audrev-weighted-pdo-identity}
 \operatorname{Op}(a)\langle x\rangle^2
 =\langle x\rangle^2\operatorname{Op}(a)
   -2\mathrm i\sum_j x_j\operatorname{Op}(\partial_{\xi_j}a)
   -\operatorname{Op}(\Delta_\xi a).
\end{equation}
The unweighted \(H^j\) bound from Schur's test, applied to each
symbol with compact spatial support in \eqref{audrev-weighted-pdo-identity}, proves
\begin{equation}\label{audrev-weighted-pdo}
 \|\operatorname{Op}(a)u\|_{H^j}
       \leq C_j\|u\|_{H^j_{-1}}.
\end{equation}
Here set \(v=\langle x\rangle^{-2}u\), for which
\(\|v\|_{H^j}\leq C_j\|u\|_{H^j_{-1}}\), and apply
\eqref{audrev-weighted-pdo-identity} to \(v\).
Smooth compactly supported functions are dense in the weighted Sobolev space defined in \eqref{audrev-weighted-Sobolev-norm}; the estimate defines the unique
bounded extension and preserves the distributional identity.
Thus two frequency derivatives, together with finitely many
spatial derivatives, suffice for the weighted passage.
This argument does not discard a nonlocal tail.

Taking \(\varepsilon=\tau^{-1/2}\) in the composition estimate and
using \eqref{audrev-symbol-product} gives
\begin{equation}\label{audrev-composition-error}
 S_\tau=\sum_{a\geq0}\operatorname{Op}(b_a)\,
              \psi_3\operatorname{Op}(d_a)-\psi_3 I_m,
 \qquad \|S_\tau\|_{H^j\to H^j}\leq C_j\tau^{-1/2}.
\end{equation}
The subtracted multiplication operator is \(\psi_3 I_m\).
In particular no identity assertion is made in the transition
region of the source cutoff.

Define
\begin{equation}\label{audrev-parametrix}
 Q_\tau g=\sum_{a\geq0}
       \operatorname{Op}(b_a)E_\delta
                 \psi_3\operatorname{Op}(d_a)g.
\end{equation}
For Schwartz functions the exact differential calculation is

\begin{equation}\label{audrev-pdo-differential-error-decomposition}
\begin{aligned}
 P_\delta\operatorname{Op}(b_a)E_\delta
   &=\operatorname{Op}(b_a)
        +\operatorname{Op}(r_{0,a})E_\delta
        +\operatorname{Op}(r_{1,a})E_\delta,\\
 r_{0,a}&=-\Delta_x b_a-2\mathrm i A\cdot\nabla_xb_a+q b_a,\\
 r_{1,a}&=2(\xi+\delta)\cdot(A b_a-\mathrm i\nabla_xb_a).
\end{aligned}
\end{equation}

All identities extend to the functions in the Sobolev spaces used below by the
bounds that follow.
Only the error localized by \(\psi_3\) is needed.  In this region
\(\chi_{\mathrm{sp}}=1\), and \(\psi_3r_{0,a}\) has bounded seminorms and compact spatial support.  Equations
\eqref{audrev-free-estimates} and \eqref{audrev-weighted-pdo}
give an \(O(\tau^{-1})\) bound on \(H^j\) for its contribution to
\(\psi_3P_\delta Q_\tau\).

For $a\geq1$, the symbol cancellation
\eqref{audrev-symbol-transport-cancellation}, derived from
\eqref{audrev-frame-transport}, removes the component
$\tau(\omega+\mathrm i\nu)$ from the first-order error. The remaining vector is
\[
 \xi+\delta-\tau(\omega+\mathrm i\nu)
       =(r-\tau)\omega+\xi_\nu\nu,
\]
whose length is \(O(\sqrt\tau)\) on the support of the near
cutoff.  More precisely,

\begin{equation}\label{audrev-near-error-symbol-bound}
 |\partial_x^\alpha\partial_\xi^\beta(\psi_3r_{1,a})|
       \leq C_{\alpha\beta}\tau^{(1-|\beta|)/2}.
\end{equation}

By \eqref{audrev-near-error-symbol-bound},
\(\tau^{-1/2}\psi_3r_{1,a}\) has the seminorms needed in
\eqref{audrev-weighted-pdo}.
Apply \(E_\delta\) first to the compactly supported source
\(\psi_3\operatorname{Op}(d_a)g\), use its \(H^j_1\)-norm and
\eqref{audrev-free-estimates}, and then apply \(\operatorname{Op}(\psi_3r_{1,a})\).
The resulting contribution is \(O(\tau^{-1/2})\) on \(H^j\).
No derivative of \(1/p_\delta\) has been taken near its
characteristic set.

For \(a=0\), the first-order error is
\(2A\cdot(\xi+\delta)\chi_F\).
On the support of $\chi_F$, the free symbol satisfies

\begin{equation}\label{audrev-far-symbol-ellipticity}
 |p_\delta(\xi)|\geq
 \begin{cases}
 c\tau^{3/2},&|\xi|\leq4\tau,\\
 c|\xi|^2,&|\xi|\geq4\tau.
 \end{cases}
\end{equation}

Indeed either \(|\xi_\nu|\geq\sqrt\tau\), giving the first bound from
the imaginary part, or
\(|r^2-\tau^2|\geq\tau^{3/2}\) and \(\xi_\nu^2\leq\tau\),
giving it from the real part for large \(\tau\).
The second bound follows from $|\xi|\geq4\tau$ and
$|p_\delta(\xi)-|\xi|^2|\leq 2|\delta||\xi|$.
The ratio
\[
 a_F(x,\xi)=\frac{2A(x)\cdot(\xi+\delta)\chi_F(\xi)}
                         {p_\delta(\xi)}
\]
is defined by zero near the characteristic set.  In the first
region, differentiating the reciprocal costs at most
\(\tau^{-1/2}\) per derivative; in the second it gives the usual
\(|\xi|^{-1}\) decay per derivative.  Consequently

\begin{equation}\label{audrev-far-error-symbol-bound}
 |\partial_x^\alpha\partial_\xi^\beta a_F|
        \leq C_{\alpha\beta}\tau^{-1/2-|\beta|/2}.
\end{equation}

The unweighted estimate for symbols with compact spatial support
in \eqref{audrev-unweighted-pdo-bound}, applied to the symbol
estimate \eqref{audrev-far-error-symbol-bound}, shows that
$\operatorname{Op}(a_F)$ has operator norm $O(\tau^{-1/2})$ on $H^j$.
Here the identity between the differential-error operator and
\(\operatorname{Op}(a_F)\) is legitimate because \(E_\delta\)
is a Fourier multiplier and the numerator vanishes on a
neighborhood of its characteristic set.
Only the far-frequency symbol $a_F$, bounded in \eqref{audrev-far-error-symbol-bound}, requires derivatives of $1/p_\delta$.

It follows that
\begin{equation}\label{audrev-local-error}
 P_\delta Q_\tau=\psi_3 I_m+S_\tau+\mathcal E_\tau,\qquad
 \|\psi_3\mathcal E_\tau\|_{H^j\to H^j}
           \leq C_j\tau^{-1/2}.
\end{equation}
The equality is distributional; the estimate concerns the
localized error displayed, not an error at the remote spatial
cutoff.
For the solution itself, apply \eqref{audrev-weighted-pdo} to
\(\psi_2b_a\), including the multiplier term \(a=0\).
The compactly supported sources
\(\psi_3\operatorname{Op}(d_a)g\) have
\(H^j_1\)-norm bounded by \(C_j\|g\|_{H^j}\).
The two bounds in \eqref{audrev-free-estimates} therefore yield
\begin{equation}\label{audrev-Q-estimates}
 \|\psi_2Q_\tau g\|_{H^j}\leq C_j\tau^{-1}\|g\|_{H^j},
 \qquad
 \|\psi_2Q_\tau g\|_{H^{j+1}}\leq C_j\|g\|_{H^j}.
\end{equation}
In the second estimate the first operator is bounded from
\(H^{j+1}_{-1}\) to \(H^{j+1}\); no \(H^{j+1}\) norm of the
source or of \(g\) is required.
\end{proof}

\begin{proposition}
\label{prop:appendix-interior-right-inverse}
Let \(m\geq1\), \(A_j,q\in C_c^\infty(\mathbb R^3;M_m(\mathbb C))\),
and fix concentric balls
\(\overline\Omega_{\mathrm{CGO}}\Subset\Omega_1\Subset\Omega_0\)
and a closed source ball \(K_{\mathrm{src}}\Subset\Omega_1\)
whose interior contains the coefficient supports and
\(\overline\Omega_{\mathrm{CGO}}\). Consider \(P_\delta\) from
\eqref{audrev-system-operator} with
\(\delta=\tau(\omega_0+i\nu)\) and a compact family of real
orthonormal pairs \((\omega_0,\nu)\). For each fixed integer
\(k\geq0\), all sufficiently large \(\tau\), and every
\(f\in H^k(\Omega_0;\mathbb C^m)\) supported in \(K_{\mathrm{src}}\),
there is a \emph{single linear} operator \(\mathcal R_\delta\) with
\begin{equation}\label{eq:appendix-two-order-interior}
 P_\delta\mathcal R_\delta f=f\quad\text{in }\Omega_1,\qquad
 \tau\|\mathcal R_\delta f\|_{H^k(\Omega_0)}
 +\|\mathcal R_\delta f\|_{H^{k+1}(\Omega_0)}
 \leq C_k\|f\|_{H^k(\Omega_0)}.
\end{equation}
The constants are uniform for a compact set of normalized null
directions. No boundary condition on \(\partial\Omega_0\) is imposed.
\end{proposition}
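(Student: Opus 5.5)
The plan is a Neumann-series perturbation of the localized parametrix $Q_\tau$ of Lemma~\ref{lem:appendix-local-error}, posed on the closed source space $\mathcal E_k=\{f\in H^k(\Omega_0;\C^m):\supp f\subset K_{\mathrm{src}}\}$. Fix $k$ and apply that lemma with $j=k$. For $f\in\mathcal E_k$ we have $\psi_3f=f$, since $\psi_3=1$ near $\supp\psi_2\supset\overline\Omega_1\supset K_{\mathrm{src}}$. The lemma then gives
\[
 P_\delta Q_\tau f=f+(S_\tau+\mathcal E_\tau)f
\]
as distributions. Let $\chi_{\mathrm{src}}\in C_c^\infty(\Omega_1)$ equal one near $K_{\mathrm{src}}$ and be supported where $\psi_3=1$. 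Define the error operator
\[
 N_\tau f=-\,\psi_{\mathrm{loc}}(S_\tau+\mathcal E_\tau)f,
\]
where $\psi_{\mathrm{loc}}$ is a cutoff equal to one on $\overline\Omega_1$ with support inside $\{\psi_3=1\}$. The lemma's bound on $\psi_3(S_\tau+\mathcal E_\tau)$ then gives $\|N_\tau\|_{H^k\to H^k}\le C_k\tau^{-1/2}$.

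The difficulty is that $N_\tau f$ is supported in $\supp\psi_{\mathrm{loc}}$, not in $K_{\mathrm{src}}$, so the iteration would leave $\mathcal E_k$. I would fix this by enlarging the source class. Replace $K_{\mathrm{src}}$ by a slightly larger closed ball $K'$ with $K_{\mathrm{src}}\subset K'\Subset\Omega_1$, and choose the localizing cutoff $\psi_{\mathrm{loc}}\in C_c^\infty(\operatorname{int}K')$ equal to one near $K_{\mathrm{src}}$. The identity is only required in $\Omega_1$ and the estimates in $\Omega_0$, and $K'\subset\{\psi_3=1\}$. Then $\widetilde N_\tau=-\psi_{\mathrm{loc}}(S_\tau+\mathcal E_\tau)$ maps $\mathcal E_k(K')$ to itself with norm $O(\tau^{-1/2})$. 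For $\tau$ large, $(I+\widetilde N_\tau)^{-1}$ exists with norm at most $2$. Set
\[
 \mathcal R_\delta=Q_\tau(I-\widetilde N_\tau)^{-1}
\]
restricted to $\mathcal E_k(K_{\mathrm{src}})$, with the sign chosen to match $P_\delta Q_\tau=I-\widetilde N_\tau$ on the region where $\psi_{\mathrm{loc}}=1$.

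The main obstacle is precisely this: $P_\delta Q_\tau g=g-\widetilde N_\tau g$ holds only where $\psi_{\mathrm{loc}}=1$, and $\Omega_1$ need not lie there. To handle it, I would take $\psi_{\mathrm{loc}}=1$ on a neighborhood of $\overline\Omega_1$ and the source space supported in $\operatorname{supp}\psi_{\mathrm{loc}}\subset\{\psi_3=1\}$. This is consistent because $\psi_3=1$ on a neighborhood of $\operatorname{supp}\psi_2\supset\overline\Omega_1$. Sources in $K_{\mathrm{src}}\subset\Omega_1$ then lie in this enlarged class, and on $\Omega_1$ we get $P_\delta\mathcal R_\delta f=(I-\widetilde N_\tau)(I-\widetilde N_\tau)^{-1}f=f$.

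The estimates follow from \eqref{audrev-Q-estimates}. Restrict to $\Omega_0$, which is legitimate if $\psi_2=1$ on $\Omega_0$; if not, I would reorder the cutoffs so that $\psi_2=1$ near $\overline\Omega_0$, noting that the lemma's construction only uses fixed cutoffs inside a larger ball. This gives $\tau\|\mathcal R_\delta f\|_{H^k}+\|\mathcal R_\delta f\|_{H^{k+1}}\le 2C_k\|f\|_{H^k}$. Uniformity in the directions follows from uniformity in the lemma, and linearity is clear.
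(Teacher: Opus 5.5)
Your argument is correct and is essentially the paper's: you Neumann-invert the localized error $\psi_3(S_\tau+\mathcal E_\tau)$ from Lemma~\ref{lem:appendix-local-error}, compose with $Q_\tau$, and read off both bounds from \eqref{audrev-Q-estimates}. The paper's bookkeeping is lighter in two places: it inverts $I+\psi_3(S_\tau+\mathcal E_\tau)$ on all of $H^k$ without any invariant source subspace, since $\psi_3=1$ on $\Omega_1$ already gives $P_\delta Q_\tau g=g+\psi_3(S_\tau+\mathcal E_\tau)g$ there for every $g$; and it sets $\mathcal R_\delta f=\psi_2Q_\tau g$, which leaves the identity on $\Omega_1$ unchanged (because $\psi_2=1$ near $\overline\Omega_1$) and makes the $\Omega_0$ estimates immediate, so the cutoffs never need to be re-chosen.
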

\begin{proof}
By Lemma~\ref{lem:appendix-local-error}, for the prescribed
fixed integer \(k\), set
\[
 T_\tau=\psi_3(S_\tau+\mathcal E_\tau):H^k\longrightarrow H^k.
\]
Its values have compact support, and its operator norm is at most
\(C_k\tau^{-1/2}\).
For large \(\tau\), set
\[
 g=(I+T_\tau)^{-1}f,\qquad
 \mathcal R_\delta f=\psi_2Q_\tau g.
\]
A source supported in \(K_{\mathrm{src}}\Subset\Omega_1\) is extended by zero
from \(\Omega_0\); this is a bounded operation at every integer
Sobolev order under consideration, since it vanishes in a
neighborhood of the boundary.  On \(\Omega_1\) both \(\psi_2\)
and \(\psi_3\) equal one.  Equation
\eqref{audrev-local-error} and the equation for \(g\) give

\begin{equation}\label{audrev-final-interior-right-inverse}
\begin{gathered}
 P_\delta\mathcal R_\delta f=f\quad\hbox{in }\Omega_1,\\
 \tau\|\mathcal R_\delta f\|_{H^k(\Omega_0)}
       +\|\mathcal R_\delta f\|_{H^{k+1}(\Omega_0)}
          \leq C_k\|f\|_{H^k(\Omega_0)}.
\end{gathered}
\end{equation}

Both bounds apply to \(\mathcal R_\delta\), constructed with the single
Neumann inverse \((I+T_\tau)^{-1}\) on \(H^k\).
The commutator with the final spatial cutoff vanishes on
\(\Omega_1\); it need not be small on the transition region.
The operator $\mathcal R_\delta$ is a localized right inverse
by \eqref{audrev-final-interior-right-inverse}; no boundary
condition is imposed on \(\partial\Omega_0\).
The finite parameter cover used above gives uniform constants
for a compact family of directions.  It does not assert
parameter derivatives of the exact right inverse.

Finally take
\[
 A_j=\frac{\widetilde A_j}{2\mathrm i},\qquad
 q=-\widetilde B,\qquad
 \delta=\mathrm i\theta/h,\qquad \tau=h^{-1}.
\]
Then \(P_\delta=-e^{\theta\cdot x/h}\widetilde M
e^{-\theta\cdot x/h}\).
Applying the localized right-inverse identity and the two
unweighted Sobolev estimates \eqref{audrev-final-interior-right-inverse}
to the source $-f$, with $\tau=h^{-1}$, gives
\eqref{eq:system-cgo-right-inverse-equation} and
\eqref{eq:system-cgo-right-inverse}.
For \(k=5\) this is an ordinary \(H^5\) estimate and an
ordinary \(H^6\) estimate, rather than a semiclassical estimate.
\end{proof}

\subsection{Transport corrections from local frames}
\label{audrev-finite-transport}
The finite family of transport frames satisfying
\eqref{audrev-frame-transport} also gives the leafwise inverse used in
Theorem~\ref{thm:autonomous-cgo}.
Fix \(\theta\), put
\(T_\theta=2D_\theta+\widetilde A_\theta\), and take a fixed
large leaf disk containing \(\overline\Omega_0\).
The parameter-local frames constructed from
\eqref{audrev-plane-frame} and \eqref{audrev-parameter-frame}, or
alternatively Lemma~\ref{lem:matrix-dbar-frame}, give finitely many
matrices \(C_a(z,x_\perp)\) with \(T_\theta C_a=0\) on this whole
disk and transverse intervals covering the required bounded
\(x_\perp\)-range. Choose weights \(w_a(x_\perp)\) subordinate to
these intervals with \(\sum_a w_a(x_\perp)^2=1\).
The transverse intervals cover the entire bounded \(x_\perp\)-range
of the target region, not just one selected leaf.
Choose a leaf cutoff \(\chi_{\rm leaf}\) supported inside the
large disk and equal to one on a neighborhood of the entire
planar projection of the target region.
For a smooth source \(F\) defined on a neighborhood of this
cutoff cylinder, set
\begin{equation}\label{audrev-transport-right-inverse}
 \mathcal V_\theta F
   =\frac12\sum_a w_a(x_\perp)C_a(z,x_\perp)\,
       \mathcal C_\theta
       \big(\chi_{\rm leaf}C_a^{-1}w_a(x_\perp)F\big)(z,x_\perp).
\end{equation}
Here \(\mathcal C_\theta\) is the scalar Cauchy operator already
defined in the text, with \(D_\theta\mathcal C_\theta=I\);
it integrates only in the leaf variable.
Each summand extends by zero outside its transverse parameter
chart.  As \(D_\theta w_a=0\),

\begin{equation}\label{audrev-transport-inverse-identity}
 T_\theta\mathcal V_\theta F=\chi_{\rm leaf}F.
\end{equation}

Because $\chi_{\rm leaf}=1$ on the target region, \eqref{audrev-transport-inverse-identity} makes $\mathcal V_\theta$ a transport right inverse there.  The source need not have compact support
in the leaf variable: the product \(\chi_{\rm leaf}F\) is the
compact leaf source used in the Cauchy integral.
Every fixed finite spatial and local
direction derivative is bounded by the corresponding source
seminorms, the fixed disk sizes, and the finitely many frame
seminorms.  No global smooth choice of an invertible matrix over
the entire parameter set is necessary.
For the first recursion one uses
\(F=\chi P_0a_0\), where \(\chi\) is compactly supported in
the domain of \(a_0\) and is one on the region where that
recursion is needed; this product is extended by zero.
For the transverse localization of \(\boldsymbol z_0^S\), the source is
\([M,\chi]\boldsymbol z_0^S\) from \eqref{eq:proof-slow-localization-source},
and the inverse is \eqref{audrev-transport-right-inverse}.
If the source vanishes on an open transverse interval,
\eqref{audrev-transport-right-inverse} vanishes on that interval,
together with all its spatial derivatives.
For a prescribed first amplitude defined only near the working
CGO ball, the construction of \(\mathcal V_\theta\) is bypassed:
the residual cutoff is supported in that neighborhood, as in
the proof of Theorem~\ref{thm:autonomous-cgo}.

The vanishing on that interval concerns the leafwise correction
\(\mathcal V_\theta F\) in \eqref{audrev-transport-right-inverse}.
The exact elliptic correction has the separate Sobolev bounds
\[
 r_h=-R_{h,\theta}^{(5)}(h\chi_0P_0a_1),\qquad
 \|r_h\|_{H^5}=O(h^2),\quad \|r_h\|_{H^6}=O(h),
\]
when \(a_1\) has the required \(H^7\) bound on the fixed source
region.  A compact source cutoff does not imply that this exact
correction vanishes in the inner ball.  The leading and first
transport amplitudes must satisfy their equations throughout
the region where the exact CGO solution is asserted, as arranged
by the large-disk constructions in the text.


\section{Boundary determination to arbitrary order}
\label{audrev-app-boundary-jets}

The full DN map determines the boundary-normal derivatives of both
Lam\'e parameters at every order, with the outward traction convention
\eqref{eq:DN}. Theorem~\ref{thm:appendix-boundary-jets} provides the
boundary determination used in Lemma~\ref{lem:common-extension}. The argument uses the classical
smooth elliptic Dirichlet--Poisson calculus, Borel summation, and
Dirichlet regularity; the elasticity-specific stable graph and
matrices governing the recovery of normal derivatives are computed below. Related boundary
determination results are discussed in
\cite{NakamuraUhlmann1995,LinNakamura2017}.

\begin{theorem}
\label{thm:appendix-boundary-jets}
Let \(\Omega\subset\mathbb R^3\) be a bounded smooth domain. Suppose that
\((\lambda_j,\mu_j)\in C^\infty(\overline\Omega;\mathbb R^2)\),
\(j=1,2\), satisfy the strong convexity assumptions
\eqref{eq:convexity}, and that their full elastic Dirichlet-to-Neumann
maps, with the outward traction convention \eqref{eq:DN}, coincide.
Then in every inward boundary-normal chart \((y,r_{\mathrm{in}})\)
and for every fixed integer \(j\geq0\),
\begin{equation}\label{audrev-boundary-all-jets}
 \partial_{r_{\mathrm{in}}}^j\lambda_1(y,0)
   =\partial_{r_{\mathrm{in}}}^j\lambda_2(y,0),\qquad
 \partial_{r_{\mathrm{in}}}^j\mu_1(y,0)
   =\partial_{r_{\mathrm{in}}}^j\mu_2(y,0).
\end{equation}
These identities hold on every boundary component, and tangential
differentiation gives equality of all mixed boundary derivatives.
\end{theorem}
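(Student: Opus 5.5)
The strategy is the classical boundary-determination argument via the full symbol of the DN map, arranged as an induction on the normal order \(j\). First we fix a boundary point and an inward boundary-normal chart \((y,r_{\mathrm{in}})\). In this chart we write the Lam\'e operator as a second-order system in \(\partial_{r_{\mathrm{in}}}\) with tangential pseudodifferential coefficients. Strong convexity \eqref{eq:convexity} makes the quadratic pencil \(\sigma_2(\xi')+\mathrm i\sigma_1(\xi')\zeta-\sigma_0\zeta^2\) have no real roots for \(\xi'\ne0\). We then use the standard Dirichlet--Poisson calculus. Up to smoothing errors, the DN map is given by a factorization
\[
 -\partial_{r_{\mathrm{in}}}^2\text{-system}
 =(\partial_{r_{\mathrm{in}}}-\mathfrak Z^\sharp)\,\Sigma\,(\partial_{r_{\mathrm{in}}}-\mathfrak Z),
\]
where \(\mathfrak Z\) is a first-order tangential \(\Psi\)DO family depending smoothly on \(r_{\mathrm{in}}\), whose principal symbol has spectrum in the stable half-plane. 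The DN map is then \(\DN=-\Sigma\mathfrak Z|_{r_{\mathrm{in}}=0}+(\text{tangential terms})\) modulo smoothing. Equality of the DN maps therefore gives equality of the full symbols \(\sum_k \mathsf Z_k(y,\xi')\) of the layered impedance \(\mathfrak Z\) at \(r_{\mathrm{in}}=0\). The Borel summation is only needed to justify that a parametrix built from the formal symbol series reproduces \(\DN\) modulo \(C^\infty\). Dirichlet regularity then shows that this smoothing error does not affect the symbol classes.

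Next we derive the Riccati recursion. Inserting the factorization gives
\[
 \partial_{r_{\mathrm{in}}}\mathfrak Z+\mathfrak Z^2+\Sigma^{-1}(\text{first-order part})\mathfrak Z=\Sigma^{-1}(\text{tangential part}).
\]
Expanding in homogeneity, \(\mathsf Z_1\) solves a matrix quadratic equation. Each lower term \(\mathsf Z_{1-k}\) solves a Sylvester equation \(\mathsf Z_1X+X\mathsf Z_1'=\text{RHS}_k\). The right-hand side contains \(\partial_{r_{\mathrm{in}}}\mathsf Z_{2-k}\) and products of earlier terms. We then run the induction. At \(j=0\), \(\mathsf Z_1\) at the boundary is an explicit function of \((\lambda,\mu)(y,0)\). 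Here we will compute the isotropic principal symbol explicitly. It is the classical Rayleigh-type impedance, built from the stable root \(\mathrm i|\xi'|\), which is repeated for the P and S parts; this is the delicate ``isotropic repeated stable root'' case. We show that the map \((\lambda,\mu)\mapsto\mathsf Z_1\) is injective, for instance from its eigenvalues along \(\xi'\) and along \(\xi'^\perp\), which involve \(\mu\) and \(\mu(\lambda+\mu)/(\lambda+3\mu)\) type expressions.

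For the inductive step, suppose all normal derivatives of order \(<j\) agree. Then \(\mathsf Z_{-j}\) at \(r_{\mathrm{in}}=0\) equals a universal expression plus a term linear in \((\partial^j_{r_{\mathrm{in}}}\lambda,\partial^j_{r_{\mathrm{in}}}\mu)\). That linear term has the form \(\mathcal S^{-1}\bigl(\partial_{r_{\mathrm{in}}}^{j}\text{ of the principal coefficients}\bigr)\), where \(\mathcal S\) is the invertible Sylvester operator \(X\mapsto\mathsf Z_1X+X\mathsf Z_1'\). The rest depends only on known jets. It remains to check that this linear map \(\R^2\to M_3(\C)\) is injective. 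We verify this by evaluating at a single covector and examining two independent entries, for example the normal--normal and tangential--tangential components.

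We expect the main obstacle to be this pair of explicit injectivity computations for the isotropic system. The difficulty is that the repeated stable root makes \(\mathsf Z_1\) non-diagonalizable in general, so the Sylvester inverse has to be computed with the Jordan structure in hand. We would first try the model case \(\xi'=(1,0)\), where the system decouples into an SH scalar block and a P--SV \(2\times2\) block, and compute there. The jets are recovered as functions of \(y\), and tangential differentiation then gives all mixed derivatives. Each boundary component is handled separately by the same local argument.
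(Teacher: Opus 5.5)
Your route is the paper's: a full-symbol Riccati recursion, inversion of the Sylvester operator despite the Jordan block, induction on $j$, and identification through the Poisson parametrix. The two computations that carry the proof, however, are either misstated or left open.

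\textbf{The order-zero step uses the wrong object.} What the DN map hands you is the traction (impedance) graph, not your displacement generator $\mathfrak Z$ with $\partial_{r_{\mathrm{in}}}u=\mathfrak Z u$. The principal symbol of your $\mathfrak Z$ is $-k(I_3+N)$, with $N$ as in \eqref{audrev-eq-boundary-jordan}. It has the single eigenvalue $-k$ and depends only on $a=(\lambda+\mu)/(\lambda+3\mu)$, that is, only on $\lambda/\mu$. So $(\lambda,\mu)\mapsto\mathsf Z_1$ is not injective, and its eigenvalues carry no information.

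The boundary values must be read off the DN principal symbol itself, whose eigenvalues are $k\mu$, $2k\mu$ and $2k\mu(\lambda+\mu)/(\lambda+3\mu)$. Only after that are $\Sigma|_{r_{\mathrm{in}}=0}$ and your tangential terms known. Your claim that ``DN equality gives equality of the full symbol of $\mathfrak Z$'' silently presupposes this step.

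The paper avoids the issue by writing the Riccati equation \eqref{audrev-eq-layered-riccati} directly for the traction graph $v=-\mathfrak Z U$, obtained from the physical flux system. In that equation the coefficients are never differentiated. If you factor the non-divergence form instead, the first-order coefficients contain $\nabla\lambda$ and $\nabla\mu$, and these feed first jets into the degree-zero term from additional sources.

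\textbf{The new-jet term has the wrong form, and its injectivity is never checked.} From here on I use the paper's labelling, in which $k\mathsf Z_0$ is the principal impedance. The new-jet term is not ``$\mathcal S^{-1}$ of $\partial^j_{r_{\mathrm{in}}}$ of the principal coefficients''. In the recursion $\mathcal S(\mathsf Z_j)=\mathsf Z_{j-1}'-\cdots$, the $j$-th jets enter only through the normal derivative of the previous coefficient, so the top part involves a $j$-fold inverse:
$\mathcal S^{-j}\bigl((\partial_\lambda\mathsf Z_0)\lambda^{(j)}+(\partial_\mu\mathsf Z_0)\mu^{(j)}\bigr)$.
Here $\mathcal S=2+\mathscr N$ with $\mathscr N^3=0$, so $\mathcal S^{-j}=2^{-j}\bigl(I-\tfrac j2\mathscr N+\tfrac{j(j+1)}8\mathscr N^2\bigr)$. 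Its coefficients depend on $j$, so injectivity must be checked at every order. That is exactly the computation you leave open.

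It closes as follows. Set $\ell_*=\lambda+3\mu$ and $U_0=\bigl(\begin{smallmatrix}1&\mathrm i\\-\mathrm i&1\end{smallmatrix}\bigr)$. In the $(1,3)$-plane, $\partial_\lambda\mathsf Z_0=2\mu^2\ell_*^{-2}U_0$, and $N^*U_0=U_0N=0$. Hence $\mathcal S^{-j}(\partial_\lambda\mathsf Z_0)=2^{1-j}\mu^2\ell_*^{-2}U_0$. The shear entry $(2,2)$ equals $2^{-j}\mu^{(j)}$ and is untouched by $\mathscr N$. Pairing it with the $(1,3)$ entry gives a triangular system with determinant $2^{1-2j}\mu^2/\ell_*^2>0$; your normal--normal $(3,3)$ entry works equally well in place of $(1,3)$.

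A minor indexing point: the $j$-th jets first appear in the term of degree $1-j$, which is $\mathsf Z_{1-j}$ in your labelling, not $\mathsf Z_{-j}$.
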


Here \(r_{\mathrm{in}}\geq0\) is the known distance into the domain,
so that the outward normal is \(-\nabla r_{\mathrm{in}}\) at the
boundary. The arguments apply at each fixed derivative order; no
uniform estimate as \(j\to\infty\) is required.

Use tangential Fourier modes \(e^{iky_1}\), \(k>0\), so that
\(\partial_{y_1}\) acts by \(ik\). Displacements are expressed in
fixed Cartesian components. A local orthonormal rotation, used only
to evaluate the frozen symbol at a point, sends the inward normal
to \(e_3\) and the unit tangential covector to \(e_1\).

\subsection{The physical flux and stable normal modes}
First take a flat boundary with coefficients depending on \(r_{\mathrm{in}}\).
In this layered model a prime denotes \(\partial_{r_{\mathrm{in}}}\).
Set
\[
 q=\lambda+2\mu,\qquad \ell_*=\lambda+3\mu,
 \qquad a=\frac{\lambda+\mu}{\ell_*},
\]
\[
 A=\operatorname{diag}(\mu,\mu,q),\qquad
 R=\begin{pmatrix}0&0&\mu\\0&0&0\\\lambda&0&0\end{pmatrix},
 \qquad H_{\mathrm{tan}}=\operatorname{diag}(q,\mu,\mu).
\]
For \(u=e^{iky_1}U(r_{\mathrm{in}})\), its inward traction is
\(v=AU'+ikRU\). The tangential stress column is
\(\sigma(u)e_1=e^{iky_1}(ikH_{\mathrm{tan}}U+R^TU')\); equilibrium therefore gives
\begin{equation}\label{audrev-eq-layered-flux}
 U'=A^{-1}(v-ikRU),\qquad
 v'=k^2H_{\mathrm{tan}}U-ikR^TU'.
\end{equation}
Equation~\eqref{audrev-eq-layered-flux} retains the variable physical
flux. The derivatives of \(\lambda\) and \(\mu\) that arise in
\(\Div\sigma(u)\) are already included in \(v'\); no separate
coefficient-derivative terms should be added to the flux system \eqref{audrev-eq-layered-flux}.

Write the stable traction graph as \(v=-\mathfrak Z(r_{\mathrm{in}},k)U\). Since the
outward traction is \(-v\), \(\mathfrak Z\) has the physical DN sign.
Equation~\eqref{audrev-eq-layered-flux} gives the exact Riccati equation
\begin{equation}\label{audrev-eq-layered-riccati}
 \mathfrak Z'=(\mathfrak Z-ikR^T)A^{-1}(\mathfrak Z+ikR)-k^2H_{\mathrm{tan}}.
\end{equation}
Its leading solution \(\mathfrak Z\sim k\mathsf Z_0\) is
\begin{equation}\label{audrev-eq-boundary-principal}
 \mathsf Z_0=\begin{pmatrix}d&0&-ib_*\\0&\mu&0\\ib_*&0&d\end{pmatrix},
 \qquad d=\frac{2\mu q}{\ell_*},\qquad
 b_*=\frac{2\mu^2}{\ell_*},
 \qquad (\mathsf Z_0-iR^T)A^{-1}(\mathsf Z_0+iR)=H_{\mathrm{tan}}.
\end{equation}
Indeed, direct multiplication verifies the last identity. Strong
convexity implies \(q>0\), \(\ell_*>0\), and \(\lambda+\mu>0\).
The eigenvalues of \(\mathsf Z_0\) are
\(\mu\), \(d+b_*=2\mu\), and
\(d-b_*=2\mu(\lambda+\mu)/\ell_*\), all positive. In particular a
transverse mode \(e^{iky_1-k r_{\mathrm{in}}}e_2\) has outward traction
\(+\mu k e^{iky_1}e_2\), fixing the overall sign.

The stable displacement generator is
\begin{equation}\label{audrev-eq-boundary-jordan}
 K_{\mathrm{st}}=A^{-1}(\mathsf Z_0+iR)=I_3+N,\qquad
 N=\begin{pmatrix}a&0&ia\\0&0&0\\ia&0&-a\end{pmatrix},
 \qquad N^2=0.
\end{equation}
\begin{lemma}
\label{lem:appendix-stable-graph}
At a boundary point, freeze the strongly convex Lam\'e coefficients
and fix a nonzero tangential covector of length \(k\). With the
matrices \(\mathsf Z_0\) and \(N\) in
\eqref{audrev-eq-boundary-principal}--\eqref{audrev-eq-boundary-jordan},
the three-dimensional decaying displacement space is represented by
\[
 U_f(r_{\mathrm{in}})=e^{-kr_{\mathrm{in}}}
       (I_3-kr_{\mathrm{in}}N)f,\qquad f\in\mathbb C^3.
\]
Its Dirichlet boundary projection is an isomorphism. In particular,
the frozen Dirichlet problem satisfies the complementing condition,
even though its stable characteristic root is repeated.
\end{lemma}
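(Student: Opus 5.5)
The plan is to work entirely with the frozen constant-coefficient system \eqref{audrev-eq-layered-flux}, taking \(\lambda,\mu\) constant so that \(A,R,H_{\mathrm{tan}}\) are constant. Eliminating \(v\) turns it into the second-order system
\[
 AU''+ik(R+R^T)U'-k^2H_{\mathrm{tan}}U=0.
\]
Its decaying solutions are those with stable characteristic exponent \(-k\), the repeated root of the frozen elasticity symbol. The roots come from
\[
 \det(-\xi^2A+k\xi(R+R^T)... )=0,
\]
which, after substituting \(U=e^{\zeta r_{\mathrm{in}}}f\), reduces to \((\zeta^2-k^2)^3=0\) up to the positive factor \(\mu^2q\). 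So the decaying subspace is exactly the generalized eigenspace for \(\zeta=-k\), and it has dimension three.

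First I will check directly that each \(U_f\) solves the system. Write \(U_f=e^{-kr_{\mathrm{in}}}(I-kr_{\mathrm{in}}N)f\). Then
\[
 U_f'=-kK_{\mathrm{st}}U_f,
\]
because \(K_{\mathrm{st}}(I-kr_{\mathrm{in}}N)=(I+N)(I-kr_{\mathrm{in}}N)=I+N-kr_{\mathrm{in}}N\), which uses \(N^2=0\). Here
\[
 -k(I+N)e^{-kr}(I-krN)f=-ke^{-kr}(I+N-krN)f
\]
matches the derivative of \(e^{-kr}(I-krN)f\).

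The right object to test is the first-order flux system in \((U,v)\). I set \(v=-k\mathsf Z_0U\), constant-coefficient, and verify the two equations. The first equation, \(U'=A^{-1}(v-ikRU)=-kA^{-1}(\mathsf Z_0+iR)U=-kK_{\mathrm{st}}U\), holds by the definition \eqref{audrev-eq-boundary-jordan}. For the second equation, \(v'=-k\mathsf Z_0U'=k^2\mathsf Z_0K_{\mathrm{st}}U\) must equal
\[
 k^2H_{\mathrm{tan}}U-ikR^TU'=k^2\bigl(H_{\mathrm{tan}}+iR^TK_{\mathrm{st}}\bigr)U.
\]
That is, I need \((\mathsf Z_0-iR^T)K_{\mathrm{st}}=H_{\mathrm{tan}}\), which is precisely the Riccati identity in \eqref{audrev-eq-boundary-principal}. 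As a sanity check, I will confirm \(K_{\mathrm{st}}=I+N\) by multiplying \(A^{-1}(\mathsf Z_0+iR)\) out entrywise. This uses \(d/\mu\), \(b_*/\mu\), the corner entries \((i\mu-ib_*)/\mu\) and \((ib_*+i\lambda)/q\), and \(d/q\). Each should reduce to \(1\pm a\) or \(ia\) using \(\ell_*=\lambda+3\mu\).

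Every \(U_f\) decays because \(k>0\) and the polynomial factor is linear. The map \(f\mapsto U_f\) is injective since \(U_f(0)=f\), so these solutions fill the whole three-dimensional decaying space by the root count above. The Dirichlet boundary projection \(U\mapsto U(0)\) is therefore the identity in this parametrization, hence an isomorphism, which is the complementing (Lopatinskii) condition.

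The main obstacle is the dimension count: I have to show no other decaying solutions exist. For that I will use the characteristic determinant: strong convexity makes the frozen symbol \(\mu|\xi|^2I+(\lambda+\mu)\xi\xi^T\) elliptic, so its determinant \(\mu^2q|\xi|^6\) vanishes only at \(\xi_n=\pm ik\), each of multiplicity three. Hence the decaying space of the sixth-order ODE system is exactly three-dimensional. Everything else is routine matrix multiplication. The rotation to \(e_3,e_1\) and the scaling to length \(k\) only require the Cartesian-component convention fixed before the lemma.
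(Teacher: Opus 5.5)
Your proposal is correct and follows the paper's proof. In both, the flux system together with the Riccati identity in \eqref{audrev-eq-boundary-principal} gives the stable generator \(-kK_{\mathrm{st}}=-k(I_3+N)\), the relation \(N^2=0\) yields the decaying family \(U_f\) with \(U_f(0)=f\), and the characteristic determinant (roots \(\pm k\), each of multiplicity three) shows that these solutions exhaust the three-dimensional stable space. Your entrywise check of \(K_{\mathrm{st}}=I_3+N\) and of the second flux equation only makes explicit the direct verification that the paper leaves implicit.
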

\begin{proof}
The flux system \eqref{audrev-eq-layered-flux} and the algebraic
identity in \eqref{audrev-eq-boundary-principal} give the stable
normal evolution generator \(-kK_{\mathrm{st}}=-k(I_3+N)\).
Since \(N^2=0\), its exponential is exactly the displayed
\(U_f\). It decays for every \(f\), while \(U_f(0)=f\).
The frozen normal characteristic polynomial is
\(\mu^2(\lambda+2\mu)(\rho_{\mathrm{char}}^2+k^2)^3\);
its stable space has complex dimension three, counting the
Jordan structure. Hence these solutions exhaust that space, and
the Dirichlet trace on it is bijective. Rotational invariance
establishes the assertion for every nonzero tangential covector.
\end{proof}

Linearizing the algebraic Riccati equation in a matrix \(X\) gives
\[
\begin{aligned}
 \mathcal S(X)&=(\mathsf Z_0-iR^T)A^{-1}X+XA^{-1}(\mathsf Z_0+iR)\\
                &=K_{\mathrm{st}}^*X+XK_{\mathrm{st}}=2X+\mathscr N(X),\\
 \mathscr N(X)&=N^*X+XN.
\end{aligned}
\]
Here \(*\) is conjugate transpose. Left multiplication by \(N^*\)
and right multiplication by \(N\) commute and have square zero.
Consequently \(\mathscr N^3=0\), and, on all complex
\(3\times3\) matrices, for every integer \(j\geq1\),
\begin{equation}\label{audrev-eq-boundary-sylvester}
 \mathcal S^{-j}(X)=2^{-j}\left(
 X-\frac j2\mathscr N(X)
   +\frac{j(j+1)}8\mathscr N^2(X)\right).
\end{equation}
The finite binomial identity \eqref{audrev-eq-boundary-sylvester}
inverts \(\mathcal S\) at every fixed order despite the repeated
stable root.

\subsection{Recovery of the normal derivatives}
\begin{lemma}
\label{lem:appendix-jet-invertibility}
For the layered model above, the coefficient of order \(k^{1-j}\)
in the asymptotic impedance symbol depends on normal derivatives
of the Lam\'e parameters of order at most \(j\). For each fixed
\(j\geq1\), after subtracting the contributions of lower-order
normal derivatives, its new \(j\)-th derivative part is given by
\eqref{audrev-eq-boundary-highest} and the two scalar observations
\eqref{audrev-eq-boundary-two-observations}. The latter determine
\(\partial_{r_{\mathrm{in}}}^j\mu\) and
\(\partial_{r_{\mathrm{in}}}^j\lambda\) uniquely; their determinant is
\[
 2^{1-2j}\frac{\mu^2}{(\lambda+3\mu)^2}>0.
\]
This includes all strongly convex coefficients, without any
restriction on the sign of \(\lambda\) beyond strong convexity.
\end{lemma}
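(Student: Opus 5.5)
The approach is to expand the stable impedance graph asymptotically in $k$, feed the expansion into the exact Riccati equation \eqref{audrev-eq-layered-riccati}, and invert the linearized operator $\mathcal S$ order by order using the finite binomial formula \eqref{audrev-eq-boundary-sylvester}. Write
\[
 \mathfrak Z(r_{\mathrm{in}},k)\sim\sum_{j\ge0}k^{1-j}\mathsf Z_j(r_{\mathrm{in}}),
\]
with $\mathsf Z_0$ as in \eqref{audrev-eq-boundary-principal}. Collecting the coefficient of $k^{2-j}$ in \eqref{audrev-eq-layered-riccati} for $j\ge1$ should give the recursion
\[
 \mathcal S(\mathsf Z_j)=\mathsf Z_{j-1}'-\sum_{\substack{a+b=j\\1\le a,b\le j-1}}\mathsf Z_aA^{-1}\mathsf Z_b .
\]
The order-$k^2$ terms cancel by the algebraic identity in \eqref{audrev-eq-boundary-principal}. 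The terms $\mathsf Z_0A^{-1}\mathsf Z_j$, $\mathsf Z_jA^{-1}\mathsf Z_0$ and the $ikR$ cross terms combine into $\mathcal S(\mathsf Z_j)$. Since $\mathcal S$ is invertible by \eqref{audrev-eq-boundary-sylvester}, this recursion determines every $\mathsf Z_j$ uniquely.

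An induction on $j$ then gives the dependence claim. $\mathsf Z_0$ is an algebraic function of $(\lambda,\mu)$. If $\mathsf Z_{j-1}$ involves normal derivatives of order at most $j-1$, then $\mathsf Z_{j-1}'$ involves order at most $j$, while the quadratic sum only involves $\mathsf Z_a$ with $a\le j-1$.

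The same induction isolates the top-order part. The only source of the $j$-th normal derivatives in $\mathsf Z_j$ is the term $\mathsf Z_{j-1}'$, and in it only the derivative of the top part of $\mathsf Z_{j-1}$. Modulo lower jets, and with $\mathcal S$ frozen at the boundary values, this gives
\[
 \mathsf Z_j\equiv\mathcal S^{-j}\bigl(\partial_\lambda\mathsf Z_0\,\partial_{r_{\mathrm{in}}}^j\lambda+\partial_\mu\mathsf Z_0\,\partial_{r_{\mathrm{in}}}^j\mu\bigr).
\]
I expect this to be the formula \eqref{audrev-eq-boundary-highest}. The derivatives of $\mathcal S$ along $r_{\mathrm{in}}$ only multiply lower jets, so they do not affect this top-order part.

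For the two scalar observations I would take the $(2,2)$ entry and one entry of the $\{1,3\}$ block, say the $(1,1)$ entry or the trace of that block. The second row and column of $N$ vanish, so $\mathscr N$ kills the $(2,2)$ entry, and $\partial\mathsf Z_0$ has $(2,2)$ entry $\partial\mu$. Hence the $(2,2)$ entry of the top part is exactly $2^{-j}\partial_{r_{\mathrm{in}}}^j\mu$. For the other entry I would apply \eqref{audrev-eq-boundary-sylvester} explicitly, using $N^2=0$ and the $2\times2$ structure of $N$ on $\{e_1,e_3\}$, together with
\[
 \partial_\lambda d=\frac{2\mu^2}{\ell_*^2},\qquad \partial_\lambda b_*=-\frac{2\mu^2}{\ell_*^2}.
\]
The $2\times2$ system is then triangular in $\partial^j\mu$. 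Its determinant is $2^{-j}$ times the $\partial^j\lambda$-coefficient of the second observation, and that coefficient should work out to $2^{1-j}\mu^2/\ell_*^2$. This gives the stated determinant $2^{1-2j}\mu^2/(\lambda+3\mu)^2$, which is positive by strong convexity.

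The main obstacle is this last computation: tracking the $\mathscr N$ and $\mathscr N^2$ contributions precisely, so that the $j$-dependent polynomial factors cancel in the chosen entry. If the $(1,1)$ entry leaves a nonconstant factor in $j$, I would instead use a combination of entries annihilated by $\mathscr N^2$, such as the trace of the $\{1,3\}$ block. Passing from the layered model to general boundary-normal charts requires only that tangential derivatives and curvature terms contribute at lower order in the top-jet part of the order-$k^{1-j}$ symbol. This is standard symbol calculus for the Dirichlet--Poisson parametrix, and by Lemma~\ref{lem:appendix-stable-graph} the complementing condition holds despite the repeated root.
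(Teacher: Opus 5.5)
Your proposal is correct and follows essentially the same route as the paper. It uses the same recursion $\mathcal S(\mathsf Z_j)=\mathsf Z_{j-1}'-\sum\mathsf Z_aA^{-1}\mathsf Z_b$, the same induction isolating $\mathsf Z_j^{\mathrm{top}}=\mathcal S^{-j}\bigl((\partial_\lambda\mathsf Z_0)\lambda^{(j)}+(\partial_\mu\mathsf Z_0)\mu^{(j)}\bigr)$, and a triangular system whose first row is $(\mathsf Z_j^{\mathrm{top}})_{22}=2^{-j}\mu^{(j)}$; the paper takes $(\mathsf Z_j^{\mathrm{top}})_{13}/i$ as the second observation, and your $(1,1)$ entry gives the same determinant.

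The computation you flag as the main obstacle is immediate. On the $\{1,3\}$ block, $\partial_\lambda\mathsf Z_0=2\mu^2\ell_*^{-2}U_0$ with $U_0=\begin{pmatrix}1&i\\-i&1\end{pmatrix}$ and $N^*U_0=U_0N=0$, so $\mathscr N$ annihilates it. The $\lambda^{(j)}$-coefficient is therefore exactly $2^{1-j}\mu^2/\ell_*^2$ in either entry, with no $j$-dependent factors; those enter only the $\mu^{(j)}$-coefficient, which the triangular structure makes irrelevant. So the trace fallback is unnecessary, and its rationale is also off: the trace is not killed by $\mathscr N^2$, and using it would give determinant $2^{2-2j}\mu^2/\ell_*^2$ rather than the stated value.
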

\begin{proof}
Expand the layered symbol as
\[
 \mathfrak Z(r_{\mathrm{in}},k)\sim k\mathsf Z_0(r_{\mathrm{in}})+\mathsf Z_1(r_{\mathrm{in}})+k^{-1}\mathsf Z_2(r_{\mathrm{in}})+\cdots.
\]
Comparison in \eqref{audrev-eq-layered-riccati} yields
\begin{equation}\label{audrev-eq-boundary-recursion}
 \mathcal S(\mathsf Z_1)=\mathsf Z_0',\qquad
 \mathcal S(\mathsf Z_j)=\mathsf Z_{j-1}'-
 \sum_{\substack{r+t=j\\r,t\geq1}}\mathsf Z_rA^{-1}\mathsf Z_t
 \quad(j\geq2).
\end{equation}
Applying \(\mathcal S^{-1}\) to \(\mathsf Z_0'\) in
\eqref{audrev-eq-boundary-recursion} gives the complete first coefficient
\begin{equation}\label{audrev-eq-boundary-first}
 \begin{split}
 b&=\frac{\mu^2\lambda'+(\lambda^2-2\mu^2)\mu'}{\ell_*^2},\\
 \mathsf Z_1&=\begin{pmatrix}
 \mu'(1-a)+b&0&ib\\
 0&\mu'/2&0\\
 -ib&0&\mu'(1+a)+b
 \end{pmatrix}.
 \end{split}
\end{equation}
An explicit complete matrix formula at the second order is
\begin{equation}\label{audrev-eq-boundary-second}
 \begin{split}
 Q&=\mathsf Z_1'-\mathsf Z_1A^{-1}\mathsf Z_1,\\
 \mathsf Z_2&=\tfrac12Q-\tfrac14(N^*Q+QN)+\tfrac14N^*QN,\\
 (\mathsf Z_2)_{22}&=\tfrac14\mu''-\frac{(\mu')^2}{8\mu}.
 \end{split}
\end{equation}
Since \(\mathsf Z_1\) is given by \eqref{audrev-eq-boundary-first},
\eqref{audrev-eq-boundary-second} contains every term quadratic in first derivatives and
every second-derivative term;
in particular the nonlinear term \(-\mathsf Z_1A^{-1}\mathsf Z_1\) is retained.
After subtraction of terms involving only the values and first
derivatives, the second-order off-diagonal observation is
\[
 (\mathsf Z_2^{\mathrm{top}})_{13}/i
 =\frac{\mu^2\lambda''+(3\lambda^2+4\lambda\mu)\mu''}
        {2\ell_*^2}.
\]

In the recursion at order \(j\), every factor in the sum has
index at most \(j-1\). Differentiating the coefficients of
\(\mathcal S^{-1}\) also produces only normal derivatives of lower order.
Induction therefore shows that the part containing the new
\(j\)-th normal derivatives is exactly
\begin{equation}\label{audrev-eq-boundary-highest}
 \mathsf Z_j^{\mathrm{top}}
 =\mathcal S^{-j}\left((\partial_\lambda\mathsf Z_0)\lambda^{(j)}
                         +(\partial_\mu\mathsf Z_0)\mu^{(j)}\right),
 \qquad \lambda^{(j)}=\partial_{r_{\mathrm{in}}}^j\lambda,
 \quad \mu^{(j)}=\partial_{r_{\mathrm{in}}}^j\mu.
\end{equation}
The inversion needed here is for the two isotropic parameters,
not for freely prescribed derivatives of an arbitrary impedance
matrix. To see its rank explicitly, in the \((1,3)\) plane let
\[
 U_0=\begin{pmatrix}1&i\\-i&1\end{pmatrix}.
\]
In that plane \(\mathsf Z_0=2\mu I_2-\mu(1-a)U_0\),
\(N^*U_0=U_0N=0\), and
\(\partial_\lambda\mathsf Z_0=2\mu^2\ell_*^{-2}U_0\). Hence
\[
 \mathcal S^{-j}(\partial_\lambda\mathsf Z_0)=
 2^{1-j}\mu^2\ell_*^{-2}U_0,
 \qquad (\mathsf Z_j^{\mathrm{top}})_{22}=2^{-j}\mu^{(j)}.
\]
More explicitly, the two scalar observations satisfy
\begin{equation}\label{audrev-eq-boundary-two-observations}
 \begin{pmatrix}
 (\mathsf Z_j^{\mathrm{top}})_{22}\\
 (\mathsf Z_j^{\mathrm{top}})_{13}/i
 \end{pmatrix}
 =\begin{pmatrix}
 2^{-j}&0\\
 \beta_j&2^{1-j}\mu^2/\ell_*^2
 \end{pmatrix}
 \begin{pmatrix}\mu^{(j)}\\\lambda^{(j)}\end{pmatrix},
 \qquad
 \beta_j=2^{-j}\left(j(j+1)a^2-
                \frac{2\mu(2\lambda+3\mu)}{\ell_*^2}\right).
\end{equation}
Its determinant is
\begin{equation}\label{audrev-eq-boundary-jet-determinant}
 2^{1-2j}\frac{\mu^2}{\ell_*^2}>0.
\end{equation}
One recovers \(\mu^{(j)}\) first and then \(\lambda^{(j)}\).
There is no division by \(\lambda\); negative \(\lambda\) and
\(\lambda=0\) allowed by \eqref{eq:convexity} are included.
At normal order zero, recovery begins with the principal
DN symbol through

\begin{equation}\label{audrev-eq-boundary-order-zero}
 \mu=(\mathsf Z_0)_{22},\qquad d=(\mathsf Z_0)_{11},\qquad
 \lambda=\frac{3d\mu-4\mu^2}{2\mu-d},
 \qquad 2\mu-d=\frac{2\mu^2}{\ell_*}>0.
\end{equation}

The highest coefficients also have an energy check with the same
outward sign. For the frozen stable solution the divergence amplitude,
after removal of \(e^{iky_1}\), is
\(k(1-a)e^{-k r_{\mathrm{in}}}(if_1-f_3)\). An infinitesimal perturbation
\(\delta\lambda(r_{\mathrm{in}})=r_{\mathrm{in}}^j\delta\lambda^{(j)}/j!\) in the DN energy
therefore has coefficient
\((1-a)^2/2^{j+1}=2^{1-j}\mu^2/\ell_*^2\) at degree \(k^{1-j}\).
For the transverse solution the corresponding coefficient of
\(\delta\mu^{(j)}\) is \(2^{-j}\). The integral used is
\(\int_0^\infty r_{\mathrm{in}}^je^{-2k r_{\mathrm{in}}}\,d r_{\mathrm{in}}=j!/(2k)^{j+1}\).
This checks the highest linear coefficients; the full nonlinear
recursion is \eqref{audrev-eq-boundary-recursion}.

For an independent check of the nontrivial off-diagonal coefficient
at \emph{every} order, write $t=k r_{\mathrm{in}}$ and use the frozen stable
field $u_f=e^{iky_1-t}(I_3-tN)f$.  After division by
$ke^{iky_1-t}$, its strain amplitude is denoted by $E_f(t)$.
Direct differentiation gives
\begin{equation}\label{auditv-eq-energy-polynomial}
 \frac{2\overline{E_{e_1}(t)}:E_{e_3}(t)}{i}
  =8a^2t^2-8a^2t+3a^2-2a-1.
\end{equation}
For the first variation of the physical elastic energy with respect
to the $j$-th normal derivatives of the Lam\'e parameters, the integral
of this polynomial is
\begin{equation}\label{auditv-eq-energy-beta}
 \frac{1}{j!}\int_0^\infty t^je^{-2t}
  (8a^2t^2-8a^2t+3a^2-2a-1)\,dt
 =2^{-j}\left(j(j+1)a^2-
       \frac{2\mu(2\lambda+3\mu)}{\ell_*^2}\right)=\beta_j.
\end{equation}
The transverse shear amplitude gives the independent coefficient
$2^{-j}$ of $\partial_{r_{\mathrm{in}}}^j\mu$, while the divergence amplitude
$(1-a)(if_1-f_3)$ gives the coefficient
$2^{1-j}\mu^2/\ell_*^2$ of $\partial_{r_{\mathrm{in}}}^j\lambda$.
Thus the rank-two recovery matrix is checked by two genuinely
different mechanisms: Sylvester recursion and physical energy
variation. This independent calculation verifies the \emph{coefficients
of the highest-order unknown normal derivatives}; it does not replace
the full
nonlinear symbol recursion.
\end{proof}

\subsection{The full symbol of the physical Dirichlet-to-Neumann map}
\begin{proposition}
\label{prop:appendix-true-dn-symbol}
For a smooth strongly convex isotropic elasticity operator on a
smooth three-dimensional domain, its outward conormal
Dirichlet-to-Neumann map is a classical tangential pseudodifferential
operator of order one. In an orthonormal frame at a boundary point,
its principal symbol equals \(k\mathsf Z_0\) from
\eqref{audrev-eq-boundary-principal}, \(k=|\xi|\).
For every \(j\geq1\), after terms determined by the normal
derivatives of the Lam\'e parameters of orders at most \(j-1\),
their tangential derivatives and
the known boundary geometry have been subtracted, the new contribution
to the homogeneous symbol of degree \(1-j\) is
\eqref{audrev-eq-boundary-new-jet-symbol}.
The recursively constructed symbol equals the full symbol of the
actual outward traction DN map modulo a smoothing symbol.
\end{proposition}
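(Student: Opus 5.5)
The plan is to run the classical boundary-normal factorization of the Lamé operator and then identify the resulting recursion with the layered Riccati recursion \eqref{audrev-eq-layered-riccati} of the previous subsection.

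\emph{Reduction to a first-order system.} Near a boundary point, work in an inward boundary-normal chart \((y,r_{\mathrm{in}})\) with \(y\in\R^2\). Write the elastic equation as a first-order system in \(r_{\mathrm{in}}\) for the pair (displacement \(U\), inward traction \(v=A\partial_{r_{\mathrm{in}}}U+\mathcal R(y,r_{\mathrm{in}},D_y)U\)). The tangential coefficients now become tangential differential operators whose principal symbols, at \(\xi=ke_1\), are \(ikR\) and \(k^2H_{\mathrm{tan}}\). The system has the form \eqref{audrev-eq-layered-flux}, except that the coefficients depend on \(y\) and on the metric, and the products are operator compositions.

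\emph{Constructing the impedance symbol.} Seek a classical first-order tangential \(\Psi\)DO family \(\mathfrak Z(r_{\mathrm{in}})\) with \(v=-\mathfrak Z U\) along the decaying solutions. This leads to the operator Riccati equation
\[
 \partial_{r_{\mathrm{in}}}\mathfrak Z=(\mathfrak Z-\mathcal R^T)A^{-1}(\mathfrak Z+\mathcal R)-\mathcal H,
\]
where composition follows the symbol calculus. Expand \(\mathfrak Z\sim\sum_j\mathsf z_{1-j}\), with \(\mathsf z_{1-j}\) homogeneous of degree \(1-j\), and collect terms by degree:
\begin{itemize}
 \item The principal term solves the algebraic identity in \eqref{audrev-eq-boundary-principal}. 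Choose the root \(k\mathsf Z_0\), whose generator \(K_{\mathrm{st}}=I_3+N\) has spectrum in the right half plane; Lemma~\ref{lem:appendix-stable-graph} shows that this picks out the decaying space.
 \item Each lower term satisfies \(k\,\mathcal S(\mathsf z_{1-j})=\) (a known expression). That expression involves \(\partial_{r_{\mathrm{in}}}\mathsf z_{2-j}\), products of earlier terms, \(\xi\)-derivatives times \(y\)-derivatives from the composition formula, and geometric terms. Solve it with the explicit inverse \eqref{audrev-eq-boundary-sylvester}.
\end{itemize}
Then track which normal jets enter. The only source of a new \(j\)-th normal derivative is \(\partial_{r_{\mathrm{in}}}\) hitting the \(\lambda,\mu\)-dependence \(j\) times, through the chain \(\mathsf Z_0\to\mathsf z_0\to\cdots\). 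Tangential-derivative terms, curvature terms and products involve only jets of order \(\le j-1\), together with their tangential derivatives. By induction the new part is \(k^{1-j}\mathsf Z_j^{\mathrm{top}}\) as in \eqref{audrev-eq-boundary-highest}, which is the asserted \eqref{audrev-eq-boundary-new-jet-symbol}.

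\emph{Identification with the true DN map.} Borel-sum the symbols to obtain a \(\Psi\)DO \(\mathfrak Z\). The operator \(\partial_{r_{\mathrm{in}}}+A^{-1}(\mathfrak Z+\mathcal R)\) then factors the system modulo smoothing, and its principal part has spectrum in the right half plane. Solve the resulting parabolic-type first-order equation forward in \(r_{\mathrm{in}}\) on a collar, with data \(f\). This gives an approximate solution \(u_f\) whose residual is smoothing. The Dirichlet problem is well posed, and Lemma~\ref{lem:appendix-stable-graph} gives the complementing condition, so Dirichlet regularity shows that the correction to the true solution is smooth in \(f\). Hence \(\DN f=\mathfrak Z(0)f\) modulo a smoothing operator, since the outward traction is \(-v\).

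I expect the main obstacle to be this last identification. The stable root is repeated, with a Jordan block \(N\), so the forward evolution has to be controlled uniformly despite the factor \(kr_{\mathrm{in}}\). I would handle this by conjugating with \(e^{-kr_{\mathrm{in}}}(I-kr_{\mathrm{in}}N)\). Because \(N^2=0\), this only produces polynomial losses, and these are absorbed by the exponential decay and by the standard energy estimates for the factored equation.
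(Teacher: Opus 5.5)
Your proposal is correct and follows essentially the same route as the paper: the curved flux system, the operator Riccati equation for the stable graph, degree-by-degree inversion of $\mathcal S$, and the induction showing that a new $j$-th normal derivative enters only through $\partial_{r_{\mathrm{in}}}$ of the preceding coefficient (yielding $k^{1-j}\mathcal S^{-j}(\cdots)$); then Borel summation and identification with the physical DN map by a parametrix whose smoothing residual is removed through the coercive Dirichlet problem. Your forward first-order evolution is just the paper's Poisson parametrix with leading term $e^{-kr_{\mathrm{in}}}(I_3-kr_{\mathrm{in}}N)$; the residual you correct should explicitly include the collar-cutoff commutator.

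The Jordan block is less of an obstacle than you expect. Under strong convexity the Hermitian part of $K_{\mathrm{st}}=I_3+N$ is $\operatorname{diag}(1+a,1,1-a)$ with $0<a<1$, so the G\aa rding/energy estimate for the factored equation works directly, with no conjugation needed.
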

\begin{proof}
The normal-flux computation and the boundary Poisson parametrix
provide the required identification. We retain the full physical
flux in an inward distance chart as follows.
Put \(n=\nabla r_{\mathrm{in}}\), \(w^\alpha=\nabla y^\alpha\),
and let \(J\) be the positive coordinate volume density. For the
isotropic tensor
\[
 C_{ijk\ell}=\lambda\delta_{ij}\delta_{k\ell}
       +\mu(\delta_{ik}\delta_{j\ell}+\delta_{i\ell}\delta_{jk}),
\]
define
\[
 A_{ik}=C_{ijk\ell}n_jn_\ell,\qquad
 B^\alpha_{ik}=C_{ijk\ell}n_jw^\alpha_\ell,\qquad
 D^{\alpha\beta}_{ik}=C_{ijk\ell}w^\alpha_jw^\beta_\ell.
\]
The minor and major symmetries of \(C\) imply that the tangential
flux is \((B^\alpha)^T\partial_{r_{\mathrm{in}}}u+D^{\alpha\beta}\partial_\beta u\).
Writing \(v=A\partial_{r_{\mathrm{in}}}u+B^\alpha\partial_\alpha u\), the divergence
equation in these coordinates is the exact system
\begin{equation}\label{audrev-eq-curved-flux}
 \begin{split}
 \partial_{r_{\mathrm{in}}}u={}&A^{-1}v-A^{-1}B^\alpha\partial_\alpha u,\\
 \partial_{r_{\mathrm{in}}}v={}&-\frac{\partial_{r_{\mathrm{in}}}J}{J}v
 -J^{-1}\partial_\alpha\big(J(B^\alpha)^TA^{-1}v\big)\\
 &+J^{-1}\partial_\alpha\big(
 J((B^\alpha)^TA^{-1}B^\beta-D^{\alpha\beta})
                 \partial_\beta u\big).
 \end{split}
\end{equation}
Repeated tangential indices are summed. Its coefficients involve
the material values and their tangential derivatives, but no
explicit unknown normal derivatives of the Lam\'e parameters. The term \(\partial_{r_{\mathrm{in}}}J/J\)
is known geometry.

For the stable graph \(v=-\mathcal P(r_{\mathrm{in}})u\), differentiating and
substituting \eqref{audrev-eq-curved-flux} gives the operator Riccati
identity, modulo smoothing terms in a microlocal construction,
\begin{equation}\label{audrev-eq-curved-riccati}
 \begin{split}
 \partial_{r_{\mathrm{in}}}\mathcal P={}&\mathcal PA^{-1}
       (\mathcal P+B^\beta\partial_\beta)
       -\frac{\partial_{r_{\mathrm{in}}}J}{J}\mathcal P\\
 &-J^{-1}\partial_\alpha
       \big(J(B^\alpha)^TA^{-1}\mathcal P\big)\\
 &-J^{-1}\partial_\alpha\big(
 J((B^\alpha)^TA^{-1}B^\beta-D^{\alpha\beta})
                                  \partial_\beta\big).
 \end{split}
\end{equation}
All products in the operator Riccati identity \eqref{audrev-eq-curved-riccati} are operator compositions. For
left quantization, their symbols are expanded by
\[
 p\#q\sim\sum_\gamma
 \frac{1}{i^{|\gamma|}\gamma!}
 (\partial_\xi^\gamma p)(\partial_y^\gamma q).
\]
At a point, in the orthonormal frame fixed above, let
\(k=|\xi|_{g_{r_{\mathrm{in}}}}\). The leading homogeneous symbol of
\(\mathcal P\) is \(k\mathsf Z_0\) from
\eqref{audrev-eq-boundary-principal}. Its leading linearization on
the coefficient of degree \(1-j\) is \(k\mathcal S\).

The orders of normal derivatives entering the complete symbol
recursion follow from \eqref{audrev-eq-curved-riccati}. Let
\(p_{1-r}\) be the homogeneous term of degree \(1-r\); by
induction it contains normal derivatives of the material coefficients of order at most \(r\),
together with known geometry and tangential derivatives of those
traces. At degree \(2-j\), the left-hand side of
\eqref{audrev-eq-curved-riccati} differentiates
\(p_{2-j}\) normally, so it can introduce the new order-\(j\)
material derivative. A composition term with no derivatives of its symbols containing
two lower symbol coefficients has indices adding to \(j\),
each at most \(j-1\). A nonprincipal composition term of length
\(|\gamma|\geq1\) has coefficient indices adding to
\(j-|\gamma|\), and therefore contains normal derivatives of the material coefficients of
order at most \(j-1\). Tangential derivatives do not increase
normal derivatives of the material parameters of order at most $j-1$. Terms with \(\partial_{r_{\mathrm{in}}}J/J\), derivatives of \(k\),
or derivatives of the known moving frame likewise use only
previous normal traces. Finally, differentiation of the
coefficients of a prior \(\mathcal S^{-1}\) introduces first
normal derivatives multiplying derivatives of lower order, and cannot create a new
pure order-\(j\) normal derivative.

It follows inductively that, after subtracting all terms known
from the normal traces of orders \(0,\ldots,j-1\) on the patch
and their tangential derivatives, the new term is precisely

\begin{equation}\label{audrev-eq-boundary-new-jet-symbol}
 k^{1-j}\mathcal S^{-j}
      ((\partial_\lambda\mathsf Z_0)\partial_{r_{\mathrm{in}}}^j\lambda
        +(\partial_\mu\mathsf Z_0)\partial_{r_{\mathrm{in}}}^j\mu).
\end{equation}

Thus \eqref{audrev-eq-boundary-jet-determinant} applies also to the
curved, tangentially variable problem. Subtracting the known lower
normal traces and their tangential derivatives is necessary:
self-adjointness of the DN operator does not make every term of a
left-quantized symbol Hermitian. For example, the anti-plane scalar
equation with coefficient \(\mu(y,r_{\mathrm{in}})\), at positive tangential
frequency, has the first two lower symbols
\[
 \frac{\partial_{r_{\mathrm{in}}}\mu-i\mu_y}{2},\qquad
 \frac{\partial_{r_{\mathrm{in}}}^2\mu+\mu_{yy}}4-
       \frac{(\partial_{r_{\mathrm{in}}}\mu)^2+(\partial_y\mu)^2}{8\mu}.
\]
The known tangential terms are retained and subtracted in the
recursion, rather than suppressed by a symmetry claim.

Identification with the physical DN symbol relies on the classical
smooth Dirichlet--Poisson parametrix principle
for strongly elliptic second-order matrix systems: if the decaying
frozen normal solution space has the same dimension as the boundary
data space and its boundary-value projection is invertible for all
nonzero tangential covectors, the true Dirichlet-to-Neumann operator
has the full classical symbol of the stable conormal graph modulo a
smoothing operator.  Repeated stable characteristic roots are
permitted, provided the stable and unstable spectral subspaces are
separated and vary smoothly.  Here the principal elastic
characteristic determinant is
$\mu^2(\lambda+2\mu)(\rho_{\mathrm{char}}^2+|\xi|^2)^3$;
the stable and unstable roots are $\rho_{\mathrm{char}}=\pm i|\xi|$.
The explicit stable solution
$e^{-|\xi|r_{\mathrm{in}}}(I_3-|\xi|r_{\mathrm{in}}N)f$ has boundary value exactly $f$;
thus the Dirichlet complementing condition is satisfied despite
$N^2=0$ and $N\ne0$.  Strong convexity, Korn's inequality and
smooth Dirichlet regularity supply the required global correction
operator.  In particular, one must identify the formal Riccati
symbol with the \emph{physical} conormal DN map by the Poisson
construction, not by symbol algebra alone.

The stable Poisson parametrix is constructed from the recursively
determined symbol, and its smoothing error is corrected through
the Dirichlet problem. The frozen inward first-order evolution has three stable and three
unstable modes, with normalized eigenvalues \(-1\) and \(+1\),
respectively.
The stable Dirichlet projection is invertible by
\eqref{audrev-eq-boundary-jordan}; the stable and unstable spaces
are separated even when either has a Jordan block. The usual
classical boundary construction therefore solves
\eqref{audrev-eq-curved-riccati} successively by the invertible
\(\mathcal S\), sums its symbol by Borel summation, and constructs
a Poisson parametrix whose boundary trace is the identity modulo
smoothing. Its frozen leading term is
\(e^{-k r_{\mathrm{in}}}(I_3-k r_{\mathrm{in}}N)\). The exponential decay absorbs the polynomial
Jordan factor.

Cutting this parametrix off inside a collar produces, away from
the boundary, a source smoothing in the boundary data; the
near-boundary error is smoothing by the successive construction.
The smooth Dirichlet inverse corrects the collar-cutoff source and the
near-boundary smoothing error. Its existence follows from coercivity of
the physical Dirichlet problem: decompose
the elastic energy into trace-free and trace parts:
\[
 \lambda(\tr E)^2+2\mu|E|^2
 =2\mu\left|E-\tfrac13(\tr E)I_3\right|^2
     +(\lambda+2\mu/3)(\tr E)^2.
\]
The uniform bounds in \eqref{eq:convexity}, zero-trace Korn's
inequality and Poincar\'e's inequality give coercivity; the usual
Dirichlet existence and elliptic regularity give the correcting
inverse. Consequently the true outward DN map and the constructed
\(\mathcal P(0)\) have the same complete classical symbol modulo
smoothing. No normal derivative of the full impedance is assumed
to be observed: successive lower homogeneous terms of the given
DN map are the observations.
\end{proof}

\begin{proof}[Proof of Theorem~\ref{thm:appendix-boundary-jets}]
The principal symbol in
Proposition~\ref{prop:appendix-true-dn-symbol} first determines the
boundary values: in the orthonormal frame,
\(\mu=(\mathsf Z_0)_{22}\) and \(d=(\mathsf Z_0)_{11}\), so
\eqref{audrev-eq-boundary-order-zero} recovers \(\lambda\).
Localizing equal full DN maps on a boundary patch gives equal
complete symbols there. The principal recovery and then
\eqref{audrev-eq-boundary-jet-determinant} prove equality of all
normal traces by induction. Their tangential derivatives agree
because the recovered normal derivatives of $\lambda_j$ and
$\mu_j$ agree as smooth functions on the boundary patch.
The argument is local on every boundary component; interactions
between disjoint patches or distinct components contribute only
smooth kernels and do not alter the local symbol. On a hole
boundary the distance is still taken into \(\Omega\), so the
outward traction remains \(-v\). This proves the equality of all boundary
derivatives without a connected-boundary assumption. The
coefficient difference is therefore flat on \(\partial\Omega\),
as required by the extension construction in
Lemma~\ref{lem:common-extension}.
\end{proof}

\section*{Acknowledgments}

This work was supported by the National Natural Science Foundation of China (Grant No. 12371422) and the Fundamental Research Funds for the Central Universities (Jilin University). The authors used ChatGPT (OpenAI) for exploratory mathematical
discussions and editorial assistance during the preparation of this
manuscript. The mathematical arguments and proofs presented in the
final manuscript were carefully reviewed and independently verified
by the authors, who take full responsibility for the correctness and
mathematical content of the work.

\end{document}